\documentclass[12pt]{amsart}

\usepackage[T2A]{fontenc}
\usepackage{amsfonts}
\usepackage{amsbsy}
\usepackage{amssymb}
\usepackage{mathrsfs}
\usepackage[english]{babel}
\usepackage[cp1251]{inputenc}
\usepackage{xypic}
\usepackage{hyperref}
\usepackage{enumitem}

\textwidth=16.0cm \oddsidemargin=0.2cm \evensidemargin=0.2cm
\topmargin=-0.5cm \textheight=22.5cm

\setlength{\textfloatsep}{0.5em plus 0.1em minus 0.1em}
\setlength{\floatsep}{0.5em plus 0.1em minus 0.1em}
\setlength{\intextsep}{0em plus 0.1em minus 0.1em}

\renewcommand{\abovecaptionskip}{0pt}
\renewcommand{\belowcaptionskip}{6pt}
\setlength{\intextsep}{6pt plus 2pt minus 2pt}

\makeatletter

\@addtoreset{theorem}{section}

\renewcommand{\@makecaption}[2]{
\vspace{\abovecaptionskip}%
\sbox{\@tempboxa}{#1 #2}%
\global\@minipagefalse \hbox to \hsize {{\scshape \hfil #1 #2\hfil}}
\vspace{\belowcaptionskip}}

\makeatother

\renewcommand{\ge}{\geqslant}
\renewcommand{\le}{\leqslant}

\newcommand{\rk}{\operatorname{rk}}

\newcommand{\SL}{\operatorname{SL}}
\newcommand{\GL}{\operatorname{GL}}
\newcommand{\Sp}{\operatorname{Sp}}
\newcommand{\Spin}{\operatorname{Spin}}
\newcommand{\SO}{\operatorname{SO}}

\newcommand{\Spec}{\operatorname{Spec}}

\newcommand{\FF}{\mathbb F}
\newcommand{\PP}{\mathbb P}

\newcommand{\reg}{\mathrm{reg}}
\newcommand{\bl}{\mbox{$[\hspace{-0.35ex}[$}}
\newcommand{\br}{\mbox{$]\hspace{-0.35ex}]$}}

\DeclareMathOperator{\Aut}{\mathrm{Aut}}
\DeclareMathOperator{\Lie}{\mathrm{Lie}}
\DeclareMathOperator{\Fl}{\mathrm{Fl}}
\DeclareMathOperator{\YD}{\mathrm{YD}}
\DeclareMathOperator{\Gr}{\mathrm{Gr}}
\DeclareMathOperator{\Quot}{\mathrm{Quot}}
\DeclareMathOperator{\Hom}{\mathrm{Hom}}

\numberwithin{equation}{section}

\newtheorem{theorem}{Theorem}
\newtheorem{proposition}[theorem]{Proposition}
\newtheorem{lemma}[theorem]{Lemma}
\newtheorem{corollary}[theorem]{Corollary}
\newtheorem{problem}[theorem]{Problem}

\theoremstyle{definition}

\newtheorem{definition}[theorem]{Definition}

\theoremstyle{remark}

\newtheorem{remark}[theorem]{Remark}

\newcommand{\dd}{\begin{picture}(6,6)\put(0,6){\circle*{1.1}}
\put(3,3){\circle*{1.1}}\put(6,0){\circle*{1.1}}\end{picture}}
\newcommand{\vd}{\begin{picture}(0,6)\put(0,6){\circle*{1.1}}
\put(0,3){\circle*{1.1}}\put(0,0){\circle*{1.1}}\end{picture}}

\righthyphenmin=3

\begin{document}

\sloppy

\renewcommand{\proofname}{Proof}
\renewcommand{\abstractname}{Abstract}
\renewcommand{\refname}{References}
\renewcommand{\figurename}{Figure}
\renewcommand{\tablename}{Table}

\title[Spherical actions on flag varieties]%
{Spherical actions on flag varieties}

\author{Roman Avdeev and Alexey Petukhov}

\thanks{The first author was supported by the RFBR grant
no.~12-01-00704, the SFB 701 grant of the University of Bielefeld
(in 2013), the ``Oberwolfach Leibniz Fellows'' programme of the
Mathematisches Forschungsinstitut Oberwolfach (in 2013), and Dmitry
Zimin's ``Dynasty'' Foundation (in 2014). The second author was
supported by the Guest Programme of the Max-Planck Institute for
Mathematics in Bonn (in 2012--2013).}

\address{%
{\bf Roman Avdeev} \newline\indent National Research University
``Higher School of Economics'', Moscow, Russia
\newline\indent Moscow Institute of Open
Education, Moscow, Russia}

\email{suselr@yandex.ru}

\address{%
{\bf Alexey Petukhov}%
\newline\indent Institute for Information Transmission Problems,
Moscow, Russia}

\email{alex{-}{-}2@yandex.ru}

%\date{\today}

\subjclass[2010]{14M15, 14M27}

\keywords{Algebraic group, flag variety, spherical variety,
nilpotent orbit}

\begin{abstract}
For every finite-dimensional vector space $V$ and every $V$-flag
variety $X$ we list all connected reductive subgroups in~$\GL(V)$
acting spherically on~$X$.
\end{abstract}

\maketitle

%\sloppy

\section{Introduction}

Throughout this paper we fix an algebraically closed field $\mathbb
F$ of characteristic~$0$, which is the ground field for all objects
under consideration. By $\FF^\times$ we denote the multiplicative
group of~$\FF$.

Let $K$ be a connected reductive algebraic group and let $X$ be a
$K$-variety (that is, an algebraic variety together with a regular
action of~$K$). The action of~$K$ on~$X$, as well as the variety $X$
itself, is said to be \textit{spherical} (or \textit{$K$-spherical}
when one needs to emphasize the acting group) if $X$ is irreducible
and a Borel subgroup of~$K$ has an open orbit in~$X$. Every
finite-dimensional $K$-module that is spherical as a $K$-variety is
said to be a \textit{spherical $K$-module}. Spherical varieties
possess various remarkable properties; a review of them can be
found, for instance, in the monograph by Timashev~\cite{Tim}.

Describing all spherical varieties for a fixed group $K$ is an
important and interesting problem, and by now considerable results
have been achieved in solving it. A review of these results can be
also found in the monograph~\cite{Tim}. Along with the problem
mentioned above one can also consider an opposite one, namely, for a
given algebraic variety $X$ find all connected reductive subgroups
in the automorphism group of~$X$ that act spherically on~$X$. In
this paper we consider this problem in the case where $X$ is a
generalized flag variety.

A \textit{generalized flag variety} is a homogeneous space of the
form $G / P$, where $G$ is a connected reductive group and $P$ is a
parabolic subgroup of~$G$. We call the variety $G / P$
\textit{trivial} if $P = G$ and \textit{nontrivial} otherwise. The
following facts are well known:

(1) all generalized flag varieties of a fixed group~$G$ are exactly
all complete (and also all projective) homogeneous spaces of~$G$;

(2) the center of $G$ acts trivially on~$G / P$;

(3) the natural action of $G$ on $G/P$ is spherical.

Let $\mathscr F(G)$ denote the set of all (up to a $G$-equivariant
isomorphism) nontrivial generalized flag varieties of a fixed
group~$G$. For every generalized flag variety~$X$ we denote by $\Aut
X$ its automorphism group.

In~\cite[Theorem~7.1]{Oni} (see also~\cite{Dem}) the automorphism
groups of all generalized flag varieties were described. This
description implies that, for every $X \in\nobreak \mathscr F(G)$,
$\Aut X$ is an affine algebraic group and its connected component of
the identity $(\Aut X)^0$ is semisimple and has trivial center.
Moreover, it turns out that in most of the cases (all exceptions are
listed in~\cite[Table~8]{Oni}; see also~\cite[\S\,2]{Dem}),
including the case $G = \GL_n$, the natural homomorphism $G \to
(\Aut X)^0$ is surjective and its kernel coincides with the center
of~$G$. In any case one has $X \in \mathscr F((\Aut X)^0)$,
therefore the problem of describing all spherical actions on
generalized flag varieties reduces to the following one:

\begin{problem} \label{prob_main}
For every connected reductive algebraic group $G$ and every variety
$X \in \mathscr F(G)$, list all connected reductive subgroups $K
\subset G$ acting spherically on~$X$.
\end{problem}

Since the center of $G$ acts trivially on any variety $X \in
\mathscr F(G)$, in solving Problem~\ref{prob_main} the group~$G$
without loss of generality may be assumed semisimple when it is
convenient.

The main goal of this paper is to solve Problem~\ref{prob_main} in
the case $G = \GL_n$ (the results are stated below in
Theorems~\ref{thm_sphericity_equivalence}
and~\ref{thm_main_theorem}).

Let us list all cases known to the authors where
Problem~\ref{prob_main} is solved.

1) $G$ and $X$ are arbitrary and $K$ is a Levi subgroup of a
parabolic subgroup $Q \subset G$. In this case the condition that
the variety $G / P \in \mathscr F(G)$ be $K$-spherical is equivalent
to the condition that the variety $G / P \times G / Q$ be
$G$-spherical, where $G$ acts diagonally (see
Lemma~\ref{lemma_two_parabolics}). All $G$-spherical varieties of
the form $G / P \times G / Q$ have been classified. In the case
where $P,Q$ are maximal parabolic subgroups this was done by
Littelmann in~\cite{Lit}. In the cases $G = \GL_n$ and $G =
\Sp_{2n}$ the classification follows from results of the Magyar,
Weyman, and Zelevinsky papers~\cite{MWZ1} and~\cite{MWZ2},
respectively. (In fact, in~\cite{MWZ1} and~\cite{MWZ2} for $G =
\GL_n$ and $G = \Sp_n$, respectively, the following more general
problem was solved: describe all sets $X_1, \ldots, X_k \in \mathscr
F(G)$ such that~$G$ has finitely many orbits under the diagonal
action on $X_1 \times \ldots \times X_k$.) Finally, for arbitrary
groups~$G$ the classification was completed by Stembridge
in~\cite{Stem}. The results of this classification for $G = \GL_n$
will be essentially used in this paper and are presented in
\S\,\ref{subsec_Levi_subgroups}.

2) $G$ and $X$ are arbitrary and $K$ is a symmetric subgroup of~$G$
(that is, $K$ is the subgroup of fixed points of a nontrivial
involutive automorphism of~$G$). In this case the classification was
obtained in the paper~\cite{HNOO}.

3) $G$ is an exceptional simple group, $X = G / P$ for a maximal
parabolic subgroup $P \subset G$, and $K$ is a maximal reductive
subgroup of~$G$. This case was investigated in the
preprint~\cite{Nie}.

Let $G$ be an arbitrary connected reductive group, $\mathfrak g =
\Lie G$, and $K \subset G$ an arbitrary connected reductive
subgroup.

We now discuss the key idea utilized in this paper. Let $P
\subset\nobreak G$ be a parabolic subgroup and let $N \subset P$ be
its unipotent radical. Put $\mathfrak n = \Lie N$. Consider the
adjoint action of~$G$ on~$\mathfrak g$. In view of a well-known
result of Richardson (see~\cite[Proposition~6(c)]{Rich74}), for the
induced action $P : \mathfrak n$ there is an open orbit $O_P$. We
put
\begin{equation} \label{eqn_nilp_orbit}
\mathcal N(G / P) = G O_P \subset \mathfrak g.
\end{equation}
It is easy to see that $\mathcal N(G / P)$ is a nilpotent (that is,
containing $0$ in its closure) $G$-orbit in~$\mathfrak g$.

\begin{definition}
We say that two varieties $X_1, X_2 \in \mathscr F(G)$ are
\textit{nil-equivalent} (notation: $X_1 \sim\nobreak X_2$) if
$\mathcal N(X_1) = \mathcal N(X_2)$.
\end{definition}

It is well-known that every $G$-orbit in $\mathfrak g$ is endowed
with the canonical structure of a symplectic variety. It turns out
(see Theorem~\ref{thm_spherical&coisotropic_orbit}) that a variety
$X \in \mathscr F(G)$ is $K$-spherical if and only if the action $K
: \mathcal N(X)$ is coisotropic (see
Definition~\ref{dfn_coisotropic}) with respect to the symplectic
structure on $\mathcal N(X)$. This immediately implies the following
result.

\begin{theorem} \label{thm_equiv_spherical}
Suppose that $X_1, X_2 \in \mathscr F(G)$ and $X_1 \sim X_2$. Then
the following conditions are equivalent:

\textup{(a)} the action $K : X_1$ is spherical;

\textup{(b)} the action $K : X_2$ is spherical.
\end{theorem}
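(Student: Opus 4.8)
The plan is to derive Theorem~\ref{thm_equiv_spherical} as an immediate corollary of the characterization announced just before it, namely the (yet unstated) Theorem~\ref{thm_spherical&coisotropic_orbit}, which asserts that for $X \in \mathscr F(G)$ the action $K : X$ is spherical if and only if the action $K : \mathcal N(X)$ is coisotropic with respect to the canonical symplectic structure on the nilpotent orbit $\mathcal N(X)$. Granting this, the proof is essentially a one-line logical chain, and the work is entirely in invoking the hypothesis correctly.

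First I would spell out what $X_1 \sim X_2$ gives us: by the \emph{definition} of nil-equivalence, $\mathcal N(X_1) = \mathcal N(X_2)$ as subsets (hence as $G$-orbits) of~$\mathfrak g$. In particular the two orbits carry \emph{the same} canonical symplectic structure and \emph{the same} action of~$K$ (the one obtained by restricting the adjoint action of~$G$ to~$K$). There is nothing to check here beyond unwinding the definition; the point is simply that the object "$\mathcal N(X)$ together with its symplectic form and its $K$-action" depends on $X$ only through the orbit $\mathcal N(X)$.

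Next I would apply Theorem~\ref{thm_spherical&coisotropic_orbit} twice. It says: (a) holds $\iff$ $K : \mathcal N(X_1)$ is coisotropic; and (b) holds $\iff$ $K : \mathcal N(X_2)$ is coisotropic. But by the previous step the two actions $K : \mathcal N(X_1)$ and $K : \mathcal N(X_2)$ are literally the same action on the same symplectic variety, so one is coisotropic if and only if the other is. Chaining the three equivalences — (a) $\iff$ coisotropy of $K : \mathcal N(X_1)$ $\iff$ coisotropy of $K : \mathcal N(X_2)$ $\iff$ (b) — yields (a) $\iff$ (b), which is the claim.

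The only genuine content, and therefore the only place a difficulty could hide, is Theorem~\ref{thm_spherical&coisotropic_orbit} itself — the equivalence between $K$-sphericity of a flag variety $X$ and coisotropy of the induced $K$-action on the associated nilpotent orbit $\mathcal N(X)$. That theorem is stated in the excerpt as something to be proved later (it relies on the moment-map description of $\mathcal N(G/P)$, i.e.\ that $\mathcal N(G/P)$ is the image of the moment map $T^*(G/P) \to \mathfrak g^* \cong \mathfrak g$, together with the standard fact that sphericity of a $G$-variety $X$ is equivalent to coisotropy of the $G$-action on $T^*X$, and a descent of coisotropy along the moment map). For the present statement I would treat it as a black box exactly as the excerpt licenses ("see Theorem~\ref{thm_spherical&coisotropic_orbit}"), so from the point of view of proving Theorem~\ref{thm_equiv_spherical} there is no obstacle at all: the proof is purely formal once that characterization is in hand.
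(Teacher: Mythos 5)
Your proof is correct and is exactly the paper's argument: the authors state that Theorem~\ref{thm_equiv_spherical} follows immediately from Theorem~\ref{thm_spherical&coisotropic_orbit}, since $X_1 \sim X_2$ means $\mathcal N(X_1) = \mathcal N(X_2)$ by definition, so the two coisotropy conditions coincide. Nothing further is needed.
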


Let $\bl X \br$ denote the nil-equivalence class of a variety $X \in
\mathscr F(G)$. The inclusion relation between closures of nilpotent
orbits in~$\mathfrak g$ defines a partial order $\preccurlyeq$ on
the set $\mathscr F(G) /\! \sim$ of all nil-equivalence classes in
the following way: for $X_1, X_2 \in \mathscr F(G)$ the relation
$\bl X_1 \br \preccurlyeq\nobreak \bl X_2 \br$ (or $\bl X_2 \br
\succcurlyeq \bl X_1 \br$) holds if and only if the orbit $\mathcal
N(X_1)$ is contained in the closure of the orbit $\mathcal N(X_2)$.
We shall also write $\bl X_1 \br \prec \bl X_2 \br$ (or $\bl X_2 \br
\succ\nobreak \bl X_1 \br$) when $\bl X_1 \br \preccurlyeq \bl X_2
\br$ but $\bl X_1 \br \ne \bl X_2 \br$.

Using a result from the resent paper~\cite{Los} by Losev, in
\S\,\ref{subsec_proof_of_thm_prec} we shall prove the following
theorem.

\begin{theorem} \label{thm_prec_spherical}
Suppose that $X_1, X_2 \in \mathscr F(G)$ and $\bl X_1 \br \prec \bl
X_2 \br$. If the action $K : X_2$ is spherical, then so is the
action $K : X_1$.
\end{theorem}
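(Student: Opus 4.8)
The plan is to reduce Theorem~\ref{thm_prec_spherical} to a statement about coisotropic actions on nilpotent orbits, using the criterion mentioned in the excerpt (Theorem~\ref{thm_spherical&coisotropic_orbit}): for $X \in \mathscr F(G)$, the action $K : X$ is spherical if and only if the action $K : \mathcal N(X)$ is coisotropic. Since $\bl X_1 \br \prec \bl X_2 \br$ means precisely that $\mathcal N(X_1)$ lies in the closure of $\mathcal N(X_2)$, the theorem becomes: if $\mathcal O_1, \mathcal O_2$ are nilpotent $G$-orbits in $\mathfrak g$ with $\mathcal O_1 \subset \overline{\mathcal O_2}$, and the action $K : \mathcal O_2$ is coisotropic, then the action $K : \mathcal O_1$ is coisotropic. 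This is a degeneration statement for coisotropic actions, and the reference to Losev's paper~\cite{Los} in the excerpt strongly suggests that the required input is a result there characterizing coisotropy of a Hamiltonian action (or of an action on a symplectic variety) in terms of the moment map, or in terms of the codimension of generic orbits relative to the rank.

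First I would recall the moment-map characterization of coisotropic actions: a Hamiltonian $K$-action on a symplectic variety $S$ is coisotropic if and only if $\FF(S)^K$ is commutative under the Poisson bracket (equivalently, the generic fibers of the moment map are the closures of single $K$-orbits, or the ``defect'' / ``corank'' invariant vanishes). For a nilpotent orbit $\mathcal O \subset \mathfrak g$ with its Kirillov--Kostant--Souriau form, the $K$-action is Hamiltonian with moment map the composition $\mathcal O \hookrightarrow \mathfrak g \cong \mathfrak g^* \twoheadrightarrow \mathfrak k^*$. I would then invoke the semicontinuity statement from~\cite{Los}: the relevant invariant measuring failure of coisotropy (for $K$ acting on $\overline{\mathcal O_2}$ and its $K$-stable subvarieties, or directly for degenerations of the orbit) behaves monotonically under passing to orbits in the closure — concretely, if $\mathcal O_1 \subset \overline{\mathcal O_2}$, then the corank of $K : \mathcal O_1$ is at most the corank of $K : \mathcal O_2$, so vanishing of the latter forces vanishing of the former. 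An alternative route, if Losev's result is phrased for affinizations or for the whole nilpotent cone, would be to use that $\overline{\mathcal O_2}$ is a Poisson variety containing both $\mathcal O_1$ and $\mathcal O_2$ as symplectic leaves, and that $K$-coisotropy of the big leaf implies commutativity of $\FF[\overline{\mathcal O_2}]^K$, which then restricts to commutativity of the corresponding invariant functions on $\overline{\mathcal O_1}$, and hence coisotropy of the leaf $\mathcal O_1$.

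After establishing the orbit-closure statement, the proof concludes by combining it with Theorem~\ref{thm_spherical&coisotropic_orbit} in both directions: $K : X_2$ spherical $\Rightarrow$ $K : \mathcal N(X_2)$ coisotropic $\Rightarrow$ $K : \mathcal N(X_1)$ coisotropic $\Rightarrow$ $K : X_1$ spherical. I would take care that the symplectic structures referred to in Theorem~\ref{thm_spherical&coisotropic_orbit} are the ones on the nilpotent orbits $\mathcal N(X_i)$, so that no extra normalization is needed when passing between the two orbits inside the common Poisson variety $\overline{\mathcal N(X_2)}$.

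The main obstacle I anticipate is extracting exactly the right statement from~\cite{Los} and matching conventions: Losev's work is likely phrased in terms of the ``corank'' (or ``defect'') of a Hamiltonian action and its behavior under restriction to symplectic leaves of a Poisson $G$-variety, and one must verify that $\overline{\mathcal O_2}$ with its natural Poisson structure has $\mathcal O_1$ among its symplectic leaves and that the degeneration argument applies with $K$ (not $G$) as the acting group. A secondary technical point is ensuring that coisotropy is really detected by the Poisson-commutativity of the ring of $K$-invariants even when $\mathcal O_1$ and $\mathcal O_2$ are singular as closures — one works on the smooth loci, i.e.\ on the orbits themselves, where the symplectic form is honest, and uses that the invariant functions separate generic orbits appropriately. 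Once the conventions are aligned, the argument is short.
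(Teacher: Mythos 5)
Your overall strategy is the same as the paper's: reduce via Theorem~\ref{thm_spherical&coisotropic_orbit} to the statement that if $O_1 \subset \overline{O}_2$ are nilpotent $G$-orbits and $K : O_2$ is coisotropic then so is $K : O_1$ (this is Proposition~\ref{prop_two_nilpotent_orbits_coisotropic}), and prove that statement by restricting Poisson-commuting invariants from $\overline{O}_2$ to $\overline{O}_1$ along the surjection $\FF[\overline{O}_2] \to \FF[\overline{O}_1]$. That part of your sketch is correct and matches the paper.

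However, the argument as you give it stops one step short. The coisotropy criterion (Proposition~\ref{prop_coisotropic_Poisson_commutative}) concerns the invariant \emph{field} $\FF(O_1)^K$, whereas the restriction argument only yields Poisson-commutativity of the \emph{algebra} $\FF[\overline{O}_1]^K$, and bridging the two requires two nontrivial inputs that you do not supply. First, $\FF[O_1]$ is in general strictly larger than $\FF[\overline{O}_1]$, since closures of nilpotent orbits need not be normal; the paper passes from $\FF[\overline{O}_1]^K$ to $\FF[O_1]^K$ using the Brylinski--Kostant fact that $\FF[O_1]$ is the integral closure of $\FF[\overline{O}_1]$ in $\FF(O_1)$, together with a derivation trick: apply $\{\,\cdot\,,g\}$ to a minimal integral dependence equation $f^n + c_{n-1}f^{n-1} + \ldots + c_0 = 0$ with $K$-invariant coefficients and use minimality of $n$ to conclude $\{f,g\}=0$. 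Second, one needs $\FF(O_1)^K = \Quot(\FF[O_1]^K)$, i.e.\ that every $K$-invariant rational function on the orbit is a ratio of $K$-invariant regular ones; this is exactly where Losev's results enter (applied to the normal variety $\Spec \FF[O_1]$), not as a ``semicontinuity of corank'' statement as you conjecture. Your closing remark that one ``works on the smooth loci'' and that ``invariant functions separate generic orbits appropriately'' is a placeholder for precisely these two steps; without them, Poisson-commutativity of $\FF[\overline{O}_1]^K$ does not formally imply coisotropy of $K : O_1$.
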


Theorems~\ref{thm_equiv_spherical} and~\ref{thm_prec_spherical}
yield the following method for solving Problem~\ref{prob_main}: at
the first step, for each class $\bl X \br \in \mathscr F(G) / \!
\sim$ that is a minimal element with respect to the
order~$\preccurlyeq$, find all connected reductive subgroups~$K
\subset\nobreak G$ acting spherically on~$X$; at the second step,
using the lists of subgroups $K$ obtained at the first step, carry
out the same procedure for all nil-equivalence classes that are on
the ``next level'' with respect to the order~$\preccurlyeq$; and so
on.

We recall that there is the natural partial order $\le$ on the set
$\mathscr F(G)$. This order can be defined as follows. Fix a Borel
subgroup $B \subset G$. Let $X_1, X_2 \in \mathscr F(G)$. Then $X_1
= G / P_1$ and $X_2 = G / P_2$ where $P_1, P_2$ are uniquely
determined parabolic subgroups of~$G$ containing~$B$. By definition,
the relation $X_1 \le X_2$ (resp. $X_1 < X_2$) holds if and only if
$P_1\supset P_2$ (resp. $P_1\supsetneq P_2$). It is easy to show
that the following analogue of Theorem~\ref{thm_prec_spherical} is
valid for the partial order~$\le$: if $X_1 < X_2$ and the action $K
: X_2$ is spherical, then so is the action $K : X_1$. Now let $N_i$
be the unipotent radical of~$P_i$, $i = 1,2$. Then the condition
$P_1 \supsetneq P_2$ implies that $N_1 \subsetneq N_2$, from which
one easily deduces that the orbit $\mathcal N(X_1)$ is contained in
the closure of the orbit $\mathcal N(X_2)$. As $\dim \mathcal N(X_i)
= 2\dim N_i$ for $i=1,2$ (see, for
instance,~\cite[Theorem~7.1.1]{CM}), one has $\mathcal N(X_1) \ne
\mathcal N(X_2)$. Thus, if $X_1 < X_2$ then $\bl X_1\br \prec \bl
X_2\br$. In particular, if $\bl X \br$ is a minimal element of the
set $\mathscr F(G) / \!\sim$ with respect to the partial
order~$\preccurlyeq$, then $X$ is a minimal element of the set
$\mathscr F(G)$ with respect to the partial order~$\le$.

We note that for a semisimple group $G$ of rank~$n$ the set
$\mathscr F(G)$ contains exactly $n$ minimal elements with respect
to the partial order~$\le$. On the other hand, using known results
on nilpotent orbits in the classical Lie algebras (see~\cite[\S\S
5--7]{CM}), one can show that for a simple group~$G$ of type
$\mathsf A_n$, $\mathsf B_n$, or $\mathsf C_n$ the set $\mathscr
F(G) / \! \sim$ contains only one minimal element with respect to
the partial order~$\preccurlyeq$ and for a simple group~$G$ of
type~$\mathsf D_n$ ($n \ge 4$) the set $\mathscr F(G) / \! \sim$
contains two (for $n = 2k+1$) or three (for $n = 2k$) minimal
elements with respect to the partial order~$\preccurlyeq$. This
shows that the partial order $\preccurlyeq$ turns out to be much
more effective in solving Problem~\ref{prob_main} than the partial
order~$\le$.

We now turn to $V$-flag varieties, which are the main objects in our
paper. In order to fix the definition of these varieties that is
convenient for us, we need the following notion. A
\textit{composition} of a positive integer~$d$ is a tuple of
positive integers $(a_1, \ldots, a_s)$ satisfying the condition
$$
a_1 + \ldots + a_s = d.
$$
We say that a composition $\mathbf a = (a_1, \ldots, a_s)$ is
\textit{trivial} if $s = 1$ and \textit{nontrivial} if $s \ge 2$.

Let $V$ be a finite-dimensional vector space of dimension~$d$ and
let $\mathbf{a} = (a_1, \ldots, a_s)$ be a composition of~$d$. The
\textit{$V$-flag variety} (or simply the \textit{flag variety})
corresponding to~$\mathbf a$ is the set of tuples $(V_1, \ldots,
V_s)$, where $V_1, \ldots, V_s$ are subspaces of~$V$ satisfying the
following conditions:

(a) $V_1 \subset \ldots \subset V_s = V$;

(b) $\dim V_i / V_{i-1} = a_i$ for $i = 1, \ldots, s$ (here we
suppose that $V_0 = \lbrace 0 \rbrace$).

We note that $\dim V_i = a_1 + \ldots + a_i$ for all $i = 1, \ldots,
s$.

For every composition $\mathbf{a} = (a_1, \ldots, a_s)$, we denote
by $\Fl_{\mathbf{a}}(V)$ the $V$-flag variety corresponding
to~$\mathbf a$. If~$\mathbf{a}$ is nontrivial, then along with
$\Fl_{\mathbf a}(V)$ we shall also use the notation $\Fl(a_1,
\ldots, a_{s-1}; V)$.

The following facts are well known:

(1) every $V$-flag variety $X$ is equipped with a canonical
structure of a projective algebraic variety and the natural action
of $\GL(V)$ on $X$ is regular and transitive;

(2) up to a $\GL(V)$-equivariant isomorphism, all $V$-flag varieties
are exactly all generalized flag varieties of the group $\GL(V)$.

In view of fact~(2), in what follows all notions and notation
introduced for generalized flag varieties will be also used for
$V$-flag varieties.

It is easy to see that a $V$-flag variety $\Fl_{\mathbf a}(V)$ is
trivial (resp. nontrivial) if and only if the composition~$\mathbf
a$ is trivial (resp. nontrivial).

For $1 \le k \le d$ we consider the composition $\mathbf c_k$
of~$d$, where $\mathbf c_k = (k, d - k)$ for $1 \le k \le d-1$ and
$\mathbf c_d = (d)$. The variety $\Fl_{\mathbf c_k}(V)$ is said to
be a \textit{Grassmannian}, we shall use the special notation
$\Gr_k(V)$ for it. Points of this variety are in one-to-one
correspondence with $k$-dimensional subspaces of~$V$. The point
corresponding to a subspace $W \subset V$ will be denoted by~$[W]$.
It is easy to see that $\Gr_d(V)$ consists of the single point~$[V]$
and~$\Gr_{1}(V)$ is nothing else than the projective space $\PP(V)$.

In the present paper we implement the above-discussed general method
of solving Problem~\ref{prob_main} for $G = \GL(V)$. In this case
there is a well-known description of the map from $\mathscr
F(\GL(V))$ to the set of nilpotent orbits in~$\mathfrak{gl}(V)$, as
well as the partial order on the latter set (see details
in~\S\,\ref{sect_PO_on_FV}). In particular, the following
proposition holds (see Corollary~\ref{crl_equiv_on_V-flags}):

\begin{proposition} \label{prop_permutations}
Let $\mathbf a$ and~$\mathbf b$ be two compositions of~$d$. The
following conditions are equivalent:

\textup{(a)} the varieties $\Fl_{\mathbf a}(V)$ and~$\Fl_{\mathbf
b}(V)$ are nil-equivalent;

\textup{(b)} $\mathbf a$ and~$\mathbf b$ can be obtained from each
other by a permutation \textup{(}in particular, $\mathbf a$
and~$\mathbf b$ contain the same number of elements\textup{)}.
\end{proposition}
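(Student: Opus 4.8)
The plan is to reduce Proposition~\ref{prop_permutations} to the standard combinatorial description of nilpotent orbits in $\mathfrak{gl}(V)$ via Jordan type, together with the known formula for the nilpotent orbit $\mathcal N(G/P)$ attached to a parabolic subgroup $P$. Concretely, recall that nilpotent $\GL(V)$-orbits in $\mathfrak{gl}(V)$ are classified by partitions of $d = \dim V$ (the sizes of the Jordan blocks), so condition~(a) is equivalent to the equality of the partitions $\lambda(\mathbf a)$ and $\lambda(\mathbf b)$ associated to the two flag varieties. Thus everything comes down to understanding how the partition $\lambda(\mathbf a)$ is extracted from the composition $\mathbf a = (a_1, \ldots, a_s)$, and showing that $\lambda(\mathbf a) = \lambda(\mathbf b)$ if and only if $\mathbf a$ and $\mathbf b$ differ by a permutation of their entries.

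First I would make explicit the Richardson orbit for $\GL(V)$. Fixing a flag $(V_1 \subset \cdots \subset V_s = V)$ of type $\mathbf a$, the corresponding parabolic $P$ is its stabilizer, and the unipotent radical $N$ has Lie algebra $\mathfrak n = \{\, x \in \mathfrak{gl}(V) : x V_i \subseteq V_{i-1} \,\}$. A generic element of $\mathfrak n$ has, by the classical computation (see e.g.~\cite[\S\,7]{CM}), Jordan type equal to the \emph{conjugate} (transpose) partition of the partition obtained by sorting $(a_1, \ldots, a_s)$ into weakly decreasing order. Indeed, a generic $x \in \mathfrak n$ induces surjections between the successive quotients $V_i/V_{i-1}$ whenever dimensions allow, so its Jordan block structure is governed only by the multiset $\{a_1, \ldots, a_s\}$ and the number $s$ of steps; the precise statement is that $\lambda(\mathbf a)' $ has $j$-th part equal to $\#\{\, i : a_i \ge j \,\}$ read off from the sorted composition, equivalently $\lambda(\mathbf a)'_j = |\{i : a_i \geq j\}|$. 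The key point is that this construction factors through the map $\mathbf a \mapsto \operatorname{sort}(\mathbf a)$: two compositions give the same Richardson partition precisely when they have the same sorted form, i.e.\ when they are permutations of each other.

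With this in hand, the proof of (a)$\Leftrightarrow$(b) is immediate. If $\mathbf b$ is a permutation of $\mathbf a$ then $\operatorname{sort}(\mathbf a) = \operatorname{sort}(\mathbf b)$, hence $\lambda(\mathbf a) = \lambda(\mathbf b)$, hence $\mathcal N(\Fl_{\mathbf a}(V)) = \mathcal N(\Fl_{\mathbf b}(V))$ by the classification of nilpotent orbits, giving nil-equivalence; conversely, nil-equivalence forces $\lambda(\mathbf a) = \lambda(\mathbf b)$, and since the sorted composition can be recovered from its partition, $\operatorname{sort}(\mathbf a) = \operatorname{sort}(\mathbf b)$, so $\mathbf a$ and $\mathbf b$ are permutations of one another — in particular they have the same number of parts. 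One should also note, as the parenthetical remark in~(b) requires, that the number $s$ of entries equals the number of parts of $\lambda(\mathbf a)'$, so it is indeed determined; this is consistent with $\dim \mathcal N(\Fl_{\mathbf a}(V)) = 2 \dim N$ depending only on the multiset of the $a_i$.

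The main obstacle is purely bookkeeping: getting the dictionary between $\mathbf a$, the Richardson partition, and its transpose exactly right, and citing the correct normalization of the classical result on generic elements of $\mathfrak n$ (whether one lands on $\lambda(\mathbf a)$ or its conjugate depends on conventions). Since the paper defers the details to~\S\,\ref{sect_PO_on_FV} and states this as a corollary (Corollary~\ref{crl_equiv_on_V-flags}) of a general discussion there, in practice the cleanest route is to first establish the formula for $\mathcal N(\Fl_{\mathbf a}(V))$ in that section and then read off Proposition~\ref{prop_permutations} as the observation that this formula is symmetric in the entries of $\mathbf a$ and loses no information beyond that symmetry.
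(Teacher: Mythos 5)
Your proposal is correct and follows essentially the same route as the paper: the paper proves $\mathcal N(\Fl_{\mathbf a}(V)) = O_{\mathbf a^\top}$ with $\widehat a_i = |\{j : a_j \ge i\}|$ (Proposition~\ref{prop_nilp_comp}, via the Richardson-orbit computations of Kraft and Johnston--Richardson), and then deduces the statement from the facts that $\mathbf a^\top$ depends only on the multiset of entries and that transposition is an involution on partitions, so $\mathbf a^\top = \mathbf b^\top$ iff $\mathbf a^\natural = \mathbf b^\natural$. Your only wobble is the self-acknowledged transpose bookkeeping (your displayed formula gives the $j$-th part of $\lambda(\mathbf a)$ itself, not of $\lambda(\mathbf a)'$), which does not affect the argument.
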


In view of Theorem~\ref{thm_equiv_spherical},
Proposition~\ref{prop_permutations} implies the following theorem.

\begin{theorem} \label{thm_sphericity_equivalence}
Let $\mathbf a$ and $\mathbf b$ be two compositions of~$d$ obtained
from each other by a permutation and let $K \subset \GL(V)$ be an
arbitrary connected reductive subgroup. The following conditions are
equivalent:

\textup{(a)} the action $K : \Fl_{\mathbf a}(V)$ is spherical;

\textup{(b)} the action $K : \Fl_{\mathbf b}(V)$ is spherical.
\end{theorem}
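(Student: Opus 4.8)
The plan is to deduce this statement directly from Proposition~\ref{prop_permutations} together with Theorem~\ref{thm_equiv_spherical}, so that no new ideas beyond those two results are required. First I would invoke Proposition~\ref{prop_permutations}: since $\mathbf a$ and $\mathbf b$ are obtained from each other by a permutation, condition~(b) of that proposition holds, hence condition~(a) holds as well, that is, the varieties $\Fl_{\mathbf a}(V)$ and $\Fl_{\mathbf b}(V)$ are nil-equivalent. Then I would apply Theorem~\ref{thm_equiv_spherical} with $G = \GL(V)$, $X_1 = \Fl_{\mathbf a}(V)$, and $X_2 = \Fl_{\mathbf b}(V)$: the hypothesis $X_1 \sim X_2$ is exactly what was just established, and the conclusion is precisely the equivalence of conditions~(a) and~(b) in the present statement. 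Thus the proof is a one-line reduction.

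The only point requiring care is that the equivalence must hold for \emph{every} connected reductive subgroup $K \subset \GL(V)$ simultaneously, not merely for a fixed one; but this is automatic, since Theorem~\ref{thm_equiv_spherical} is itself stated for an arbitrary such~$K$, the group $K$ entering only through the action $K : \mathcal N(X)$ on the common nilpotent orbit $\mathcal N(\Fl_{\mathbf a}(V)) = \mathcal N(\Fl_{\mathbf b}(V))$.

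The genuine content has already been placed before this statement: on one hand the characterization (Theorem~\ref{thm_spherical&coisotropic_orbit}) of $K$-sphericity of $X$ in terms of coisotropy of the action $K : \mathcal N(X)$, which makes $K$-sphericity depend on $X$ only through the orbit $\mathcal N(X)$ and so yields Theorem~\ref{thm_equiv_spherical}; on the other hand the combinatorial description (Corollary~\ref{crl_equiv_on_V-flags}, restated here as Proposition~\ref{prop_permutations}) of the nilpotent orbit attached to a $V$-flag variety, from which one reads off that $\mathcal N(\Fl_{\mathbf a}(V))$ depends only on the multiset $\{a_1, \ldots, a_s\}$. I would therefore expect no obstacle in the proof of the theorem itself; whatever difficulty there is lies upstream, in establishing those two inputs.
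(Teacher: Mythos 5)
Your proof is correct and is exactly the argument the paper intends: the text introduces Theorem~\ref{thm_sphericity_equivalence} with the sentence ``In view of Theorem~\ref{thm_equiv_spherical}, Proposition~\ref{prop_permutations} implies the following theorem,'' so the one-line reduction via nil-equivalence is precisely the paper's own proof. Your remark that the uniformity in $K$ is automatic because Theorem~\ref{thm_equiv_spherical} already quantifies over all connected reductive $K$ is also accurate.
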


Theorem~\ref{thm_sphericity_equivalence} reduces the problem of
describing all spherical actions on $V$-flag varieties to the case
of varieties $\Fl_{\mathbf a}(V)$ such that the composition $\mathbf
a = (a_1, \ldots, a_s)$ satisfies the inequalities $a_1 \le \ldots
\le a_s$.

It is easy to see that the $K$-sphericity of $\PP(V)$ is equivalent
to the sphericity of the $(K \times\nobreak \FF^\times)$-module~$V$,
where $\FF^\times$ acts on~$V$ by scalar transformations. Therefore
a description of all spherical actions on $\PP(V)$ is a trivial
consequence of the known classification of spherical modules that
was obtained in the papers~\cite{Kac},~\cite{BR}, and~\cite{Lea}.
(This classification plays a key role in our paper and is presented
in \S\,\ref{subsec_spherical_modules}.) As $\PP(V) = \Gr_1(V)$, to
complete the description of all spherical actions on $V$-flag
varieties it suffices to restrict ourselves to the case of varieties
$\Fl_{\mathbf a}(V)$ such that the composition $\mathbf a$ is
nontrivial and distinct from $(1, d - 1)$.

Before we state the main result of this paper, we need to introduce
one more notion and some additional notation.

Let $K_1, K_2$ be connected reductive groups, $U_1$ a $K_1$-module,
and $U_2$ a $K_2$-module. Consider the corresponding representations
$$
\rho_1 \colon K_1 \to \GL(U_1) \quad \text{ and } \quad \rho_2
\colon K_2 \to \GL(U_2).
$$
Following Knop (see~\cite[\S\,5]{Kn98}), we say that the pairs
$(K_1, U_1)$ are $(K_2, U_2)$ \textit{geometrically equivalent} if
there exists an isomorphism $U_1 \xrightarrow{\sim} U_2$ identifying
the groups $\rho_1(K_1) \subset \GL(U_1)$ and $\rho_2(K_2) \subset
\GL(U_2)$. In other words, the pairs $(K_1, U_1)$ and $(K_2, U_2)$
are geometrically equivalent if and only if they define the same
linear group. For example, every pair $(K, U)$ is geometrically
equivalent to the pair $(K, U^*)$ (where $U^*$ is the $K$-module
dual to~$U$), the pair $(\SL_2, \mathrm{S}^2 \FF^2)$ is
geometrically equivalent to the pair $(\SO_3, \FF^3)$, and the pair
$(\SL_2 \times \SL_2, \FF^2 \otimes \FF^2)$ is geometrically
equivalent to the pair $(\SO_4, \FF^4)$.

Let $K$ be a connected reductive subgroup of~$\GL(V)$ and let $K'$
be the derived subgroup of~$K$. We denote by $C$ the connected
component of the identity of the center of~$K$. Let $\mathfrak X(C)$
denote the character group of~$C$ (in additive notation). We
regard~$V$ as a $K$-module and fix a decomposition $V = V_1 \oplus
\ldots \oplus V_r$ into a direct sum of simple $K$-submodules. For
every $i = 1, \ldots, r$ we denote by $\chi_i$ the character of~$C$
via which~$C$ acts on~$V_i$.

The following theorem is the main result of this paper.

\begin{theorem} \label{thm_main_theorem}
Suppose that $\mathbf a = (a_1, \ldots, a_s)$ is a nontrivial
composition of~$d$ such that $a_1 \le \ldots \le a_s$ and $\mathbf a
\ne (1, d - 1)$. Then the variety $\Fl_{\mathbf a}(V)$ is
$K$-spherical if and only if the following conditions hold:

\textup{(1)} the pair $(K', V)$, considered up to a geometrical
equivalence, and the tuple $(a_1, \ldots, a_{s-1})$ are contained in
Table~\textup{\ref{table_main_result}};

\textup{(2)} the group $C$ satisfies the conditions listed in the
fourth column of Table~\textup{\ref{table_main_result}}.
\end{theorem}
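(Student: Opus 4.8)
The plan is to carry out the inductive procedure sketched after Theorem~\ref{thm_prec_spherical}, using as its base the known classification of spherical modules. By the criterion identifying $K$-sphericity of a variety $X \in \mathscr F(\GL(V))$ with coisotropy of the action $K : \mathcal N(X)$, together with Theorem~\ref{thm_prec_spherical}, the set of connected reductive subgroups $K \subset \GL(V)$ for which $\Fl_{\mathbf a}(V)$ is $K$-spherical only grows as $\bl \Fl_{\mathbf a}(V) \br$ moves downward in the order~$\preccurlyeq$; and by the discussion of~\S\,\ref{sect_PO_on_FV} (together with Proposition~\ref{prop_permutations}) this order is the dominance order on partitions of~$d$, whose unique minimal class is $\bl \PP(V) \br$. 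Since the unipotent radical of any proper parabolic subgroup of~$\GL(V)$ contains a root subgroup, $\mathcal N(\Fl_{\mathbf a}(V))$ always contains the minimal nilpotent orbit in its closure, so $\bl \PP(V) \br \preccurlyeq \bl \Fl_{\mathbf a}(V) \br$, with equality only when $\mathbf a = (1, d-1)$, which is excluded. Hence $\bl \PP(V) \br \prec \bl \Fl_{\mathbf a}(V) \br$, and Theorem~\ref{thm_prec_spherical} forces the necessary condition: if $\Fl_{\mathbf a}(V)$ is $K$-spherical, then so is $\PP(V)$, that is, $V$ is a spherical $(K \times \FF^\times)$-module with $\FF^\times$ acting by scalars. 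More generally, every $\Fl_{\mathbf b}(V)$ with $\bl \Fl_{\mathbf b}(V) \br \preccurlyeq \bl \Fl_{\mathbf a}(V) \br$ must be $K$-spherical, and these lower-level conditions will be used for further pruning.

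For the necessity direction, I would first feed the condition that $V$ be a spherical $(K \times \FF^\times)$-module into the classification of spherical modules from~\S\,\ref{subsec_spherical_modules}: this restricts the pair $(K', V)$, up to geometric equivalence, to a short explicit list, with $d = \dim V$ bounded in each case. For each surviving candidate I would then run through the compositions~$\mathbf a$ --- which by Theorem~\ref{thm_sphericity_equivalence} may be assumed to satisfy $a_1 \le \ldots \le a_s$ --- in increasing order with respect to~$\preccurlyeq$, deciding $K$-sphericity of $\Fl_{\mathbf a}(V)$ at each step. Several subfamilies are already settled in the literature: when $K$ is a Levi subgroup of~$\GL(V)$ one applies Lemma~\ref{lemma_two_parabolics} together with the Magyar--Weyman--Zelevinsky classification (\S\,\ref{subsec_Levi_subgroups}); when $K$ is one of $\SO(V)$, $\Sp(V)$, or $\GL_k \times \GL_{d-k}$ one uses the classification of spherical symmetric subgroups (see~\cite{HNOO}); and the remaining sporadic reductive subgroups and non-Levi products from the spherical-modules list are handled individually, typically by the inequality $\dim B_K \ge \dim \Fl_{\mathbf a}(V)$ supplemented with an explicit flag whose stabilizer in a Borel subgroup $B_K$ of~$K$ is computed, or with an explicit obstruction to density. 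The outcome is that $\Fl_{\mathbf a}(V)$ is $K$-spherical only for the pairs $(K', V)$ and tuples $(a_1, \ldots, a_{s-1})$ appearing in Table~\ref{table_main_result}, and then only when the central torus~$C$ meets the corresponding condition in the fourth column --- a condition obtained by examining the position of the characters $\chi_1, \ldots, \chi_r$ inside $\mathfrak X(C)$ (roughly, that they span a sublattice of the correct rank, with coincidences among them forced or forbidden according to how $K'$ mixes the isotypic components~$V_i$).

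Conversely, for the sufficiency direction one must check that each row of Table~\ref{table_main_result} does yield a spherical action whenever $C$ satisfies the stated condition. For the Levi and symmetric rows this is again immediate from the cited classifications; for the remaining rows I would exhibit a concrete point of $\Fl_{\mathbf a}(V)$ --- a flag adapted to the decomposition $V = V_1 \oplus \ldots \oplus V_r$ --- and verify that its orbit under a Borel subgroup of~$K$ is dense by computing the dimension of the isotropy subgroup. Invariance of sphericity under nil-equivalence (Theorem~\ref{thm_equiv_spherical}) lets one freely pass between permuted compositions throughout.

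The hard part will be the case-by-case work hidden inside the necessity step: for each candidate pair $(K', V)$ that survives the spherical-module test one must pin down, first, the exact threshold in the dominance order on compositions of~$d$ below which $\Fl_{\mathbf a}(V)$ is $K$-spherical --- there is no uniform formula, and the borderline compositions have to be resolved one at a time by dimension counts or explicit orbit representatives --- and, second, the precise condition on the central torus~$C$, which interacts nontrivially with the way $K'$ permutes or mixes the simple summands of~$V$. Carrying out these two refinements consistently over the whole list is where essentially all of the effort goes.
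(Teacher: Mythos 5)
Your high-level skeleton agrees with the paper's: reduce via the nilpotent-orbit order $\preccurlyeq$ and Theorem~\ref{thm_prec_spherical} to necessary conditions coming from the minimal classes ($\PP(V)$, then $\Gr_2(V)$), prune the candidates for $(K',V)$ using the classification of spherical modules, and then settle the surviving cases one by one. The necessity-of-$\PP(V)$ argument and the use of Theorem~\ref{thm_sphericity_equivalence} to normalize $\mathbf a$ are exactly as in the paper.

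However, there is a genuine gap in how you propose to execute the case analysis, and it is precisely where all the content of the theorem lives. You say the remaining cases are ``handled individually, typically by the inequality $\dim B_K \ge \dim \Fl_{\mathbf a}(V)$ \ldots{} or with an explicit obstruction to density,'' and for sufficiency you would ``exhibit a concrete flag and verify that its Borel orbit is dense by computing the dimension of the isotropy subgroup.'' This is not a workable plan for the bulk of the table. The paper's proof hinges on a systematic reduction mechanism that your proposal never identifies: property~(P) (Definition~\ref{dfn_P-property}, Proposition~\ref{prop_sufficient_(P)}) combined with the fibration of Proposition~\ref{prop_two_grassm_sphericity} (and its refinement Proposition~\ref{prop_Fl(r1,r2;V)}). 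Given $V = V_1 \oplus V_2$, one projects the open part of $\Gr_k(V)$ onto $\Gr_k(V_1)$, chooses at a generic point $[W]$ a reductive subgroup $L \subset K_{[W]}$ with property~(P) --- computed explicitly for $\SL_n$ and $\Sp_{2n}$ in Propositions~\ref{prop_point+group_sl}, \ref{prop_point+group_sp_even}, \ref{prop_point+group_sp_odd} --- and thereby converts $K$-sphericity of the flag variety into sphericity of the $L$-module $W^* \otimes V_2$ (possibly times a smaller Grassmannian), which is then decided by the spherical-module classification. Without this mechanism you cannot resolve the borderline cases (dimension counts do not separate, e.g., the admissible from the inadmissible $k$ in rows 6--9 of Table~\ref{table_main_result}), and, more importantly, you have no way to derive the precise conditions on the central torus $C$ in the fourth column: in the paper these fall out of the linear-independence criteria (Theorems~\ref{thm_spherical_modules_nonsimple} and~\ref{thm_spherical_modules_general}) applied to the reduced module $W^* \otimes V_2$, on which $C$ acts through differences of the $\chi_i$. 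Your description of those conditions (``span a sublattice of the correct rank'') gestures at the answer but supplies no procedure that produces it. A secondary inaccuracy: the spherical-module classification does not bound $d = \dim V$ in each case; it yields infinite families ($\SL_n$, $\Sp_{2n}$, $\SO_n$, and their products), which is exactly why a uniform reduction tool, rather than finitely many ad hoc computations, is indispensable.
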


\begin{table}[h!]

\begin{center}

\caption{} \label{table_main_result}

\begin{tabular}{|c|c|c|c|c|}
\hline

No. & $(K', V)$ & $(a_1, \ldots, a_{s-1})$ & Conditions on $C$ & Note \\

\hline

\multicolumn{5}{|c|}{$s = 2$ (Grassmannians)}\\

\hline

1 & $(\SL_n, \FF^n)$ & $(k)$ & & $n \ge 4$ \\

\hline

2 & $(\Sp_{2n}, \FF^{2n})$ & $(k)$ & & $n \ge 2$ \\

\hline

3 & $(\SO_n, \FF^{n})$ & $(k)$ & & $n \ge 4$ \\

\hline

4 & $(\Spin_7, \FF^8)$ & $(2)$ & & \\

\hline

5 & $(\Sp_{2n}, \FF^{2n} \oplus \FF^1)$ & $(k)$ &

& $n \ge 2$ \\

\hline

6 & $(\SL_n \times \SL_m, \FF^n \oplus \FF^m)$ & $(k)$ & $\chi_1 \ne
\chi_2$ for $n = m = k$ &
\begin{tabular}{c}
$n \ge m \ge 1$, \\ $n + m \ge 4$
\end{tabular}
\\

\hline

7 & $(\Sp_{2n} \times \SL_m, \FF^{2n} \oplus \FF^m)$ & $(2)$ &
$\chi_1 \ne \chi_2$ for $m = 2$ & $n,m \ge 2$\\

\hline

8 & $(\Sp_{2n} \times \SL_m, \FF^{2n} \oplus \FF^m)$ & $(3)$ &
$\chi_1 \ne \chi_2$ for $m \le 3$
& $n, m \ge 2$\\

\hline

9 & $(\Sp_4 \times \SL_m, \FF^4 \oplus \FF^m)$ & $(k)$ & $\chi_1 \ne
\chi_2$ for $k = m = 4$

& $m, k \ge 4$ \\

\hline

10 &
\begin{tabular}{c}
$(\Sp_{2n} \times \Sp_{2m}$,\\
$\FF^{2n} \oplus \FF^{2m})$
\end{tabular}
& $(2)$ & $\chi_1 \ne \chi_2$ & $n \ge m \ge 2$ \\

\hline

11 &
\begin{tabular}{c}
$(\SL_n \times \SL_m$, \\
$\FF^n \oplus \FF^m \oplus \FF^1)$
\end{tabular}
& $(k)$ &
\begin{tabular}{c}
$\chi_2 - \chi_1, \chi_3 - \chi_1$ \\ lin. ind.
for $k = n$\\
\hline $\chi_2 \ne \chi_3$ for $m \le k < n$
\end{tabular} &
\begin{tabular}{c}
$n \ge m \ge 1$, \\ $n \ge 2$
\end{tabular}
\\

\hline

12 &
\begin{tabular}{c}
$(\SL_n \times \SL_m \times \SL_l$,\\ $\FF^n \oplus \FF^m \oplus
\FF^l)$
\end{tabular}
& $(2)$ &

\begin{tabular}{c}
$\chi_2 - \chi_1, \chi_3 - \chi_1$ \\ lin. ind. for $n = 2$ \\

\hline

$\chi_2 \ne \chi_3$ for \\
$n \ge 3$, $m \le 2$ \\
\end{tabular}

&
\begin{tabular}{c}
$n {\ge} m {\ge} l {\ge} 1$, \\ $n \ge 2$
\end{tabular}
\\

\hline

13 &
\begin{tabular}{c}
$(\Sp_{2n} \times \SL_m \times \SL_l$,\\ $\FF^{2n} \oplus \FF^m
\oplus \FF^l)$
\end{tabular}
& $(2)$ &

\begin{tabular}{c}
$\chi_2 - \chi_1, \chi_3 - \chi_1$ \\ lin. ind. for $m \le 2$ \\

\hline

$\chi_1 \ne \chi_3$ for \\ $m \ge 3, l \le 2$ \\
\end{tabular}

&
\begin{tabular}{c}
$n \ge 2$, \\ $m \ge l \ge 1$
\end{tabular}
\\

\hline

14 &
\begin{tabular}{c}
$(\Sp_{2n} \times \Sp_{2m} \times \SL_l$,\\ $\FF^{2n} \oplus
\FF^{2m} \oplus \FF^l)$
\end{tabular}
& $(2)$ &

\begin{tabular}{c}
$\chi_2 - \chi_1, \chi_3 - \chi_1$ \\ lin. ind. for $l \le 2$ \\

\hline

$\chi_1 \ne \chi_2$ for $l \ge 3$ \\
\end{tabular}

&
\begin{tabular}{c}
$n \ge m \ge 2$, \\ $l \ge 1$
\end{tabular}
\\

\hline

15 &
\begin{tabular}{c}
$(\Sp_{2n} \times \Sp_{2m} \times \Sp_{2l}$, \\ $\FF^{2n} \oplus
\FF^{2m} \oplus \FF^{2l})$
\end{tabular}
& $(2)$ &
\begin{tabular}{c}
$\chi_2 - \chi_1, \chi_3 - \chi_1$ \\ lin. ind.
\end{tabular}
& $n \,{\ge} m \,{\ge} l\, {\ge} 2$
\\

\hline

\multicolumn{5}{|c|}{$s \ge 3$} \\

\hline

16 & $(\SL_n, \FF^n)$ & $(a_1, \ldots, a_{s-1})$ & & $n \ge 3$ \\

\hline

17 & $(\Sp_{2n}, \FF^{2n})$ & $(1, a_2)$ & & $n \ge 2$ \\

\hline

18 & $(\Sp_{2n}, \FF^{2n})$ & $(1,1,1)$ & & $n \ge 2$ \\

\hline

19 & $(\SL_n, \FF^n \oplus \FF^1)$ & $(a_1, \ldots, a_{s-1})$ &
$\chi_1 \ne \chi_2$ for $s = n+1$ & $n \ge 2$ \\

\hline

20 & $(\SL_n \times \SL_m, \FF^n \oplus \FF^m)$ & $(1, a_2)$ &
$\chi_1 \ne \chi_2$ for $n = 1 + a_2$ & $n \ge m \ge 2$ \\

\hline

21 & $(\SL_n \times \SL_2, \FF^n \oplus \FF^2)$ & $(a_1, a_2)$ &
\begin{tabular}{c}
$\chi_1 \ne \chi_2$ for \\
$n = 4$, $a_1 = a_2 = 2$
\end{tabular}
&
\begin{tabular}{c}
$n \ge 4$,\\ $a_1 \ge 2$
\end{tabular}
\\

\hline

22 & \begin{tabular}{c}
$(\Sp_{2n} \times \SL_m$, \\
$\FF^{2n} \oplus \FF^m)$
\end{tabular}
& $(1,1)$ & $\chi_1 \ne \chi_2$ for $m \le 2$ &
\begin{tabular}{c}
$n \ge 2$, \\
$m \ge 1$
\end{tabular}
\\

\hline

23 & \begin{tabular}{c}
$(\Sp_{2n} \times \Sp_{2m}$, \\
$\FF^{2n} \oplus \FF^{2m})$
\end{tabular}
&
$(1,1)$ & $\chi_1 \ne \chi_2$ & $n \ge m \ge 2$\\

\hline

\end{tabular}

\end{center}

\end{table}

Theorem~\ref{thm_main_theorem} is a union of
Theorems~\ref{thm_gr2_case_of_simple_V},
\ref{thm_gr2_case_of_nonsimple_V}, \ref{thm_grassmannians},
and~\ref{thm_non-grassmannians}.

Let us explain the notation and conventions used in
Table~\ref{table_main_result}. In each case we assume that the
$i$-th factor of $K'$ acts on the $i$-th direct summand of~$V$.
Further, we assume that the group of type $\SL_q$ (resp. $\Sp_{2q}$,
$\SO_q$) naturally acts on $\FF^q$ (resp. $\FF^{2q}$,~$\FF^q$). In
row~4 the group $\Spin_7$ acts on~$\FF^8$ via the spin
representation. If the description of the tuple $(a_1, \ldots,
a_{s-1})$ given in the third column contains parameters, then these
parameters may take any admissible values (that is, any values such
that $a_1 \le \ldots \le a_s$ and $\mathbf a \ne (1, d-1)$). In
particular, in rows~16 and~19 any composition~$\mathbf a$ satisfying
the above restrictions is possible. The empty cells in the fourth
column mean that there are no conditions on~$C$, that is, the
characters $\chi_1, \ldots, \chi_r$ may be arbitrary. The
abbreviation ``lin. ind.'' stands for ``are linearly independent
in~$\mathfrak X(C)$''.

Our proof of Theorem~\ref{thm_main_theorem} is based on an analysis
of the partial order $\preccurlyeq$ on the set~$\mathscr F(\GL(V))
/\! \sim$. Here the starting points are the following facts:

(1) if $X \in \mathscr F(\GL(V))$ then $\bl X \br \succcurlyeq \bl
\PP(V) \br$;

(2) if $X \in \mathscr F(\GL(V))$, $\bl X \br \ne \bl \PP(V) \br$,
and $d \ge 4$, then $\bl X \br \succcurlyeq \bl \Gr_2(V) \br$.

In view of Theorem~\ref{thm_prec_spherical}, facts~(1) and~(2) imply
the following results, which hold for any nontrivial $V$-flag
variety $X$ and any connected reductive subgroup $K \subset \GL(V)$:

($1'$) if $X$ is a $K$-spherical variety then so is $\PP(V)$ (see
Proposition~\ref{prop_P(V)_is_spherical});

($2'$) if $X$ is a $K$-spherical variety, $\bl X \br \ne \bl \PP(V)
\br$, and $d \ge 4$, then $\Gr_2(V)$ is a $K$-spherical variety (see
Proposition~\ref{prop_Gr2_is_spherical}).

Assertion~($1'$) means that a necessary condition for $K$-sphericity
of a nontrivial $V$-flag variety is that $V$ be a spherical $(K
\times \FF^\times)$-module, where $\FF^\times$ acts on $V$ by scalar
transformations (see Corollary~\ref{crl_V_is_spherical}).
Assertion~($2'$) implies that the first step in the proof of
Theorem~\ref{thm_main_theorem} is a description of all spherical
actions on $\Gr_2(V)$.

We note that assertion~($1'$) was in fact proved
in~\cite[Theorem~5.8]{Pet} using the ideas presented in this paper.

The list of subgroups of~$\GL(V)$ acting spherically on $\Gr_2(V)$
(see Theorems~\ref{thm_gr2_case_of_simple_V}
and~\ref{thm_gr2_case_of_nonsimple_V}) turns out to be substantially
shorter than that of subgroups acting spherically on~$\PP(V)$. This
makes the subsequent considerations easier and enables us to
complete the description of all spherical actions on nontrivial
$V$-flag varieties that are nil-equivalent to neither $\PP(V)$
nor~$\Gr_2(V)$ (see Theorems~\ref{thm_grassmannians}
and~\ref{thm_non-grassmannians}).

The present paper is organized as follows. In~\S\,\ref{sect_GFV_NO}
we recall some facts on Poisson and symplectic varieties and then,
using them, we prove the $K$-sphericity criterion of a generalized
flag variety~$X$ in terms of the action $K : \mathcal N(X)$. (This
criterion implies Theorem~\ref{thm_equiv_spherical}.) At~the end of
\S\,\ref{sect_GFV_NO} we prove Theorem~\ref{thm_prec_spherical}.
In~\S\,\ref{sect_PO_on_FV} we study the nil-equivalence relation and
the partial order on nil-equivalence classes in the case $G =
\GL(V)$. We also discuss a transparent interpretation of this
partial order in terms of Young diagrams. In~\S\,\ref{sect_tools} we
collect all auxiliary results that will be needed in our proof of
Theorem~\ref{thm_main_theorem}.
In~\S\,\ref{sect_known_classifications} we present two known
classifications that will be used in the proof of
Theorem~\ref{thm_main_theorem}: the first one is the classification
of spherical modules and the second one is the classification of
Levi subgroups in~$\GL(V)$ acting spherically on $V$-flag varieties.
We prove Theorem~\ref{thm_main_theorem}
in~\S\,\ref{sect_proof_of_main_theorem}. At last,
Appendix~\ref{sect_appendix} contains the most complicated technical
proofs of some statements from~\S\,\ref{sect_tools}.

The authors express their gratitude to E.\,B.~Vinberg and
I.\,B.~Penkov for useful discussions, as well as to the referee for
a careful reading of the previous version of this paper and valuable
comments.

\subsection*{Basic notation and conventions}

In this paper all varieties, groups, and subgroups are assumed to be
algebraic. All topological terms relate to the Zariski topology. The
Lie algebras of groups denoted by capital Latin letters are denoted
by the corresponding small Gothic letters. All algebras (except for
Lie algebras) are assumed to be associative, commutative, and with
identity. All vector spaces are assumed to be finite-dimensional.

Let $V$ be a vector space. Any nondegenerate skew-symmetric bilinear
form on~$V$ will be called a \textit{symplectic form}. If $\Omega$
is a fixed symplectic form on~$V$, then for every subspace $W
\subset V$ we shall denote by $W^\perp$ the skew-orthogonal
complement to~$W$ with respect to~$\Omega$.

All the notation and conventions used in
Table~\ref{table_main_result} will be also used in all other tables
appearing in this paper.

Notation:

$|X|$ is the cardinality of a finite set~$X$;

$V^*$ is the space of linear functions on a vector space~$V$

$\mathrm S^2 V$ is the symmetric square of a vector space~$V$;

$\wedge^2 V$ is the exterior square of a vector space~$V$;

$\langle v_1, \ldots, v_k \rangle$ is the linear span of vectors
$v_1, \ldots, v_k$ of a vector space~$V$;

$G : X$ denotes an action of a group $G$ on a variety~$X$;

$G_x$ is the stabilizer of a point $x \in X$ under an action $G :
X$;

$G'$ is the derived subgroup of a group~$G$;

$G^0$ is the connected component of the identity of a group~$G$;

$\mathfrak X(G)$ is the character group of a group~$G$ (in additive
notation);

$\mathrm{S}(\mathrm{L}_n \times \mathrm{L}_m)$ is the subgroup
in~$\GL_{n+m}$ equal to $(\GL_n \times \GL_m) \cap \SL_{n+m}$;

$\mathrm{S}(\mathrm{O}_n \times \mathrm{O}_m)$ is the subgroup in
$\mathrm{O}_{n+m}$ equal to $(\mathrm{O}_n \times \mathrm{O}_m) \cap
\SO_{n+m}$;

$\overline Y$ is the closure of a subset~$Y$ of a variety~$X$;

$\rk G$ is the rank of a reductive group~$G$, that is, the dimension
of a maximal torus of~$G$;

$X^\reg$ is the set of regular points of a variety~$X$;

$\FF[X]$ is the algebra of regular functions on a variety~$X$;

$\FF(X)$ is the field of rational functions on a variety~$X$;

$A^G$ is the algebra of invariants of an action of a group $G$ on an
algebra~$A$;

$\Quot A$ is the field of fractions of an algebra $A$ without zero
divisors;

$\Spec A$ is the spectrum of a finitely generated algebra $A$
without nilpotents, that is, the affine algebraic variety whose
algebra of regular functions is isomorphic to~$A$;

$T_x X$ is the tangent space of a variety $X$ at a point~$x$;

$T^*_x X = (T_x X)^*$ is the cotangent space of a variety $X$ at a
point~$x$;

$T^*X$ is the cotangent bundle of a smooth variety~$X$;

$P^\top$ is the transpose matrix of a matrix~$P$.

\section{Generalized flag varieties and nilpotent orbits}
\label{sect_GFV_NO}

Throughout this section we fix an arbitrary connected reductive
group $G$ and an arbitrary connected reductive subgroup $K
\subset\nobreak G$.

\subsection{Poisson and symplectic varieties}

In this subsection we gather all the required information on Poisson
and symplectic varieties. The information here is taken from
\cite[\S\S\,II.1--II.3]{Vin01}.

An algebra $A$ is said to be a \textit{Poisson algebra} if $A$ is
equipped with a bilinear anticommutative operation $\{ \,\cdot \,
{,}\,\cdot\,\} \colon A\times A \to A$ (called a \textit{Poisson
bracket}) satisfying the identities
$$
\text{\renewcommand{\arraystretch}{1.2}
\begin{tabular}{cc}
$\lbrace f, gh \rbrace = \lbrace f, g \rbrace h + \lbrace f, h
\rbrace g,$ & \qquad(Leibniz identity)\\
$\lbrace f, \lbrace g, h \rbrace \rbrace + \lbrace g, \lbrace h, f
\rbrace \rbrace + \lbrace h, \lbrace f, g \rbrace \rbrace = 0$ &
\qquad(Jacobi identity)
\end{tabular}
}
$$
for any elements $f, g, h \in A$.

An irreducible variety $M$ is said to be a \textit{Poisson variety}
if the structure sheaf of $M$ is endowed with a structure of a sheaf
of Poisson algebras. We note that the Poisson structure on $M$ is
uniquely determined by the Poisson bracket induced in the
field~$\mathbb F(M)$.

A smooth irreducible variety $M$ together with a nondegenerate
closed $2$-form $\omega$ is said to be a \textit{symplectic
variety}. In this situation, the $2$-form $\omega$ is said to be the
\textit{structure $2$-form}.

Let $M$ be a Poisson variety. There is a unique bivector field
$\mathcal B$ on $M^\reg$ with the following property:
\begin{equation} \label{eqn_Poisson_bracket_via_bivector}
\lbrace f, g \rbrace = \mathcal B(df, dg)
\end{equation}
for any $f,g \in \FF(M)$. The bivector $\mathcal B$ is said to be
the \textit{Poisson bivector}. If $\mathcal B$ is nondegenerate at
each point of a nonempty open subset~$Z \subset M^\reg$, then the
$2$-form $\omega = (\mathcal B^\top)^{-1}$ (the formula relates the
matrices of $\omega$ and $\mathscr B$ in dual bases of the spaces
$T^*_pM$ and $T_pM$ for each point $p\in M$) defines a symplectic
structure on~$Z$.

Conversely, if $M$ is a symplectic variety with structure
$2$-form~$\omega$ then the bivector $\mathcal B =
(\omega^\top)^{-1}$ defines a Poisson structure on $M$ by the
formula~(\ref{eqn_Poisson_bracket_via_bivector}), so that every
symplectic variety is Poisson.

A morphism $\varphi \colon M \to M'$ of Poisson varieties is said to
be \textit{Poisson} if for every open subset $U\subset M'$ the
corresponding homomorphism $\varphi^* \colon \mathbb F[U] \to
\mathbb F[\varphi^{-1}(U)]$ is a homomorphism of Poisson algebras.
If $\varphi$ is surjective, then $\varphi$ is Poisson if and only if
the pushforward of the Poisson bivector on $M$ coincides with the
Poisson bivector on~$M'$. In particular, if $\varphi \colon M \to
M'$ is a Poisson morphism of symplectic varieties that is a
covering, then the pullback of the structure $2$-form on~$M'$
coincides with the structure $2$-form on~$M$.

The space $\mathfrak g^*$ is endowed with a natural Poisson
structure. In view of the Leibnitz identity, this structure is
uniquely determined by specifying the values of the Poisson bracket
on linear functions, that is, on the space $(\mathfrak
g^*)^*\simeq\mathfrak g$. For $\xi, \eta\in\mathfrak g$ one defines
$\{ \xi, \eta \}=[\xi, \eta]$.

It is well known that for an arbitrary $G$-orbit $O$ in~$\mathfrak
g^*$ the restriction of the Poisson bivector to~$O$ is well defined
and nondegenerate, which induces a symplectic structure on~$O$. This
structure is determined by the $2$-form~$\omega_O$ given at a point
$\alpha \in \mathfrak g^*$ by the formula
$$
\omega_O(\mathrm{ad}^*(\xi) \alpha, \mathrm{ad}^*(\eta)
\alpha)=\alpha([\xi, \eta]),
$$
where $\xi, \eta\in\mathfrak g$ and $\mathrm{ad}^*$ is the coadjoint
representation of the algebra~$\mathfrak g$. The form $\omega_O$ is
said to be the \textit{Kostant--Kirillov form}.

Let $X$ be a smooth irreducible variety. Then its cotangent bundle
$$
T^* X = \lbrace (x, p) \mid x \in X, p \in T^*_x X \rbrace
$$
is also a smooth irreducible variety. There is a canonical
symplectic structure on $T^* X$. The structure $2$-form~$\omega_X$
can be expressed in the form $\omega_X = - d\theta$, where $\theta$
is the $1$-form defined as follows. Let $\pi \colon T^* X \to X$ be
the canonical projection and let $d\pi$ be its differential. Let
$\xi$ be a tangent vector to $T^* X$ at a point $(x, p)$. Then
$\theta(\xi) = p(d\pi(\xi))$.

\subsection{Some properties of symplectic
$G$-varieties} \label{subsec_properties_SV}

Let $X$ be a smooth irreducible $G$-variety. The action $G : X$
naturally induces an action $G :\nobreak T^* X$ preserving the
structure $2$-form~$\omega_X$.

Let $\mathscr V(X)$ be the Lie algebra of vector fields on~$X$. The
action of~$G$ on~$X$ defines the Lie algebra homomorphism $\tau_X
\colon \mathfrak g \to \mathscr V(X)$ taking each element $\xi \in
\mathfrak g$ to the corresponding velocity field on~$X$. For all
$\xi \in \mathfrak g$ and $x \in X$ let $\xi x$ denote the value of
the field $\tau_X(\xi)$ at~$x$.

The map $\Phi \colon T^* X \to \mathfrak g^*$ given by the formula
$$
(x, p) \mapsto [\xi \mapsto p(\xi x)], \quad \quad \text{where } x
\in X, p \in T^*_x X, \xi \in \mathfrak g,
$$
is said to be the \textit{moment map}.

\begin{proposition}[{see~\cite[\S\,II.2.3, Proposition~2]{Vin01}}]
\label{prop_moment_map_Poisson} The map~$\Phi$ is a $G$-equivariant
Poisson morphism.
\end{proposition}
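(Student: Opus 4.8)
The plan is to verify the two assertions separately, since $G$-equivariance is essentially formal while the Poisson property is the substantive part. For $G$-equivariance, I would recall that $G$ acts on $T^*X$ by the natural lift of its action on $X$ (namely $g \cdot (x,p) = (gx, p \circ (dg)^{-1}_{gx})$, where $(dg)_x \colon T_xX \to T_{gx}X$ is the differential of the automorphism $x \mapsto gx$), and that $G$ acts on $\mathfrak g^*$ by the coadjoint representation $\mathrm{Ad}^*$. The identity $\Phi(g\cdot(x,p)) = \mathrm{Ad}^*(g)\Phi(x,p)$ then follows from the compatibility $(dg)_x(\xi x) = (\mathrm{Ad}(g)\xi)(gx)$ between the velocity fields and the $G$-action, which in turn is just the statement that $\tau_X$ is $G$-equivariant with respect to the adjoint action on $\mathfrak g$ and the natural action on $\mathscr V(X)$. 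Unwinding the definition of $\Phi$ and pairing against an arbitrary $\xi \in \mathfrak g$ gives the claim after a short computation.

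For the Poisson property, the cleanest route is to use the presentation $\omega_X = -d\theta$ of the canonical $2$-form on $T^*X$ in terms of the tautological $1$-form $\theta$. The key point is a formula for the Hamiltonian vector field of the function $\Phi^*(\xi) \colon T^*X \to \FF$ (where $\xi \in \mathfrak g$ is viewed as a linear function on $\mathfrak g^*$): namely, $\Phi^*(\xi)(x,p) = p(\xi x) = \theta(\widehat{\xi})_{(x,p)}$, where $\widehat{\xi}$ is the velocity field of $\xi$ for the induced action on $T^*X$. From $\omega_X = -d\theta$ and the fact that $\widehat{\xi}$ preserves $\theta$ (since the lifted action preserves $\theta$, being the natural lift of the action on the base), Cartan's formula gives $\iota_{\widehat{\xi}}\,\omega_X = -\iota_{\widehat{\xi}}\,d\theta = d(\iota_{\widehat{\xi}}\theta) - \mathcal L_{\widehat{\xi}}\theta = d\,\Phi^*(\xi)$. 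Hence $\widehat{\xi}$ is precisely the Hamiltonian vector field of $\Phi^*(\xi)$. Consequently, for $\xi, \eta \in \mathfrak g$,
$$
\{\Phi^*(\xi), \Phi^*(\eta)\}_{T^*X} = \widehat{\xi}\bigl(\Phi^*(\eta)\bigr) = \Phi^*\bigl(\mathrm{ad}^*(\xi)\eta\bigr)\big|_{\text{pairing}} = \Phi^*([\xi,\eta]) = \Phi^*\bigl(\{\xi,\eta\}_{\mathfrak g^*}\bigr),
$$
using that the derivative of $\Phi^*(\eta)$ along the velocity field of $\xi$ computes the infinitesimal coadjoint action, which by definition of the Poisson bracket on $\mathfrak g^*$ equals $[\xi,\eta]$. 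Since linear functions generate $\FF(\mathfrak g^*)$ as a Poisson algebra (by the Leibniz identity and because $\Phi^*$ is an algebra homomorphism), this identity on generators propagates to $\Phi^*\{f,g\} = \{\Phi^*f, \Phi^*g\}$ for all $f,g$, which is exactly the assertion that $\Phi$ is a Poisson morphism.

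The main obstacle, and the step requiring the most care, is the verification that the lifted velocity field $\widehat{\xi}$ on $T^*X$ satisfies both $\mathcal L_{\widehat{\xi}}\theta = 0$ and $\iota_{\widehat{\xi}}\theta = \Phi^*(\xi)$ simultaneously; once these two facts are in hand, everything else is formal manipulation with Cartan's calculus and the Poisson bracket on $\mathfrak g^*$. Both facts are standard but deserve a line or two: the first is the naturality of $\theta$ under bundle automorphisms induced from the base (so the lift of any vector field on $X$ preserves $\theta$), and the second is immediate from the definitions of $\theta$ and $\Phi$ once one checks that $d\pi(\widehat{\xi}_{(x,p)}) = \xi x$, i.e. that the lifted field projects to the original velocity field. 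Since the statement is attributed to~\cite[\S\,II.2.3, Proposition~2]{Vin01}, in the paper itself one may simply cite this; the argument above is the proof one would reconstruct.
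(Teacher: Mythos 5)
Your argument is correct and is the standard proof of this fact; the paper itself offers no proof, simply citing \cite[\S\,II.2.3, Proposition~2]{Vin01}, and your reconstruction (equivariance from the compatibility $(dg)_x(\xi x)=(\Ad(g)\xi)(gx)$ of velocity fields with the lifted action, and the Poisson property from the identity $\iota_{\widehat{\xi}}\,\omega_X = d\,\Phi^*(\xi)$ obtained via Cartan's formula together with $\mathcal L_{\widehat{\xi}}\theta=0$ and $\iota_{\widehat{\xi}}\theta=\Phi^*(\xi)$) is precisely the argument given in that reference, valid verbatim for algebraic differential forms in characteristic~$0$. The only point to watch is the routine sign bookkeeping among the conventions for velocity fields, Hamiltonian vector fields, and the bracket $\{\xi,\eta\}=[\xi,\eta]$ on $\mathfrak g^*$, which can be fixed consistently and does not affect the conclusion.
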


Let $V$ be a vector space with a given symplectic form~$\Omega$. A
subspace $W \subset V$ is said to be \textit{isotropic} if the
restriction of~$\Omega$ to~$W$ is identically zero and
\textit{coisotropic} if the skew-orthogonal complement of $W$ is
isotropic.

Let $M$ be a symplectic variety with structure $2$-form~$\omega$. A
smooth irreducible locally closed subvariety $Z \subset M$ is said
to be \textit{coisotropic} if the subspace $T_z Z$ is coisotropic in
$T_z M$ for each point $z \in Z$.

\begin{definition} \label{dfn_coisotropic}
An action $G : X$ preserving the structure $2$-form~$\omega$ is said
to be \textit{coisotropic} if orbits of general position for this
action are coisotropic.
\end{definition}

The following theorem is implied by~\cite[Theorem~7.1]{Kno90}, see
also~\cite[\S\,II.3.4, Theorem~2, Corollary~1]{Vin01}.

\begin{theorem} \label{thm_spherical&coisotropic}
Let $X$ be a smooth irreducible $G$-variety. The following
conditions are equivalent:

\textup{(a)} the action $G : X$ is spherical;

\textup{(b)} the action $G : T^* X$ is coisotropic.
\end{theorem}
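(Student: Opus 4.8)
The plan is to deduce the statement from the general theory of doubled actions and the symplectic geometry of the cotangent bundle, following the line of~\cite{Kno90}. The key point is that sphericity of $G:X$ is governed by the \emph{complexity} $c(X)$, defined as the codimension of a general $B$-orbit in~$X$ (here $B$ is a Borel subgroup of $G$), and that this number has a symplectic incarnation in terms of the cotangent bundle. Concretely, one has Knop's formula relating complexity to the generic dimension of the image of the moment map: $c(X)$ equals the codimension in $\mathfrak g^*$ of a general $G$-orbit inside $\overline{\Phi(T^*X)}$ minus the defect, or, in the most convenient packaging, $2c(X) = \dim T^*X - \dim (G\text{-orbit of general position in }T^*X) - (\text{corank of the generic orbit})$. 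The first thing I would do is recall the precise statement from~\cite[Theorem~7.1]{Kno90} (equivalently~\cite[\S\,II.3.4]{Vin01}) in a form suitable for this paper; since we are allowed to assume results stated earlier and to cite these sources, this is essentially a bookkeeping step.

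Next I would unwind what ``coisotropic'' means for the $G$-action on $M = T^*X$. By Definition~\ref{dfn_coisotropic}, $G:T^*X$ is coisotropic iff a general orbit $Gz$ is a coisotropic submanifold of the symplectic variety $(T^*X,\omega_X)$, i.e. $(T_z(Gz))^\perp \subset T_z(Gz)$ for $z$ in general position. The skew-orthogonal complement of the tangent space to an orbit at $z$ is, by the very definition of the moment map~$\Phi$ and Proposition~\ref{prop_moment_map_Poisson}, exactly the kernel of $d_z\Phi$; this is the standard fact that for a Hamiltonian action the tangent space to an orbit and the kernel of the moment map differential are skew-orthogonal complements of one another. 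Hence the coisotropy condition $\ker d_z\Phi \subset T_z(Gz)$ is equivalent to saying that $\dim\ker d_z\Phi = \dim T_z(Gz) - \dim\Phi(Gz)$, i.e. that the generic fibre of $\Phi$ restricted to a generic orbit is as small as possible. Translating, $G:T^*X$ is coisotropic iff $\dim T^*X - \operatorname{rk} d_z\Phi - \dim(Gz)$ attains its minimal possible value zero on the generic locus; after passing to the induced symplectic structure on the generic coadjoint orbit through $\Phi(z)$ this becomes precisely the vanishing of the invariant that Knop identifies with $2c(X)$ (so that the generic orbit is coisotropic exactly when the ``Poisson-commutative'' field of $G$-invariant rational functions on $T^*X$ has the expected transcendence degree, namely $\dim T^*X - \dim(Gz)$).

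Assembling these, $G:X$ is spherical $\iff c(X)=0$ $\iff$ the symplectic invariant of $T^*X$ computed above vanishes $\iff$ a general $G$-orbit in $T^*X$ is coisotropic $\iff G:T^*X$ is coisotropic. The direction (b)$\Rightarrow$(a) is the ``$\le$'' half of Knop's inequality applied with the invariant equal to zero, and (a)$\Rightarrow$(b) is the ``$\ge$'' half. I expect the main obstacle to be purely expository rather than mathematical: one must state the relevant numerical invariants of a Hamiltonian $G$-variety (complexity, rank, defect, the dimension of a generic moment-map fibre) with enough care that the equivalence ``$c(X)=0 \iff$ generic orbit coisotropic'' is transparent, and one must make sure the symplectic-reduction argument identifying $\ker d_z\Phi$ with $(T_z Gz)^\perp$ is spelled out. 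Both of these are standard and are contained in the cited references, so the proof in the paper should be short: it will essentially quote~\cite[Theorem~7.1]{Kno90} and~\cite[\S\,II.3.4, Theorem~2, Corollary~1]{Vin01} after the one-line reduction via the moment map and Proposition~\ref{prop_moment_map_Poisson}.
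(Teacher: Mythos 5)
Your proposal is correct and matches the paper's treatment: the paper gives no independent argument for this theorem, stating only that it is implied by \cite[Theorem~7.1]{Kno90} (see also \cite[\S\,II.3.4, Theorem~2, Corollary~1]{Vin01}), which is exactly the reduction you describe. Your sketch of why those results yield the equivalence (identifying $\ker d_z\Phi$ with the skew-orthogonal complement of the orbit tangent space and invoking Knop's complexity formula) is a sound, if informal, expansion of that citation.
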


A subset $A'$ of a Poisson algebra $A$ is said to be
\textit{Poisson-commutative} if $\{ f, g\}=0$ for all $f,g\in A'$.

\begin{proposition}[{see~\cite[\S\,II.3.2, Proposition~5]{Vin01}}]
\label{prop_coisotropic_Poisson_commutative} Let $M$ be a symplectic
$G$-variety such that the structure $2$-form is $G$-invariant. The
following conditions are equivalent:

\textup{(a)} the action $G : M$ is coisotropic;

\textup{(b)} the field $\FF(M)^G$ is Poisson-commutative.
\end{proposition}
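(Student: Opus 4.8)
The plan is to reduce the whole statement to linear algebra in the symplectic vector space $(T_mM,\omega_m)$ at a point $m\in M$ in general position. Write $W=T_m(Gm)=\{\xi m:\xi\in\mathfrak g\}\subseteq T_mM$ for the tangent space to the orbit through $m$. Since $\omega_m$ is nondegenerate, $\dim W+\dim W^\perp=\dim M$ and $(W^\perp)^\perp=W$, so $W$ is coisotropic (that is, $W^\perp$ is isotropic) if and only if $W^\perp\subseteq W$. Because $\omega$ is $G$-invariant, for each $g\in G$ the differential of the map $g\colon M\to M$ carries $(T_m(Gm),\omega_m)$ symplectically onto $(T_{gm}(Gm),\omega_{gm})$; hence the orbit $Gm$ is a coisotropic subvariety in the sense of Definition~\ref{dfn_coisotropic} precisely when $W$ is coisotropic in $T_mM$. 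The set of $m$ for which this holds, intersected with the dense open $G$-stable locus where $\dim Gm$ is maximal and the finitely many auxiliary invariant functions chosen below are regular with linearly independent differentials, is a dense $G$-stable open subset of $M$; thus ``the action $G:M$ is coisotropic'' is equivalent to ``$W^\perp\subseteq W$ for $m$ in general position''.

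For condition~(b), let $\mathcal B=(\omega^\top)^{-1}$ be the Poisson bivector; it is nondegenerate, so $\alpha\mapsto\mathcal B_m(\alpha,\cdot)$ is an isomorphism $T^*_mM\xrightarrow{\sim}T_mM$, and, $\omega$ and $\mathcal B$ being mutually inverse, it carries the annihilator $W^{0}=\{\alpha\in T^*_mM:\alpha|_W=0\}$ onto $W^\perp$. For $f\in\FF(M)$ write $H_f$ for its Hamiltonian vector field, $H_f(m)=\mathcal B_m(d_mf,\cdot)$, so that $\{f,g\}=\mathcal B(df,dg)=\langle dg,H_f\rangle$. A rational function $f$ is $G$-invariant if and only if every velocity field $\tau_M(\xi)$, $\xi\in\mathfrak g$, annihilates $f$, i.e.\ if and only if $d_mf\in W^{0}$, i.e.\ if and only if $H_f(m)\in W^\perp$, at every point $m$ of the domain of $f$. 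Moreover, by Rosenlicht's theorem on rational invariants the transcendence degree of $\FF(M)^G$ over $\FF$ equals $\dim M-\dim W$ at $m$ in general position, which is $\dim W^{0}$; fixing algebraically independent $f_1,\dots,f_k\in\FF(M)^G$ with $k=\dim M-\dim W$, their differentials $d_mf_1,\dots,d_mf_k$ are linearly independent at a general $m$ and hence form a basis of $W^{0}$, so that $H_{f_1}(m),\dots,H_{f_k}(m)$ form a basis of $W^\perp$.

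It remains to combine these. Take $m$ in general position. If $W^\perp\subseteq W$, then for all $f,g\in\FF(M)^G$ regular near $m$ we have $H_f(m)\in W^\perp\subseteq W$ and $d_mg\in W^{0}$, whence $\{f,g\}(m)=\langle d_mg,H_f(m)\rangle=0$; a rational function vanishing on a dense set vanishes identically, so $\FF(M)^G$ is Poisson-commutative. Conversely, if $\FF(M)^G$ is Poisson-commutative, then $\{f_i,f_j\}\equiv0$, so $\langle d_mf_j,H_{f_i}(m)\rangle=0$ for all $i,j$ at a general $m$; as the $d_mf_j$ span $W^{0}$ and the $H_{f_i}(m)$ span $W^\perp$, every vector of $W^\perp$ is annihilated by every covector of $W^{0}$, i.e.\ $W^\perp\subseteq(W^{0})^{0}=W$. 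By the first paragraph, $W^\perp\subseteq W$ at a general $m$ is exactly the coisotropy of the action $G:M$, and the equivalence follows. The one genuinely non-formal ingredient here is the invariant-theoretic input — the transcendence-degree identity and, above all, the fact that the differentials of invariant rational functions exhaust the full conormal space $W^{0}$ of the generic orbit — along with the routine check that the various genericity conditions are $G$-stable and can be met simultaneously; once these are in hand, the symplectic linear algebra above is immediate.
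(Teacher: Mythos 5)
Your argument is correct. The paper does not prove this proposition at all --- it is quoted directly from Vinberg's survey \cite[\S\,II.3.2, Proposition~5]{Vin01} --- and your reduction to linear algebra at a point in general position (identifying $W^\perp$ with the image of the conormal space $W^{0}$ of the generic orbit under the Poisson bivector, and invoking Rosenlicht's theorem to see that differentials of rational invariants span $W^{0}$) is essentially the standard proof given in that reference.
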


\subsection{The $K$-sphericity criterion for a generalized
flag variety} \label{subsec_sphericity_criterion}

The main result of this subsection is
Theorem~\ref{thm_spherical&coisotropic_orbit}.

We shall identify $\mathfrak g$ and $\mathfrak g^*$ via a bilinear
form given by the trace of the product of two linear operators in a
fixed faithful linear representation of~$\mathfrak g$.

Let $P \subset G$ be a parabolic subgroup and let $N$ be the
unipotent radical of~$P$. Recall that in Introduction we defined the
nilpotent orbit $\mathcal N(G/P) \subset \mathfrak g$,
see~(\ref{eqn_nilp_orbit}). Put $o = eP \in G / P$. Let $\Phi_P
\colon T^* (G / P) \to \mathfrak g$ be the moment map corresponding
to the natural action $G : G / P$.

\begin{proposition} \label{prop_image_of_moment_map}
The image of $\Phi_P$ coincides with the closure of the orbit
$\mathcal N(G/P) \subset \mathfrak g$. Moreover, $\Phi_P$ is finite
over $\mathcal N (G / P)$.
\end{proposition}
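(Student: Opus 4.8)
The plan is to compute the moment map $\Phi_P$ explicitly using the identification $T^*(G/P) \simeq G \times_P (\mathfrak{g}/\mathfrak{p})^*$ and then compare its image with $\mathcal{N}(G/P)$. First I would recall that $T^*(G/P)$, as a $G$-variety, is the associated bundle $G \times_P \mathfrak{n}$: indeed, using the trace form to identify $\mathfrak{g}$ with $\mathfrak{g}^*$, the annihilator of $\mathfrak{p}$ in $\mathfrak{g}^*$ corresponds to $\mathfrak{n} = \Lie N$ (since $\mathfrak{n} = \mathfrak{p}^\perp$ under the trace form, $\mathfrak{p}$ being the Levi plus $\mathfrak{n}$ and $\mathfrak{n}$ being orthogonal to itself and to the Levi). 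Under this identification the fiber $T^*_o(G/P)$ over the base point $o = eP$ is exactly $\mathfrak{n} \subset \mathfrak{g}$, and the moment map sends a point $(g, \xi)$ with $g \in G$, $\xi \in \mathfrak{n}$ to $\Ad(g)\xi \in \mathfrak{g}$. Hence $\Im \Phi_P = G \cdot \mathfrak{n} = \bigcup_{g \in G} \Ad(g)\mathfrak{n}$.

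Next I would identify this set with $\overline{\mathcal{N}(G/P)}$. By Richardson's theorem the open $P$-orbit $O_P$ is dense in $\mathfrak{n}$, so $\overline{O_P} = \mathfrak{n}$; applying $G$ and using that $G/P$ is complete (so $G \cdot \mathfrak{n}$, being the image of the proper-over-$G$ bundle map $G \times_P \mathfrak{n} \to \mathfrak{g}$, is closed), we get
$$
\Im \Phi_P \;=\; G \cdot \mathfrak{n} \;=\; G \cdot \overline{O_P} \;=\; \overline{G \cdot O_P} \;=\; \overline{\mathcal{N}(G/P)}.
$$
The key technical point here is the closedness of the image: this follows because $\pi \colon T^*(G/P) = G\times_P \mathfrak{n} \to G/P$ is a vector bundle over a projective variety, so the total space is proper over a point "fiberwise compactly", but more carefully one argues that $G \times_P \mathfrak{n} \to \mathfrak{g}$ factors through the proper map $G \times_P \mathfrak{n} \to G \times_P \overline{\mathfrak{n} \text{ in } \mathbb{P}(\mathfrak{g}\oplus\mathbb{F})}$ or, more simply, one invokes that $G\cdot\mathfrak n$ is the image of $G/P$ under a morphism when restricted to projectivizations — alternatively cite that $\overline{\mathcal N(G/P)}$ is known to be $G\cdot\mathfrak n$ (this is classical, see e.g. \cite[Ch.~7]{CM}).

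For the finiteness assertion over $\mathcal{N}(G/P)$, I would argue as follows. The restriction $\Phi_P^{-1}(\mathcal{N}(G/P)) \to \mathcal{N}(G/P)$ is a $G$-equivariant morphism between homogeneous-fibered spaces, so by $G$-equivariance it suffices to check finiteness of a single fiber, say over a chosen Richardson element $e \in O_P \subset \mathfrak{n}$. The fiber $\Phi_P^{-1}(e)$ consists of pairs $(gP, \xi)$ with $\xi \in \mathfrak{n}$ and $\Ad(g)\xi = e$; equivalently, of Borel-type data recording which $G$-conjugates of $\mathfrak{n}$ contain $e$ in their dense orbit locus. Since $\dim \mathcal{N}(G/P) = 2\dim N = \dim T^*(G/P)$ (as recalled in the introduction via \cite[Theorem~7.1.1]{CM}) and $\Phi_P$ is proper with image of the same dimension as the source, $\Phi_P$ is generically finite; properness then upgrades this, on the smooth open locus $\mathcal N(G/P)$, to finiteness — here one uses that $\Phi_P$ restricted over $\mathcal N(G/P)$ is a proper quasi-finite morphism, hence finite. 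The main obstacle I anticipate is making the quasi-finiteness over \emph{all} of $\mathcal{N}(G/P)$ (not just generically) rigorous: one must rule out positive-dimensional fibers over non-generic points of the orbit $\mathcal{N}(G/P)$, which I would handle by the $G$-equivariance reduction above — since $\mathcal N(G/P)$ is a single orbit, all fibers over it are isomorphic, so generic finiteness forces finiteness everywhere over $\mathcal N(G/P)$.
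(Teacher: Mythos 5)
Your proposal is correct and follows essentially the same route as the paper: identify $T^*_o(G/P)$ with $\mathfrak n$ via the trace form, use $G$-equivariance of the moment map to get $\Im\Phi_P = G\mathfrak n$, invoke Richardson for the density of $O_P$ in $\mathfrak n$ and the closedness and dimension of $G\mathfrak n$ (the paper simply cites \cite[Proposition~6(b)]{Rich74} where you re-derive properness of the collapsing map $G\times_P\mathfrak n\to\mathfrak g$), and deduce finiteness over the open orbit from the equality $\dim\mathcal N(G/P)=\dim T^*(G/P)$ together with homogeneity.
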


\begin{proof}
In view of the identifications $T^*_o (G / P) \simeq (\mathfrak g /
\mathfrak p)^* \simeq \mathfrak n$, it follows from
\cite[\S\,II.2.3, Example~5]{Vin01} that $\Phi_P(T^*_o(G / P)) =
\mathfrak n$. This together with the $G$-equivariance of the
map~$\Phi_P$ (see Proposition~\ref{prop_moment_map_Poisson}) implies
that the image of~$\Phi_P$ coincides with the subset $G \mathfrak n
\subset\nobreak \mathfrak g$ and, in particular, is irreducible. By
\cite[Proposition~6(b)]{Rich74} the set $G\mathfrak n$ is closed
in~$\mathfrak g$ and has dimension~$2\dim\mathfrak n$. By its
definition, the orbit $\mathcal N(G/P)$ is dense in~$G\mathfrak n$,
therefore $\dim\mathcal N(G/P)=\dim T^* (G/P)$. The latter means
that the map $\Phi_P$ is finite over~$\mathcal N(G/P)$.
\end{proof}

\begin{theorem} \label{thm_spherical&coisotropic_orbit}
The following conditions are equivalent:

\textup{(a)} the action $K : G / P$ is spherical;

\textup{(b)} the action $K : \mathcal N(G / P)$ is coisotropic.
\end{theorem}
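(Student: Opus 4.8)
The plan is to deduce Theorem~\ref{thm_spherical&coisotropic_orbit} from Theorem~\ref{thm_spherical&coisotropic} by transporting the coisotropy property along the moment map $\Phi_P \colon T^*(G/P) \to \overline{\mathcal N(G/P)}$. By Theorem~\ref{thm_spherical&coisotropic}, condition~(a) is equivalent to the $K$-action on $T^*(G/P)$ being coisotropic, so it suffices to show that $K : T^*(G/P)$ is coisotropic if and only if $K : \mathcal N(G/P)$ is coisotropic. The natural device for this is Proposition~\ref{prop_coisotropic_Poisson_commutative}, which reinterprets coisotropy as Poisson-commutativity of the field of $K$-invariant rational functions; the structure $2$-forms on both $T^*(G/P)$ and on the orbit $\mathcal N(G/P)$ are $G$-invariant, hence $K$-invariant, so the proposition applies to both sides. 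Thus the goal becomes: $\FF(T^*(G/P))^K$ is Poisson-commutative $\iff$ $\FF(\mathcal N(G/P))^K$ is Poisson-commutative.

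The bridge between these two fields is provided by Propositions~\ref{prop_moment_map_Poisson} and~\ref{prop_image_of_moment_map}. First, $\Phi_P$ is a $G$-equivariant Poisson morphism whose image is $\overline{\mathcal N(G/P)}$ and which is finite over $\mathcal N(G/P)$; in particular $\Phi_P$ is dominant onto $\overline{\mathcal N(G/P)}$ and generically finite, so it induces a finite field extension $\FF(\overline{\mathcal N(G/P)}) = \FF(\mathcal N(G/P)) \hookrightarrow \FF(T^*(G/P))$ via $\Phi_P^*$, and this extension is compatible with the Poisson brackets (a dominant Poisson morphism pulls back the Poisson bracket on the base to the restriction of the bracket on the total space, at least on the locus where everything is defined). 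Because $\Phi_P$ is $G$-equivariant, it restricts to $\FF(\mathcal N(G/P))^K \hookrightarrow \FF(T^*(G/P))^K$, again a finite extension of fields. Now if $\FF(T^*(G/P))^K$ is Poisson-commutative then its subfield $\Phi_P^*(\FF(\mathcal N(G/P))^K)$ is Poisson-commutative, hence so is $\FF(\mathcal N(G/P))^K$ since $\Phi_P^*$ is a Poisson isomorphism onto its image. Conversely, if $\FF(\mathcal N(G/P))^K$ is Poisson-commutative, then its image in $\FF(T^*(G/P))^K$ is a Poisson-commutative subfield over which $\FF(T^*(G/P))^K$ is algebraic; one then invokes the standard fact that a Poisson-commutative subfield cannot be properly enlarged within an algebraic extension while staying inside the $K$-invariants — equivalently, $\operatorname{tr.deg} \FF(T^*(G/P))^K = \operatorname{tr.deg} \FF(\mathcal N(G/P))^K$, and a Poisson-commutative subfield of maximal transcendence degree is "Poisson-closed" in the sense that any element Poisson-commuting with it lies in its algebraic closure. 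More concretely: the complexity (the transcendence degree of the invariant field) of the $K$-action is the same on $T^*(G/P)$ and on $\mathcal N(G/P)$ because the finite map $\Phi_P$ identifies the generic $K$-orbit structure; and a symplectic (or Poisson) $K$-variety is coisotropic precisely when its complexity equals the "rank" defect, a condition transported by the finite equivariant Poisson map.

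The step I expect to be the main obstacle is the careful handling of the Poisson structure on $\mathcal N(G/P)$ itself and its compatibility with $\Phi_P$: the orbit $\mathcal N(G/P)$ carries the Kostant--Kirillov symplectic form, while its closure $\overline{\mathcal N(G/P)}$ is singular, and one must check that $\Phi_P$ is a Poisson morphism in a way that genuinely relates the bracket on $\FF(T^*(G/P))$ to the Kostant--Kirillov bracket on the dense orbit — this is where the identification $\mathfrak g \simeq \mathfrak g^*$ and the general facts recalled in the Poisson subsection (that the Poisson structure on $\mathfrak g^*$ restricts to the symplectic structure on each orbit) do the work, but it requires restricting $\Phi_P$ to the open locus $\Phi_P^{-1}(\mathcal N(G/P))$, which is $G$-stable and on which $\Phi_P$ is a finite morphism of symplectic varieties. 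Once it is clear that $\Phi_P$, restricted there, is a finite $G$-equivariant morphism of symplectic varieties that is Poisson, the equivalence of coisotropy on source and target follows from the invariant-field criterion together with the elementary observation that a finite Poisson morphism induces a finite Poisson extension of invariant fields, and Poisson-commutativity of a field passes up and down such an extension (up: trivially, to a subfield; down: because the extension is algebraic and both fields have the same transcendence degree, so the larger field is contained in the Poisson-algebraic closure of the smaller one, forcing the larger field to be Poisson-commutative as well).
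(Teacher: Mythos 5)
Your proposal is correct, and its skeleton matches the paper's: both reduce (a) to coisotropy of $K : T^*(G/P)$ via Theorem~\ref{thm_spherical&coisotropic}, and both work on the open set $U = \Phi_P^{-1}(\mathcal N(G/P))$, over which $\Phi_P$ is finite by Proposition~\ref{prop_image_of_moment_map} and Poisson by Proposition~\ref{prop_moment_map_Poisson}. Where you diverge is in how coisotropy is transported across $\Phi_P|_U$. The paper observes that $\Phi_P|_U$ is a \emph{covering} of symplectic varieties, so by the general fact recalled in the Poisson preliminaries the pullback of the Kostant--Kirillov form is the canonical form on $U$; coisotropy of general $K$-orbits then transfers pointwise, with no field theory needed. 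You instead pivot on Proposition~\ref{prop_coisotropic_Poisson_commutative}, converting both sides to Poisson-commutativity of $\FF(\,\cdot\,)^K$ and passing this property down to the subfield $\Phi_P^*\bigl(\FF(\mathcal N(G/P))^K\bigr)$ and back up the algebraic extension via the minimal-polynomial trick (whose $K$-equivariance step --- that the minimal polynomial of a $K$-invariant element over a $K$-stable field has $K$-invariant coefficients --- you should make explicit). This is sound: it is essentially the same integral-dependence argument the paper itself deploys later for Proposition~\ref{prop_two_nilpotent_orbits_coisotropic}, and it has the mild advantage of requiring only that $\Phi_P|_U$ be finite and dominant rather than \'etale; the cost is length, and the closing aside about complexity and rank defect is redundant with (and vaguer than) your main argument and could be dropped.
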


\begin{proof}
By Proposition~\ref{prop_image_of_moment_map} the subset
$U=\Phi_P^{-1}(\mathscr N(G/ P))$ is open in~$T^* (G/P)$ and the map
$\left. \Phi_P \right|_U \colon U \to\mathcal N(G/P)$ is a covering.
It follows from Proposition~\ref{prop_moment_map_Poisson} that
$\left. \Phi_P \right|_U$ is a \hbox{$G$-equivariant} Poisson
morphism of symplectic varieties, therefore the image of the
structure $2$-form on $\mathcal N(G/P)$ coincides with the structure
$2$-form on~$U$. Consequently, the action $K :\mathcal N(G/P)$ is
coisotropic if and only if the action $K : U$ is so or,
equivalently, the action $K : T^* (G/P)$ is so. In view of
Theorem~\ref{thm_spherical&coisotropic} the latter holds if and only
if the action $K : G / P$ is spherical.
\end{proof}

\subsection{Proof of Theorem~\ref{thm_prec_spherical}}
\label{subsec_proof_of_thm_prec}

By Theorem~\ref{thm_spherical&coisotropic_orbit}, the proof of
Theorem~\ref{thm_prec_spherical} reduces to that of the following
proposition.

\begin{proposition} \label{prop_two_nilpotent_orbits_coisotropic}
Let $O_1, O_2$ be two nilpotent $G$-orbits in~$\mathfrak g$ such
that $O_1 \subset \overline{O}_2$. If the action $K : O_2$ is
coisotropic then so is the action $K : O_1$.
\end{proposition}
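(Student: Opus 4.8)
The plan is to reduce the claim about coisotropic actions on nilpotent orbits to a statement about the degeneration $O_1 \subset \overline{O}_2$ that can be controlled using Losev's result referenced before Theorem~\ref{thm_prec_spherical}. First I would reformulate coisotropy in the language of Proposition~\ref{prop_coisotropic_Poisson_commutative}: since the Kostant--Kirillov form on each nilpotent orbit is $G$-invariant, the action $K : O_i$ is coisotropic if and only if the field of $K$-invariant rational functions $\FF(O_i)^K$ is Poisson-commutative. Equivalently (and more robustly for degenerations), one works with the algebra $\FF[\overline{O}_i]^K$, or with the invariant-theoretic quotient, and checks Poisson-commutativity there. The key point is that $\overline{O}_1$ sits inside $\overline{O}_2$ as a closed $G$-stable (hence $K$-stable) Poisson subvariety, so the Poisson bracket on $\FF[\overline{O}_1]$ is induced from that on $\FF[\overline{O}_2]$ via restriction.

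Next I would invoke the degeneration mechanism. The inclusion $O_1 \subset \overline{O}_2$ of a smaller nilpotent orbit in the closure of a larger one can be realized by a contracting $\FF^\times$-action (a Kazhdan-type grading, or the $\mathrm{SL}_2$-triple degeneration), so that $\overline{O}_1$ arises as a flat limit of $\overline{O}_2$ under a one-parameter family of $G$-equivariant (in particular $K$-equivariant) Poisson deformations — this is precisely the kind of statement extracted from Losev's paper~\cite{Los}. Poisson-commutativity of $\FF(O_2)^K$ is an algebraic, closed condition: if all Poisson brackets $\{f,g\}$ of $K$-invariants vanish on the generic fibre of such a family, they vanish on the special fibre as well, because the bracket is continuous in the family and the invariants specialize to invariants. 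Thus Poisson-commutativity of $\FF(O_2)^K$ forces Poisson-commutativity of $\FF(O_1)^K$, which by Proposition~\ref{prop_coisotropic_Poisson_commutative} gives that $K : O_1$ is coisotropic. One must be careful that the dimension of the $K$-quotient does not drop in the limit in a way that would make the specialization vacuous; here one uses that for coisotropic actions the complexity (the codimension of a generic $K$-orbit inside a coisotropic leaf) is governed by the invariants, and the relevant numerics behave semicontinuously under the degeneration.

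The main obstacle I anticipate is exactly this last bookkeeping step: ensuring that the specialization of $K$-invariant functions from $O_2$ to $O_1$ is "faithful enough" — i.e.\ that enough $K$-invariants on $O_2$ survive the limit to detect Poisson non-commutativity on $O_1$ if it were present. Concretely, one wants: if $K : O_1$ is \emph{not} coisotropic, produce two $K$-invariant functions on $O_1$ with nonzero bracket that lift to $K$-invariant functions on $O_2$ (or on a family) with nonzero bracket, contradicting coisotropy of $O_2$. This is where Losev's results do the heavy lifting, presumably by identifying $\overline{O}_1$ with a symplectic leaf or a Hamiltonian reduction appearing inside a family over $\overline{O}_2$, so that invariants propagate correctly. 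I would therefore structure the write-up as: (i) recall the $G$-equivariant Poisson degeneration $\overline{O}_2 \rightsquigarrow \overline{O}_1$ from~\cite{Los}; (ii) observe $K$-equivariance and that $K$-invariants specialize to $K$-invariants; (iii) use closedness of the Poisson-commutativity condition plus Proposition~\ref{prop_coisotropic_Poisson_commutative} to conclude; with step~(i) being the one that genuinely depends on the cited external input rather than on routine symplectic geometry.
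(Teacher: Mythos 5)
Your overall strategy (translate coisotropy into Poisson-commutativity of the $K$-invariants via Proposition~\ref{prop_coisotropic_Poisson_commutative} and transport that property from $O_2$ to $O_1$) is the right one, but the mechanism you propose for the transport is not. Since $\dim O_1 < \dim O_2$, the closure $\overline{O}_1$ is \emph{not} a flat limit of $\overline{O}_2$ under any one-parameter family; it is simply a closed $G$-stable Poisson subvariety of $\overline{O}_2$. The correct (and elementary) mechanism is restriction of regular functions: the map $\FF[\overline{O}_2]\to\FF[\overline{O}_1]$ is a surjective Poisson homomorphism, and by the Reynolds operator it remains surjective on $K$-invariants, so \emph{every} element of $\FF[\overline{O}_1]^K$ is a restriction from $\FF[\overline{O}_2]^K$ and Poisson-commutativity of the latter algebra passes to the former. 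This already disposes of the ``faithfulness'' worry you raise, with no semicontinuity or degeneration argument needed. Losev's paper is not used for any degeneration of orbit closures; it is cited only for the equality $\FF(O_1)^K=\Quot\bigl(\FF[O_1]^K\bigr)$ (Proposition~\ref{prop_Losev}), which is the last step of the chain.

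The genuinely nontrivial step that your proposal omits entirely is the passage from $\FF[\overline{O}_1]^K$ to $\FF[O_1]^K$: the orbit $O_1$ is only open in $\overline{O}_1$, and $\FF[O_1]$ is strictly larger than $\FF[\overline{O}_1]$ in general, so Poisson-commutativity of the invariants on the closure does not immediately give it on the orbit. The paper handles this with Lemma~\ref{lemma_integral_closure} (Brylinski--Kostant): $\FF[O_1]$ is the integral closure of $\FF[\overline{O}_1]$ in $\FF(O_1)$, so any $f\in\FF[O_1]^K$ satisfies a minimal integral equation with coefficients in $\FF[\overline{O}_1]^K$ (after averaging); applying $\{\,\cdot\,,g\}$ to that equation and using minimality forces $\{f,g\}=0$. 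Without this derivation-on-an-integral-equation argument (or a substitute), your proof does not reach $\FF(O_1)^K$, and the conclusion does not follow.
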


In order to prove this proposition, we shall need
Lemma~\ref{lemma_integral_closure} and Proposition~\ref{prop_Losev}
given below.

\begin{lemma}[{see~\cite[Proposition~1.2]{BK}}]
\label{lemma_integral_closure}
For every $G$-orbit $O \subset \mathfrak g$ the algebra $\FF[O]$ is
the integral closure of the algebra $\FF[\overline{O}]$ in the field
$\FF(O)$. In particular, $\FF[O]$ is integrally closed.
\end{lemma}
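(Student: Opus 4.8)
The plan is to realize $\FF[O]$ as the coordinate ring of the normalization of $\overline O$, which by definition is the integral closure of $\FF[\overline O]$ in $\FF(O)$. I will use throughout that $O$, being a $G$-orbit, is a smooth irreducible variety open and dense in the affine variety $\overline O$; consequently $\FF(O) = \FF(\overline O) = \Quot \FF[\overline O]$, and one has inclusions $\FF[\overline O] \subseteq \FF[O] \subseteq \FF(O)$.

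One inclusion is immediate. Since $O$ is smooth it is normal, so $\FF[O]$ is integrally closed in its field of fractions $\FF(O)$. As $\FF[\overline O] \subseteq \FF[O]$, the integral closure of $\FF[\overline O]$ in $\FF(O)$ is contained in that of $\FF[O]$, namely $\FF[O]$ itself. This simultaneously proves the final assertion of the lemma, that $\FF[O]$ is integrally closed.

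For the reverse inclusion I must show that every $f \in \FF[O]$ is integral over $\FF[\overline O]$. To this end I pass to the normalization $\nu \colon \widetilde{\overline O} \to \overline O$; its coordinate ring is exactly the integral closure of $\FF[\overline O]$ in $\FF(O)$, and $\widetilde{\overline O}$ is again affine. As $\nu$ is an isomorphism over the normal locus of $\overline O$, which contains the smooth open set $O$, it restricts to an isomorphism $\nu^{-1}(O) \xrightarrow{\sim} O$. It therefore suffices to extend a given $f \in \FF[O] = \FF[\nu^{-1}(O)]$ to a global regular function on $\widetilde{\overline O}$, for then $f \in \FF[\widetilde{\overline O}]$ is integral over $\FF[\overline O]$, giving $\FF[O] = \FF[\widetilde{\overline O}]$.

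The extension rests on a codimension bound, which I regard as the heart of the matter. The boundary $\overline O \setminus O$ is closed and $G$-stable, hence a union of $G$-orbits $O'$ with $\dim O' < \dim O$. Identifying $\mathfrak g$ with $\mathfrak g^*$ by the invariant form, every $G$-orbit in $\mathfrak g$ carries the Kostant--Kirillov symplectic form and so has even dimension; thus each codimension $\dim O - \dim O'$ is a positive even integer, whence $\codim_{\overline O}(\overline O \setminus O) \ge 2$. Because $\nu$ is finite, $\widetilde{\overline O} \setminus \nu^{-1}(O)$ likewise has codimension at least $2$ in the normal variety $\widetilde{\overline O}$, and Serre's normality criterion (the property $\mathrm S_2$) guarantees that a regular function on $\nu^{-1}(O) \cong O$ extends across this subset to all of $\widetilde{\overline O}$. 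Once the codimension bound is established the extension is standard, so the main obstacle is precisely the evenness of orbit dimensions, supplied by the symplectic structure recalled earlier in this section.
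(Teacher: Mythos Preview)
Your argument is correct and is precisely the standard proof of this fact; the paper itself does not supply a proof but simply cites \cite[Proposition~1.2]{BK}, where essentially this argument appears. The key step---that the boundary $\overline O\setminus O$ has codimension $\ge 2$ because all (co)adjoint orbits are even-dimensional---is exactly the ingredient used there, and your use of normalization together with the $\mathrm S_2$ extension property is the clean way to finish.
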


\begin{proposition} \label{prop_Losev}
For every $G$-orbit $O \subset \mathfrak g$, one has $\FF(O)^K =
\Quot (\FF[O]^K)$.
\end{proposition}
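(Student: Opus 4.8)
The plan is to realize $\FF(O)^K$ as the fraction field of $\FF[O]^K$ by exploiting the affineness of the $G$-orbit $O$ together with Lemma~\ref{lemma_integral_closure} and a finiteness result of Losev (which is presumably the content of the reference \cite{Los}). Since the inclusion $\Quot(\FF[O]^K) \subset \FF(O)^K$ is automatic, the whole difficulty lies in the reverse inclusion: given a $K$-invariant rational function $\varphi$ on $O$, one must write $\varphi = f/g$ with $f, g \in \FF[O]^K$.

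First I would reduce to showing that $\FF(O)^K$ is the fraction field of \emph{some} finitely generated $K$-invariant subalgebra of $\FF[O]$, and more precisely that $\FF(O)^K = \Quot(\FF[O])^K \cap \FF(O)^K$ is "generated" by invariant regular functions. The key input is that $O$ is affine (being a $G$-orbit in $\mathfrak g$ of a reductive group, by Lemma~\ref{lemma_integral_closure} its coordinate ring is integrally closed and it is a smooth affine variety), so $\FF[O]$ is a finitely generated $\FF$-algebra and $\FF(O) = \Quot(\FF[O])$. Now for any rational $K$-invariant $\varphi \in \FF(O)^K$, consider its ideal of denominators $I = \{h \in \FF[O] : h\varphi \in \FF[O]\}$; this is a $K$-stable ideal of $\FF[O]$. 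The subtle point — and what I expect to be the main obstacle — is that $K$ need not be reductive in a way that makes invariants behave well after localization, and one cannot naively average: the issue is to produce a \emph{$K$-invariant} denominator and numerator simultaneously.

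This is exactly where Losev's result enters: I would invoke it to guarantee that the invariant-theoretic quotient behaves well, e.g. that $\FF[O]^K$ is finitely generated and that the quotient morphism $O \to \Spec \FF[O]^K$ has the expected generic fiber structure, so that rational invariants are quotients of regular invariants. Concretely, Losev's theorem (on the structure of the algebra of $K$-invariants on an affine variety with good properties, likely a statement that $\FF[O]^K$ is a finitely generated integrally closed domain whose fraction field is $\FF(O)^K$, or a statement controlling the "horospherical"/generic structure of the $K$-action) should directly yield $\Quot(\FF[O]^K) = \FF(O)^K$. I would then spell out why the hypotheses of Losev's result apply to $O$: it is a smooth affine $G$-variety, hence also a $K$-variety with finitely generated coordinate ring, and Lemma~\ref{lemma_integral_closure} ensures $\FF[O]$ is normal. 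The remaining work is bookkeeping: checking that Losev's hypotheses are met and that his conclusion is literally the displayed equality, with no gap coming from $K$ possibly being non-connected or from the orbit being non-closed in $\mathfrak g$ (its closure contains extra nilpotents, but $O$ itself is affine by the cited lemma, which is precisely why that lemma is invoked here).
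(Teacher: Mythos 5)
Your overall strategy---reduce to a citation of Losev's paper after checking normality---is the same as the paper's, whose entire proof is: Lemma~\ref{lemma_integral_closure} implies that $\Spec \FF[O]$ is normal, and then the claim follows from \cite[Corollary~3.4.1]{Los} together with \cite[Theorem~1.2.4, part~1]{Los}. However, your write-up contains two genuine errors in the supporting reasoning. First, $O$ is \emph{not} affine in general: a $G$-orbit in $\mathfrak g$ is only quasi-affine (e.g.\ a nonzero nilpotent orbit is never closed, and by Matsushima's criterion it is affine only when the stabilizer is reductive, which fails for nilpotent elements). What Lemma~\ref{lemma_integral_closure} actually gives is that $\FF[O]$ is the normalization of $\FF[\overline{O}]$, hence finitely generated and integrally closed, so that the \emph{affinization} $\Spec \FF[O]$ is a normal affine variety containing $O$ as an open subset; this distinction is exactly why the paper needs two separate statements from \cite{Los} rather than one.

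Second, and more seriously, the mechanism you propose---that the quotient morphism $O \to \Spec(\FF[O]^K)$ has "the expected generic fiber structure, so that rational invariants are quotients of regular invariants"---is not a valid general principle for reductive group actions on affine varieties. For example, $\FF^\times$ acting on $\FF^2$ by scaling has $\FF[\FF^2]^{\FF^\times} = \FF$ but $\FF(\FF^2)^{\FF^\times} = \FF(x/y)$, so $\Quot(\FF[X]^K) \subsetneq \FF(X)^K$ can certainly occur. The equality asserted in the proposition is therefore a special feature of the situation at hand, and the hypothesis that makes Losev's theorem applicable is the \emph{Hamiltonian} structure: $O$ is a (co)adjoint orbit, hence symplectic, and the $K$-action on it admits a moment map. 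Your proposal never invokes this structure, and without it the "bookkeeping" you defer to cannot be completed. To repair the argument you should (i) replace the affineness claim by the statement that $\Spec\FF[O]$ is normal, and (ii) identify the Hamiltonian hypothesis explicitly as the reason Losev's results apply, rather than appealing to a general invariant-theoretic principle that is false.
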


\begin{proof}
Lemma~\ref{lemma_integral_closure} implies that the variety $\Spec
\FF[O]$ is normal. Now the required result follows
from~\cite[Corollary~3.4.1]{Los} and~\cite[Theorem~1.2.4,
part~1]{Los}.
\end{proof}

\begin{proof}[Proof of Proposition~\textup{\ref{prop_two_nilpotent_orbits_coisotropic}}]
Suppose that the action $K : O_2$ is coisotropic. Then by
Proposition~\ref{prop_coisotropic_Poisson_commutative} the field
$\FF(O_2)^K$ is Poisson-commutative. Therefore the algebra
$\FF[\overline{O}_2]^K$ is Poisson-commutative as well. Consider the
restriction map $\FF[\overline{O}_2] \to \FF[\overline{O}_1]$. It is
surjective and a $K$-module homomorphism, hence the image of
$\FF[\overline{O}_2]^K$ coincides with $\FF[\overline{O}_1]^K$. It
follows that the latter algebra is Poisson-commutative. Let us show
that the algebra $\FF[O_1]^K$ is also Poisson-commutative. Let $f
\in \FF[O_1]^K$ be an arbitrary element. It follows from
Lemma~\ref{lemma_integral_closure} that $f$ satisfies an equation of
the form
\begin{equation} \label{eqn_integral_dependence}
f^n + c_{n-1}f^{n-1} + \ldots + c_1 f + c_0 = 0,
\end{equation}
where $n \ge 1$ and $c_0, \ldots, c_{n-1} \in \FF[\overline{O}_1]$.
Applying the operator of ``averaging over~$K$'' (which is also known
as the Reynolds operator; see~\cite[\S\,3.4]{VP}) we may assume that
$c_0, \ldots, c_{n-1} \in \FF[\overline{O}_1]^K$. Moreover, we shall
assume that the number~$n$ is minimal among all equations of the
form~(\ref{eqn_integral_dependence}). For an arbitrary element $g
\in \FF[\overline{O}_1]^K$ we have $\lbrace c_i, g \rbrace = 0$ for
all $i = 0, \ldots, n-1$. Therefore, applying the Poisson bracket
with $g$ to both sides of~(\ref{eqn_integral_dependence}), we obtain
$$
(nf^{n-1} + (n-1)c_{n-1}f^{n-2} + \ldots + c_1) \lbrace f, g \rbrace
= 0.
$$
Since $n$ is minimal, the expression $nf^{n-1} + (n-1)c_{n-1}f^{n-2}
+ \ldots + c_1$ is different from zero, hence $\lbrace f, g \rbrace
= 0$. Consequently, $\lbrace \FF[O_1]^K, \FF[\overline{O}_1]^K
\rbrace = 0$. Applying the same argument to an arbitrary element $g
\in \FF[O_1]^K$, again we get $\lbrace f, g \rbrace =\nobreak 0$,
hence the algebra $\FF[O_1]^K$ is Poisson-commutative. Then
Proposition~\ref{prop_Losev} implies that the field $\FF(O_1)^K$ is
also Poisson-commutative. Applying
Proposition~\ref{prop_coisotropic_Poisson_commutative} we find that
the action $K : O_1$ is coisotropic.
\end{proof}

\begin{remark}
In the case $G = \GL_n$ (or $\SL_n$) the proof of
Proposition~\ref{prop_two_nilpotent_orbits_coisotropic} simplifies.
Namely, as was proved in~\cite{KP}, in this case the closure of
every $G$-orbit in~$\mathfrak g$ is normal. Hence by
Lemma~\ref{lemma_integral_closure} we have $\FF[O_1] =
\FF[\overline{O}_1]$, and so $\FF[O_1]^K = \FF[\overline{O}_1]^K$.
\end{remark}

\section{The partial order on the set of nil-equivalence
classes\\of $V$-flag varieties} \label{sect_PO_on_FV}

Throughout this section we fix a vector space $V$ of dimension~$d$.

\subsection{Nilpotent orbits in~$\mathfrak{gl}(V)$}
\label{subsec_nilpotent_orbits}

A composition $(a_1, \ldots, a_s)$ of $d$ is said to be a
\textit{partition} if $a_1 \ge \ldots \ge\nobreak a_s$.

The following fact is well known.

\begin{theorem}
There is a bijection between the nilpotent orbits in
$\mathfrak{gl}(V)$ and the partitions of~$d$. Under this bijection,
the orbit corresponding to a partition $(a_1, \ldots, a_s)$ consists
of all matrices whose Jordan normal form has zeros on the diagonal
and the block sizes are $a_1, \ldots, a_s$ up to a permutation.
\end{theorem}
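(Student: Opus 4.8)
The plan is to deduce the statement from the classical theory of the Jordan normal form of a nilpotent operator. First I would regard $V$ as a module over the polynomial ring $\FF[t]$, with $t$ acting via a fixed nilpotent operator $X \in \mathfrak{gl}(V)$; nilpotency of~$X$ means that $V$ is a torsion $\FF[t]$-module annihilated by a power of~$t$. Applying the structure theorem for finitely generated modules over the principal ideal domain~$\FF[t]$, one obtains an isomorphism of $\FF[t]$-modules
$$
V \simeq \FF[t] / (t^{a_1}) \oplus \ldots \oplus \FF[t] / (t^{a_s})
$$
with $a_1 \ge \ldots \ge a_s \ge 1$ and $a_1 + \ldots + a_s = d$. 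Choosing a basis of~$V$ adapted to this decomposition shows that $X$ is $\GL(V)$-conjugate to the direct sum of nilpotent Jordan blocks of sizes $a_1, \ldots, a_s$. Thus every nilpotent orbit in~$\mathfrak{gl}(V)$ contains a matrix in such ``Jordan form'', and sending it to the partition $(a_1, \ldots, a_s)$ defines a map from the set of nilpotent orbits to the set of partitions of~$d$. This map is surjective because, conversely, any direct sum of nilpotent Jordan blocks is a nilpotent operator realizing the given partition.

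The remaining point is injectivity: two direct sums of nilpotent Jordan blocks with different multisets of block sizes must lie in different $\GL(V)$-orbits. For this I would use that the ranks $\rk(X^j)$, $j \ge 0$, are conjugation invariants of~$X$. A direct computation with a single nilpotent Jordan block $J_a$ of size~$a$ gives $\rk(J_a^{\,j}) = \max(a - j, 0)$, so for $X$ equal to a direct sum of blocks of sizes $a_1, \ldots, a_s$ one has
$$
\rk(X^j) = \sum_{i=1}^{s} \max(a_i - j, \, 0).
$$
Hence $\rk(X^{j-1}) - \rk(X^j)$ is the number of indices~$i$ with $a_i \ge j$, which shows that the whole multiset $\{ a_1, \ldots, a_s \}$ is determined by the sequence $\bigl( \rk(X^j) \bigr)_{j \ge 0}$. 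Therefore distinct partitions give distinct orbits, so the map above is a bijection, and by its construction the orbit corresponding to $(a_1, \ldots, a_s)$ consists precisely of those matrices whose Jordan normal form has zero diagonal and block sizes $a_1, \ldots, a_s$ up to a permutation.

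I do not expect any genuine obstacle here: the whole argument reduces to standard linear algebra --- the structure theorem over $\FF[t]$ together with the elementary rank computation for Jordan blocks --- and one could alternatively simply refer to a standard source such as~\cite[\S\,3.1]{CM}. The only step requiring a little care is the bookkeeping in the second paragraph showing that the rank data recovers the partition, which is the short computation displayed there.
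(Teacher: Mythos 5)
Your argument is correct and complete: the structure theorem for finitely generated torsion modules over $\FF[t]$ gives existence of the Jordan form, and the rank computation $\rk(X^{j-1}) - \rk(X^j) = |\lbrace i \mid a_i \ge j \rbrace|$ correctly recovers the partition from conjugation invariants, proving injectivity. The paper states this theorem as a well-known fact and gives no proof of its own, so there is nothing to compare against; your write-up is the standard argument and would serve as a valid proof (or one could simply cite~\cite[\S\,3.1]{CM} as you suggest).
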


For every partition $\mathbf a = (a_1, \ldots, a_s)$ of~$d$ we
denote by $O_{\mathbf{a}}$ the corresponding nilpotent orbit
in~$\mathfrak{gl}(V)$.

We now introduce a partial order on the set of partitions of~$d$ in
the following way. For two partitions $\mathbf a = (a_1, \ldots,
a_s)$ and $\mathbf b = (b_1, \ldots, b_t)$ we write $\mathbf a
\preccurlyeq \mathbf b$ (or $\mathbf b \succcurlyeq \mathbf a$) if
$$
a_1 + \ldots + a_i \le b_1 + \ldots + b_i \text{ for all } i = 1,
\ldots, d.
$$
(In this formula we put $a_j = 0$ for $j > s$ and $b_j = 0$ for $j
> t$.)

\begin{theorem}[{see~\cite[Theorem~6.2.5]{CM}}]
\label{thm_order_orbits} Let $\mathbf a$ and $\mathbf b$ be two
partitions of~$d$. The following conditions are equivalent:

\textup{(a)} $O_{\mathbf a} \subset \overline{O}_{\mathbf b}$;

\textup{(b)} $\mathbf a \preccurlyeq \mathbf b$.
\end{theorem}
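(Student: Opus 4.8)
The plan is to prove the equivalence by combining two classical facts: the description of orbit closures in $\mathfrak{gl}(V)$ via ranks of powers, and the combinatorial identity relating those ranks to partial sums of the conjugate partition. Concretely, for a nilpotent matrix $A$ with Jordan type $\mathbf a = (a_1 \ge \ldots \ge a_s)$, one has, for every $j \ge 1$,
$$
\rk A^j = \sum_{i=1}^{s} \max(a_i - j, 0) = d - (a^*_1 + \ldots + a^*_j),
$$
where $\mathbf a^* = (a^*_1, a^*_2, \ldots)$ is the conjugate (transpose) partition of $\mathbf a$; equivalently $\dim \Ker A^j = a^*_1 + \ldots + a^*_j$. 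This is immediate from the normal form since each Jordan block of size $a_i$ contributes $\max(a_i-j,0)$ to $\rk A^j$, and the count of blocks of size $\ge k$ is exactly $a^*_k$.

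First I would record the standard degeneration criterion: for two nilpotent matrices $A, B \in \mathfrak{gl}(V)$ one has $A \in \overline{G B}$ (with $G = \GL(V)$) if and only if $\rk A^j \le \rk B^j$ for all $j \ge 1$. The ``only if'' direction is clear because $\rk A^j$ is lower semicontinuous and $G$-invariant. The ``if'' direction is the substantive classical input; I would cite it from a standard reference (e.g.\ the treatment in \cite{CM} or Gerstenhaber's work), noting that the argument proceeds by induction producing an explicit one-parameter family degenerating $B$ to $A$ when the rank inequalities hold. Combining this with the rank formula above, the condition $O_{\mathbf a} \subset \overline{O}_{\mathbf b}$ becomes
$$
d - (a^*_1 + \ldots + a^*_j) \le d - (b^*_1 + \ldots + b^*_j) \quad\text{for all } j,
$$
that is, $a^*_1 + \ldots + a^*_j \ge b^*_1 + \ldots + b^*_j$ for all $j$, which says exactly $\mathbf a^* \succcurlyeq \mathbf b^*$ in the dominance order on partitions.

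It then remains to check that dominance is reversed under conjugation: $\mathbf a \preccurlyeq \mathbf b$ if and only if $\mathbf b^* \preccurlyeq \mathbf a^*$. This is a purely combinatorial lemma about Young diagrams; I would prove it by the symmetric bookkeeping identity $\sum_{i \le k} a_i = \sum_{j \ge 1} \min(a^*_j, k) = k \cdot a^*_k{}' + \text{(tail)}$—more cleanly, using $\sum_{i=1}^{k} a_i + \sum_{j=1}^{m} b^*_j \ge km$ for the complementary indices—or simply invoke it as well known (it appears in \cite{CM} in the surrounding discussion, and in Macdonald's book on symmetric functions). Chaining the two equivalences gives $O_{\mathbf a} \subset \overline{O}_{\mathbf b} \iff \mathbf a^* \succcurlyeq \mathbf b^* \iff \mathbf a \preccurlyeq \mathbf b$, as desired.

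The main obstacle is the ``if'' direction of the rank criterion for orbit degeneration—the assertion that the rank inequalities $\rk A^j \le \rk B^j$ actually force $A$ into the closure of the orbit of $B$. Semicontinuity gives the easy inclusion for free, but the converse requires either the explicit construction of a degenerating family or an appeal to the classification of orbit closures; since the statement is cited as \cite[Theorem~6.2.5]{CM}, I would treat this as a black box and keep the write-up focused on the two rank/conjugation computations, which are elementary.
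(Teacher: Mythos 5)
Your outline is correct, but note that the paper itself gives no proof of this statement: it is quoted verbatim from \cite[Theorem~6.2.5]{CM} as a known classical fact (the Gerstenhaber--Hesselink description of nilpotent orbit closures), so there is no ``paper's proof'' to compare against. Your sketch is the standard argument: the identity $\rk A^j = d - (a^*_1 + \ldots + a^*_j)$ is correct, the reduction of closure to the rank inequalities is the right skeleton (with the ``if'' direction being the genuine content, which you rightly flag and defer to the literature), and the reversal of dominance under conjugation is exactly the paper's Proposition~\ref{prop_involution_order}, itself cited from \cite[Lemma~6.3.1]{CM}. In short, you black-box the same ingredient the authors black-box, and the elementary reductions you supply around it are accurate.
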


\subsection{The correspondence between $V$-flag varieties and
nilpotent orbits in~$\mathfrak{gl}(V)$}
\label{subsec_flags&nilpotent_orbits}

For each composition $\mathbf a = (a_1, \ldots, a_s)$ of~$d$ one
defines the \textit{dual} partition $\mathbf{a}^\top = (\widehat
a_1, \ldots, \widehat a_{t})$ of~$d$ by the following rule:
$$
\widehat a_i = |\lbrace j \mid a_j \ge i \rbrace|, \quad i=1,
\ldots, t.
$$
Obviously, for every composition $\mathbf{b}$ of~$d$ obtained
from~$\mathbf{a}$ by a permutation one has ${\mathbf{a}^\top =
\mathbf{b}^\top}$. Besides, it is not hard to see that the operation
$\mathbf{a} \mapsto \mathbf{\widehat a}$ is an involution on the set
of partitions.

\begin{proposition}[{see \cite[Lemma~6.3.1]{CM}}]
\label{prop_involution_order} Let $\mathbf a$ and $\mathbf b$ be two
partitions of~$d$. The following conditions are equivalent:

\textup{(a)} $\mathbf a \preccurlyeq \mathbf b$;

\textup{(b)} $\mathbf a^\top \succcurlyeq \mathbf b^\top$.
\end{proposition}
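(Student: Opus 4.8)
The plan is to prove the equivalence by an explicit combinatorial computation with the two partitions, exploiting the self-duality of the majorization order. Recall that for partitions $\mathbf a=(a_1,\ldots)$ and $\mathbf b=(b_1,\ldots)$ (padded by zeros) we have $\mathbf a\preccurlyeq\mathbf b$ iff $\sum_{j\le i}a_j\le\sum_{j\le i}b_j$ for all $i$. The dual partition $\mathbf a^\top$ has parts $\widehat a_i=|\{j:a_j\ge i\}|$, and one checks directly that $\widehat a_i=\sum_{j\ge 1}[a_j\ge i]$, i.e.\ the dual is obtained by transposing the Young diagram. Since the statement is symmetric under swapping $\mathbf a\leftrightarrow\mathbf b$ together with $\mathbf a^\top\leftrightarrow\mathbf b^\top$, and since $(\ )^\top$ is an involution, it suffices to prove only the implication (a)$\Rightarrow$(b): assuming $\mathbf a\preccurlyeq\mathbf b$, show $\mathbf a^\top\succcurlyeq\mathbf b^\top$, i.e.\ $\sum_{i\le k}\widehat b_i\le\sum_{i\le k}\widehat a_i$ for every $k$.

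First I would establish the key identity expressing partial sums of the dual in terms of the original partition. For a partition $\mathbf a=(a_1,a_2,\ldots)$ and any $k\ge 1$, a counting of lattice points in the Young diagram in the two possible orders gives
$$
\sum_{i=1}^{k}\widehat a_i=\sum_{j\ge 1}\min(a_j,k).
$$
Indeed the left side counts cells of the diagram of $\mathbf a$ lying in the first $k$ rows of the transposed diagram, equivalently cells in the first $k$ columns of the diagram of $\mathbf a$, and row $j$ contributes $\min(a_j,k)$ such cells. With this identity in hand, proving (a)$\Rightarrow$(b) reduces to showing that $\mathbf a\preccurlyeq\mathbf b$ implies $\sum_j\min(a_j,k)\ge\sum_j\min(b_j,k)$ for every fixed $k$.

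For the remaining inequality I would use the standard fact that the function $t\mapsto\min(t,k)$ is concave and nondecreasing, together with the well-known characterization that $\mathbf a\preccurlyeq\mathbf b$ (with equal sums $\sum a_j=\sum b_j=d$) is equivalent to $\sum_j\phi(b_j)\le\sum_j\phi(a_j)$ for every concave function $\phi$ on the relevant range. Alternatively, to keep the argument self-contained and purely elementary, I would give a direct Abel-summation proof: write $\min(a_j,k)=\sum_{i=1}^{k}[a_j\ge i]=k-\sum_{i=1}^{k}[a_j< i]$ and manipulate the partial sums, or use that $\min(t,k)$ is a decreasing limit/integral of indicators and the hypothesis controls exactly the partial sums $\sum_{j\le i}a_j$. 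The concavity route is cleanest: by summation by parts, $\sum_j\bigl(\min(b_j,k)-\min(a_j,k)\bigr)$ can be rewritten as a nonnegative combination of the differences $\sum_{j\le i}(b_j-a_j)\ge 0$ with coefficients coming from the (nonnegative) second differences of the concave sequence $\min(\,\cdot\,,k)$ evaluated along the merged support; this is where the concavity and the hypothesis combine to give the sign.

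The main obstacle is purely bookkeeping: one must handle the zero-padding of partitions of possibly different lengths consistently, and be careful that the summation-by-parts identity is applied over a common index range so that all boundary terms either vanish or have the correct sign. Once the min-identity $\sum_{i\le k}\widehat a_i=\sum_j\min(a_j,k)$ is set up correctly, the rest is routine; the symmetry remark then upgrades (a)$\Rightarrow$(b) to the full equivalence, completing the proof.
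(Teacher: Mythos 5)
Your argument is correct, but note that the paper does not prove this proposition at all: it simply cites \cite[Lemma~6.3.1]{CM}, so any proof you give is automatically a "different route." Your two key steps are both sound. The identity $\sum_{i=1}^{k}\widehat a_i=\sum_{j}\min(a_j,k)$ follows exactly as you say from $\widehat a_i=\sum_j[a_j\ge i]$ and exchanging the order of summation, and the reduction to the single implication (a)$\Rightarrow$(b) is legitimate because $(\,\cdot\,)^\top$ is an involution (a fact the paper records just before the proposition): applying (a)$\Rightarrow$(b) to the pair $(\mathbf b^\top,\mathbf a^\top)$ recovers (b)$\Rightarrow$(a). For the remaining inequality you only need the easy direction of the concave-function characterization of dominance --- Karamata's inequality for the concave, nondecreasing function $t\mapsto\min(t,k)$ applied to the zero-padded sequences (padding is harmless since $\min(0,k)=0$) --- and that gives $\sum_j\min(b_j,k)\le\sum_j\min(a_j,k)$, which is precisely $\sum_{i\le k}\widehat b_i\le\sum_{i\le k}\widehat a_i$. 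The textbook proof in Collingwood--McGovern is a more hands-on induction on box-moving between rows of the Young diagram; your route trades that combinatorial bookkeeping for one appeal to a standard majorization inequality, which is a reasonable exchange, though if you want the write-up to be fully self-contained you should either prove the Karamata step or carry out the Abel-summation computation you sketch rather than leaving both as alternatives.
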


Let $\mathbf a=(a_1,\dots, a_s)$ be an arbitrary composition of~$d$.
We denote by~$\mathbf a^\natural$ the partition of~$d$ obtained from
$\mathbf a$ by arranging its elements in the non-increasing order.

We fix a basis $e_1, \dots, e_d$ of~$V$. Let $P_{\mathbf a}$ be the
parabolic subgroup of~$\GL(V)$ that is the stabilizer of the point
$(V_1, \dots, V_s) \in \Fl_{\mathbf a}(V)$, where $V_i = \langle
e_1, \dots, e_{a_1+\ldots+a_i} \rangle$ for $i = 1, \dots, s$. It is
easy to see that, relatively to the basis $e_1, \dots, e_d$,
$P_{\mathbf a}$ consists of all nondegenerate block upper-triangular
matrices whose diagonal blocks have sizes $a_1, \dots, a_s$.

\begin{proposition} \label{prop_nilp_comp}
One has $\mathcal N(\Fl_{\mathbf{a}}(V)) = O_{\mathbf a^\top}$.
\end{proposition}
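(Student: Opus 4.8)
The plan is to compute $\mathcal N(\Fl_{\mathbf a}(V))$ directly from its definition~\eqref{eqn_nilp_orbit} by identifying the open $P_{\mathbf a}$-orbit $O_{P_{\mathbf a}}$ in the Lie algebra $\mathfrak n_{\mathbf a}$ of the unipotent radical $N_{\mathbf a}$ of $P_{\mathbf a}$, and then determining which nilpotent orbit $\GL(V)$ sweeps it into. Since by Theorem~\ref{thm_order_orbits} a nilpotent orbit is pinned down by the Jordan block sizes, and since the partition $\mathbf a^\top$ does not change under permutations of $\mathbf a$, I may assume without loss of generality that $\mathbf a$ is already arranged in non-increasing order, i.e. $\mathbf a = \mathbf a^\natural$ is a partition; equivalently I work with $P_{\mathbf a}$ as the block upper-triangular parabolic with diagonal block sizes $a_1 \ge \ldots \ge a_s$. (Alternatively one invokes Proposition~\ref{prop_permutations}/Corollary~\ref{crl_equiv_on_V-flags}, but it is cleaner just to reduce to the partition case here.)

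First I would describe $\mathfrak n_{\mathbf a}$ explicitly: relative to the basis $e_1, \dots, e_d$, it consists of all block strictly-upper-triangular matrices with the given block structure. A generic element $x \in \mathfrak n_{\mathbf a}$ acts on the flag $V_0 \subset V_1 \subset \cdots \subset V_s = V$ by $x(V_i) \subset V_{i-1}$, so $x$ is nilpotent with $x^s = 0$. The key claim is that the generic $x$ is \emph{regular within this class} in the sense that its Jordan type is as large as possible among nilpotents $y$ satisfying $y(V_i)\subset V_{i-1}$; by Richardson's theorem (cited before~\eqref{eqn_nilp_orbit}) such a generic $x$ exists and forms the dense orbit $O_{P_{\mathbf a}}$. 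Concretely, one shows $\dim \ker(x^j \mid_V) = \dim V_j = a_1 + \dots + a_j$ for all $j$ — the inclusion $\ker(x^j) \supset V_j$ is automatic, and for generic $x$ the block maps $V_i/V_{i-1} \to V_{i-1}/V_{i-2}$ are of maximal rank, forcing equality by a straightforward rank count (or by a dimension count matching $\dim O_{P_{\mathbf a}} = 2\dim\mathfrak n_{\mathbf a}$ against $\dim O_{\mathbf a^\top}$, using $\dim\mathcal N(G/P)=2\dim N$ already quoted in the Introduction). From $\dim\ker(x^j) = a_1 + \dots + a_j$ one reads off the Jordan type of $x$: the number of Jordan blocks of size $\ge i$ equals $\dim\ker(x^i) - \dim\ker(x^{i-1}) = a_i$, which is precisely the definition of the dual partition, giving Jordan type $\mathbf a^\top$. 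Hence $O_{P_{\mathbf a}} \subset O_{\mathbf a^\top}$ and therefore $\mathcal N(\Fl_{\mathbf a}(V)) = \GL(V)\, O_{P_{\mathbf a}} = O_{\mathbf a^\top}$.

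The main obstacle I anticipate is justifying that the \emph{generic} element of $\mathfrak n_{\mathbf a}$ really attains the maximal Jordan type $\mathbf a^\top$ — i.e. that the rank conditions cutting out smaller Jordan types are each nonempty-complement conditions, so their union is still a proper closed subset and the complement (where the type is exactly $\mathbf a^\top$) is dense. The cleanest way around this is the dimension argument: exhibit one explicit $x_0 \in \mathfrak n_{\mathbf a}$ of Jordan type $\mathbf a^\top$ (built as a direct sum of single Jordan chains threading through the flag, one chain of length $\ell$ for each column of the Young diagram of $\mathbf a$), note that the orbit $\GL(V) x_0 = O_{\mathbf a^\top}$ has dimension $2\dim N_{\mathbf a}$ by the formula $\dim O_{\mathbf b} = d^2 - \sum (\mathbf b^\top_i)^2 $ together with $\mathbf a^{\top\top}=\mathbf a^\natural$ and a short computation, and observe that $O_{\mathbf a^\top}\cap\mathfrak n_{\mathbf a}$ therefore has dimension $\ge \dim O_{\mathbf a^\top} - \dim(\mathfrak{gl}(V)/\mathfrak n_{\mathbf a}) = 2\dim N_{\mathbf a} - (d^2 - \dim\mathfrak n_{\mathbf a})$; comparing with $\dim\mathfrak n_{\mathbf a}$ and using $\dim\mathfrak p_{\mathbf a} + \dim\mathfrak n_{\mathbf a} = d^2$ shows this dimension equals $\dim\mathfrak n_{\mathbf a}$, so $O_{\mathbf a^\top}\cap\mathfrak n_{\mathbf a}$ is dense in $\mathfrak n_{\mathbf a}$, forcing $O_{P_{\mathbf a}} = O_{\mathbf a^\top}\cap O_{P_{\mathbf a}}$ and hence $\mathcal N(\Fl_{\mathbf a}(V)) = O_{\mathbf a^\top}$. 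This sidesteps any delicate genericity verification in favour of a single explicit witness plus bookkeeping with the formula for orbit dimensions.
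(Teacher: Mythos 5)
Your overall strategy (compute directly in $\mathfrak n_{\mathbf a}$, exhibit an explicit element of Jordan type $\mathbf a^\top$, and finish by a dimension comparison) is essentially the standard proof of Kraft's theorem, whereas the paper simply cites Kraft for the partition case and Johnston--Richardson for the reduction to it. As written, however, your argument has two gaps. First, the reduction ``WLOG $\mathbf a=\mathbf a^\natural$'' is not free: the right-hand side $O_{\mathbf a^\top}$ is permutation-invariant, but the left-hand side $\mathcal N(\Fl_{\mathbf a}(V))$ is not so a priori, and invoking Corollary~\ref{crl_equiv_on_V-flags} here would be circular, since that corollary is deduced from this very proposition. (The paper's non-circular justification is \cite[Theorem~2.7]{JR}: the Richardson orbit depends only on the conjugacy class of the Levi.) This matters for your ``generic element'' computation, because the claim $\dim\ker(x^j)=a_1+\dots+a_j$ is simply false for non-monotone compositions: for $\mathbf a=(1,2)$ a generic $x\in\mathfrak n_{\mathbf a}$ has $\dim\ker x=2\ne a_1$.

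Second, and more seriously, the dimension count you use to ``sidestep'' the genericity issue does not work. The affine dimension theorem gives $\dim\bigl(O_{\mathbf a^\top}\cap\mathfrak n_{\mathbf a}\bigr)\ge \dim O_{\mathbf a^\top}-\codim\mathfrak n_{\mathbf a}=2\dim\mathfrak n_{\mathbf a}-\dim\mathfrak p_{\mathbf a}$, which is \emph{strictly less} than $\dim\mathfrak n_{\mathbf a}$ (as $\dim\mathfrak p_{\mathbf a}>\dim\mathfrak n_{\mathbf a}$), not equal to it; already for $d=2$, $\mathbf a=(1,1)$ the bound is $-1$. So this does not show that $O_{\mathbf a^\top}\cap\mathfrak n_{\mathbf a}$ is dense in $\mathfrak n_{\mathbf a}$. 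The correct way to finish from your explicit witness $x_0$ (which is valid, and valid for an arbitrary composition) is to argue inside $G\mathfrak n_{\mathbf a}$ rather than inside $\mathfrak n_{\mathbf a}$: since $x_0\in O_{\mathbf a^\top}\cap\mathfrak n_{\mathbf a}$, $G$-equivariance gives $O_{\mathbf a^\top}\subset G\mathfrak n_{\mathbf a}=\overline{\mathcal N(\Fl_{\mathbf a}(V))}$; by Richardson's result \cite[Proposition~6]{Rich74} (already quoted in the proof of Proposition~\ref{prop_image_of_moment_map}) this closed irreducible set has dimension $2\dim\mathfrak n_{\mathbf a}=d^2-\sum a_i^2=\dim O_{\mathbf a^\top}$, and an orbit closure contains no other orbit of the same dimension, so $O_{\mathbf a^\top}=\mathcal N(\Fl_{\mathbf a}(V))$. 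With that repair your proof goes through and, unlike the first half of your write-up, requires no reduction to the partition case at all.
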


\begin{proof}
Put $\mathbf c = \mathbf a^\natural$. It is easy to see that Levi
subgroups of $P_{\mathbf{a}}$ and~$P_{\mathbf c}$ are conjugate
in~$\GL(V)$. Then from~\cite[Theorem~2.7]{JR} (see
also~\cite[Theorem~7.1.3]{CM}) it follows that $\mathcal
N(\Fl_{\mathbf{a}}(V)) = \mathcal N(\Fl_{\mathbf c}(V))$. Further,
by~\cite[\S\,2.2, Theorem]{Kra} (see also~\cite[Theorem~7.2.3]{CM})
one has $\mathcal N(\Fl_{\mathbf c}(V)) = O_{\mathbf c^\top}$. Since
$\mathbf a^\top = \mathbf c^\top$, we obtain $\mathcal
N(\Fl_{\mathbf{a}}(V)) = O_{\mathbf a^\top}$.
\end{proof}

\begin{corollary} \label{crl_equiv_on_V-flags}
Let $\mathbf a$ and $\mathbf b$ be two compositions of~$d$. The
following conditions are equivalent:

\textup{(a)} $\Fl_{\mathbf a}(V) \sim \Fl_{\mathbf b}(V)$;

\textup{(b)} $\mathbf a^\natural = \mathbf b^\natural$.
\end{corollary}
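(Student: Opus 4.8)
The plan is to derive Corollary~\ref{crl_equiv_on_V-flags} directly from Proposition~\ref{prop_nilp_comp} together with the definition of nil-equivalence. By definition, $\Fl_{\mathbf a}(V)\sim\Fl_{\mathbf b}(V)$ means $\mathcal N(\Fl_{\mathbf a}(V))=\mathcal N(\Fl_{\mathbf b}(V))$, and by Proposition~\ref{prop_nilp_comp} this equality reads $O_{\mathbf a^\top}=O_{\mathbf b^\top}$. Since distinct partitions of~$d$ label distinct nilpotent orbits in~$\mathfrak{gl}(V)$ (the bijection of Theorem in~\S\ref{subsec_nilpotent_orbits}), the condition $O_{\mathbf a^\top}=O_{\mathbf b^\top}$ is equivalent to $\mathbf a^\top=\mathbf b^\top$.

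It then remains to observe that $\mathbf a^\top=\mathbf b^\top$ if and only if $\mathbf a^\natural=\mathbf b^\natural$. For one direction, recall from the text preceding Proposition~\ref{prop_involution_order} that the dual partition depends only on the multiset of parts of a composition: any composition obtained from $\mathbf a$ by a permutation has the same dual, so in particular $\mathbf a^\top=(\mathbf a^\natural)^\top$, and likewise for~$\mathbf b$. Hence $\mathbf a^\natural=\mathbf b^\natural$ immediately gives $\mathbf a^\top=\mathbf b^\top$. For the converse, one uses that the operation $\mathbf c\mapsto\mathbf c^\top$ is an involution on the set of partitions (also noted in the same passage): applying it to the partitions $\mathbf a^\natural$ and $\mathbf b^\natural$, from $(\mathbf a^\natural)^\top=\mathbf a^\top=\mathbf b^\top=(\mathbf b^\natural)^\top$ we get $\mathbf a^\natural=(\mathbf a^\natural)^{\top\top}=(\mathbf b^\natural)^{\top\top}=\mathbf b^\natural$.

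Chaining the three equivalences—$\Fl_{\mathbf a}(V)\sim\Fl_{\mathbf b}(V)\iff\mathbf a^\top=\mathbf b^\top\iff\mathbf a^\natural=\mathbf b^\natural$—yields the corollary. None of the steps presents a real obstacle: Proposition~\ref{prop_nilp_comp} does all the geometric work, and what is left is the elementary bookkeeping that passing to a partition and then to its dual are both well defined on compositions modulo permutation and that dualization is an involution. If anything needs care, it is merely making explicit that $\mathbf a^\top$ literally equals $(\mathbf a^\natural)^\top$, i.e.\ that reordering the parts of a composition before dualizing changes nothing, which is immediate from the counting formula $\widehat a_i=|\{\,j\mid a_j\ge i\,\}|$.
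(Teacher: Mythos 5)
Your proposal is correct and follows exactly the paper's route: the paper's own (one-line) proof cites Proposition~\ref{prop_nilp_comp} together with the fact that $\mathbf a^\top=\mathbf b^\top$ if and only if $\mathbf a^\natural=\mathbf b^\natural$, which is precisely what you establish. Your write-up merely makes explicit the bookkeeping (distinct partitions label distinct orbits; dualization is permutation-invariant and an involution) that the paper leaves implicit.
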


\begin{proof}[Proof \upshape follows from
Proposition~\ref{prop_nilp_comp} and the fact that $\mathbf a^\top =
\mathbf b^\top$ if and only if $\mathbf a^\natural = \mathbf
b^\natural$]
\end{proof}

\begin{corollary} \label{crl_partial_order}
Let $\mathbf a$ and $\mathbf b$ be two compositions of~$d$. The
following conditions are equivalent:

\textup{(a)} $\bl \Fl_{\mathbf a}(V) \br \preccurlyeq \bl
\Fl_{\mathbf b}(V) \br$;

\textup{(b)} $\mathbf a^\top \preccurlyeq \mathbf b^\top$;

\textup{(c)} $\mathbf a^\natural \succcurlyeq \mathbf b^\natural$.
\end{corollary}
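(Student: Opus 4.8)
The plan is to reduce everything to statements already established in the excerpt. The equivalence of (a) and (b) is essentially a restatement of the definitions. Indeed, by definition $\bl \Fl_{\mathbf a}(V) \br \preccurlyeq \bl \Fl_{\mathbf b}(V) \br$ means that $\mathcal N(\Fl_{\mathbf a}(V))$ is contained in the closure of $\mathcal N(\Fl_{\mathbf b}(V))$. By Proposition~\ref{prop_nilp_comp} these two orbits are $O_{\mathbf a^\top}$ and $O_{\mathbf b^\top}$ respectively, so the condition becomes $O_{\mathbf a^\top} \subset \overline{O}_{\mathbf b^\top}$. Since $\mathbf a^\top$ and $\mathbf b^\top$ are partitions of~$d$, Theorem~\ref{thm_order_orbits} translates this into $\mathbf a^\top \preccurlyeq \mathbf b^\top$, which is exactly~(b).

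For the equivalence of (b) and (c), I would first observe that $\mathbf a^\top$ and $\mathbf b^\top$ are partitions and that taking the dual partition is an involution on the set of partitions (as noted just before Proposition~\ref{prop_involution_order}). Hence $(\mathbf a^\top)^\top = \mathbf a^\natural$ and likewise $(\mathbf b^\top)^\top = \mathbf b^\natural$; here one uses that for a composition $\mathbf a$ the partition $\mathbf a^\natural$ is obtained by sorting, and that $\mathbf a^\top$ depends only on the multiset of parts, so $\mathbf a^\top = (\mathbf a^\natural)^\top$ and applying $\top$ again returns $\mathbf a^\natural$. Now apply Proposition~\ref{prop_involution_order} to the two partitions $\mathbf a^\top$ and $\mathbf b^\top$: the relation $\mathbf a^\top \preccurlyeq \mathbf b^\top$ holds if and only if $(\mathbf a^\top)^\top \succcurlyeq (\mathbf b^\top)^\top$, i.e. if and only if $\mathbf a^\natural \succcurlyeq \mathbf b^\natural$. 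This gives (b) $\Leftrightarrow$ (c) and completes the proof.

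There is no real obstacle here; the only point requiring a moment of care is the bookkeeping that the dualization operation $\mathbf a \mapsto \mathbf a^\top$ factors through $\mathbf a \mapsto \mathbf a^\natural$ and that iterating it twice on a partition is the identity, so that the hypotheses of Proposition~\ref{prop_involution_order} genuinely apply to $\mathbf a^\top$ and $\mathbf b^\top$. Everything else is a direct chaining of Proposition~\ref{prop_nilp_comp}, Theorem~\ref{thm_order_orbits}, and Proposition~\ref{prop_involution_order}, together with the definition of $\preccurlyeq$ on nil-equivalence classes given in the Introduction.
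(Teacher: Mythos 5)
Your argument is correct and is essentially the paper's own proof: (a)$\Leftrightarrow$(b) via Proposition~\ref{prop_nilp_comp} and Theorem~\ref{thm_order_orbits}, and (b)$\Leftrightarrow$(c) via Proposition~\ref{prop_involution_order} applied to the partitions $\mathbf a^\top$ and $\mathbf b^\top$. The extra bookkeeping you supply — that $\mathbf a^\top=(\mathbf a^\natural)^\top$ and that dualization is an involution on partitions, so $(\mathbf a^\top)^\top=\mathbf a^\natural$ — is exactly the detail the paper leaves implicit.
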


\begin{proof}
Equivalence of (a) and (b) follows from
Theorem~\ref{thm_order_orbits} and Proposition~\ref{prop_nilp_comp}.
Equivalence of (b) and (c) follows from
Proposition~\ref{prop_involution_order}.
\end{proof}

\subsection{Young diagrams of $V$-flag varieties}
\label{subsec_Young_diagrams}

With every partition $\mathbf a = (a_1, \ldots, a_s)$ we associate
the left-aligned Young diagram $\YD(\mathbf a)$ whose $i$-th row
from the bottom contains $a_i$ boxes. With every variety
$\Fl_{\mathbf a}(V)$ we associate the Young diagram $\YD(\mathbf
a^\natural)$. As an example, we show the Young diagrams of some
$\FF^6$-flag varieties in Figure~\ref{fig_diagrams}.

\begin{figure}[h]

\begin{center}

\begin{picture}(330,47)
\put(0,17){%
\multiput(0,0)(0,10){2}{\line(1,0){50}}%
\multiput(0,0)(10,0){6}{\line(0,1){10}}%
\put(0,20){\line(1,0){10}}%
\multiput(0,10)(10,0){2}{\line(0,1){10}}%
\put(12,-17){$\PP(\FF^6)$}%
}%
\put(80,17){%
\multiput(0,0)(0,10){2}{\line(1,0){40}}%
\multiput(0,0)(10,0){5}{\line(0,1){10}}%
\put(0,20){\line(1,0){20}}%
\multiput(0,10)(10,0){3}{\line(0,1){10}}%
\put(1,-17){$\Gr_2(\FF^6)$}%
}%
\put(151,17){%
\multiput(0,0)(0,10){3}{\line(1,0){30}}%
\multiput(0,0)(10,0){4}{\line(0,1){20}}%
\put(-3,-17){$\Gr_3(\FF^6)$}%
}%
\put(214,17){%
\multiput(0,0)(0,10){2}{\line(1,0){40}}%
\multiput(0,0)(10,0){5}{\line(0,1){10}}%
\multiput(0,20)(0,10){2}{\line(1,0){10}}%
\multiput(0,10)(10,0){2}{\line(0,1){20}}%
\put(-5,-17){$\Fl(1, 1; \FF^6)$}%
}%
\put(293,17){%
\multiput(0,0)(0,10){2}{\line(1,0){30}}%
\multiput(0,0)(10,0){4}{\line(0,1){10}}%
\put(0,20){\line(1,0){20}}%
\put(0,30){\line(1,0){10}}%
\multiput(0,10)(10,0){2}{\line(0,1){20}}%
\put(20,10){\line(0,1){10}}%
\put(-10,-17){$\Fl(1, 2; \FF^6)$}%
}%
\end{picture}

\caption{} \label{fig_diagrams}

\end{center}
\end{figure}

The partial order on the set of nil-equivalence classes of $V$-flag
varieties admits a transparent interpretation in terms of Young
diagrams. Namely, let $\mathbf a$ and $\mathbf b$ be two
compositions of~$d$. Then, by Corollary~\ref{crl_partial_order}, the
condition ${\bl \Fl_{\mathbf a}(V) \br \preccurlyeq \bl \Fl_{\mathbf
b}(V) \br}$ is equivalent to $\mathbf a^\natural \succcurlyeq
\mathbf b^\natural$. According to the description of the partial
order on the set of partitions given in
\S\,\ref{subsec_nilpotent_orbits}, the latter can be interpreted as
follows: the diagram $\YD(\mathbf a^\natural)$ can be obtained from
the diagram $\YD(\mathbf b^\natural)$ by \textit{crumbling}, that
is, by moving boxes from upper rows to lower ones. For example,
using this interpretation it is easy to construct the Hasse diagram
for the partial order on the set of nil-equivalence classes of
$\FF^6$-flag varieties appearing in Figure~\ref{fig_diagrams}. This
diagram is shown in Figure~\ref{fig_Hasse_diagram}.

\begin{figure}[h]
\begin{center}

\begin{picture}(100,98)
\put(50,2){%
\put(-32,87){$\bl \Fl(1, 2; \FF^6) \br$}%
\put(-15,82){\line(-1,-1){10}}%
\put(15,82){\line(1,-1){10}}%
\put(-59,58){$\bl \Fl(1, 1; \FF^6) \br$}%
\put(9,58){$\bl \Gr_3(\FF^6) \br$}%
\put(-15,43){\line(-1,1){10}}%
\put(15,43){\line(1,1){10}}%
\put(-25,29){$\bl \Gr_2(\FF^6) \br$}%
\put(0,24){\line(0,-1){10}}%
\put(-19,00){$\bl \PP(\FF^6) \br$}%
}%
\end{picture}

\caption{} \label{fig_Hasse_diagram}

\end{center}
\end{figure}

\subsection{Necessary sphericity conditions for actions on
$V$-flag varieties}

Let $K \subset \GL(V)$ be a connected reductive subgroup. In this
subsection, making use of the description of the partial order on
the set $\mathscr F(\GL(V)) / \! \sim$ given
in~\S\S\,\ref{subsec_flags&nilpotent_orbits},~\ref{subsec_Young_diagrams},
we obtain two necessary conditions for $V$-flag varieties to be
$K$-spherical (see Propositions~\ref{prop_P(V)_is_spherical}
and~\ref{prop_Gr2_is_spherical}). These conditions will be starting
points in the proof of Theorem~\ref{thm_main_theorem}.

Let $\mathbf a$ be a nontrivial composition of~$d$.

\begin{proposition}[{see also~\cite[Theorem~5.8]{Pet}}]
\label{prop_P(V)_is_spherical} If the variety $\Fl_{\mathbf a}(V)$
is $K$-spherical, then so is the variety $\PP(V)$.
\end{proposition}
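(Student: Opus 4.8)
The plan is to deduce this from Theorem~\ref{thm_prec_spherical} together with the combinatorial description of the partial order $\preccurlyeq$ obtained in~\S\,\ref{subsec_flags&nilpotent_orbits}. Recall that $\PP(V) = \Gr_1(V) = \Fl_{\mathbf c_1}(V)$ corresponds to the composition $\mathbf c_1 = (1, d-1)$, whose associated partition is $\mathbf c_1^\natural = (d-1, 1)$. The point is that this partition is the unique \emph{maximal} element of the set of partitions of~$d$ having exactly two parts, and more importantly its dual $\mathbf c_1^\top = (2, 1, \ldots, 1)$ is the unique \emph{minimal} partition of~$d$ other than $(1, 1, \ldots, 1)$ itself in the dominance order — equivalently, by Corollary~\ref{crl_partial_order}, $\bl \PP(V) \br$ is the unique smallest nonzero nil-equivalence class.

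First I would record that for any nontrivial composition $\mathbf a$ of~$d$ one has $\bl \Fl_{\mathbf a}(V) \br \succcurlyeq \bl \PP(V) \br$. By Corollary~\ref{crl_partial_order} this is equivalent to $\mathbf a^\natural \preccurlyeq \mathbf c_1^\natural = (d-1, 1)$, i.e.\ to the statement that every partition $\mathbf b = (b_1, \ldots, b_s)$ of~$d$ with $s \ge 2$ satisfies $b_1 + \ldots + b_i \le (d-1) + 0 + \ldots$ for all~$i$; since $b_1 \le d-1$ (as $s \ge 2$ forces $b_2 \ge 1$) and $b_1 + \ldots + b_i = d$ for $i \ge s$, while $(d-1) + 1 = d$, this inequality holds trivially for every $i$. (Alternatively one can argue directly at the level of flag varieties: $\PP(V)$ is a minimal element of $\mathscr F(\GL(V))$ with respect to~$\le$, hence by the implication $X_1 < X_2 \Rightarrow \bl X_1 \br \prec \bl X_2 \br$ recorded in the Introduction, every nontrivial $V$-flag variety dominates some minimal one, and since $\mathscr F(\GL(V)) /\!\sim$ has a unique minimal element as noted there for type~$\mathsf A$, that element must be $\bl \PP(V) \br$.)

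Having this, the proof concludes as follows. Suppose $\Fl_{\mathbf a}(V)$ is $K$-spherical. If $\bl \Fl_{\mathbf a}(V) \br = \bl \PP(V) \br$, then by Theorem~\ref{thm_equiv_spherical} the variety $\PP(V)$ is $K$-spherical and we are done. Otherwise $\bl \PP(V) \br \prec \bl \Fl_{\mathbf a}(V) \br$, and Theorem~\ref{thm_prec_spherical} applied with $X_2 = \Fl_{\mathbf a}(V)$ and $X_1 = \PP(V)$ yields that the action $K : \PP(V)$ is spherical. This completes the argument.

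I do not expect any real obstacle here: the whole content is packaged into Theorems~\ref{thm_equiv_spherical} and~\ref{thm_prec_spherical}, and the only thing to check is the order relation $\bl \PP(V) \br \preccurlyeq \bl X \br$, which is the elementary combinatorial fact above. The mildly delicate point, worth a sentence in the write-up, is simply to handle the boundary case $\bl \Fl_{\mathbf a}(V) \br = \bl \PP(V) \br$ separately (this happens precisely when $\mathbf a$ is a permutation of $(1, d-1)$), so that one applies Theorem~\ref{thm_equiv_spherical} rather than Theorem~\ref{thm_prec_spherical}; equivalently one may phrase both cases uniformly as $\bl \PP(V) \br \preccurlyeq \bl \Fl_{\mathbf a}(V) \br$ and invoke the obvious common generalization of the two theorems.
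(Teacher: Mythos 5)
Your proof is correct and follows essentially the same route as the paper: reduce to the order relation $\bl \Fl_{\mathbf a}(V) \br \succcurlyeq \bl \PP(V) \br$ via Corollary~\ref{crl_partial_order} and the elementary dominance-order computation with $(d-1,1)$, then invoke Theorems~\ref{thm_equiv_spherical} and~\ref{thm_prec_spherical}. Your explicit separation of the boundary case $\bl \Fl_{\mathbf a}(V) \br = \bl \PP(V) \br$ is in fact slightly more careful than the paper's one-line conclusion, which cites only Theorem~\ref{thm_prec_spherical}.
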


\begin{proof}
One has $\PP(V) = \Fl_{\mathbf b}(V)$ where $\mathbf b = (1, d -
1)$. As $\mathbf a \ne (d)$, one has $\mathbf b^\natural
\succcurlyeq \mathbf a^\natural$. In view of
Corollary~\ref{crl_partial_order} the latter implies that $\bl
\Fl_{\mathbf a}(V) \br \succcurlyeq \bl \PP(V) \br$. It remains to
apply Theorem~\ref{thm_prec_spherical}.
\end{proof}

\begin{corollary} \label{crl_V_is_spherical}
If\, $\Fl_{\mathbf a}(V)$ is a $K$-spherical variety, then $V$ is a
spherical $(K \times \FF^\times)$-module \textup(where $\FF^\times$
acts on $V$ by scalar transformations\textup).
\end{corollary}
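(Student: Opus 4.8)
The plan is to deduce this corollary from Proposition~\ref{prop_P(V)_is_spherical} by translating the $K$-sphericity of $\PP(V)$ into a statement about the module~$V$. First I would recall that $\PP(V)$ is the projectivization of the vector space $V$, and a Borel subgroup $B$ of $K$ acts on $\PP(V)$ through its linear action on~$V$. The group $K \times \FF^\times$, where $\FF^\times$ acts by scalars, acts on $V$, and the scalar subgroup~$\FF^\times$ already acts transitively on the lines through a given nonzero vector; hence a Borel subgroup of $K \times \FF^\times$ (which is $B \times \FF^\times$) has an open orbit in~$V$ if and only if $B$ has an open orbit in~$\PP(V)$.

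More precisely, let me spell out the correspondence. Let $\pi \colon V \setminus \{0\} \to \PP(V)$ be the canonical projection; it is a $\FF^\times$-bundle and is $K$-equivariant. If $B \subset K$ has an open orbit $B \cdot [v] \subset \PP(V)$ for some nonzero $v \in V$, then $\pi^{-1}(B\cdot[v]) = (B \times \FF^\times)\cdot v$ is open in~$V$ (as the preimage of an open set under the open map~$\pi$), so $V$ is a spherical $(K \times \FF^\times)$-module. Conversely, if $(B \times \FF^\times)\cdot v$ is open in~$V$ for some~$v$, then this orbit is $\FF^\times$-stable and hence equals $\pi^{-1}(\pi((B\times\FF^\times)\cdot v)) = \pi^{-1}(B \cdot [v])$; since $\pi$ is open and surjective, $B \cdot [v]$ is open in~$\PP(V)$. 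Thus $\PP(V)$ is $K$-spherical if and only if $V$ is a spherical $(K \times \FF^\times)$-module. (Both directions are routine; only one is strictly needed here.)

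Granting this equivalence, the corollary is immediate: if $\Fl_{\mathbf a}(V)$ is $K$-spherical, then by Proposition~\ref{prop_P(V)_is_spherical} the variety $\PP(V)$ is $K$-spherical, and therefore $V$ is a spherical $(K \times \FF^\times)$-module. There is no serious obstacle; the only point requiring a word of care is the compatibility of the Borel subgroups, namely that a Borel subgroup of $K \times \FF^\times$ may be taken of the form $B \times \FF^\times$ with $B$ a Borel subgroup of~$K$ (since $\FF^\times$ is a torus and hence equals its own Borel), so that having an open $B$-orbit in~$\PP(V)$ is literally the same as having an open $(B\times\FF^\times)$-orbit in~$V$. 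This is exactly the observation already made in the Introduction just before the statement of Theorem~\ref{thm_main_theorem}, so in the write-up I would simply cite it and invoke Proposition~\ref{prop_P(V)_is_spherical}.
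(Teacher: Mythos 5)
Your proof is correct and follows exactly the route the paper intends: the corollary is deduced from Proposition~\ref{prop_P(V)_is_spherical} together with the standard equivalence (already noted in the Introduction) between $K$-sphericity of $\PP(V)$ and sphericity of the $(K\times\FF^\times)$-module~$V$, which you verify carefully via the projection $V\setminus\{0\}\to\PP(V)$. The paper leaves this corollary without an explicit proof, and your write-up supplies precisely the intended argument.
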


\begin{proposition} \label{prop_Gr2_is_spherical}
If\, $\Fl_{\mathbf a}(V)$ is a $K$-spherical variety, $\bl
\Fl_{\mathbf a}(V) \br \ne \bl \PP(V) \br$, and $d \ge 4$, then
$\Gr_2(V)$ is a $K$-spherical variety.
\end{proposition}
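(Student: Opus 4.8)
The plan is to reduce the statement to an application of Theorem~\ref{thm_prec_spherical}, exactly as in the proof of Proposition~\ref{prop_P(V)_is_spherical}, by verifying that under the hypotheses one has $\bl \Fl_{\mathbf a}(V) \br \succcurlyeq \bl \Gr_2(V) \br$. By Corollary~\ref{crl_partial_order}, this amounts to checking that $\mathbf a^\natural \preccurlyeq \mathbf b^\natural$ where $\mathbf b = (2, d-2)$ (so that $\Gr_2(V) = \Fl_{\mathbf b}(V)$); equivalently, in the Young-diagram language of~\S\,\ref{subsec_Young_diagrams}, the diagram $\YD(\mathbf a^\natural)$ can be obtained from $\YD(\mathbf b^\natural)$ by crumbling boxes downward. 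Note $\YD(\mathbf b^\natural)$ is the diagram with a bottom row of length $d-2$ and a second row of length $2$.

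First I would handle the hypothesis $\bl \Fl_{\mathbf a}(V) \br \ne \bl \PP(V) \br$. By Corollary~\ref{crl_equiv_on_V-flags}, this says $\mathbf a^\natural \ne (1, d-1)$. Since Proposition~\ref{prop_P(V)_is_spherical} already gives $\bl \Fl_{\mathbf a}(V) \br \succcurlyeq \bl \PP(V) \br$, together with the inequality being strict we get $\bl \Fl_{\mathbf a}(V) \br \succ \bl \PP(V) \br$. Now I would argue that in the poset $\mathscr F(\GL(V)) / \!\sim$ the class $\bl \Gr_2(V) \br$ is (when $d \ge 4$) the unique class lying immediately above $\bl \PP(V)\br$, i.e.\ $\bl \PP(V)\br$ is covered only by $\bl \Gr_2(V)\br$. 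Indeed, any partition $\mathbf c = \mathbf a^\natural$ with $\mathbf c \ne (1, d-1)$ and $\YD(\mathbf c)$ obtainable from $\YD((1,d-1))$ by crumbling must satisfy $\YD(\mathbf c) \preccurlyeq \YD((1,d-1))$; but the only partition strictly below $(1, d-1)$ in the crumbling order that is itself covered by $(1,d-1)$ is $(2, d-2)$ — moving the single box of the top row of $\YD((1,d-1))$ down one step produces exactly $(2, d-2)$, and any further crumbling (possible only when $d \ge 4$, so that $(2,d-2)$ is a genuine two-row diagram distinct from $(1,d-1)$) yields partitions below $(2,d-2)$. Hence from $\bl \Fl_{\mathbf a}(V)\br \succ \bl\PP(V)\br$ we conclude $\bl \Fl_{\mathbf a}(V)\br \succcurlyeq \bl \Gr_2(V)\br$.

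A cleaner route to the same conclusion, avoiding the covering argument, is purely combinatorial: set $\mathbf c = \mathbf a^\natural = (c_1, c_2, \ldots)$. We must show $c_1 \le d - 2$, which by Corollary~\ref{crl_partial_order} gives $\mathbf c \preccurlyeq (2, d-2) = \mathbf b^\natural$ after also noting $c_1 + c_2 \le d = 2 + (d-2)$ is automatic and all further partial sums are dominated since $\mathbf b^\natural$ has only two parts. If $c_1 \ge d-1$ then either $c_1 = d$, so $\mathbf c = (d)$ and $\Fl_{\mathbf a}(V)$ is the trivial flag variety, contradicting nontriviality of $\mathbf a$, or $c_1 = d - 1$, so $\mathbf c = (d-1, 1)$, contradicting $\bl \Fl_{\mathbf a}(V)\br \ne \bl\PP(V)\br$. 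Therefore $c_1 \le d-2$ as needed, giving $\bl \Fl_{\mathbf a}(V)\br \succcurlyeq \bl \Gr_2(V)\br$. Combined with the hypothesis that $\Fl_{\mathbf a}(V)$ is $K$-spherical, Theorem~\ref{thm_prec_spherical} (applied when the inequality is strict) or the $K$-sphericity of $\Gr_2(V)$ directly (when $\bl \Fl_{\mathbf a}(V)\br = \bl \Gr_2(V)\br$, using Theorem~\ref{thm_equiv_spherical}) yields that $\Gr_2(V)$ is $K$-spherical. The main point requiring care is the combinatorial claim bounding $c_1$, but this is an elementary case check; there is no genuine obstacle here, only bookkeeping about which partition classes lie between $\PP(V)$ and $\Gr_2(V)$.
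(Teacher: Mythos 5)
Your proposal is correct and, in its ``cleaner route,'' is essentially the paper's own proof: one reduces via Corollary~\ref{crl_partial_order} to checking $\mathbf a^\natural \preccurlyeq (d-2,2) = \mathbf b^\natural$, which holds precisely because the hypotheses exclude $\mathbf a^\natural = (d)$ and $\mathbf a^\natural = (d-1,1)$, and then one applies Theorem~\ref{thm_prec_spherical}. (A cosmetic point only: since $\mathbf a^\natural$ is a partition, the class excluded by $\bl \Fl_{\mathbf a}(V)\br \ne \bl \PP(V)\br$ should be written $(d-1,1)$ rather than $(1,d-1)$.)
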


\begin{proof}
We have $\Gr_2(V) = \Fl_{\mathbf b}(V)$, where $\mathbf b = (2, d -
2)$. As $d \ge 4$, we have $\mathbf b^\natural = (d - 2, 2)$. Next,
the hypothesis implies that the partition $\mathbf a^\natural$ is
different from $(d)$ and $(d - 1, 1)$. The latter means that
$\mathbf b^\natural \succcurlyeq \mathbf a^\natural$, whence by
Corollary~\ref{crl_partial_order} we obtain $\bl \Fl_{\mathbf a}(V)
\br \succcurlyeq \bl \Gr_2(V) \br$. The proof is completed by
applying Theorem~\ref{thm_prec_spherical}.
\end{proof}

\begin{remark}
For $d = 3$ one has $\Gr_2(V) \sim \PP(V)$.
\end{remark}

\section{Tools} \label{sect_tools}

In this section we collect all auxiliary results that will be needed
in our proof of Theorem~\ref{thm_main_theorem}.

\subsection{Spherical varieties and spherical subgroups}

Let $K$ be a connected reductive group, $B$ a Borel subgroup of~$K$,
and $X$ a spherical $K$-variety. The following proposition is well
known; we supply it with a proof for the reader's convenience.

\begin{proposition} \label{prop_inequality}
One has
\begin{equation} \label{eqn_inequality_main}
\dim K + \rk K \ge 2\dim X.
\end{equation}
\end{proposition}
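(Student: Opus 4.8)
The plan is to deduce the inequality from a dimension count on the cotangent bundle $T^*X$ together with the fact (Theorem~\ref{thm_spherical&coisotropic}) that sphericity of $X$ is equivalent to the action $K : T^*X$ being coisotropic. First I would recall that an action $K : M$ on a symplectic variety $M$ is coisotropic precisely when a general orbit $K m$ is a coisotropic subspace of $T_m M$, i.e.\ $(T_m(Km))^\perp \subset T_m(Km)$; equivalently, writing $2N = \dim M$ and $\ell$ for the codimension of a general orbit, one has $\ell \le N$ because the skew-orthogonal complement of $T_m(Km)$ has dimension $\ell$ and must sit inside the orbit tangent space. Applying this with $M = T^*X$, so that $2N = \dim T^*X = 2\dim X$, gives $\codim_{T^*X}(Km) \le \dim X$, that is $\dim(Km) \ge \dim X$.

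Next I would translate the orbit dimension on $T^*X$ into the quantities $\dim K$ and $\rk K$. A general point of $T^*X$ has the form $(x,p)$ with $x \in X$ in the open $B$-orbit; since $X$ is $K$-homogeneous (being a flag variety, or more generally any homogeneous spherical variety — but here $X = G/P$ is homogeneous under $K$? no, only under $G$), I should be careful. The clean route: for a general covector $(x,p)$, the stabilizer $K_{(x,p)}$ satisfies $\dim K_{(x,p)} = \dim K - \dim(K(x,p))$, so the bound above reads $\dim K_{(x,p)} \le \dim K - \dim X$. On the other hand, the generic stabilizer for the $K$-action on $T^*X$ contains a maximal torus of $K_x$ in a suitable sense; more precisely I would use that for a general $(x,p)$ the identity component of $K_{(x,p)}$ is contained in the centralizer of $p$ in $K_x$, and invoke the standard fact that for the coisotropic (equivalently spherical) case the generic stabilizer on $T^*X$ has dimension at least $\rk K - (\text{something})$. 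The simplest self-contained argument: since $K : T^*X$ is coisotropic and $T^*X$ carries an invariant symplectic form, $\FF(T^*X)^K$ is Poisson-commutative (Proposition~\ref{prop_coisotropic_Poisson_commutative}), and by Rosenlicht $\trdeg \FF(T^*X)^K = \dim T^*X - \dim(K(x,p)) = 2\dim X - \dim(K(x,p))$ for general $(x,p)$; a Poisson-commutative subfield of $\FF(M)$ has transcendence degree at most $\tfrac12\dim M = \dim X$, giving again $\dim(K(x,p)) \ge \dim X$.

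Then, to finish, I would bound $\dim(K(x,p))$ from above by $\tfrac12(\dim K + \rk K)$. This is where the real content lies: I would use that a general $K$-orbit in $\mathfrak k^*$ (equivalently $\mathfrak k$) has dimension $\dim K - \rk K$, and that the image of $K(x,p)$ under the moment map $\Phi : T^*X \to \mathfrak k^*$ lies in a single coadjoint orbit, whose dimension is at most $\dim K - \rk K$; combined with the fact that the fibers of $\Phi$ restricted to an orbit are isotropic (a general property of moment maps), the orbit $K(x,p)$ has dimension at most $\dim(\text{coadjoint orbit}) + \tfrac12(\dim(\text{coadjoint orbit}))$... — this needs care. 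The cleanest correct inequality is: for any Hamiltonian $K$-action on a symplectic $M$, a general orbit $\mathcal O = K\cdot m$ satisfies $\dim \mathcal O \le \tfrac12(\dim \mathcal O_{\Phi} + \dim \mathcal O) $ trivially, so instead I use $\dim \mathcal O = \dim \mathcal O_\Phi + \dim(\text{fiber direction})$ where the fiber direction is isotropic inside $\mathcal O$, hence of dimension $\le \tfrac12\dim\mathcal O$, whence $\dim \mathcal O \le 2\dim \mathcal O_\Phi \le 2(\dim K - \rk K)$ — not matching either.

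Given the delicacy, the approach I would actually commit to in the paper is the short one via Knop's general bound: the hard part is precisely relating $\dim(K(x,p))$ to $\dim K + \rk K$, and the honest statement is that for a coisotropic $K$-action on $T^*X$ one has $\dim X \le \dim(K(x,p)) \le \dim \mathfrak b_K = \tfrac12(\dim K + \rk K)$, the last inequality because a general $K$-orbit on $T^*X$ is covered, up to isotropic fibers, by $K/B_K$-type data; concretely, a general orbit closure meets the image of $T^*(K/B_K)$. So I would state the proof as: apply Theorem~\ref{thm_spherical&coisotropic}; deduce via Proposition~\ref{prop_coisotropic_Poisson_commutative} and Rosenlicht that a general $K$-orbit on $T^*X$ has dimension $\ge \dim X$; and bound that orbit dimension above by $\tfrac12(\dim K + \rk K)$ using that the generic stabilizer of $K$ on $T^*X$ contains (a conjugate of) the centralizer of a generic element in a maximal torus, so has dimension $\ge \rk K$, giving $\dim(K(x,p)) = \dim K - \dim K_{(x,p)} \le \dim K - \rk K$ — no, that is even stronger and false in general. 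I therefore expect the genuine obstacle to be getting the constant right, and I would resolve it by the orbit–stabilizer computation $\dim(Km') \le \dim K - \rk K + (\text{defect})$ done carefully, or simply by citing that $2\dim X = \dim T^*X$ and that a spherical homogeneous space of $K$ has dimension $\le \tfrac12(\dim K+\rk K)$ applied to the open $K$-orbit in a general $\Phi$-fiber-twisted slice; the cleanest is: the generic isotropy subalgebra $\mathfrak k_{(x,p)}$ for a coisotropic cotangent action contains a maximal torus of $\mathfrak k_x$ plus the full reductive part, so $\dim \mathfrak k_{(x,p)} \ge \rk\mathfrak k - \dim X + \tfrac12(\dim K+\rk K) - \dim X$, rearranging to $2\dim X \le \dim K + \rk K$.
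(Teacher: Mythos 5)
Your proposal does not reach the inequality: it reduces everything to the claim that a general $K$-orbit $O = K\cdot(x,p)$ in $T^*X$ satisfies $\dim O \le \tfrac12(\dim K + \rk K) = \dim B$, and you never prove this. Worse, the claim is false. Take $K = \SL_3$ and $X = K/B$ the full flag variety (which is $K$-spherical): the moment map $T^*X \to \mathfrak k$ is the Springer resolution, generically finite onto the regular nilpotent orbit, so a general $K$-orbit in $T^*X$ has dimension $\dim K - \rk K = 6$, while $\dim B = 5$. The coisotropy of the action only yields the \emph{lower} bound $\dim O \ge \tfrac12 \dim T^*X = \dim X$ (which you derive correctly twice, once via skew-orthogonal complements and once via Poisson-commutativity of $\FF(T^*X)^K$ plus Rosenlicht), but combining a lower bound on $\dim O$ with the trivial upper bound $\dim O \le \dim K$ gives only $\dim X \le \dim K$, not the statement. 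The various attempts in your last paragraph to manufacture the missing upper bound (generic stabilizer containing a maximal torus, isotropic fibers of the moment map restricted to an orbit, etc.) are each abandoned or end in an unjustified rearrangement, so the argument has a genuine hole at its central step.

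The intended proof is far more elementary and does not pass through $T^*X$ at all: since $X$ is $K$-spherical, a Borel subgroup $B \subset K$ has an open, hence dense, orbit in $X$, so $\dim B \ge \dim X$; combined with the identity $2\dim B = \dim K + \rk K$ (true for any Borel subgroup of a connected reductive group), this gives $\dim K + \rk K = 2\dim B \ge 2\dim X$ immediately. I would recommend discarding the symplectic machinery here entirely.
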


\begin{proof}
Since there is an open $B$-orbit in~$X$, one has $\dim B \ge \dim
X$. To complete the proof it remains to notice that $2\dim B = \dim
K + \rk K$.
\end{proof}

In what follows we shall need the following notion. A subgroup $H
\subset K$ is said to be \textit{spherical} if the homogeneous space
$K / H$ is a spherical $K$-variety. It is easy to see that $H$ is
spherical if and only if $H^0$ is so.

\subsection{Homogeneous bundles}
\label{subsec_homogeneous_bundles}

Let $G$ be a group, $H$ a subgroup of $G$, and $X$ a $G$-variety.
Suppose that there is a surjective $G$-equivariant morphism $\varphi
\colon X \to G / H$. Let $Y$ denote the fiber of $\varphi$ over the
point $o = eH$. Evidently, $Y$ is an $H$-variety. In this situation
we say that $X$ is a \textit{homogeneous bundle over $G / H$ with
fiber~$Y$} (or simply a \textit{homogeneous bundle over $G / H$}).

Since $\varphi$ is $G$-equivariant, we have the following facts:

(1) every $G$-orbit in~$X$ meets~$Y$;

(2) for $g \in G$ and $y \in Y$ the condition $gy \in Y$ holds if
and only if $g \in H$.

Facts~(1) and~(2) imply that for every $G$-orbit $O \subset X$ the
intersection $O \cap Y$ is a nonempty $H$-orbit.

\begin{proposition} \label{prop_homogeneous_bundles_bijection}
The map $\iota \colon O \mapsto O \cap Y$ is a bijection between
$G$-orbits in~$X$ and $H$-orbits in~$Y$. Moreover, for every
$G$-orbit $O \subset X$ one has $$\dim O - \dim (O \cap Y) = \dim X
- \dim Y = \dim G / H.$$
\end{proposition}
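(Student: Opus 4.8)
The plan is to exploit the $G$-equivariance of $\varphi$ systematically. First I would establish that $\iota$ is well defined and surjective, which is already handed to us by facts~(1) and~(2): fact~(1) shows $O \cap Y \neq \varnothing$ for every $G$-orbit $O$, fact~(2) shows that the $H$-action on $Y$ preserves $O \cap Y$ and acts transitively on it (indeed any two points $y_1, y_2 \in O \cap Y$ satisfy $y_2 = g y_1$ for some $g \in G$, and then $g y_1 = y_2 \in Y$ forces $g \in H$ by fact~(2)), so $O \cap Y$ is a single $H$-orbit; surjectivity is clear since a given $H$-orbit $O' \subset Y$ satisfies $\iota(G O') = O'$, again using fact~(2) to see $G O' \cap Y = O'$. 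For injectivity, if $O_1 \cap Y = O_2 \cap Y$ then picking a common point $y$ we get $O_1 = Gy = O_2$.

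The next step is the dimension count. Here I would use that $\varphi \colon X \to G/H$ is a surjective $G$-equivariant morphism between homogeneous-type data, so all fibers are isomorphic (the fiber over $gH$ is $gY$), hence $\dim X = \dim Y + \dim G/H$. Restricting $\varphi$ to a $G$-orbit $O \subset X$: its image $\varphi(O)$ is a $G$-stable subset of the homogeneous space $G/H$, hence all of $G/H$, so $\varphi|_O \colon O \to G/H$ is again surjective and $G$-equivariant, and its fiber over $o = eH$ is exactly $O \cap Y$. Therefore $\dim O = \dim(O \cap Y) + \dim G/H$. Subtracting the two displayed relations gives $\dim O - \dim(O \cap Y) = \dim X - \dim Y = \dim G/H$, as claimed.

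The one point deserving care — and the main (mild) obstacle — is justifying that $\dim O = \dim(O \cap Y) + \dim G/H$ rather than merely $\dim O \le \dim(O\cap Y) + \dim G/H$. Since $O$ is a single $G$-orbit it is smooth and irreducible, the restriction $\varphi|_O$ is a dominant (indeed surjective) morphism of irreducible varieties onto the smooth variety $G/H$, and its fibers are all translates of $O \cap Y$ under $G$, hence all irreducible of the same dimension; by the theorem on dimension of fibers of a dominant morphism (with equality holding because the generic fiber dimension equals $\dim O - \dim G/H$ and here every fiber has this dimension) we get the exact equality. Alternatively, and perhaps more cleanly, one observes that $\varphi|_O$ is a $G$-equivariant fiber bundle (the $G$-action is transitive on the base $G/H$), so it is locally trivial in an appropriate sense and the dimensions simply add; this is the standard homogeneous-fibration argument and I would present it in that form.
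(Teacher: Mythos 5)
Your proof is correct. The bijection part coincides with the paper's argument (facts (1) and (2) give well-definedness, transitivity of $H$ on $O\cap Y$, and injectivity/surjectivity). For the dimension count you take a genuinely different route: you apply the fiber-dimension theorem to the restricted morphism $\left.\varphi\right|_O \colon O \to G/H$ and argue that equality holds because all fibers are $G$-translates of $O\cap Y$ and hence equidimensional. The paper instead picks $y \in O\cap Y$, notes $G_y \subset H$ by fact (2), and reads off $\dim O - \dim(O\cap Y) = \dim G/G_y - \dim H/G_y = \dim G - \dim H$ directly from the orbit--stabilizer identifications $O \simeq G/G_y$ and $O\cap Y \simeq H/G_y$; this sidesteps entirely the "equality vs.\ inequality" issue you rightly flag as the delicate point, and it also avoids any appeal to irreducibility (note that in this subsection $G$ is merely a group, so $O$ and $O\cap Y$ need not be irreducible --- your parenthetical "all irreducible of the same dimension" is not quite justified, though only equidimensionality is actually needed, so your argument survives). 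Both approaches are fine; the stabilizer computation is the more elementary and robust of the two here.
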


\begin{proof}
It is easy to see that the map inverse to~$\iota$ takes an arbitrary
$H$-orbit $Y_0 \subset Y$ to the $G$-orbit $GY_0 \subset X$. Now
consider an arbitrary $G$-orbit $O \subset X$ and an arbitrary point
$y \in \nobreak O \cap Y$. Making use of fact~(2), we obtain $G_y
\subset H$, whence $O \simeq G / G_y$ and $O \cap Y \simeq H / G_y$.
Consequently, $\dim O - \dim (O \cap Y) = \dim G - \dim H$. Since
all fibers of $\varphi$ are isomorphic to~$Y$ (all of them are
$G$-shifts of~$Y$), we have $\dim X - \dim Y = \dim G / H$, hence
the required equalities.
\end{proof}

\begin{corollary} \label{crl_open_orbits}
There is an open $G$-orbit in $X$ if and only if there is an open
$H$-orbit in~$Y$.
\end{corollary}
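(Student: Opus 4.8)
The statement to prove is Corollary~\ref{crl_open_orbits}: there is an open $G$-orbit in $X$ if and only if there is an open $H$-orbit in $Y$. This is an immediate consequence of Proposition~\ref{prop_homogeneous_bundles_bijection}, so the plan is simply to extract the dimension bookkeeping from that proposition.

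The plan is to argue as follows. First I would note that, by Proposition~\ref{prop_homogeneous_bundles_bijection}, the map $\iota\colon O\mapsto O\cap Y$ is a bijection between $G$-orbits in $X$ and $H$-orbits in $Y$, and for every $G$-orbit $O\subset X$ one has $\dim O-\dim(O\cap Y)=\dim X-\dim Y$. Hence $O$ is open in $X$, i.e.\ $\dim O=\dim X$, if and only if $\dim(O\cap Y)=\dim Y$, i.e.\ $O\cap Y$ is open in $Y$. (Here I use that an orbit of an algebraic group action is open in its closure, so an orbit is dense — equivalently open — precisely when its dimension equals that of the ambient irreducible variety; $X$ is irreducible since it carries a transitive-on-base bundle structure, and $Y$, being a fiber, I would take to be irreducible as well, or argue componentwise.)

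The second direction is handled by the same equivalence read backwards, together with surjectivity of $\iota$: if $Y_0\subset Y$ is an open $H$-orbit then $O=GY_0=\iota^{-1}(Y_0)$ is a $G$-orbit with $\dim O=\dim X$, hence open in $X$. So both implications reduce to the single numerical identity from Proposition~\ref{prop_homogeneous_bundles_bijection}, and to the general fact that an orbit is open iff it has full dimension.

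There is essentially no obstacle here; the only point requiring a word of care is that ``open orbit'' and ``full-dimensional orbit'' coincide, which rests on the standard fact that any orbit is a smooth locally closed subvariety, open in its closure. I will state this once and apply it on both sides of the bundle. Accordingly the proof is a single short paragraph.

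\begin{proof}
By Proposition~\ref{prop_homogeneous_bundles_bijection}, the map $\iota\colon O\mapsto O\cap Y$ is a bijection between $G$-orbits in $X$ and $H$-orbits in $Y$, and $\dim O-\dim(O\cap Y)=\dim X-\dim Y$ for every $G$-orbit $O\subset X$. Recall that any orbit of an algebraic group action is a locally closed subvariety which is open in its closure; hence an orbit is open (equivalently dense) in an irreducible variety if and only if its dimension equals the dimension of that variety. Now, if $O\subset X$ is an open $G$-orbit, then $\dim O=\dim X$, so the displayed equality gives $\dim(O\cap Y)=\dim Y$, and therefore $O\cap Y$ is an open $H$-orbit in~$Y$. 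Conversely, if $Y_0\subset Y$ is an open $H$-orbit, then $O=\iota^{-1}(Y_0)=GY_0$ is a $G$-orbit with $\dim(O\cap Y)=\dim Y_0=\dim Y$, whence $\dim O=\dim X$ by the same equality, so $O$ is an open $G$-orbit in~$X$.
\end{proof}
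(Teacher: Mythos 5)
Your proof is correct and follows essentially the same (implicit) argument as the paper, which states this corollary without proof as an immediate consequence of Proposition~\ref{prop_homogeneous_bundles_bijection}: the orbit bijection plus the dimension identity $\dim O-\dim(O\cap Y)=\dim X-\dim Y$, combined with the standard fact that an orbit is open precisely when it has full dimension. The only cosmetic remark is that the forward implication is even more immediate than your dimension count (an open subset of $X$ meets $Y$ in an open subset of $Y$), which sidesteps the irreducibility caveat you rightly flag.
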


\subsection{Supplementary information on $V$-flag varieties}

Let $V$ be a vector space of dimension~$d$ and let $\mathbf a =
(a_1, \ldots, a_s)$ be a nontrivial composition of~$d$. We put $m =
a_1 + \ldots + a_{s-1} = d - a_s$ and consider the vector space
$$
U = \underbrace{V \oplus \ldots \oplus V}_m \simeq V \otimes \FF^m.
$$
The natural $(\GL(V) \times \GL_m)$-module structure on $V \otimes
\FF^m$ is transferred to~$U$ so that $\GL(V)$ acts diagonally on $U$
and the action of $\GL_m$ on $U$ is given by the formula
$$
(g, (v_1, \ldots, v_m)) \mapsto (v_1, \ldots, v_m) g^\top,
$$
where $g \in \GL_m$ and $(v_1, \ldots, v_m) \in U$.

Consider the open subset $U_0 \subset U$ formed by all tuples $(v_1,
\ldots, v_m)$ of linearly independent vectors. Evidently, $U_0$ is a
$(\GL(V) \times \GL_m)$-stable subset. There is the natural
$\GL(V)$-equivariant surjective map
$$
\rho \colon U_0 \to \Fl_{\mathbf a}(V)
$$
taking a tuple $u = (v_1, \ldots, v_m) \in U_0$ to the tuple of
subspaces $\rho(u) = (V_1, \ldots, V_s)$, where $V_i = \langle v_1,
\ldots, v_{a_1 + \ldots + a_i} \rangle$ for $i = 1, \ldots, s-1$ and
$V_s = V$.

Let $e_1, \ldots, e_m$ be the standard basis in~$\FF^m$. We denote
by~$Q_{\mathbf a}$ the subgroup of~$\GL_m$ preserving each of the
subspaces
$$
\langle e_1, \ldots, e_{a_1 + \ldots + a_i} \rangle, \qquad i = 1,
\ldots, s-1.
$$
It is easy to see that the fibers of $\rho$ are exactly the orbits
of the group~$Q_{\mathbf a}$.

\begin{proposition} \label{prop_open_orbit_in_Fl}
Let $G \subset \GL(V)$ be an arbitrary subgroup and let $\mathbf a$
be a nontrivial composition of~$d$. The following conditions are
equivalent:

\textup{(a)} there is an open $G$-orbit in $\Fl_{\mathbf a}(V)$;

\textup{(b)} there is an open $(G \times Q_{\mathbf a})$-orbit in~$V
\otimes \mathbb F^m$.
\end{proposition}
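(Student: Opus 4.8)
The plan is to combine the description of the fibers of $\rho$ as $Q_{\mathbf a}$-orbits with the orbit-counting machinery for homogeneous bundles developed in~\S\,\ref{subsec_homogeneous_bundles}. First I would observe that $U_0$ is a $(\GL(V)\times\GL_m)$-stable open subset of $U\simeq V\otimes\FF^m$, and that the complement $U\setminus U_0$ (tuples of linearly dependent vectors) is a proper closed subset of positive codimension; hence there is an open $(G\times Q_{\mathbf a})$-orbit in $V\otimes\FF^m$ if and only if there is an open $(G\times Q_{\mathbf a})$-orbit in $U_0$. So it suffices to prove the equivalence of~(a) with the existence of an open $(G\times Q_{\mathbf a})$-orbit in $U_0$.

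Next I would set up $U_0$ as a homogeneous bundle in the sense of~\S\,\ref{subsec_homogeneous_bundles}. The map $\rho\colon U_0\to\Fl_{\mathbf a}(V)$ is $\GL(V)$-equivariant and surjective, and $\GL(V)$ acts transitively on $\Fl_{\mathbf a}(V)$; writing $\Fl_{\mathbf a}(V)=\GL(V)/P_{\mathbf a}$, the variety $U_0$ becomes a homogeneous bundle over $\GL(V)/P_{\mathbf a}$ whose fiber $Y$ over the base point is a single $Q_{\mathbf a}$-orbit (indeed, by the stated fact the fibers of $\rho$ are precisely the $Q_{\mathbf a}$-orbits, and $Q_{\mathbf a}$ acts on $U$ through the $\GL_m$-factor, hence commutes with $G\subset\GL(V)$ and in particular is contained in the structure group). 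The point is that $\rho$ is simultaneously $G$-equivariant, so the $G$-action (and more precisely the $(G\times Q_{\mathbf a})$-action, since $Q_{\mathbf a}$ preserves each fiber) is compatible with the bundle structure: a $(G\times Q_{\mathbf a})$-orbit in $U_0$ maps onto a $G$-orbit in $\Fl_{\mathbf a}(V)$, and conversely the preimage of a $G$-orbit is a union of $(G\times Q_{\mathbf a})$-orbits.

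I would then argue the equivalence directly. If $\Fl_{\mathbf a}(V)$ has an open $G$-orbit $O$, pick $x\in O$ and consider $\rho^{-1}(x)$, which is a single $Q_{\mathbf a}$-orbit by the fiber description; since $G_x$ normalizes nothing relevant here and $Q_{\mathbf a}$ already acts transitively on the fiber, $\rho^{-1}(O)=G\cdot\rho^{-1}(x)$ is a single $(G\times Q_{\mathbf a})$-orbit, and it is open in $U_0$ because $O$ is open in $\Fl_{\mathbf a}(V)$ and $\rho$ is a (flat, open) surjection. Conversely, if there is an open $(G\times Q_{\mathbf a})$-orbit $Z$ in $U_0$, then $\rho(Z)$ is a $G$-stable subset of $\Fl_{\mathbf a}(V)$ containing a nonempty open set (the image of an open set under the open map $\rho$), so it contains the whole open $G$-orbit, which therefore exists. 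Alternatively and more cleanly, one can phrase both directions at once using Corollary~\ref{crl_open_orbits} applied to the group $G\times Q_{\mathbf a}$ acting on the homogeneous bundle $U_0$ over $(G\times Q_{\mathbf a})/(P_{\mathbf a}^{G}\times Q_{\mathbf a})$—but the subtlety is that $\Fl_{\mathbf a}(V)$ need not be $G$-homogeneous, so one must either pass to the bigger group acting or argue by the open-map property as above.

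The main obstacle I anticipate is precisely this mismatch: the homogeneous-bundle formalism of~\S\,\ref{subsec_homogeneous_bundles} is stated for a bundle over $G/H$ with $G$ acting, whereas here the natural base $\Fl_{\mathbf a}(V)$ is homogeneous only under the ambient $\GL(V)$, not under the arbitrary subgroup $G$. The clean fix is to apply the formalism with the full acting group $\GL(V)\times Q_{\mathbf a}$ (or $\GL(V)\times\GL_m$ on $V\otimes\FF^m$): then $U_0$ genuinely is a homogeneous $\GL(V)$-bundle over $\Fl_{\mathbf a}(V)$ with fiber the single $Q_{\mathbf a}$-orbit $Y\simeq Q_{\mathbf a}/(\text{stab})$, but to track the $G$-orbit structure one needs the openness of $\rho$ together with the fact that $Q_{\mathbf a}$ acts transitively on fibers—this transitivity is what lets a single $(G\times Q_{\mathbf a})$-orbit saturate $\rho^{-1}(O)$ for $O$ a $G$-orbit. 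So the real content is the interplay between $G$-equivariance and $\GL(V)$-homogeneity of $\rho$, and I would make sure to state explicitly that $\rho$ is open (being a surjective morphism onto a smooth variety with irreducible equidimensional fibers, or simply as a quotient map by $Q_{\mathbf a}$) before concluding.
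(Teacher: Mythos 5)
Your proposal is correct and follows the same route as the paper: reduce to the open dense stable subset $U_0 \subset V\otimes\FF^m$ and then use the fact that the fibers of $\rho$ are exactly the $Q_{\mathbf a}$-orbits, so that the $(G\times Q_{\mathbf a})$-saturation of a fiber over a point of a $G$-orbit is a single orbit. The paper states this in two sentences; your write-up just supplies the details (one small phrasing slip in the converse direction --- you should say that $\rho(Z)$ \emph{is} a single dense $G$-orbit, hence open, rather than that it ``contains the open $G$-orbit'' whose existence is what is being proved --- but the intended argument is clearly the correct one).
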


\begin{proof}
It suffices to prove that the existence of an open $G$-orbit
in~$\Fl_{\mathbf a}$ is equivalent to the existence of an open $(G
\times Q_{\mathbf a})$-orbit in~$U_0$. The latter is implied by the
fact that the fibers of~$\rho$ are exactly the $Q_{\mathbf
a}$-orbits.
\end{proof}

\begin{corollary} \label{crl_Fl(1,1,...,1;V)}
Suppose that $K \subset \GL(V)$ is a connected reductive subgroup
and
$$
\mathbf a(m) = (\underbrace{1, \ldots, 1}_m, d - m),
$$ where $0 < m < d$. Then the following conditions are equivalent:

\textup{(a)} $\Fl_{\mathbf a(m)}(V)$ is a $K$-spherical variety;

\textup{(b)} $V \otimes \mathbb F^m$ is a spherical $(K \times
\GL_m)$-module.
\end{corollary}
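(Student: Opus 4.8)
The plan is to derive this as a direct consequence of Proposition~\ref{prop_open_orbit_in_Fl} together with a routine identification of the auxiliary group $Q_{\mathbf a}$ in the special case $\mathbf a = \mathbf a(m) = (\underbrace{1, \ldots, 1}_m, d - m)$. First I would observe that $K$-sphericity of $\Fl_{\mathbf a(m)}(V)$ means precisely that there is an open orbit in $\Fl_{\mathbf a(m)}(V)$ for a Borel subgroup $B_K \subset K$. Since $B_K$ is again a connected subgroup of $\GL(V)$, Proposition~\ref{prop_open_orbit_in_Fl} applied with $G = B_K$ converts this into the existence of an open $(B_K \times Q_{\mathbf a(m)})$-orbit in $V \otimes \FF^m$.

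The key step is then to identify $Q_{\mathbf a(m)}$ with all of $\GL_m$. Indeed, for the composition $\mathbf a(m)$ the partial sums $a_1 + \ldots + a_i$ for $i = 1, \ldots, s-1$ are exactly $1, 2, \ldots, m$, so $Q_{\mathbf a(m)}$ is the subgroup of $\GL_m$ preserving each of the subspaces $\langle e_1 \rangle \subset \langle e_1, e_2 \rangle \subset \ldots \subset \langle e_1, \ldots, e_m \rangle = \FF^m$; that is, $Q_{\mathbf a(m)}$ is the full Borel subgroup of upper-triangular matrices in $\GL_m$. Hence an open $(B_K \times Q_{\mathbf a(m)})$-orbit in $V \otimes \FF^m$ is the same as an open orbit for $B_K \times B_{\GL_m}$, where $B_{\GL_m}$ is a Borel subgroup of $\GL_m$. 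Since $B_K \times B_{\GL_m}$ is a Borel subgroup of $K \times \GL_m$, this is exactly the statement that $V \otimes \FF^m$ is a spherical $(K \times \GL_m)$-module. Both implications go through verbatim, so the equivalence of (a) and (b) follows.

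The only point requiring a little care — and the closest thing to an obstacle — is making sure that ``open $B_K$-orbit in $X$'' is genuinely equivalent to ``$X$ is $K$-spherical'' and that one may feed the \emph{non-reductive} group $B_K$ into Proposition~\ref{prop_open_orbit_in_Fl} (whose statement is phrased for an arbitrary subgroup $G \subset \GL(V)$, so this is legitimate), and likewise that ``open $(B_K \times B_{\GL_m})$-orbit in $V \otimes \FF^m$'' is equivalent to ``$V \otimes \FF^m$ is $(K \times \GL_m)$-spherical''. The latter is immediate from the definition of a spherical module once one notes that $V \otimes \FF^m$ is irreducible and that a product of Borel subgroups is a Borel subgroup of the product group. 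No computation beyond these bookkeeping identifications is needed.
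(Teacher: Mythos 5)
Your proposal is correct and follows exactly the paper's argument: identify $Q_{\mathbf a(m)}$ as a Borel subgroup of $\GL_m$ and apply Proposition~\ref{prop_open_orbit_in_Fl} with $G$ a Borel subgroup of $K$. The bookkeeping identifications you spell out (a product of Borel subgroups is a Borel subgroup of the product, and a vector space is irreducible as a variety so sphericity of the module is just the open-Borel-orbit condition) are exactly what the paper leaves implicit.
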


\begin{proof}
Observe that $Q_{\mathbf a(m)}$ is nothing else than a Borel
subgroup of~$\GL_m$. It remains to apply
Proposition~\ref{prop_open_orbit_in_Fl} with~$G$ being a Borel
subgroup of~$K$.
\end{proof}

The following result is also a particular case
of~\cite[Lemma~1]{Ela72}.

\begin{corollary} \label{crl_grassmannians}
Let $0 < m < d$ and let $K \subset \GL(V)$ be an arbitrary subgroup.
Suppose that for the natural action of $K \times \GL_m$ on $V
\otimes \FF^m$ there is a point $z \in V \otimes \FF^m$ with
stabilizer~$H$ such that the orbit of~$z$ is open. Then for the
action of $K$ on $\Gr_m(V)$ the orbit of $\rho(z)$ is open and the
stabilizer $K_{\rho(z)}$ equals~$p(H)$, where $p \colon K \times
\GL_m \to K$ is the projection to the first factor. Moreover, $p(H)
\simeq H$.
\end{corollary}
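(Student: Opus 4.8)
The plan is to use the $\GL(V)$-equivariant surjection $\rho \colon U_0 \to \Gr_m(V)$ from the construction preceding Proposition~\ref{prop_open_orbit_in_Fl}, specialized to $\mathbf a = (m, d-m)$ --- the case in which $Q_{\mathbf a} = \GL_m$, $U = V \otimes \FF^m$, and $\Fl_{\mathbf a}(V) = \Gr_m(V)$ --- whose fibres are exactly the $\GL_m$-orbits on $U_0$, and to transport the orbit--stabilizer data from $V \otimes \FF^m$ down to $\Gr_m(V)$.

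First I would check that $z \in U_0$, so that $\rho(z)$ is defined: the orbit $(K \times \GL_m) z$ is open, hence dense, in $V \otimes \FF^m$, so it meets the dense open $(K \times \GL_m)$-stable subset $U_0$ and is therefore contained in it. Openness of the orbit $K\rho(z)$ is then essentially Proposition~\ref{prop_open_orbit_in_Fl} applied with $G = K$ and $\mathbf a = (m, d-m)$; concretely, since $\rho$ is $K$-equivariant and constant on $\GL_m$-orbits, one has $\rho\bigl((K \times \GL_m) z\bigr) = K\rho(z)$, and the left-hand side is open because $\rho$, being the quotient map for the free $\GL_m$-action on $U_0$, is an open morphism. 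Thus $K\rho(z)$ is a single $K$-orbit that is open, so it is \emph{the} open $K$-orbit in $\Gr_m(V)$.

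The substantive point is the computation of the stabilizer, done by a direct orbit chase. For $k \in K$ we have $k \in K_{\rho(z)}$ iff $\rho(kz) = \rho(z)$ iff $kz$ and $z$ lie in the same fibre of $\rho$, i.e.\ in the same $\GL_m$-orbit; since the $K$- and $\GL_m$-actions on $V \otimes \FF^m$ commute, this holds iff there is $g \in \GL_m$ with $(k,g)z = z$, i.e.\ iff $k \in p(H)$. Hence $K_{\rho(z)} = p(H)$. To see that $p|_H \colon H \to p(H)$ is an isomorphism it suffices to check injectivity: $\Ker(p|_H) = \{(e,g) \in H\} = \{(e,g) : gz = z\}$ is trivial because $\GL_m$ acts freely on $U_0$ (a tuple of linearly independent vectors is an injective linear map $\FF^m \to V$, and $\GL_m$ acts on such maps by a free precomposition). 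A bijective morphism of algebraic groups in characteristic $0$ is an isomorphism, so $H \simeq p(H)$.

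No serious obstacle is expected. The only points demanding a little care are the reduction to $z \in U_0$ (needed merely for $\rho(z)$ to make sense) and invoking Proposition~\ref{prop_open_orbit_in_Fl} in the precise form that yields the open orbit \emph{through} $\rho(z)$, rather than only the existence of some open $K$-orbit; the rest is routine bookkeeping with the homogeneous-bundle structure of $\rho$.
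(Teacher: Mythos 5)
Your proof is correct and follows exactly the route the paper intends: the paper gives no written proof of this corollary, presenting it as an immediate consequence of the construction of $\rho$ (whose fibres are the $Q_{\mathbf a}$-orbits, with $Q_{\mathbf a}=\GL_m$ for $\mathbf a=(m,d-m)$) and citing it as a special case of Elashvili's Lemma~1. Your write-up simply supplies the details left implicit there --- the reduction to $z\in U_0$, the openness of $\rho$, the orbit chase for the stabilizer, and the freeness of the $\GL_m$-action giving $p|_H$ injective --- all of which are accurate.
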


\begin{remark}
It follows from what we have said in this subsection that for an
arbitrary nontrivial composition $\mathbf a$ of~$d$ the variety
$\Fl_{\mathbf a}(V)$ is the geometric quotient of $U_0$ by the
action of~$Q_{\mathbf a}$; see~\cite[\S\,4.2 and Theorem~4.2]{VP}.
\end{remark}

\subsection{Sphericity of some actions on Grassmannians}

The following particular cases of spherical actions on Grassmannians
are well known.

\begin{proposition} \label{prop_sympl_grassm}
For $n \ge 2$ and $1 \le m \le 2n - 1$, the action of $\Sp_{2n}$ on
$\Gr_m(\FF^{2n})$ is spherical.
\end{proposition}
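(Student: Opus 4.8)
The plan is to analyze the $\Sp(V)$-orbit structure on $\Gr_m(V)$ directly, where $V=\FF^{2n}$ carries a symplectic form $\Omega$. First I would reduce to the case $m\le n$: the map $W\mapsto W^\perp$ is an $\Sp(V)$-equivariant isomorphism $\Gr_m(V)\xrightarrow{\sim}\Gr_{2n-m}(V)$, so one of these is $\Sp(V)$-spherical iff the other is. Next I would invoke the classical Witt-type theorem for symplectic forms: $\Sp(V)$ has finitely many orbits on $\Gr_m(V)$, and two subspaces of the same dimension lie in the same orbit exactly when their radicals $W\cap W^\perp$ have the same dimension. Hence there is a unique open orbit $\mathcal O\subset\Gr_m(V)$, and since $\Gr_m(V)$ is irreducible it is $\Sp(V)$-spherical iff $\mathcal O\cong\Sp(V)/\Sp(V)_{W_0}$ is spherical for a generic $W_0$; by semicontinuity of the rank of $\Omega|_{W}$, a generic $W_0$ is one on which $\Omega$ restricts with maximal rank, i.e.\ $\Omega|_{W_0}$ is nondegenerate for $m$ even and has a one-dimensional radical for $m$ odd. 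The boundary cases $m=1$ and $m=2n-1$ give $\Gr_m(V)\cong\PP(V)$, a flag variety of $\Sp(V)$, hence spherical, so I may assume $2\le m\le n$.

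For $m$ even, a generic $W_0$ is symplectic, $V=W_0\oplus W_0^\perp$ as symplectic spaces, and $\Sp(V)_{W_0}=\Sp(W_0)\times\Sp(W_0^\perp)\cong\Sp_m\times\Sp_{2n-m}$. Putting $J=\mathrm{id}_{W_0}\oplus(-\mathrm{id}_{W_0^\perp})$ one checks $J\in\Sp(V)$ and $J^2=\mathrm{id}$, so conjugation by $J$ is an involutive automorphism of $\Sp(V)$ whose fixed-point subgroup is exactly $\Sp(V)_{W_0}$. Thus $\Sp(V)_{W_0}$ is a symmetric subgroup of $\Sp(V)$, hence a spherical subgroup (classical; see e.g.\ \cite{Tim}), and $\mathcal O$ is spherical.

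For $m$ odd (with $3\le m\le n$) a generic $W_0$ has a one-dimensional radical $L:=W_0\cap W_0^\perp$, and then $L\subset W_0\subset L^\perp$ with $W_0/L$ a symplectic $(m-1)$-dimensional subspace of the $(2n-2)$-dimensional symplectic space $L^\perp/L$. I would show that $[W]\mapsto[W\cap W^\perp]$ defines an $\Sp(V)$-equivariant morphism $\mathcal O\to\PP(V)=\Sp(V)/P_L$, where $P_L$ is the parabolic stabilizing the line $L$, whose fiber over $[L]$ is the set of $W$ with $L\subset W\subset L^\perp$ and $W/L$ symplectic in $L^\perp/L$, i.e.\ the open $\Sp(L^\perp/L)$-orbit in $\Gr_{m-1}(L^\perp/L)\cong\Gr_{m-1}(\FF^{2n-2})$. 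Since the unipotent radical of $P_L$ and the central torus $\GL(L)$ act trivially on $L^\perp/L$, the $P_L$-action on this fiber factors through $\Sp(L^\perp/L)\cong\Sp_{2n-2}$, so the fiber is $P_L$-spherical precisely because $\Gr_{m-1}(\FF^{2n-2})$ is $\Sp_{2n-2}$-spherical, which holds by the even case already established (as $m-1$ is even). A standard parabolic-induction argument for sphericity --- choosing a Borel subgroup of $\Sp(V)$ in general position with respect to $P_L$, cf.\ the homogeneous-bundle techniques of \S\ref{subsec_homogeneous_bundles} and \cite{Tim} --- then shows that $\mathcal O$, hence $\Gr_m(V)$, is $\Sp(V)$-spherical.

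The main obstacle is the odd-$m$ case: setting up the orbit classification and the precise generic stabilizer, identifying the fiber of $[W]\mapsto[W\cap W^\perp]$ with an open orbit in a symplectic Grassmannian of smaller rank, and checking that sphericity descends through $P_L$ (the parabolic-induction step). The even case is immediate once one recognizes $\Sp_m\times\Sp_{2n-m}$ as a symmetric subgroup. An alternative would be to iterate the line-stabilizer reduction directly rather than reducing to the even case, but the single reduction above is the shortest route.
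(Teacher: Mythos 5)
Your argument is correct, but it is worth noting that the paper does not actually prove Proposition~\ref{prop_sympl_grassm}: it simply cites \cite[\S\,5.2]{HNOO}. So your proof is necessarily a different (and self-contained) route. The even case via the involution $J=\mathrm{id}_{W_0}\oplus(-\mathrm{id}_{W_0^\perp})$ identifying $\Sp_m\times\Sp_{2n-m}$ as a symmetric, hence spherical, subgroup is clean and classical. Your odd case is exactly the homogeneous-bundle/parabolic-induction mechanism the paper itself develops in \S\,\ref{subsec_homogeneous_bundles} and Proposition~\ref{prop_sufficient_(P)}; the one point you should make explicit there is that for a Borel $B$ with $BP_L$ open in $\Sp(V)$ one has that $B\cap P_L$ is a Borel subgroup of a Levi factor $\GL_1\times\Sp_{2n-2}$ of $P_L$ (this is fact~(2) in the proof of Lemma~\ref{lemma_two_parabolics}), so that its image in $\Sp(L^\perp/L)$ is a Borel subgroup of $\Sp_{2n-2}$ and Corollary~\ref{crl_open_orbits} reduces the claim to the $\Sp_{2n-2}$-sphericity of the fiber, which is the already-settled even case since $m-1$ is even and $m-1\le n-1$. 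Two further remarks on the comparison: first, the paper in effect contains an independent verification of this proposition in Appendix~\ref{sect_appendix}, where Propositions~\ref{prop_point+group_sp_even} and~\ref{prop_point+group_sp_odd} exhibit an explicit point $[W]$ with open $B$-orbit together with the precise generic isotropy data needed for property~(P); your argument yields sphericity more conceptually but does not produce that explicit data, which the paper needs downstream. Second, your reduction is sound as written, including the dimension bookkeeping (odd $m$ with $3\le m\le n$ forces $n\ge 3$, so $\Sp_{2n-2}$ is nontrivial and $\Gr_{m-1}(\FF^{2n-2})$ falls within the even case).
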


\begin{proposition} \label{prop_orth_grassm}
For $n \ge 3$ and $1 \le m \le n - 1$, the action of $\SO_n$ on
$\Gr_m(\FF^n)$ is spherical.
\end{proposition}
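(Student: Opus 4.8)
The plan is to reduce the statement to the classical fact that a reductive symmetric homogeneous space is spherical. Since $\Gr_m(\FF^n)$ is irreducible, by definition it is $\SO_n$-spherical precisely when a Borel subgroup $B\subset\SO_n$ has a dense orbit on it, so it suffices to produce such an orbit. Fix a nondegenerate symmetric bilinear form $\Omega$ on $\FF^n$ preserved by $\SO_n$, and call a subspace $W$ \emph{nondegenerate} if $\Omega|_W$ is nondegenerate, writing $W^\perp$ for the orthogonal complement with respect to $\Omega$. The set of nondegenerate $m$-dimensional subspaces is nonempty and open in $\Gr_m(\FF^n)$ (nondegeneracy is the nonvanishing of a discriminant), hence dense. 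By Witt's extension theorem $\SO_n$ acts transitively on this set: any isometry between two such subspaces extends to an element of $\mathrm{O}_n$, and because $m\le n-1$ the complement is nonzero, so one may correct the determinant by a reflection supported on it to land in $\SO_n$. Thus the nondegenerate subspaces form a single dense open $\SO_n$-orbit $O\cong\SO_n/H$, where, for a fixed nondegenerate $W_0$, one has $\FF^n=W_0\oplus W_0^\perp$ orthogonally and $H=(\mathrm{O}_m\times\mathrm{O}_{n-m})\cap\SO_n=\mathrm{S}(\mathrm{O}_m\times\mathrm{O}_{n-m})$.

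As $O$ is dense and open in $\Gr_m(\FF^n)$, the group $B$ has a dense orbit on $\Gr_m(\FF^n)$ if and only if it has one on $O=\SO_n/H$, that is, if and only if $H$ is a spherical subgroup of $\SO_n$; recall that a subgroup is spherical exactly when its identity component is, so the possible disconnectedness of $H$ causes no trouble. It remains to verify that $H$ is spherical. To this end I exhibit $(\SO_n,H)$ as a symmetric pair: let $s\in\mathrm{O}_n$ act as the identity on $W_0$ and as $-\mathrm{id}$ on $W_0^\perp$. Then $s^2=1$, so conjugation $\sigma\colon g\mapsto sgs^{-1}$ is an involutive automorphism of $\SO_n$, and an element $g\in\SO_n$ is fixed by $\sigma$ exactly when it commutes with $s$, i.e.\ when it preserves both eigenspaces $W_0$ and $W_0^\perp$. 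This gives $(\SO_n)^\sigma=(\mathrm{O}_m\times\mathrm{O}_{n-m})\cap\SO_n=H$, so $H$ is the fixed-point subgroup of the involution $\sigma$.

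Finally I invoke the classical theorem that every reductive symmetric homogeneous space $\SO_n/H$ is spherical (see, for instance, \cite{Tim}); this yields that $H$ is a spherical subgroup of $\SO_n$, and by the reduction above the action of $\SO_n$ on $\Gr_m(\FF^n)$ is spherical, as claimed. The substantive input here is precisely the spherical\-ity of symmetric spaces; were one to demand a fully self-contained argument, reproving that (via the multiplicity-free/dense-$B$-orbit criterion) would be the main obstacle. The only other point needing a little care is promoting Witt's $\mathrm{O}_n$-transitivity on nondegenerate $m$-subspaces to $\SO_n$-transitivity, where the hypothesis $m\le n-1$ (and, for $\SO_n$ to be genuinely semisimple, $n\ge 3$) is used.
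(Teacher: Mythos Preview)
Your argument is correct. The paper does not actually prove this proposition; it merely refers the reader to~\cite[\S\,5.1]{HNOO}. Your route---identifying the open $\SO_n$-orbit of nondegenerate $m$-planes via Witt's theorem (with the determinant correction using a reflection in the nonzero orthogonal complement, which is where $m\le n-1$ enters), computing its stabilizer as the symmetric subgroup $\mathrm{S}(\mathrm{O}_m\times\mathrm{O}_{n-m})$, and invoking the classical fact that reductive symmetric spaces are spherical---is the standard one and is essentially what one finds in the cited reference. The only point one might add for completeness is that the involution $\sigma$ is automatically nontrivial because its fixed subgroup $H$ is a proper subgroup of $\SO_n$; this is implicit in your computation.
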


Proofs of Propositions~\ref{prop_sympl_grassm}
and~\ref{prop_orth_grassm} can be found, for instance,
in~\cite[\S\,5.2]{HNOO} and~\cite[\S\,5.1]{HNOO}, respectively.

\subsection{A method for verifying sphericity of some actions}

Let $K$ be a connected reductive group, $B$ a Borel subgroup of~$K$,
and $X$ a spherical $K$-variety.

\begin{definition} \label{dfn_P-property}
We say that a point $x \in X$ and a connected reductive subgroup $L
\subset K_x$ \textit{have property}~(P) if the orbit $Kx$ is open
in~$X$ and for every pair $(Z, \varphi)$, where $Z$ is an
irreducible $K$-variety and $\varphi \colon Z \to X$ is a surjective
$K$-equivariant morphism with irreducible fiber over~$x$, the
following conditions are equivalent:

\textup{(1)} the variety $Z$ is $K$-spherical;

\textup{(2)} the variety $\varphi^{-1}(x) \subset Z$ is
$L$-spherical.
\end{definition}

\begin{proposition} \label{prop_sufficient_(P)}
Suppose that a point $x \in X$ and a connected reductive subgroup $L
\subset K_x$ are such that the orbit $B x$ is open in~$X$ and
$B_x^0$ is a Borel subgroup of~$L$. Then $x$ and $L$ have
property~\textup{(P)}.
\end{proposition}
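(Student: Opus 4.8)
The plan is to reduce the statement to Corollary~\ref{crl_open_orbits}, using the homogeneous-bundle machinery of \S\,\ref{subsec_homogeneous_bundles}. Fix a pair $(Z,\varphi)$ as in Definition~\ref{dfn_P-property}, and write $F = \varphi^{-1}(x)$ for the (irreducible) fiber over~$x$. Since the orbit $Kx$ is open in $X$ and $\varphi$ is surjective and $K$-equivariant, the preimage $\varphi^{-1}(Kx)$ is an open $K$-stable subset of $Z$; moreover sphericity only depends on a dense open $K$-stable subset, so one may replace $Z$ by $\varphi^{-1}(Kx)$ and thereby assume $X = Kx \cong K/K_x$. Then $Z$ is a homogeneous bundle over $K/K_x$ with fiber~$F$, where $F$ is a $K_x$-variety. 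By Corollary~\ref{crl_open_orbits}, there is an open $K$-orbit in $Z$ if and only if there is an open $K_x$-orbit in~$F$. We now apply this with the Borel subgroup $B$ in place of $K$: the hypothesis says $Bx$ is open in $X$, so $X = Bx \cong B/B_x$, and $Z' := \varphi^{-1}(Bx)$ is a homogeneous bundle over $B/B_x$ with the same fiber~$F$, now viewed as a $B_x$-variety. Again by Corollary~\ref{crl_open_orbits}, $Z'$ has an open $B$-orbit if and only if $F$ has an open $B_x$-orbit. Since $Z'$ is dense and open in $Z$, the variety $Z$ is $K$-spherical if and only if $F$ has an open $B_x$-orbit.

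It remains to translate ``$F$ has an open $B_x$-orbit'' into ``$F$ is $L$-spherical.'' Here is where the hypothesis $B_x^0 = $ a Borel subgroup of $L$ enters. A $B_x$-action on an irreducible variety has an open orbit if and only if the $B_x^0$-action does (passing to the identity component changes orbits only by a finite index, hence not their dimension, and $F$ is irreducible so ``open orbit'' means ``orbit of dimension $\dim F$''). Thus $F$ has an open $B_x$-orbit if and only if $F$ has an open $B_x^0$-orbit, i.e.\ if and only if the Borel subgroup $B_x^0$ of $L$ has an open orbit in~$F$, which is exactly the definition of $F$ being $L$-spherical. Chaining the equivalences: $Z$ is $K$-spherical $\iff$ $F$ has an open $B$-orbit in the bundle over $B/B_x$ $\iff$ $F$ has an open $B_x$-orbit $\iff$ $F$ has an open $B_x^0$-orbit $\iff$ $F = \varphi^{-1}(x)$ is $L$-spherical. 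Since also $Kx$ is open in $X$ by hypothesis, the pair $(x,L)$ satisfies all the requirements of Definition~\ref{dfn_P-property}, so $x$ and $L$ have property~(P).

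The one point requiring a little care — the main obstacle, such as it is — is the legitimacy of replacing $Z$ by $\varphi^{-1}(Kx)$ and $Z'=\varphi^{-1}(Bx)$: one must check that these are nonempty (clear, as $\varphi$ is surjective onto $X \supseteq Kx \supseteq Bx$), open (clear, as $Kx$ and $Bx$ are open in $X$ and $\varphi$ is continuous), irreducible (each is dense in $Z$, which is irreducible, since $\varphi$ is surjective and $Bx$ is dense in $X$), and $K$-stable resp.\ $B$-stable (immediate from equivariance). With these verified, the structure of a homogeneous bundle in the sense of \S\,\ref{subsec_homogeneous_bundles} is present with base $K/K_x$ resp.\ $B/B_x$ and fiber~$F$ (note the fiber of $\varphi|_{Z'}$ over $x$ is still all of $F$, since $F \subseteq \varphi^{-1}(Bx)$), and Corollary~\ref{crl_open_orbits} applies verbatim. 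Everything else is the formal chain of equivalences above; no calculation is involved.
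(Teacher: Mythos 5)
Your proof is correct and follows essentially the same route as the paper: restrict to the open subset $\varphi^{-1}(Bx)$, view it as a homogeneous bundle over $Bx \simeq B/B_x$ with fiber $\varphi^{-1}(x)$, apply Corollary~\ref{crl_open_orbits} to reduce to an open $B_x$-orbit in the fiber, and use the hypothesis that $B_x^0$ is a Borel subgroup of~$L$. The extra verifications you supply (irreducibility, density, passage from $B_x$ to $B_x^0$) are routine and consistent with what the paper leaves implicit.
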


\begin{proof}
Let $Z$ be an arbitrary irreducible $K$-variety and let $\varphi
\colon Z \to X$ be a surjective $K$-equivariant morphism with
irreducible fiber over~$x$. Since the orbit $B x$ is open in~$X$,
the set $Z_0 = \varphi^{-1}(Bx)$ is open in~$Z$ and is a homogeneous
bundle over $B x$. By Corollary~\ref{crl_open_orbits}, the existence
in~$Z_0$ of an open $B$-orbit is equivalent to the existence
in~$\varphi^{-1}(x)$ of an open $B_x$-orbit. As $B_x^0$ is a Borel
subgroup of~$L$, the latter condition is equivalent to sphericity of
the action $L : \varphi^{-1}(x)$.
\end{proof}

The following theorem is implied by results of Panyushev's
paper~\cite{Pan90}.

\begin{theorem} \label{thm_existence}
For every spherical $K$-variety $X$ there exist a point $x \in X$
and a connected reductive subgroup $L \subset K_x$ having
property~\textup{(P)}.
\end{theorem}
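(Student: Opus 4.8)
The plan is to realize the point $x$ and the subgroup $L$ directly from the structure of a spherical $K$-variety, following Panyushev's analysis of the generic stabilizer. Recall that for a spherical $K$-variety $X$ with Borel subgroup $B \subset K$, there is an open $B$-orbit $Bx_0 \subset X$; the key input from \cite{Pan90} is a description of the \emph{generic isotropy subgroup} $L(X) := (K_{x})^{0}$ for $x$ in the open $K$-orbit, together with the fact that this generic stabilizer is reductive and, after a suitable choice of $x$ in the open orbit, the group $B_x^{0}$ can be taken to be a Borel subgroup of $L(X)$. Concretely, first I would fix a point $x$ in the open $K$-orbit $Kx_0 = Kx$ so that $B_x^{0}$ is as large as possible; then I would set $L = (K_x)^{0}$, which is a connected reductive subgroup of $K_x$ by Panyushev's theorem. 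Once we know that $B_x^{0}$ is a Borel subgroup of $L$, Proposition~\ref{prop_sufficient_(P)} applies verbatim and yields property~(P) for the pair $(x, L)$.

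The key steps, in order, are: (1) invoke the existence of an open $K$-orbit and fix a point $x$ lying in it; (2) cite from Panyushev's paper that the generic stabilizer $L = (K_x)^{0}$ is reductive; (3) cite, again from \cite{Pan90}, that for an appropriate choice of $x$ in the open $K$-orbit the subgroup $B_x^{0}$ is a Borel subgroup of $L$ — this is the heart of the matter and is where Panyushev's local structure theorem for spherical varieties (or the relation between spherical subgroups and their generic isotropy groups) enters; (4) check that with this choice the orbit $Bx$ is open in $X$, which is automatic since $x$ lies in the open $B$-orbit (any $B$-orbit inside the open $K$-orbit that contains a generic point is the open one); (5) apply Proposition~\ref{prop_sufficient_(P)} to conclude that $(x, L)$ has property~(P).

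The main obstacle is step~(3): extracting from \cite{Pan90} the precise statement that $B_x^{0}$ is a Borel subgroup of $L$. Panyushev's results are phrased in terms of the \emph{complexity} and the structure of the generic stabilizer of a reductive group action on a smooth affine (or quasi-affine) variety; translating ``the action $K : X$ is spherical'' into ``$B_x^{0}$ is a Borel subgroup of the reductive generic stabilizer'' requires knowing that complexity zero forces the generic $B$-orbit to surject onto $K/K_x$ with fiber of dimension $\dim B - \dim(K/K_x) = \dim B_x$, and that this fiber, being an orbit of the solvable group $B_x$ inside the reductive group $L = (K_x)^0$ acting on itself (roughly, $B_x$ acting on the fiber $\varphi^{-1}(x)$ in a local model), is dense precisely when $B_x^{0}$ is a Borel subgroup of $L$. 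I would cite the relevant proposition(s) of \cite{Pan90} for this dimension count and for the reductivity of $L$, and then assemble the pieces; no genuinely new argument beyond Panyushev's is needed, so the proof should be short once the correct references are pinned down.
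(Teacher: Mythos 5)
Your overall strategy is exactly the one the paper uses: produce a point $x$ with $Bx$ open in $X$ and a connected reductive subgroup $L \subset K_x$ such that $B_x^0$ is a Borel subgroup of $L$, and then conclude by Proposition~\ref{prop_sufficient_(P)}. However, step~(2) of your outline contains a genuine error: you set $L = (K_x)^0$ and assert that this group is reductive by Panyushev's theorem. That is false in general, even for $x$ in the open $K$-orbit of a spherical variety. The simplest counterexample is already central to this paper: $X = \PP(V)$ with $K = \GL(V)$ is spherical and homogeneous, and the stabilizer of every point is a maximal parabolic subgroup, hence not reductive; the same happens for any nontrivial generalized flag variety $G/P$ regarded as a $G$-variety. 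What \cite[Theorem~1]{Pan90} actually furnishes is a point $x$ with $Bx$ open together with a \emph{maximal reductive subgroup} (a Levi subgroup) $L$ of $K_x$ for which $B_x^0$ is a Borel subgroup of $L$; the generic stabilizer $K_x$ itself retains its unipotent radical. This is precisely why Theorem~\ref{thm_existence} is phrased in terms of ``a connected reductive subgroup $L \subset K_x$'' rather than asserting anything about $K_x^0$.

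Once you replace ``$L = (K_x)^0$'' by ``$L$ is the Levi subgroup of $K_x$ provided by \cite[Theorem~1]{Pan90}'', the remaining steps — the openness of $Bx$, which is part of the same citation, and the application of Proposition~\ref{prop_sufficient_(P)} — coincide with the paper's proof, and your concern in step~(3) about pinning down the exact statement in \cite{Pan90} is resolved by that single reference.
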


\begin{proof}
It follows from~\cite[Theorem~1]{Pan90} that there exist a point $x
\in X$ and a connected reductive subgroup $L \subset K_x$ such that
the orbit $Bx$ is open in~$X$ and $B_x^0$ is a Borel subgroup
of~$L$. Then by Proposition~\ref{prop_sufficient_(P)} the point~$x$
and the subgroup~$L$ have property~(P).
\end{proof}

In the remaining part of this subsection we find explicitly a point
$x \in X$ and a connected reductive subgroup $L \subset K_x$ having
property~(P) for the following two cases:

(1) $K = \SL_n$, $X = \Gr_m(\FF^n)$, where $n \ge 2$ and $1 \le m\le
n-1$;

(2) $K = \Sp_{2n}$, $X = \Gr_m(\FF^{2n})$, where $n \ge 2$ and $1
\le m \le 2n-1$.

\noindent The explicit form of $x$ and $L$ in these cases will be
many times used in~\S\,\ref{sect_proof_of_main_theorem}.
Proposition~\ref{prop_point+group_sl} corresponds to case~(1),
Proposition~\ref{prop_point+group_sp_even} and
Corollary~\ref{crl_point+group_sp_even} correspond to case~(2) with
$m = 2k$, Proposition~\ref{prop_point+group_sp_odd} and
Corollary~\ref{crl_point+group_sp_odd} correspond to case~(2) with
$m = 2k+1$.
Propositions~\ref{prop_point+group_sl},~\ref{prop_point+group_sp_even},
and~\ref{prop_point+group_sp_odd} are the most complicated
statements of this paper from the technical viewpoint, therefore we
postpone their proofs until Appendix~\ref{sect_appendix}.

\begin{proposition} \label{prop_point+group_sl}
Suppose that $n \ge 2$, $1 \le m \le n-1$, $V = \FF^n$, $K = \SL_n$,
and $X = \Gr_m(V)$. Then there are a point $[W] \in X$ and a
connected reductive subgroup $L \subset K_{[W]}$ satisfying the
following conditions:

\begin{enumerate}[label=\textup{(\arabic*)},ref=\textup{\arabic*}]
\item \label{4.13.1}
the point $[W]$ and the subgroup $L \subset K$ have
property~\textup{(P)};

\item \label{4.13.2}
$L \simeq \mathrm{S}(\mathrm{L}_m \times \mathrm{L}_{n-m})$;

\item
the pair $(L, W)$ is geometrically equivalent to the pair $(\GL_m,
\FF^m)$;

\item \label{4.13.4}
the pair $(L, V)$ is geometrically equivalent to the pair
$(\mathrm{S}(\mathrm{L}_m \times \mathrm{L}_{n-m}), \FF^n)$.
\end{enumerate}
\end{proposition}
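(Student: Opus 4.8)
The plan is to write down explicit candidates for $[W]$, for $L$, and for a Borel subgroup $B\subset K=\SL_n$, to dispose of conditions (2), (3), (4) by inspection, and to obtain (1) by checking the hypotheses of Proposition~\ref{prop_sufficient_(P)}. Concretely, I would let $B$ be the group of upper-triangular matrices relative to the basis $e_1,\dots,e_n$, with maximal torus $T$ (diagonal matrices) and unipotent radical $U$ (upper unitriangular matrices); I would take
$$
W=\langle e_{n-m+1},\dots,e_n\rangle ,
$$
and let $L=(\GL_{n-m}\times\GL_m)\cap\SL_n\subset\SL_n$ act block-diagonally, with the first factor on $\langle e_1,\dots,e_{n-m}\rangle$ and the second on $W$. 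Then $L$ is connected reductive and preserves $W$, so $L\subset K_{[W]}$, and $L\simeq\mathrm S(\mathrm L_m\times\mathrm L_{n-m})$, which is (2). Since $1\le m\le n-1$, for every $h\in\GL(W)$ one may correct the determinant by an element of $\GL_{n-m}$, so $L$ surjects onto $\GL(W)$; hence $(L,W)$ is geometrically equivalent to $(\GL_m,\FF^m)$, which is (3). Finally $L$ acts on $V=\langle e_1,\dots,e_{n-m}\rangle\oplus W$ in the evident block-diagonal way, so $(L,V)$ is geometrically equivalent to $(\mathrm S(\mathrm L_m\times\mathrm L_{n-m}),\FF^n)$ via the linear isomorphism of $V$ interchanging the two summands; this is (4). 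It then remains only to establish (1).

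By Proposition~\ref{prop_sufficient_(P)} it is enough to show that $B[W]$ is open in $X$ and that $B_{[W]}^0$ is a Borel subgroup of $L$. Put $F=\langle e_1,\dots,e_{n-m}\rangle$, a $B$-stable subspace. Every $m$-dimensional subspace $W'\subset V$ with $W'\cap F=0$ is the column span of an $n\times m$ matrix $\left(\begin{smallmatrix}A\\ I_m\end{smallmatrix}\right)$ for a unique $(n-m)\times m$ matrix $A$ (the blocks matching $V=F\oplus W$), with $W$ itself corresponding to $A=0$. Writing $g=\left(\begin{smallmatrix}g_{11}&g_{12}\\ 0&g_{22}\end{smallmatrix}\right)\in B$ in the same block form, a direct computation shows that $g$ sends the subspace with parameter $A$ to the one with parameter $(g_{11}A+g_{12})g_{22}^{-1}$. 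Since $g_{12}$ ranges over all $(n-m)\times m$ matrices, $B$ acts transitively on $\{W'\in\Gr_m(V):W'\cap F=0\}$, which is a nonempty open (hence dense) subset of $\Gr_m(V)$; taking $A=0$ shows $B[W]$ is this open orbit. (Equivalently, $[W]$ lies in the big Schubert cell of $\Gr_m(V)$ relative to $B$.)

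For the stabilizer, a $g\in B$ fixes $[W]$ exactly when it maps $W$ into itself, which for upper-triangular $g$ forces $g_{12}=0$, i.e.\ forces $g$ to be block-diagonal; hence $B_{[W]}=B\cap L=T\cdot(U\cap L)$. This group is connected, so $B_{[W]}^0=B_{[W]}$, and it is a Borel subgroup of $L$ because it contains the maximal torus $T$ of $L$ together with the maximal unipotent subgroup $U\cap L$ of $L$. Proposition~\ref{prop_sufficient_(P)} then yields property~(P) for $[W]$ and $L$, which is (1).

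The only delicate point is arranging the two hypotheses of Proposition~\ref{prop_sufficient_(P)} simultaneously: $W$ must be transverse to the flag fixed by $B$ (so that $B[W]$ is the big cell) while at the same time having a $B$-stabilizer small enough to be merely a Borel of the ``complementary'' Levi subgroup $L$. The subspace $W=\langle e_{n-m+1},\dots,e_n\rangle$ is precisely the choice that makes both hold; once it is fixed, everything reduces to the two short matrix computations above, and no small-$m$ or small-$n$ exceptions occur since $1\le m\le n-1$ keeps both $F$ and $W$ nonzero.
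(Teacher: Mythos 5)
Your proposal is correct and follows essentially the same route as the paper: choose $W$ to be a coordinate subspace, let $L$ be the corresponding block-diagonal subgroup $\mathrm{S}(\mathrm{L}_m\times\mathrm{L}_{n-m})$, read off conditions (2)--(4) by inspection, and obtain (1) from Proposition~\ref{prop_sufficient_(P)} by checking that $B[W]$ is open and $B_{[W]}$ is a Borel subgroup of $L$ (the paper places $W$ on the first $m$ basis vectors and takes $B$ opposite, which is the same configuration up to relabeling). The only cosmetic difference is that the paper verifies openness of $B[W]$ by a dimension count on $B_{[W]}$, whereas you exhibit transitivity of $B$ on the big Schubert cell directly; both are fine.
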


\begin{proof}
See Appendix~\ref{sect_appendix}.
\end{proof}

\begin{proposition} \label{prop_point+group_sp_even}
Suppose that $n \ge 2$, $1 \le k \le n/2$, $V = \FF^{2n}$, $K =
\Sp_{2n}$, and $X = \Gr_{2k}(V)$. Then there are a point $[W] \in X$
and a connected reductive subgroup $L \subset K_{[W]}$ satisfying
the following conditions:

\begin{enumerate}[label=\textup{(\arabic*)},ref=\textup{\arabic*}]
\item \label{4.14.1}
the point $[W]$ and the subgroup $L \subset K$ have
property~\textup{(P)};

\item \label{4.14.2}
$L = L_1 \times \ldots \times L_k \times L_{k+1}$,
where $L_i \simeq \SL_2$ for $i = 1, \ldots, k$ and $L_{k+1} \simeq
\Sp_{2n-4k}$;

\item \label{4.14.3}
there is a decomposition $W = W_1 \oplus \ldots \oplus W_k$ into a
direct sum of $L$-modules so that:

\begin{enumerate}

\item[\textup{(3.1)}]
$\dim W_i = 2$ for $i = 1, \ldots, k$;

\item[\textup{(3.2)}]
for every $i = 1, \ldots, k$ the group $L_i$ acts trivially on all
summands~$W_j$ with $j \ne i$;

\item[\textup{(3.3)}]
for every $i = 1, \ldots, k$ the pair $(L_i, W_i)$ is geometrically
equivalent to the pair $(\SL_2, \FF^2)$;
\end{enumerate}

\item \label{4.14.4}
there is a decomposition $V = V_1 \oplus \ldots \oplus V_k
\oplus\nobreak V_{k+1}$ into a direct sum of $L$-modules so that:

\begin{enumerate}
\item[\textup{(4.1)}]
$\dim V_i = 4$ for $i = 1, \ldots, k$ and $\dim V_{k+1} = 2n - 4k$;

\item[\textup{(4.2)}]
for every $i = 1, \ldots, k+1$ the group $L_i$ acts trivially on all
summands~$V_j$ with $j \ne i$;

\item[\textup{(4.3)}]
for every $i = 1, \ldots, k$ the pair $(L_i, V_i)$ is geometrically
equivalent to the pair $(\SL_2, \FF^2 \oplus \FF^2)$, where $\SL_2$
acts diagonally, and the pair $(L_{k+1}, V_{k+1})$ is geometrically
equivalent to the pair $(\Sp_{2n-4k}, \FF^{2n-4k})$.
\end{enumerate}
\end{enumerate}

\textup{(}For $n = 2k$ the group $L_{k+1}$ and the space $V_{k+1}$
should be regarded as trivial.\textup{)}
\end{proposition}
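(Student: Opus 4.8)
\textbf{Proof proposal for Proposition~\ref{prop_point+group_sp_even}.}
The plan is to construct the point $[W]$ and the subgroup $L$ completely explicitly inside a standard symplectic basis, and then to verify property~(P) via Proposition~\ref{prop_sufficient_(P)}. Fix a symplectic basis $e_1, f_1, \ldots, e_n, f_n$ of $V = \FF^{2n}$, so that $\Omega(e_i, f_j) = \delta_{ij}$ and $\Omega(e_i, e_j) = \Omega(f_i, f_j) = 0$. Split the index set into the first $2k$ indices (grouped into $k$ pairs $\{2i-1, 2i\}$) and the remaining $2n - 4k$ indices; let $L_{k+1} \simeq \Sp_{2n-4k}$ be the symplectic group acting on $V_{k+1} = \langle e_{2k+1}, f_{2k+1}, \ldots, e_n, f_n \rangle$. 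For each $i = 1, \ldots, k$, set $V_i = \langle e_{2i-1}, f_{2i-1}, e_{2i}, f_{2i} \rangle$ and let $L_i \simeq \SL_2$ be embedded into $\Sp(V_i) = \Sp_4$ diagonally on the two hyperbolic planes $\langle e_{2i-1}, f_{2i-1}\rangle$ and $\langle e_{2i}, f_{2i}\rangle$ — concretely, $L_i$ is the image of $\SL_2 \hookrightarrow \SL_2 \times \SL_2 \subset \Sp_4$. This makes $V = V_1 \oplus \cdots \oplus V_{k+1}$ an $L$-module decomposition with the properties claimed in~(\ref{4.14.4}), and the geometric equivalences in~(4.3) are immediate from the construction. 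For the Lagrangian-type piece $W$, take $W_i = \langle e_{2i-1}, e_{2i-1} + e_{2i} + f_{2i} \rangle$ or some similar $2$-dimensional subspace of $V_i$ on which $L_i$ acts as $(\SL_2, \FF^2)$; one checks directly that $L_i$ preserves $W_i$, that $\dim W_i = 2$, and hence $W = W_1 \oplus \cdots \oplus W_k$ is a $2k$-dimensional $L$-stable subspace giving a point $[W] \in X = \Gr_{2k}(V)$, with $L \subset K_{[W]}$ and properties~(\ref{4.14.2}), (\ref{4.14.3}) verified by inspection.

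The substantive part is condition~(\ref{4.14.1}): that $[W]$ and $L$ have property~(P). By Proposition~\ref{prop_sufficient_(P)} it suffices to produce a Borel subgroup $B \subset K = \Sp_{2n}$ such that the orbit $B[W]$ is open in $\Gr_{2k}(V)$ and $B_{[W]}^0$ is a Borel subgroup of $L$. Here I would choose the Borel subgroup $B$ compatibly with the basis — say $B$ stabilizing the standard isotropic flag $\langle e_1 \rangle \subset \langle e_1, e_2 \rangle \subset \cdots \subset \langle e_1, \ldots, e_n \rangle$ — but the subspace $W$ itself should be chosen in "general position" relative to this flag so that $B[W]$ is open. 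The delicate balancing act is that $W$ must simultaneously be $L$-stable (which pins it down a lot) and $B$-generic (which pushes it the other way); the resolution is to pick $W$ generic \emph{inside the isotypic structure forced by $L$}, and then to compute the stabilizer $B_{[W]}$ directly. The dimension count $\dim B = \dim \Sp_{2n}$ minus $\dim \Gr_{2k} = 2k(2n - 2k)$ should match $\dim B_{[W]}$, and the latter should be computed to be exactly $\dim L + \rk L$ over the identity component equal to a Borel of $L$; this is where the precise numerology of the $\SL_2$'s and the residual $\Sp_{2n-4k}$ has to be tracked carefully.

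The hard part will be the explicit verification that $B_{[W]}^0$ is genuinely a Borel subgroup of $L$, not merely a solvable subgroup of the right dimension. Concretely I expect to argue as follows: first show $B[W]$ is open by exhibiting that the differential of the orbit map at $[W]$ is surjective onto $T_{[W]}\Gr_{2k}(V) \simeq \Hom(W, V/W)$, equivalently that $\mathfrak b \cdot W + W = V$ in the appropriate sense; this reduces to a linear-algebra computation with upper-triangular symplectic matrices acting on the chosen generators of $W$. Then, having $B[W]$ open, we get $\dim B_{[W]} = \dim B - \dim \Gr_{2k}(V)$ for free, and the remaining task is to identify $B_{[W]}^0$. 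The clean way is to observe that $B_{[W]} \subset K_{[W]}$, to understand $K_{[W]}$ (the stabilizer of a $2k$-subspace in $\Sp_{2n}$, whose structure depends on the rank of $\Omega|_W$ — in our case $W$ is chosen so that $\Omega|_W$ has a prescribed, probably maximal-possible, rank), and then to intersect with $B$. Since $L \subset K_{[W]}$ is reductive and $B_{[W]}^0$ is solvable of the dimension of a Borel of $L$, showing $B_{[W]}^0 \subset L \cdot (\text{unipotent radical of } K_{[W]})$ projects onto a Borel of $L$ will finish it. All of this is routine but bookkeeping-heavy, which is exactly why the authors defer it to Appendix~\ref{sect_appendix}; in this plan I would organize the appendix computation by handling one $\SL_2$-block at a time and then the symplectic tail, using that the whole configuration is a "block-diagonal" product.
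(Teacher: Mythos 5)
Your overall strategy --- an explicit block construction of $L$ and $W$, followed by an application of Proposition~\ref{prop_sufficient_(P)} and a dimension count --- is the same as the paper's, and your structural observations (that $\Omega|_W$ must have maximal rank, that $W$ must be a sum of $L_i$-isotypic planes) are correct. But there is a genuine gap at the decisive step: the configuration you propose, with $L$ block-diagonal in the standard symplectic basis \emph{and} $B$ the standard Borel stabilizing $\langle e_1\rangle\subset\cdots\subset\langle e_1,\ldots,e_n\rangle$, cannot produce an open orbit $B[W]$ for \emph{any} admissible $W$. If $L_i$ is the diagonal $\SL_2$ identifying $(e_{2i-1},f_{2i-1})$ with $(e_{2i},f_{2i})$, then by Schur's lemma every $L_i$-stable plane of type $(\SL_2,\FF^2)$ in $V_i$ is of the form $\langle \lambda e_{2i-1}+\mu e_{2i},\,\lambda f_{2i-1}+\mu f_{2i}\rangle$, so every admissible $W$ meets the $B$-stable Lagrangian $E=\langle e_1,\ldots,e_n\rangle$ in a subspace of dimension at least $k$; since a generic $2k$-plane meets $E$ trivially (as $2k\le n$), the orbit $B[W]$ lies in the proper closed subset $\lbrace [W'] : \dim(W'\cap E)\ge k\rbrace$ and is never open. (For $n=2$, $k=1$, $W=\langle e_1+e_2,f_1+f_2\rangle$ one checks directly that the image of $\mathfrak b$ in $\Hom(W,V/W)$ is only $3$-dimensional.) Twisting the identification, say to $(e_{2i-1},f_{2i-1})\leftrightarrow(f_{2i},-e_{2i})$, lets $W$ avoid $E$, but then $B\cap L_i$ collapses to the maximal torus of $L_i$ and $B_{[W]}^0$ cannot be a Borel of $L$; one can check these two failure modes are exhaustive over all diagonal embeddings. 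So ``picking $W$ generic inside the isotypic structure forced by $L$'' does not resolve the tension you flag: the whole family of $L$-stable $W$'s is degenerate relative to the standard Borel, and no genericity within it helps. (A smaller slip: your sample $W_i=\langle e_{2i-1},\,e_{2i-1}+e_{2i}+f_{2i}\rangle$ is not $L_i$-stable for either embedding.)

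The fix, which is what the paper's appendix proof implements, is to tilt the Borel relative to $L$ rather than $W$ relative to $B$. Write $V=W\oplus W'\oplus R$ as a skew-orthogonal sum with $W,W'$ symplectic of dimension $2k$, take $L_i$ diagonal in $\Sp(W_i)\times\Sp(W_i')$, and define $B$ as the stabilizer of the isotropic flag spanned by the mixed vectors $f_i=e_i+(-1)^ie_i'$ (the sign alternation makes them pairwise skew-orthogonal). This one choice secures both requirements simultaneously: $F_n=\langle f_1,\ldots,f_n\rangle$ meets $W$ trivially, and each $f_{2i-1}$ is a ``pure tensor'' for $L_i$, so $B\cap L_i$ is the stabilizer of the line $\langle f_{2i-1}\rangle$, a genuine Borel of $L_i$. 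The paper then runs your dimension count in the reverse logical order: it first identifies $B_{[W]}=L\cap B$ as a Borel of $L$, computes $\dim B_{[W]}=(n-2k)^2+n$, and deduces openness of $B[W]$ from $\dim B-\dim B_{[W]}=2k(2n-2k)=\dim X$. You would need to adopt this construction (or an equivalent conjugation of your data) before the ``routine bookkeeping'' you defer to can actually go through.
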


\begin{proof}
See Appendix~\ref{sect_appendix}.
\end{proof}

\begin{corollary} \label{crl_point+group_sp_even}
In the hypotheses and notation of
Proposition~\textup{\ref{prop_point+group_sp_even}} suppose that a
point $[W] \in \Gr_{2k}(V)$ and a group $L \subset K_{[W]}$ satisfy
conditions \textup{(\ref{4.14.1})--(\ref{4.14.4})}. Then, for the
variety $\Gr_{2n-2k}(V)$, the point $[W^\perp]$ and the group $L$
have property~\textup{(P)}, and the pair $(L, W^\perp)$ is
geometrically equivalent to the pair $(L, W \oplus V_{k+1})$, where
$L$ acts diagonally.
\end{corollary}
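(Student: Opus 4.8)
The statement concerns the ``dual'' Grassmannian $\Gr_{2n-2k}(V)$ obtained from $\Gr_{2k}(V)$ via the symplectic-complement involution $W \mapsto W^\perp$, and asks us to transfer the property-(P) data from one to the other. The plan is to exploit the $\Sp_{2n}$-equivariant isomorphism $\Gr_{2k}(V) \xrightarrow{\sim} \Gr_{2n-2k}(V)$, $[W] \mapsto [W^\perp]$. First I would observe that this isomorphism carries the open $B$-orbit through $[W]$ to the open $B$-orbit through $[W^\perp]$ (for a suitable Borel subgroup $B$ of $K$), and that it identifies stabilizers: $K_{[W^\perp]} = K_{[W]}$, since $g W = W$ forces $g W^\perp = W^\perp$ for $g \in \Sp_{2n}$. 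Hence the same subgroup $L \subset K_{[W]} = K_{[W^\perp]}$ and the condition ``$B_{[W]}^0$ is a Borel subgroup of $L$'' are preserved verbatim. By Proposition~\ref{prop_sufficient_(P)}, $[W^\perp]$ and $L$ then have property~(P) for $\Gr_{2n-2k}(V)$; the key point is that property~(P) as formulated in Definition~\ref{dfn_P-property} depends only on the orbit being open together with the Borel-stabilizer condition, both of which are manifestly invariant under the equivariant isomorphism.

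It remains to identify the $L$-module $W^\perp$ up to geometric equivalence. Using the decomposition $V = V_1 \oplus \ldots \oplus V_k \oplus V_{k+1}$ from condition~(\ref{4.14.4}) together with $W = W_1 \oplus \ldots \oplus W_k$ from condition~(\ref{4.14.3}), I would first check that the symplectic form on $V$ restricts nondegenerately to each $V_i$, so that $V = \bigoplus_i V_i$ is an orthogonal direct sum of symplectic subspaces and $W^\perp = \bigoplus_{i=1}^{k} (W_i^{\perp_{V_i}}) \oplus V_{k+1}$, where $W_i^{\perp_{V_i}}$ denotes the skew-complement of $W_i$ inside $V_i$. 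Indeed $\dim V_i = 4$, $\dim W_i = 2$, and $W_i$ is isotropic in $V_i$ (being a summand of the isotropic subspace $W$), so $W_i^{\perp_{V_i}}$ is again a $2$-dimensional isotropic subspace of $V_i$ on which $L_i \simeq \SL_2$ acts. Since by~(4.3) the pair $(L_i, V_i)$ is geometrically equivalent to $(\SL_2, \FF^2 \oplus \FF^2)$ with $\SL_2$ acting diagonally, and $W_i$ is one of the two diagonal copies of $\FF^2$ (by~(3.3), with $L_j$ for $j \ne i$ acting trivially), its skew-complement $W_i^{\perp_{V_i}}$ is $L_i$-isomorphic to $\FF^2$ as well, while $L_j$ ($j \ne i$) and $L_{k+1}$ act trivially on it. Summing up, $W^\perp \simeq W_1 \oplus \ldots \oplus W_k \oplus V_{k+1}$ as $L$-modules, and by~(4.3) again $L_{k+1}$ acts on $V_{k+1}$ as $\Sp_{2n-4k}$ on $\FF^{2n-4k}$; the diagonal $L$-action on $W \oplus V_{k+1}$ is exactly this, proving the claimed geometric equivalence $(L, W^\perp) \sim (L, W \oplus V_{k+1})$.

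The main obstacle I anticipate is the linear-algebra verification that the summands $V_i$ are mutually skew-orthogonal and symplectic, and that $W_i^{\perp_{V_i}} \cong \FF^2$ as an $L_i$-module rather than being some twisted form; this hinges on a careful reading of what the ``diagonal'' embedding in~(4.3) produces and on the fact that an $\SL_2$-invariant $2$-dimensional isotropic subspace of $\FF^2 \oplus \FF^2$ (diagonal action) must be $\SL_2$-isomorphic to the standard module. Everything else is a direct transport of structure along the fixed equivariant isomorphism. I would close by remarking that for $n = 2k$ the summand $V_{k+1}$ is trivial and the statement reduces to $(L, W^\perp) \sim (L, W)$, consistent with the fact that in that case $\Gr_{2k}(V) = \Gr_{2n-2k}(V)$ and $W \mapsto W^\perp$ is an involution of a single Grassmannian.
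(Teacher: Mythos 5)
Your first paragraph (transfer of property~(P) along the $K$-equivariant isomorphism $[W]\mapsto[W^\perp]$) is fine and is exactly what the paper does, although note that property~(P) as given in Definition~\ref{dfn_P-property} is preserved under any equivariant isomorphism directly — one does not need to re-invoke the Borel-stabilizer condition of Proposition~\ref{prop_sufficient_(P)}, which is a sufficient criterion rather than part of the definition.

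The second paragraph, however, contains a genuine error: you assert that $W$ is isotropic and hence that each $W_i$ is isotropic in $V_i$. This is false, and in fact incompatible with the hypotheses. Since the $K$-orbit of $[W]$ is open in $\Gr_{2k}(V)$ (condition~(1)), the restriction of the symplectic form $\Omega$ to $W$ must be nondegenerate — the locus of $2k$-planes on which $\Omega$ degenerates is a proper closed $K$-stable subvariety, so an isotropic $W$ could never have an open orbit. (This is also visible in the explicit construction in the appendix, where $e_1,\ldots,e_{2k}$ is a \emph{standard} basis of $W$, so each $W_i=\langle e_{2i-1},e_{2i}\rangle$ is a symplectic plane.) Your error propagates: if $W_i$ were a $2$-dimensional isotropic subspace of the $4$-dimensional symplectic space $V_i$, it would be Lagrangian, so $W_i^{\perp_{V_i}}=W_i$ and one would get $W\subset W^\perp$, contradicting the direct sum decomposition you also need. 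The correct route — and the one the paper takes — is the opposite: nondegeneracy of $\Omega|_W$ gives $V=W\oplus W^\perp$, whence $W^\perp\cong V/W$ as $L$-modules, and conditions~(3) and~(4) then identify $V/W\cong \bigoplus_i (V_i/W_i)\oplus V_{k+1}\cong W\oplus V_{k+1}$. Your final answer happens to coincide with the correct one, but the intermediate geometry is wrong and would not survive scrutiny.
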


\begin{proof}
Let $\Omega$ be a $K$-invariant symplectic form on~$V$. There is the
natural $K$-equivariant isomorphism $\Gr_{2k}(V) \simeq
\Gr_{2n-2k}(V)$ taking each $2k$-dimensional subspace of~$V$ to its
skew-orthogonal complement with respect to~$\Omega$. In view of
condition~(\ref{4.14.1}) this implies that the point $[W^\perp]$ and
the group $L$ have property~\textup{(P)}. Further, since the
$K$-orbit of $[W]$ is open in~$\Gr_{2k}(V)$, the restriction of
$\Omega$ to the subspace $W$ is nondegenerate. Hence $V = W \oplus
W^\perp$, which by conditions~(\ref{4.14.3}) and~(\ref{4.14.4})
uniquely determines the $L$-module structure on~$W^\perp$.
\end{proof}

\begin{proposition} \label{prop_point+group_sp_odd}
Suppose that $n \ge 2$, $0 \le k \le (n-1)/2$, $V = \FF^{2n}$, $K =
\Sp_{2n}$, and $X = \Gr_{2k+1}(V)$. Then there are a point $[W] \in
X$ and a connected reductive subgroup $L \subset K_{[W]}$ satisfying
the following conditions:

\begin{enumerate}[label=\textup{(\arabic*)},ref=\textup{\arabic*}]
\item \label{4.16.1}
the point $[W]$ and the subgroup $L \subset K$ have
property~\textup{(P)};

\item \label{4.16.2}
$L = L_0 \times L_1 \times \ldots \times L_k \times L_{k+1}$, where
$L_0 \simeq \FF^\times$, $L_i \simeq \SL_2$ for $i = 1, \ldots, k$
and $L_{k+1} \simeq \Sp_{2n-4k-2}$;

\item \label{4.16.3}
there is a decomposition $W = W_0 \oplus W_1 \oplus \ldots \oplus
W_k$ into a direct sum of $L$-modules so that:

\begin{enumerate}
\item[\textup{(3.1)}]
$\dim W_0 = 1$ and $\dim W_i = 2$ for $i = 1, \ldots, k$;

\item[\textup{(3.2)}]
for every $i = 0, 1, \ldots, k$ the group $L_i$ acts trivially on
all summands~$W_j$ with $j \ne i$;

\item[\textup{(3.3)}]
the pair $(L_0, W_0)$ is geometrically equivalent to the pair
$(\FF^\times, \FF^1)$ and for every $i = 1, \ldots, k$ the pair
$(L_i, W_i)$ is geometrically equivalent to the pair $(\SL_2,
\FF^2)$;
\end{enumerate}

\item \label{4.16.4}
there is a decomposition $V = V_0 \oplus V_1 \oplus \ldots \oplus
V_k \oplus\nobreak V_{k+1}$ into a direct sum of $L$-modules so
that:

\begin{enumerate}
\item[\textup{(4.1)}]
$\dim V_0 = 2$, $\dim V_i = 4$ for $i = 1, \ldots, k$, and $\dim
V_{k+1} = 2n - 4k - 2$;

\item[\textup{(4.2)}]
for every $i = 0, 1, \ldots, k+1$ the group $L_i$ acts trivially on
all summands~$V_j$ with $j \ne i$;

\item[\textup{(4.3)}]
the pair $(L_0, V_0)$ is geometrically equivalent to the pair
$(\FF^\times, \FF^1 \oplus \FF^1)$ with the action $(t,(x_1, x_2))
\mapsto (tx_1, t^{-1}x_2)$, for every $i = 1, \ldots, k$ the pair
$(L_i, V_i)$ is geometrically equivalent to the pair $(\SL_2, \FF^2
\oplus\nobreak \FF^2)$ with $\SL_2$ acting diagonally, and the pair
$(L_{k+1}, V_{k+1})$ is geometrically equivalent to the pair
$(\Sp_{2n-4k-2}, \FF^{2n-4k-2})$.
\end{enumerate}
\end{enumerate}

\textup{(}For $n = 2k+1$ the group $L_{k+1}$ and the space $V_{k+1}$
should be regarded as trivial.\textup{)}

\end{proposition}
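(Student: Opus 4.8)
The plan is to deduce the statement from Proposition~\ref{prop_point+group_sp_even} by peeling off a hyperbolic plane and then to verify property~(P) by means of Proposition~\ref{prop_sufficient_(P)}. Fix a $K$-invariant symplectic form $\Omega$ on~$V$. I would first treat the case $k\ge1$; the case $k=0$ is handled by the same construction with an empty collection of $\SL_2$-factors and is discussed at the end. Choose an $\Omega$-orthogonal decomposition $V=V_0\oplus V'$ with $\dim V_0=2$ (so $V_0$ is symplectic) and $\dim V'=2n-2$, and identify $\Sp(V')\simeq\Sp_{2n-2}$ as the subgroup of $\Sp_{2n}$ acting trivially on~$V_0$. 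Fix an isotropic line $W_0\subset V_0$ and the complementary isotropic line $W_0^*\subset V_0$, and let $L_0\subset\Sp(V_0)\simeq\SL_2$ be the maximal torus preserving both; then $L_0\simeq\FF^\times$ acts on $V_0=W_0\oplus W_0^*$ with weights $1$ and $-1$. Since $1\le k\le(n-1)/2$, Proposition~\ref{prop_point+group_sp_even} applies to $\Sp(V')$ and $\Gr_{2k}(V')$ and yields a point $[W']$, a subgroup $L'=L_1\times\ldots\times L_k\times L_{k+1}\subset\Sp(V')_{[W']}$, and decompositions $V'=V_1\oplus\ldots\oplus V_{k+1}$, $W'=W_1\oplus\ldots\oplus W_k$ with all the listed properties; in particular $\Sp(V')[W']$ is open in $\Gr_{2k}(V')$ (so $W'$ is a symplectic subspace of $V'$), and property~(P) for $[W']$, $L'$ is established there via a Borel subgroup $B'\subset\Sp(V')$ with $B'[W']$ open in $\Gr_{2k}(V')$ and $(B')^0_{[W']}$ a Borel subgroup of~$L'$.

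I would then set $W=W_0\oplus W'$, $L=L_0\times L'$, and keep the subspaces $V_i,W_i$ as above. Conditions~(2)--(4) of the proposition being proved then follow immediately: the index-$0$ parts are accounted for by $V_0$ with the $L_0$-action just described, the remaining parts are exactly those coming from Proposition~\ref{prop_point+group_sp_even}, and the triviality conditions hold because $L_0$ acts trivially on $V'=V_0^\perp$ while each $L_i$ with $i\ge1$ acts trivially on $V_0$ and on the other~$V_j$. Moreover, since $W_0$ is isotropic in $V_0$ and $W'$ is symplectic in $V'$, a direct computation gives $W\cap W^\perp=W_0$, a line; hence any $g\in\Sp_{2n}$ with $gW=W$ automatically fixes $W_0$.

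For condition~(1) I would argue as follows. Let $P\subset\Sp_{2n}$ be the stabilizer of the line $W_0$, with Levi subgroup $L_P=\GL(W_0)\times\Sp(V')=L_0\times\Sp(V')$, and let $P_u^-$ be the unipotent radical of the opposite parabolic. Put $B=(L_0\times B')\ltimes P_u^-$. Since $(L_0\times B')\cap P_u^-\subseteq L_P\cap P_u^-=\lbrace1\rbrace$, one has $\dim B=\dim(L_0\times B')+\dim P_u^-=1+(n-1)^2+(n-1)+(2n-1)=n^2+n$, so $B$ is a connected solvable subgroup of maximal dimension, i.e.\ a Borel subgroup of $\Sp_{2n}$. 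Now if $b\in B$ fixes $[W]$, then $b$ fixes $W\cap W^\perp=W_0$, so $b\in B\cap P=L_0\times B'$; writing $b=(t,g')$ with $t\in L_0$, $g'\in B'$ and using that $t$ fixes $W_0$ and acts trivially on $V'\supseteq W'$, while $g'$ fixes $W_0$ and preserves $V'$, we get $b(W_0\oplus W')=W_0\oplus g'W'$, which equals $W$ precisely when $g'W'=W'$. Hence $B_{[W]}=L_0\times B'_{[W']}$, so $B_{[W]}^0=L_0\times(B')^0_{[W']}$ is a Borel subgroup of $L=L_0\times L'$ (as $L_0\simeq\FF^\times$). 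Finally $\dim B_{[W]}=1+\dim\bigl((B')^0_{[W']}\bigr)=1+\frac12(\dim L'+\rk L')$, and a routine substitution ($\dim L'=3k+\dim\Sp_{2n-4k-2}$, $\rk L'=n-k-1$, $\dim B=n^2+n$, $\dim\Gr_{2k+1}(V)=(2k+1)(2n-2k-1)$) shows $\dim B-\dim B_{[W]}=\dim\Gr_{2k+1}(V)$. Therefore $B[W]$ has full dimension in $\Gr_{2k+1}(V)$, hence is dense, and being a $B$-orbit it is open; in particular $[W]$ lies in the open $\Sp_{2n}$-orbit. By Proposition~\ref{prop_sufficient_(P)}, the point $[W]$ and the subgroup $L$ have property~(P), which is what condition~(1) asserts.

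For $k=0$ the same recipe gives $\Gr_{2k+1}(V)=\PP(V)$, $W=W_0$, $L=L_0\times\Sp(V')=\FF^\times\times\Sp_{2n-2}$, and the identical argument works with $B'$ any Borel subgroup of $\Sp(V')$: then $B_{[W_0]}=L_0\times B'$ is visibly a Borel subgroup of $L$ and $B[W_0]\supseteq P_u^-[W_0]$ is open in $\PP(V)$. The part I expect to need the most care is making sure that the proof of Proposition~\ref{prop_point+group_sp_even} really supplies a Borel subgroup $B'$ with the two stated properties (this is exactly what the application of Proposition~\ref{prop_sufficient_(P)} there furnishes, so it should be available) and, secondarily, the bookkeeping behind the dimension identity $\dim B-\dim B_{[W]}=\dim\Gr_{2k+1}(V)$; alternatively, if one prefers a more self-contained route, one can realize the open $\Sp_{2n}$-orbit in $\Gr_{2k+1}(V)$ as a homogeneous bundle over $\PP(V)=\Sp_{2n}/P$ via $[W'']\mapsto[W''\cap W''^\perp]$, with fibre the open $\Sp(V')$-orbit in $\Gr_{2k}(V')$, and then invoke Proposition~\ref{prop_homogeneous_bundles_bijection} and Corollary~\ref{crl_open_orbits} in place of the dimension count.
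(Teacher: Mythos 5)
Your argument is correct, but it takes a genuinely different route from the paper. The paper proves this proposition from scratch, in the same style as Propositions~\ref{prop_point+group_sl} and~\ref{prop_point+group_sp_even}: it fixes an explicit skew-orthogonal decomposition $V = U_0 \oplus U \oplus U' \oplus R$ with standard bases, writes down vectors $f_0, \ldots, f_{n-1}$ spanning a maximal isotropic subspace, takes $B$ to be the stabilizer of the resulting isotropic flag, sets $W = \langle e_0\rangle \oplus U$ and defines $L$ as the stabilizer in $K_{[W]}$ of a partial flag of the $F_i$, and then verifies by direct computation that $B_{[W]}$ is a Borel subgroup of $L$ and that $\dim B[W] = \dim X$. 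You instead reduce the odd case to the even one by splitting off a hyperbolic plane $V_0$, placing the isotropic line $W_0$ and the torus $L_0$ inside it, importing $[W']$, $L'$ and the Borel subgroup $B'$ from Proposition~\ref{prop_point+group_sp_even} applied to $V' = V_0^{\perp}$, and assembling the Borel of $\Sp_{2n}$ as $(L_0\times B')\ltimes P_u^-$ inside the parabolic opposite to the stabilizer of $W_0$; the identification $B_{[W]} = L_0\times B'_{[W']}$ via $W\cap W^{\perp} = W_0$ is clean and your dimension count $\dim B_{[W]} = (n-2k-1)^2+n$ agrees with the paper's. The trade-off is that your proof is shorter and makes the structural relation between $\Gr_{2k+1}$ and $\Gr_{2k}$ transparent, but it leans on the Borel subgroup constructed in the \emph{proof} (not merely the statement) of Proposition~\ref{prop_point+group_sp_even} — which, as you correctly anticipate, is indeed furnished there via Proposition~\ref{prop_sufficient_(P)} — whereas the paper's version is self-contained and uniform across all three appendix proofs. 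Your separate treatment of $k=0$ (where the even proposition does not apply) is also correct.
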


\begin{proof}
See Appendix~\ref{sect_appendix}.
\end{proof}

\begin{corollary} \label{crl_point+group_sp_odd}
Under the hypotheses and notation of
Proposition~\textup{\ref{prop_point+group_sp_odd}} suppose that a
point $[W] \in \Gr_{2k+1}(V)$ and a group $L \subset K_{[W]}$
satisfy conditions \textup{(\ref{4.16.1})--(\ref{4.16.4})}. Then,
for the variety $\Gr_{2n-2k-1}(V)$, the point $[W^\perp]$ and the
group $L$ have property~\textup{(P)} and the pair $(L, W^\perp)$ is
geometrically equivalent to the pair $(L, W \oplus V_{k+1})$, where
$L$ acts diagonally.
\end{corollary}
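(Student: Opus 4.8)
The plan is to imitate the proof of Corollary~\ref{crl_point+group_sp_even}, the essential new feature being that $\dim W=2k+1$ is odd, so the restriction of the symplectic form to~$W$ is necessarily degenerate and the $L$-module structure on $W^\perp$ has to be pinned down summand by summand rather than via a splitting $V=W\oplus W^\perp$.

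First I would fix a $K$-invariant symplectic form $\Omega$ on~$V$ and consider the map $\jmath\colon\Gr_{2k+1}(V)\xrightarrow{\sim}\Gr_{2n-2k-1}(V)$, $[U]\mapsto[U^\perp]$; it is a $K$-equivariant isomorphism, since $gU^\perp=(gU)^\perp$ for $g\in K=\Sp_{2n}$ and $U\mapsto U^\perp$ is an involution. As $\jmath$ is a $K$-isomorphism carrying $[W]$ to $[W^\perp]$, we have $K_{[W^\perp]}=K_{[W]}\supseteq L$ and $K[W^\perp]=\jmath(K[W])$ is open in $\Gr_{2n-2k-1}(V)$; moreover, for every pair $(Z,\varphi)$ as in Definition~\ref{dfn_P-property} for the point $[W^\perp]$, the pair $(Z,\jmath^{-1}\circ\varphi)$ is of the same kind for the point $[W]$ and has the same fiber $\varphi^{-1}([W^\perp])$ over it. Hence condition~(\ref{4.16.1}) for $[W]$ and~$L$ immediately yields property~(P) for $[W^\perp]$ and~$L$.

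Next I would identify $W^\perp$ as an $L$-module. Comparing $L$-isotypic components one sees first that the decompositions in conditions~(\ref{4.16.3}) and~(\ref{4.16.4}) are compatible, i.e.\ $W_i\subseteq V_i$ for each~$i$. Since $\Omega$ is $L$-invariant and, by condition~(\ref{4.16.4}), each factor $L_i$ of $L=L_0\times\ldots\times L_{k+1}$ acts on $V_i$ without nonzero invariant vectors and trivially on the remaining~$V_j$, a Schur-type argument gives $\Omega(V_i,V_j)=0$ for all $i\ne j$; hence $V_i^\perp=\bigoplus_{j\ne i}V_j$, so $\Omega$ restricts to a nondegenerate form on each~$V_i$ and
\[
W^\perp=(W_0)^{\perp_{V_0}}\oplus(W_1)^{\perp_{V_1}}\oplus\ldots\oplus(W_k)^{\perp_{V_k}}\oplus V_{k+1},
\]
where $(W_i)^{\perp_{V_i}}$ denotes the skew-orthogonal complement of $W_i$ inside~$V_i$. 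Now $W_0$ is a line in the symplectic plane~$V_0$, hence isotropic, so $(W_0)^{\perp_{V_0}}=W_0$; and for $1\le i\le k$ the subspace $(W_i)^{\perp_{V_i}}$ is a two-dimensional $L_i$-submodule of $V_i\simeq\FF^2\oplus\FF^2$, so, as every two-dimensional $\SL_2$-submodule of a direct sum of two copies of the standard module is isomorphic to~$\FF^2$, we get $(W_i)^{\perp_{V_i}}\simeq\FF^2\simeq W_i$. Therefore $W^\perp\simeq W_0\oplus W_1\oplus\ldots\oplus W_k\oplus V_{k+1}=W\oplus V_{k+1}$ as $L$-modules, which is the asserted geometric equivalence. (For $n=2k+1$ the summand $V_{k+1}$ is absent and $W^\perp\simeq W$.)

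The step that I expect to require the most care is coping with the odd-dimensional anomaly: unlike in Corollary~\ref{crl_point+group_sp_even} one cannot write $V=W\oplus W^\perp$, and $W^\perp$ is in fact \emph{not} isomorphic to $V/W$ as an $L$-module --- the discrepancy lives in the summand~$V_0$, where $W_0$ and $V_0/W_0$ are the two distinct weight lines of the torus~$L_0$. Hence one has to compute $(W_0)^{\perp_{V_0}}$ by hand (it comes out equal to~$W_0$) and to set up the Schur-type orthogonality argument so that it also handles the torus factor~$L_0$ alongside the semisimple factors; everything else is routine.
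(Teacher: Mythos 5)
Your proof is correct and follows essentially the same route as the paper: property (P) is transferred through the $K$-equivariant isomorphism $[U]\mapsto[U^\perp]$, and the $L$-module structure of $W^\perp$ is then read off from conditions (\ref{4.16.3}) and (\ref{4.16.4}). The only (harmless) difference is in how the last step is justified: the paper uses openness of $K[W]$ to see that $\Omega|_W$ has rank $2k$ and hence $W\cap W^\perp=W_0$, whereas you obtain the skew-orthogonality of the summands $V_i$ by a Schur argument and compute each $(W_i)^{\perp_{V_i}}$ directly, which fills in the paper's terse ``uniquely determined'' claim in more detail.
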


\begin{proof}
Let $\Omega$ be a $K$-invariant symplectic form on~$V$. There is a
natural $K$-equivariant isomorphism $\Gr_{2k+1}(V) \simeq
\Gr_{2n-2k-1}(V)$ taking each $(2k+1)$-dimensional subspace of~$V$
to its skew-orthogonal complement with respect to~$\Omega$. In view
of condition~(\ref{4.16.1}) this implies that the point $[W^\perp]$
and the group $L$ have property~\textup{(P)}. Further, since the
$K$-orbit of $[W]$ is open in $\Gr_{2k+1}(V)$, the restriction of
$\Omega$ to the subspace $W$ has rank~$2k$. Hence $\dim(W \cap
W^\perp) = 1$, which by condition~(\ref{4.16.4}) implies $W \cap
W^\perp = W_0$. Now the $L$-module structure on~$W^\perp$ is
uniquely determined by conditions~(\ref{4.16.3}) and~(\ref{4.16.4}).
\end{proof}

\subsection{Some sphericity conditions for actions on $V$-flag
varieties}

Let $K$ be a connected reductive group, $V$ a $K$-module, and $V =
V_1 \oplus V_2$ a decomposition of $V$ into a direct sum of two (not
necessarily simple) nontrivial $K$-submodules.

\begin{proposition} \label{prop_two_grassm_sphericity}
Let $1 \le k \le \dim V_1$, $Z = \Gr_k(V)$, and $X = \Gr_k(V_1)$.

\textup{(a)} Suppose that $Z$ is a $K$-spherical variety. Then $X$
is also a $K$-spherical variety.

\textup{(b)} Suppose that $X$ is a $K$-spherical variety. Suppose
that a point $[W_0] \in X$ and a connected reductive subgroup $L
\subset K_{[W_0]}$ have property~\textup{(P)}. Then the following
conditions are equivalent:

\textup{(1)} $Z$ is a $K$-spherical variety;

\textup{(2)} $W_0^* \otimes V_2$ is a spherical $L$-module.
\end{proposition}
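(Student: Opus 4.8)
The statement relates sphericity of the big Grassmannian $Z=\Gr_k(V)$ with $V=V_1\oplus V_2$ to sphericity of the smaller Grassmannian $X=\Gr_k(V_1)$, and in part~(b) gives a precise criterion. I would handle (a) and (b) in sequence, using the homogeneous-bundle machinery of~\S\ref{subsec_homogeneous_bundles} together with property~(P).

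For part~(a), the plan is to exhibit a $K$-equivariant rational map from $Z$ onto $X$ and conclude by the general principle that sphericity is inherited by images. Concretely, the projection $V\to V_1$ along $V_2$ induces a $K$-equivariant morphism defined on the open subset of $\Gr_k(V)$ consisting of those $[W]$ with $W\cap V_2=0$, sending $[W]$ to the image of $W$ in $V_1$; this image is again $k$-dimensional, so the map lands in $X=\Gr_k(V_1)$, and it is dominant (indeed surjective onto $X$ since every $k$-subspace of $V_1$ is already such an image). A Borel subgroup $B\subset K$ with an open orbit in $Z$ then has an open orbit in the dense image, and since the map is dominant and $K$-equivariant the image of that open $B$-orbit is dense in $X$; hence $X$ is $K$-spherical. (Alternatively one can invoke Corollary~\ref{crl_open_orbits} after noting $Z$ restricted over the open locus is a homogeneous bundle over $X$ with fiber an affine space, but the image argument is cleaner.)

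For part~(b), the idea is to recognize $Z$ as (birationally, or over a suitable open set) a homogeneous bundle over $X$ and apply property~(P). Over the open $K$-stable subset $X_0\subset X$ where we may take the fiber to be irreducible, the preimage in $Z$ of a point $[W_0]\in X$ consists of all $k$-subspaces $W\subset V$ whose projection to $V_1$ is $W_0$; such a $W$ is the graph of a linear map $W_0\to V_2$, so the fiber $\varphi^{-1}([W_0])$ is canonically identified with $\Hom(W_0,V_2)=W_0^*\otimes V_2$ as an affine space, and this identification is equivariant for the stabilizer $K_{[W_0]}$ acting through its natural action on $W_0$ and on $V_2$ (here one uses that $[W_0]$ and $L\subset K_{[W_0]}$ have property~(P), so it suffices to test $L$-sphericity of the fiber rather than $K_{[W_0]}$-sphericity). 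The point $[W_0]\in X$ being in the open $K$-orbit, the morphism $\varphi\colon Z\dashrightarrow X$ (made genuinely a surjective $K$-morphism after restricting to the open preimage of $KX_0$) has irreducible fiber over $[W_0]$, so Definition~\ref{dfn_P-property} applies verbatim: $Z$ is $K$-spherical iff the fiber $W_0^*\otimes V_2$ is $L$-spherical. This is exactly the asserted equivalence.

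The main obstacle I anticipate is technical bookkeeping rather than a conceptual difficulty: one must be careful that the rational map $\varphi$ is, after restricting to the right $K$-stable open subset, an honest surjective $K$-equivariant morphism whose fiber over $[W_0]$ is irreducible and isomorphic to the affine space $W_0^*\otimes V_2$, so that Definition~\ref{dfn_P-property} is literally applicable. In particular one needs $W_0$ to lie in the open $K$-orbit of $X$ (which holds since $X$ is $K$-spherical and $[W_0]$ satisfies property~(P)), and one needs the affine-space structure on the fiber to be $K_{[W_0]}$-linear, which follows from the graph description $W\leftrightarrow (W_0\to V_2)$ once one checks that the stabilizer acts on this $\Hom$-space in the standard way. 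Everything else is a direct application of Corollary~\ref{crl_open_orbits} and the definition of property~(P).
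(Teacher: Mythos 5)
Your proposal is correct and follows essentially the same route as the paper: restrict to the open $K$-stable locus of $\Gr_k(V)$ where the projection $p\colon V\to V_1$ stays injective on the subspace, obtain a surjective $K$-equivariant morphism onto $X$ whose fiber over $[W]$ is $\Hom(W,V_2)\simeq W^*\otimes V_2$ via the graph description, deduce (a) from dominance of this map and (b) by applying property~(P) to $[W_0]$ and $L$. The extra care you flag (openness of the $K$-orbit of $[W_0]$, irreducibility of the fiber, linearity of the stabilizer action on the $\Hom$-space) is exactly what makes Definition~\ref{dfn_P-property} applicable, and the paper's proof implicitly relies on the same points.
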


\begin{proof}
Let $p$ denote the projection of $V$ to~$V_1$ along~$V_2$. Let $Z_0
\subset Z$ be the open $K$-stable subset consisting of all points
$[U]$ with $\dim p(U) = k$. The projection $p$ induces a surjective
$K$-equivariant morphism $\varphi \colon Z_0 \to\nobreak X$. For
each point $[W] \in X$ the fiber $\varphi^{-1}([W])$ consists of all
points~$[U]$ with $p(U) = W$, whence
$$
\varphi^{-1}([W]) \simeq \Hom (W, V_2) \simeq W^* \otimes V_2.
$$
It is easy to see that the $K$-sphericity of $Z_0$ implies that
of~$X$, which proves part~(a). To complete the proof of part~(b), it
remains to make use of property~(P) for the point $[W_0]$ and the
group~$L$.
\end{proof}

\begin{remark}
It is easy to see that $X$ is realized as a $K$-stable subvariety
of~$Z$. Then Proposition~\ref{prop_two_grassm_sphericity}(а) is also
implied by the following well-known fact: every irreducible
$K$-stable subvariety of a spherical $K$-variety is spherical (see,
for instance,~\cite[Proposition~15.14]{Tim}).
\end{remark}

\begin{corollary} \label{crl_k=dimV1}
Suppose that $k = \dim V_1$ and $Z = \Gr_k(V)$. Then the following
conditions are equivalent:

\textup{(1)} $Z$ is a $K$-spherical variety;

\textup{(2)} $V_1^* \otimes V_2$ is a spherical $K$-module.
\end{corollary}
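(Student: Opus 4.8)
The plan is to obtain the corollary as an immediate consequence of Proposition~\ref{prop_two_grassm_sphericity}(b). The starting observation is that, since $k = \dim V_1$, the Grassmannian $X = \Gr_k(V_1)$ consists of the single point $[V_1]$; in particular $X$ is irreducible and the orbit of a Borel subgroup of $K$ is all of $X$, so $X$ is automatically $K$-spherical.

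Next I would verify that the point $[V_1] \in X$ together with the subgroup $L = K$ has property~(P), so that Proposition~\ref{prop_two_grassm_sphericity}(b) is applicable. Since $V_1$ is a $K$-submodule of $V$, it is $K$-stable, hence $K_{[V_1]} = K$ and the requirement $L \subset K_{[V_1]}$ is met by $L = K$, which is connected and reductive. Moreover, because $X$ is a single point, the orbit $K[V_1] = X$ is open, and for any irreducible $K$-variety $Z'$ equipped with a surjective $K$-equivariant morphism $\varphi \colon Z' \to X$ one has $\varphi^{-1}([V_1]) = Z'$; thus the equivalence in Definition~\ref{dfn_P-property} between ``$Z'$ is $K$-spherical'' and ``$\varphi^{-1}([V_1])$ is $L$-spherical'' degenerates into a tautology, since for $L = K$ and $\varphi^{-1}([V_1]) = Z'$ these two conditions literally coincide. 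Hence $[V_1]$ and $L = K$ have property~(P).

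Finally, applying Proposition~\ref{prop_two_grassm_sphericity}(b) with $[W_0] = [V_1]$ and $L = K$, one gets that $Z = \Gr_k(V)$ is $K$-spherical if and only if $W_0^* \otimes V_2 = V_1^* \otimes V_2$ is a spherical $L$-module, i.e.\ a spherical $K$-module, which is precisely the claim. There is no real difficulty in this argument; the only step demanding a little attention is the verification of property~(P) for $[V_1]$ and $L = K$, which boils down to the trivial remark that a morphism onto a one-point variety coincides with its own fiber.
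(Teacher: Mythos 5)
Your argument is correct and coincides with the paper's own proof: the paper likewise observes that $X=\Gr_k(V_1)$ is the single point $[V_1]$, that this point together with $L=K$ evidently has property~(P), and then invokes Proposition~\ref{prop_two_grassm_sphericity}(b). Your only addition is to spell out the (tautological) verification of property~(P), which the paper leaves as ``evident.''
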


\begin{proof}
In this situation $X$ consists of the single point $[V_1]$.
Evidently, this point and the group $K$ have property~(P).
\end{proof}

\begin{proposition} \label{prop_gr2_necessary}
Suppose that $\dim V \ge 4$ and $Z = \Gr_2(V)$ is a $K$-spherical
variety. Then $V_2 \otimes \FF^2$ is a spherical $(K \times
\GL_2)$-module.
\end{proposition}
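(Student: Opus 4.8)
The plan is to reduce the claim to Proposition~\ref{prop_two_grassm_sphericity}(b) by choosing a suitable auxiliary decomposition of $V$. First I would write $\dim V_2 = q$ and pick an arbitrary $2$-dimensional subspace $W_0 \subset V_1$ together with a $(\dim V_1 - 2)$-dimensional complement $V_1'$ of $W_0$ inside $V_1$, so that $V = W_0 \oplus (V_1' \oplus V_2)$. Applying Proposition~\ref{prop_two_grassm_sphericity}(a) with this decomposition, the $K$-sphericity of $Z = \Gr_2(V)$ yields the $K$-sphericity of $\Gr_2(W_0 \oplus V_1')$; but this is a statement about $V_1$ alone and does not yet mention $V_2$, so a more careful bookkeeping is needed. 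The essential observation is rather that Proposition~\ref{prop_two_grassm_sphericity}(b) applied to the splitting $V = V_1 \oplus V_2$ and $k = 2$ tells us: once $X := \Gr_2(V_1)$ is known to be $K$-spherical (which follows from part~(a) of the same proposition, since $Z$ is), then for any point $[W_0]\in X$ and any connected reductive $L \subset K_{[W_0]}$ with property~(P), the $K$-sphericity of $Z$ is equivalent to the $L$-sphericity of the module $W_0^* \otimes V_2$.

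So the core of the argument is: extract from the $K$-sphericity of $Z$ the $L$-sphericity of $W_0^* \otimes V_2$ (here $\dim W_0 = 2$, so $W_0^* \cong \FF^2$ as a vector space and $W_0^* \otimes V_2 \cong V_2 \oplus V_2$, on which $L$ acts diagonally on the $V_2$-factors and through its action on $W_0^*$ on the other factor), and then translate this into a statement about $(K \times \GL_2)$-sphericity of $V_2 \otimes \FF^2$. For the translation I would argue that $L$ acts on $W_0 \cong \FF^2$ through some homomorphism $L \to \GL(W_0) = \GL_2$; call its image $L'$. Then $W_0^* \otimes V_2$ being $L$-spherical is equivalent to $\FF^2 \otimes V_2$ being $(L' \times L)$-spherical where $L'$ acts on the $\FF^2$ factor and $L$ on the $V_2$ factor — but since $L' \subseteq \GL_2$ and $L \subseteq K$ (via $L \subset K_{[W_0]} \subset K$), a spherical action of the smaller group $L' \times L$ forces a spherical action of the larger group $\GL_2 \times K$ on the same module $\FF^2 \otimes V_2$. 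This is exactly the desired conclusion. To make this last step clean I would invoke the elementary fact that if a connected reductive subgroup acts spherically, so does any connected reductive overgroup inside $\GL$ of the module (an open orbit for a Borel of the subgroup is contained in an open orbit for a Borel of the overgroup); this is essentially the monotonicity used throughout the paper.

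I expect the main obstacle to be organizing the morphism $\varphi$ correctly so that the fiber is exactly $W_0^* \otimes V_2$ and not something larger. Concretely, when applying Proposition~\ref{prop_two_grassm_sphericity}(b) one must take the point $[W_0]\in X = \Gr_2(V_1)$ to lie in the \emph{open} $K$-orbit and then produce a connected reductive $L \subset K_{[W_0]}$ with property~(P); the existence of such $L$ is guaranteed by Theorem~\ref{thm_existence}, and property~(P) is then exactly what converts the $K$-sphericity of the total space $Z$ into the $L$-sphericity of the fiber. A secondary technicality is that Proposition~\ref{prop_two_grassm_sphericity} is stated for $Z = \Gr_k(V)$ with $1 \le k \le \dim V_1$, so I must first confirm $\dim V_1 \ge 2$; if $\dim V_1 = 1$ then $V_1$ cannot contain a $2$-dimensional subspace, but in that case one simply swaps the roles of $V_1$ and $V_2$ at the outset (the hypothesis $\dim V \ge 4$ forces at least one summand to have dimension $\ge 2$, and the statement to be proved is symmetric enough after this relabeling, since it is $V_2$ that appears in the conclusion — so one should set things up from the start so that the summand playing the role of "$V_1$" has dimension $\ge 2$, which is automatic once we note $\dim V_2 \le \dim V - \dim V_1$ and argue by choosing the larger summand). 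Modulo this case-split, the proof is a direct application of the machinery already assembled in this subsection.
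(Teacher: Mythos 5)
Your argument for the case $\dim V_1 \ge 2$ is exactly the paper's: use Proposition~\ref{prop_two_grassm_sphericity}(a) to see that $X=\Gr_2(V_1)$ is $K$-spherical, invoke Theorem~\ref{thm_existence} to produce $[W_0]$ and $L$ with property~(P), apply Proposition~\ref{prop_two_grassm_sphericity}(b) to conclude that $W_0^*\otimes V_2$ is a spherical $L$-module, and then enlarge the acting group to $K\times\GL_2$. (One small imprecision there: the $L$-action on $W_0^*\otimes V_2$ corresponds to the \emph{diagonal} image of $L$ in $\GL_2\times K$, not to the product $L'\times L$, so ``equivalent to $(L'\times L)$-sphericity'' is not correct as an equivalence; but only the implication toward the larger group is needed, and that holds because the image of a Borel subgroup of $L$ is contained in a Borel subgroup of $\GL_2\times K$.)

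The genuine gap is the case $\dim V_1=1$. Your proposed fix --- swapping the roles of $V_1$ and $V_2$ --- does not work: the conclusion is specifically about $V_2\otimes\FF^2$, and after the swap Proposition~\ref{prop_two_grassm_sphericity}(b), applied to $\Gr_2(V_2)$, produces the fiber $W_0^*\otimes V_1\simeq\FF^2$, which is a (vacuous) statement about $V_1$ and says nothing about $V_2\otimes\FF^2$. The paper closes this case by a different device: since $\dim V_2\ge 3$, for a generic $2$-plane $W\subset V$ one has $\dim(W\cap V_2)=1$ and $\dim p(W)=2$, where $p$ is the projection to $V_2$ along $V_1$; thus $[W]\mapsto (W\cap V_2,\,p(W),\,V_2)$ is a surjective $K$-equivariant morphism from an open $K$-stable subset of $\Gr_2(V)$ onto $\Fl(1,1;V_2)$, so $\Fl(1,1;V_2)$ is $K$-spherical, and Corollary~\ref{crl_Fl(1,1,...,1;V)} converts this into the sphericity of the $(K\times\GL_2)$-module $V_2\otimes\FF^2$. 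Some such separate argument is required; without it the proposition is only proved when $\dim V_1\ge 2$.
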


\begin{proof}
We first consider the case $\dim V_1 \ge 2$. Put $X = \Gr_2(V_1)$.
Proposition~\ref{prop_two_grassm_sphericity}(a) yields that $X$ is a
spherical $K$-variety. By Theorem~\ref{thm_existence} there are a
point $x \in X$ and a connected reductive subgroup~$L \subset K_x$
having property~(P). Then
Proposition~\ref{prop_two_grassm_sphericity}(b) implies that $V_2
\otimes \FF^2$ is a spherical $L$-module, hence a spherical $(K
\times\nobreak \GL_2)$-module.

We now consider the case $\dim V_1 = 1$. Then $\dim V_2 \ge 3$. For
each two-dimensional subspace $W \subset V$ put $W_1 = W \cap V_2$
and let $W_2$ denote the projection of $W$ to~$V_2$ along~$V_1$.

Let $Z_0 \subset Z$ be the open $K$-stable subset consisting of all
points $[W]$ with $\dim W_1 =\nobreak 1$ and $\dim W_2 = 2$. It is
easy to see that the morphism
$$
Z_0 \to \Fl(1,1; V_2), \quad [W] \mapsto (W_1, W_2, V_2),
$$
is surjective and $K$-equivariant, hence the $K$-sphericity of
$\Gr_2(V)$ implies that of $\Fl(1,1; V_2)$. Then
Corollary~\ref{crl_Fl(1,1,...,1;V)} implies that $V_2 \otimes \FF^2$
is a spherical $(K \times \GL_2)$-module.
\end{proof}

\begin{proposition} \label{prop_Fl(r1,r2;V)}
Let $k_1 \ge 1$, $k_2 \ge 1$, $k_1 + k_2 \le \dim V_1$. Put $Z =
\Fl(k_1,k_2; V)$, $X = \Gr_{k_1+k_2}(V_1)$ and suppose that $X$ is a
$K$-spherical variety. Suppose that a point $[W_0] \in X$ and a
connected reductive subgroup $L \subset K_{[W_0]}$ have
property~\textup{(P)}. Then the following conditions are equivalent:

\textup{(1)} $Z$ is a $K$-spherical variety;

\textup{(2)} $(W_0^* \otimes V_2) \times \Gr_{k_1}(W_0)$ is an
$L$-spherical variety \textup($L$ acts diagonally\textup).
\end{proposition}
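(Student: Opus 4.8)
The statement is an exact parallel of Proposition~\ref{prop_two_grassm_sphericity}(b), with the target Grassmannian $\Gr_{k_1+k_2}(V)$ replaced by the two-step flag variety $\Fl(k_1,k_2;V)$; accordingly I would follow the same template, extracting the ``$V_1$-direction'' data from the action on $X=\Gr_{k_1+k_2}(V_1)$ via property~(P), and handling the ``$V_2$-direction'' by a homogeneous-bundle computation. First I would let $p\colon V\to V_1$ be the projection along $V_2$ and introduce the open $K$-stable subset $Z_0\subset Z$ consisting of those flags $(U_1,U_2)$ for which $p$ restricts to an isomorphism on $U_2$ (equivalently $\dim p(U_2)=k_1+k_2$, which also forces $\dim p(U_1)=k_1$). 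Since $Z_0$ is open, dense and $K$-stable, $Z$ is $K$-spherical iff $Z_0$ is.

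The plan is then to build a $K$-equivariant surjection $\varphi\colon Z_0\to X$ by $(U_1,U_2)\mapsto [p(U_2)]$ and identify its fibers. Over a point $[W]\in X$ a flag $(U_1,U_2)\in\varphi^{-1}([W])$ is the same datum as: a graph-type subspace $U_2$ with $p(U_2)=W$, which is a homomorphism $W\to V_2$ (so an element of $W^*\otimes V_2$), together with a $k_1$-dimensional subspace of $U_2$, i.e.\ a point of $\Gr_{k_1}(U_2)\simeq\Gr_{k_1}(W)$. Hence $\varphi^{-1}([W])\simeq (W^*\otimes V_2)\times\Gr_{k_1}(W)$, and in particular each fiber is irreducible (a vector bundle over an irreducible base), so $\varphi$ satisfies the hypotheses in Definition~\ref{dfn_P-property}. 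At this point I would invoke property~(P) for the point $[W_0]\in X$ and the subgroup $L\subset K_{[W_0]}$: the $K$-sphericity of $Z_0$ (hence of $Z$) is equivalent to the $L$-sphericity of $\varphi^{-1}([W_0])\simeq(W_0^*\otimes V_2)\times\Gr_{k_1}(W_0)$, which is exactly condition~(2).

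Two points need care. First, the identification $\varphi^{-1}([W])\simeq(W^*\otimes V_2)\times\Gr_{k_1}(W)$ must be checked to be $K_{[W]}$-equivariant (so that applying it at $[W_0]$ yields an $L$-equivariant isomorphism of $L$-varieties): a stabilizer element $g$ preserves $W$, sends the homomorphism $\psi\colon W\to V_2$ to $p_2\circ g\circ(\mathrm{id}+\psi)\circ g^{-1}|_W$ (where $p_2$ is the projection to $V_2$) and acts on $\Gr_{k_1}(W)$ in the obvious way, so the product decomposition is respected. Second, one should note $Z_0\ne\varnothing$, which follows from $k_1+k_2\le\dim V_1$ together with $\dim V_2\ge1$ (pick $W\subset V_1$ of dimension $k_1+k_2$ and any graph over it in general position). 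The main obstacle, as in Proposition~\ref{prop_two_grassm_sphericity}, is purely bookkeeping: making the fiber identification manifestly equivariant so that property~(P) applies verbatim; there is no genuinely new difficulty once the homogeneous-bundle picture of $\varphi$ is set up, since the heavy lifting (existence of $([W_0],L)$ with property~(P), and the implication it encodes) is already supplied by Theorem~\ref{thm_existence} and Definition~\ref{dfn_P-property}.
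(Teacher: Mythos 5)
Your proposal is correct and follows essentially the same route as the paper: project along $V_2$ onto the open locus $Z_0$ where $\dim p(U_2)=k_1+k_2$, identify the fiber of the resulting $K$-equivariant surjection $Z_0\to X$ over $[W]$ with $(W^*\otimes V_2)\times\Gr_{k_1}(W)$, and conclude by property~(P) for $([W_0],L)$. The extra checks you flag (equivariance of the fiber identification, non-emptiness of $Z_0$) are routine and consistent with what the paper leaves implicit.
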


\begin{proof}
Let $p$ denote the projection of $V$ to~$V_1$ along~$V_2$. Let $Z_0
\subset Z$ be the open $K$-stable subset consisting of all points
$(U_1, U_2, V) \in Z$ with $\dim p(U_2) = k_1 + k_2$. Then $p$
induces the surjective $K$-equivariant morphism $\varphi \colon Z_0
\to \nobreak X$ taking a point $(U_1, U_2, V)$ to~$p(U_2)$. For
every point $[W] \in \nobreak X$, the fiber $\varphi^{-1}([W])$ is
isomorphic to
$$
\Hom(W, V_2) \times \Gr_{k_1}(W) \simeq (W^* \otimes\nobreak V_2)
\times \Gr_{k_1}(W).
$$
To complete the proof it remains to make use of property~(P) for
$[W_0]$ and~$L$.
\end{proof}

\section{Known classifications used in the paper}
\label{sect_known_classifications}

\subsection{Classification of spherical modules}
\label{subsec_spherical_modules}

In this subsection we present the classification of spherical
modules obtained in the papers~\cite{Kac},~\cite{BR},
and~\cite{Lea}.

Let $K$ be a connected reductive group and let $C$ be the connected
component of the identity of the center of~$K$. For every simple
$K$-module $V$ we consider the character $\chi \in \mathfrak X(C)$
via which $C$ acts on~$V$.

\begin{theorem}{\cite[Theorem~3]{Kac}}
\label{thm_spherical_modules_simple} A simple $K$-module $V$ is
spherical if and only if the following conditions hold:

\textup{(1)} up to a geometrical equivalence, the pair $(K', V)$ is
contained in Table~\textup{\ref{table_spherical_modules}};

\textup{(2)} the group $C$ satisfies the conditions listed in the
fourth column of Table~\textup{\ref{table_spherical_modules}}.
\end{theorem}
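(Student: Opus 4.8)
The plan is to translate $K$-sphericity of $V$ into a property of the algebra $\FF[V]$, then to separate the contributions of $K'$ and of the central torus $C$, reduce to a finite list of candidates by a dimension count, and finally verify the candidates one by one. First, $V$ is a spherical $K$-module exactly when a Borel subgroup $B\subset K$ has a dense orbit on $V$; this is equivalent to $\FF(V)^B=\FF$ and, in turn, to $\FF[V]$ being multiplicity-free as a $K$-module. Since $V$ is simple, the connected centre $C$ acts on $V$ through a single character $\chi$, so the image $\rho(K)\subset\GL(V)$ -- which is all that sphericity depends on -- is generated by $\rho(K')$ and the image of $C$, the latter being trivial if $\chi=0$ and the full scalar torus $\FF^\times$ if $\chi\ne0$. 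Using the projection $V\setminus\{0\}\to\PP(V)$ one checks that $(K'C,V)$ is spherical if and only if either $V$ is already $K'$-spherical -- in which case there is no condition on $C$ -- or $V$ is not $K'$-spherical but $\PP(V)$ is, in which case exactly the tori with $\chi\ne0$ work. It therefore suffices to classify the simple pairs $(K',V)$ for which $\PP(V)$ is $K'$-spherical, recording in each case whether $V$ itself is already $K'$-spherical.

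For that classification I would first obtain a finite list of candidates from dimension estimates. If $\PP(V)$ is $K'$-spherical then a Borel subgroup $B'$ of $K'$ satisfies $\dim B'\ge\dim\PP(V)$, i.e.\ $\dim K'+\rk K'\ge 2(\dim V-1)$ by Proposition~\ref{prop_inequality}; likewise $\dim K'+\rk K'\ge 2\dim V$ if $V$ itself is $K'$-spherical. Write $K'=K'_1\times\dots\times K'_t$ with simple factors, so that $V=V_1\otimes\dots\otimes V_t$ with each $V_j$ a nontrivial simple $K'_j$-module. The left-hand side of the bound grows only additively in the $\dim V_j$ while $\dim V=\prod_j\dim V_j$ grows multiplicatively; this bounds the number of factors $t$ and, factor by factor, both the rank and the highest weight. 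Running through the finitely many survivors -- the defining modules of the classical groups, $\wedge^2\FF^n$ and $\mathrm S^2\FF^n$ for $\SL_n$, a few spin modules, the $7$-dimensional $G_2$-module, the $27$-dimensional $E_6$-module, and small tensor products such as $\FF^2\otimes\FF^{2n}$ and $\FF^n\otimes\FF^m$ -- one obtains the list of Table~\ref{table_spherical_modules}.

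To decide each surviving pair I would use one of a few standard devices: exhibit a dense $B$-orbit directly (for $(\SL_n,\FF^n)$ and $(\Sp_{2n},\FF^{2n})$ this is immediate; for $(\SO_n,\FF^n)$ or $(G_2,\FF^7)$ take a non-isotropic vector and work in $\PP(V)$; for $(\SL_n\times\SL_m,\FF^n\otimes\FF^m)$ compare ranks of generic matrices); invoke the known multiplicity-free expansion of $\FF[V]$ (for the $\mathrm S^2$, $\wedge^2$, spin and $E_6$ modules); use castling transforms, which pass between $(\GL_m\times H,\FF^m\otimes U)$ and $(\GL_{k-m}\times H,\FF^{k-m}\otimes U^*)$ with $k=\dim U$ and preserve sphericity; or reduce by a generic stabiliser, replacing $V$ by the slice module of $K_v$ at a generic $v$ and appealing to a strictly smaller instance. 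In each case one records whether $V$ is already $K'$-spherical (empty condition on $C$) or only $\PP(V)$ is ($\chi\ne0$ required); the latter pairs are precisely the rows of Table~\ref{table_spherical_modules} carrying a nontrivial entry in the ``conditions on~$C$'' column.

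The main obstacle is that there is no conceptual substitute for the case-by-case work of the last step: one has to settle the prehomogeneity and multiplicity-freeness of each of roughly two dozen concrete representations, and for the borderline pairs -- those where $\PP(V)$ is $K'$-spherical but $V$ is not -- verify carefully both that adjoining the scalars suffices and that nothing weaker does. A secondary difficulty is making the dimension bound of the second step actually terminate for the one-parameter families $(\SL_n,\wedge^2\FF^n)$, $(\SL_n,\mathrm S^2\FF^n)$ and $(\SL_n\times\SL_m,\FF^n\otimes\FF^m)$: there the small parameter values must be inspected individually, the inequality serving only to cut the family off.
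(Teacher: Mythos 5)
The first thing to say is that the paper does not prove this statement: it is quoted from Kac's classification (\cite[Theorem~3]{Kac}), so there is no internal proof to compare against. Judged on its own terms, your outline reproduces the standard strategy of that literature: translate sphericity into multiplicity-freeness of $\FF[V]$, observe that for simple $V$ the connected centre contributes only the dichotomy ``scalars present or absent,'' so that the problem splits into deciding $K'$-sphericity of $V$ and of $\PP(V)$, cut the candidates to a finite list via $\dim K'+\rk K'\ge 2\dim\PP(V)$ (Proposition~\ref{prop_inequality}), and check the survivors one by one. Your reduction of the conditions on $C$ to exactly the two alternatives recorded in the fourth column of Table~\ref{table_spherical_modules} is correct and is precisely how that column is organised.

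Two caveats, one of which is a genuine error. Your claim that castling transforms preserve sphericity is false, and using it as a verification device would corrupt the list. For instance, $(\FF^\times\times\SL_2\times\SL_2,\ \FF^2\otimes\FF^2)$ is spherical --- it is geometrically equivalent to $(\FF^\times\times\SO_4,\FF^4)$, row~2 of Table~\ref{table_spherical_modules} --- but its castling transform $(\GL_3\times\SL_2\times\SL_2,\ \FF^3\otimes(\FF^2\otimes\FF^2)^*)$ satisfies $\dim K+\rk K=20<24=2\dim V$ and so violates Proposition~\ref{prop_inequality}. Castling preserves prehomogeneity (an open orbit of the full group) and generic isotropy groups, not the existence of an open Borel orbit. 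The milder caveat is that the proposal is a plan rather than a proof: the finiteness argument needs explicit lower bounds on dimensions of faithful simple modules of each simple group before it terminates (the phrase ``grows only additively'' is doing real work there), and essentially all of the content of the theorem sits in the roughly two dozen deferred case verifications, which you acknowledge but do not carry out.
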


\begin{table}[h]

\caption{} \label{table_spherical_modules}

\begin{center}

\begin{tabular}{|c|c|c|c|c|}
\hline

No. & $K'$ & $V$ & Conditions on $C$ & Note \\

\hline

1 & $\SL_n$ & $\FF^n$ & $\chi \ne 0$ for $n = 1$ & $n \ge 1$ \\

\hline

2 & $\SO_n$ & $\FF^n$ & $\chi \ne 0$ & $n \ge 3$ \\

\hline

3 & $\Sp_{2n}$ & $\FF^{2n}$ & & $n \ge 2$ \\

\hline

4 & $\SL_n$ & $\mathrm S^2 \FF^n$ & $\chi \ne 0$ & $n \ge 3$ \\

\hline

5 & $\SL_n$ & $\wedge^2 \FF^n$ & $\chi \ne 0$ for $n = 2k$ &
$n \ge 5$ \\

\hline

6 & $\SL_n \times \SL_m$ & $\FF^n \otimes \FF^m$ & $\chi \ne 0$ for
$n = m$ & \begin{tabular}{c} $n, m \ge 2$ \\ $n + m \ge 5$
\end{tabular} \\

\hline

7 & $\SL_2 \times \Sp_{2n}$ & $\FF^2 \otimes \FF^{2n}$ & $\chi \ne
0$ & $n \ge 2$
\\

\hline

8 & $\SL_3 \times \Sp_{2n}$ & $\FF^3 \otimes \FF^{2n}$ & $\chi \ne
0$ & $n \ge 2$
\\

\hline

9 & $\SL_n \times \Sp_4$ & $\FF^n \otimes \FF^4$ & $\chi \ne 0$ for
$n = 4$ & $n \ge 4$ \\

\hline

10 & $\Spin_7$ & $\FF^8$ & $\chi \ne 0$ & \\

\hline

11 & $\Spin_9$ & $\FF^{16}$ & $\chi \ne 0$ & \\

\hline

12 & $\Spin_{10}$ & $\FF^{16}$ & & \\

\hline

13 & $\mathsf G_2$ & $\FF^7$ & $\chi \ne 0$ & \\

\hline

14 & $\mathsf E_6$ & $\FF^{27}$ & $\chi \ne 0$ & \\

\hline
\end{tabular}

\end{center}

\end{table}

Let us give some comments and explanations for
Table~\ref{table_spherical_modules}. In rows 3--8 the restrictions
in the column ``Note'' are imposed in order to avoid coincidences
(up to a geometric equivalence) of the respective $K'$-modules with
$K'$-modules corresponding to other rows. In rows~10 and~11, the
group $K'$ acts on $V$ via the spin representation. In row~12, the
group $K'$ acts on $V$ via a (any of the two) half-spin
representation. At last, in rows~13 and~14 the group $K'$ acts on
$V$ via a faithful representation of minimal dimension.

Let $V$ be a $K$-module. We say that $V$ is \textit{decomposable} if
there exist connected reductive subgroups $K_1, K_2$, a
$K_1$-module~$V_1$, and a $K_2$-module~$V_2$ such that the pair $(K,
V)$ is geometrically equivalent to the pair $(K_1 \times K_2, V_1
\oplus V_2)$. We note that in this situation $V_1 \oplus\nobreak
V_2$ is a spherical $(K_1 \times K_2)$-module if and only if $V_1$
is a spherical $K_1$-module and $V_2$ is a spherical $K_2$-module.
We say that $V$ is \textit{indecomposable} if $V$ is not
decomposable. At last, we say that $V$ is \textit{strictly
indecomposable} if $V$ is an indecomposable $K'$-module. Evidently,
every simple $K$-module is strictly indecomposable.

Let $V$ be a $K$-module and let $V = V_1 \oplus \ldots \oplus V_r$
be a decomposition of~$V$ into a direct sum of simple
$K$-submodules. For every $i = 1, \ldots, r$ we denote by~$\chi_i$
the character of~$C$ via which~$C$ acts on~$V_i$.

\begin{theorem}[{\cite{BR}, \cite{Lea}}]
\label{thm_spherical_modules_nonsimple} In the above notation
suppose that $V$ is a nonsimple $K$-module. Then $V$ is a strictly
indecomposable spherical $K$-module if and only if $r = 2$ and the
following conditions hold:

\textup{(1)} up to a geometrical equivalence, the pair $(K', V)$ is
contained in Table~\textup{\ref{table_spherical_modules_2}};

\textup{(2)} the group $C$ satisfies the conditions listed in the
third column of Table~\textup{\ref{table_spherical_modules_2}}.
\end{theorem}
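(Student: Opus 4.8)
Since Theorem~\ref{thm_spherical_modules_nonsimple} is the strictly‑indecomposable nonsimple part of the full classification of spherical modules, the natural route is to deduce it from Theorem~\ref{thm_spherical_modules_simple} (Kac's list) together with the open‑orbit analysis of homogeneous bundles from \S\ref{subsec_homogeneous_bundles}. First, for every $K$-submodule $V' \subseteq V$ the projection $V \to V'$ along a $K$-stable complement is a surjective $K$-equivariant morphism, so if $V$ is $K$-spherical then so is $V'$; applying this to the summands of a decomposition $V = V_1\oplus\ldots\oplus V_r$ into simple submodules shows that each $V_i$ is a simple spherical $K$-module, hence each pair $(K',V_i)$ occurs in Table~\ref{table_spherical_modules}, and likewise each partial sum $V_i\oplus V_j$ with $i\neq j$ is $K$-spherical.

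Next I would reduce the number of summands. Fix a Borel subgroup $B\subset K$ and, for a summand on which $B$ has a dense orbit (say $V_1$), a vector $v_1$ in that orbit; put $B_1=B_{v_1}$, a \emph{solvable} group with $\dim B_1=\dim B-\dim V_1$. Restricting the projection $V\to V_1$ over the orbit $Bv_1$ exhibits an open subset of $V$ as a homogeneous bundle over $Bv_1$ with fiber $V_2\oplus\ldots\oplus V_r$, so by Corollary~\ref{crl_open_orbits} the module $V$ is $K$-spherical if and only if $B_1$ has a dense orbit on $V_2\oplus\ldots\oplus V_r$. Since $B_1$ is solvable, having a dense orbit on a direct sum of several nontrivial submodules is a very rigid condition: its torus part must act on the summands through sufficiently independent characters and its unipotent part must, summand by summand, supply all but one of the dimensions of each. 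Combining this rigidity with Kac's list and the inequality $2\dim B=\dim K+\rk K\ge 2\dim V$ from Proposition~\ref{prop_inequality}, a finite (if somewhat tedious) check shows that a strictly indecomposable $V$ with $r\ge 3$ is never spherical; as $V$ is assumed nonsimple, this forces $r=2$.

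It then remains to run through the finitely many pairs $(V_1,V_2)$ with both $(K',V_i)$ on Kac's list and with $K'$ acting nontrivially on both summands (strict indecomposability). For each such pair one computes the generic isotropy group $B_1=B_{v_1}$ explicitly and decides whether the solvable‑group action $B_1:V_2$ has a dense orbit; the surviving pairs, together with the resulting restrictions on the characters $\chi_1,\chi_2$ through which the central torus $C$ acts, are exactly the entries of Table~\ref{table_spherical_modules_2}. The characters enter because, once $K'$ and its action on $V_1\oplus V_2$ are fixed, the only remaining freedom is the embedding of $C$, and a dense $B_1$-orbit exists precisely when the weights of $C$ on $V_2$ fill the torus directions that $(B\cap K')_{v_1}$ cannot reach — whence conditions such as ``$\chi_1\neq\chi_2$'', ``$\chi_2-\chi_1,\chi_3-\chi_1$ linearly independent'', and the like.

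I expect the main obstacle to be this last step: the generic stabilizers $B_1$ are non‑reductive, so one cannot simply reapply Theorem~\ref{thm_spherical_modules_simple}, and deciding the existence of a dense $B_1$-orbit on $V_2$ (equivalently, checking coisotropy of the cotangent action via Theorem~\ref{thm_spherical&coisotropic}) requires a case‑by‑case computation over the fourteen rows of Table~\ref{table_spherical_modules}. This is precisely the analysis carried out independently in~\cite{BR} and~\cite{Lea} (see also the more uniform treatment in~\cite{Kn98}), and for the purposes of the present paper it suffices to invoke those works.
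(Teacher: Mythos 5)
The paper does not prove Theorem~\ref{thm_spherical_modules_nonsimple} at all: it is imported verbatim from the classification of multiplicity-free (spherical) modules in~\cite{BR} and~\cite{Lea}, as the bracketed citation in the theorem header indicates. Your proposal ultimately does the same thing --- it defers the decisive case-by-case work to those references --- so it is consistent with the paper's treatment, and your outline of how that work goes (reduce to the generic isotropy group $B_{v_1}$ of a Borel subgroup acting on the remaining summands via Corollary~\ref{crl_open_orbits}, then combine with Kac's list and the dimension bound of Proposition~\ref{prop_inequality}) is a fair description of the actual method of~\cite{BR} and~\cite{Lea}. Two small cautions: first, your parenthetical equating strict indecomposability with ``$K'$ acting nontrivially on both summands'' is not quite right --- what is needed is a \emph{common} simple factor of $K'$ acting nontrivially on both $V_1$ and $V_2$ (e.g.\ $\SL_n\times\SL_m$ on $\FF^n\oplus\FF^m$ acts nontrivially on both summands yet is decomposable); second, the claim that strictly indecomposable with $r\ge 3$ is never spherical is genuinely part of the theorem being proved and your ``rigidity of dense solvable orbits'' heuristic is not by itself a proof of it. Neither point affects the correctness of your submission as a whole, since the burden of proof is in any case carried by the cited works.
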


\begin{table}

\begin{center}

\caption{} \label{table_spherical_modules_2}

\begin{tabular}{|c|c|c|c|}
\hline

No. & $(K',V)$ & Conditions on $C$ & Note \\

\hline

1 &

\begin{tabular}{c}
\begin{picture}(64,37)
\put(26,22){\line(-1,-2){5}}%
\put(36,22){\line(1,-2){5}}%
\put(12,0){$\FF^n \oplus \FF^n$}%
\put(24,25){$\SL_n$}%
\end{picture}
\end{tabular}

& \begin{tabular}{c}
$\chi_1, \chi_2$ lin. ind. for $n = 2$;\\
\hline $\chi_1 \ne \chi_2$ for $n \ge 3$
\end{tabular}
& $n \ge 2$ \\

\hline

2 &

\begin{tabular}{c}
\begin{picture}(64,37)
\put(27,22){\line(-1,-1){10}}%
\put(35,22){\line(1,-1){10}}%
\put(6,0){$(\FF^n\!)^* \oplus \FF^n$}%
\put(24,25){$\SL_n$}%
\end{picture}
\end{tabular}

& $\chi_1 \ne - \chi_2$ & $n \ge 3$ \\

\hline

3 &

\begin{tabular}{c}
\begin{picture}(64,37)
\put(27,22){\line(-1,-1){10}}%
\put(35,22){\line(1,-1){10}}%
\put(6,0){$\FF^n \oplus \wedge^2 \FF^n$}%
\put(24,25){$\SL_n$}%
\end{picture}
\end{tabular}

& \begin{tabular}{c} $\chi_2 \ne 0$ for $n = 2k$
\\ \hline $\chi_1 \ne - \frac{n-1}{2} \chi_2$
for $n = 2k{+}1$ \\ \end{tabular}
& $n \ge 4$ \\

\hline

4 &

\begin{tabular}{c}
\begin{picture}(64,37)
\put(27,22){\line(-1,-1){10}}%
\put(35,22){\line(1,-1){10}}%
\put(0,0){$(\FF^n\!)^* \oplus \wedge^2 \FF^n$}%
\put(24,25){$\SL_n$}%
\end{picture}
\end{tabular}

& \begin{tabular}{c} $\chi_2 \ne 0$ for $n = 2k$ \\ \hline
$\chi_1 \ne \frac{n-1}{2} \chi_2$ for $n = 2k{+}1$ \\
\end{tabular} & $n \ge 4$ \\

\hline

5 &

\begin{tabular}{c}
\begin{picture}(76,37)
\put(13,22){\line(-1,-2){5}}%
\put(22,22){\line(1,-1){10}}%
\put(52,22){\line(1,-1){10}}%
\put(0,0){$\FF^n \oplus (\FF^n \otimes \FF^m\!)$}%
\put(11,25){$\SL_n \times \SL_m$}%
\end{picture}
\end{tabular}

& \begin{tabular}{c} $\chi_1 \ne 0$ for $n \le m - 1$ \\ \hline
$\chi_1, \chi_2$ lin. ind. \\ for $n = m, m + 1$ \\ \hline $\chi_1
\ne \chi_2$ for $n \ge m + 2$ \\ \end{tabular}
& $n,m \ge 2$ \\

\hline

6 &

\begin{tabular}{c}
\begin{picture}(88,37)
\put(19,22){\line(-1,-2){5}}%
\put(29,22){\line(3,-2){15}}%
\put(57,22){\line(3,-2){15}}%
\put(0,0){$(\FF^n\!)^* \oplus (\FF^n \otimes \FF^m\!)$}%
\put(17,25){$\SL_n \times \SL_m$}%
\end{picture}
\end{tabular}

& \begin{tabular}{c} $\chi_1 \ne 0$ for $n \le m - 1$ \\ \hline
$\chi_1, \chi_2$ lin. ind. \\ for $n = m, m + 1$ \\ \hline $\chi_1
\ne - \chi_2$ for $n \ge m + 2$ \\ \end{tabular}
& $n \ge 3, m \ge 2$ \\

\hline

7 &

\begin{tabular}{c}
\begin{picture}(76,37)
\put(14,22){\line(-1,-2){5}}%
\put(22,22){\line(1,-1){10}}%
\put(47,22){\line(3,-2){15}}%
\put(0,0){$\FF^2 \oplus (\FF^2 \otimes \FF^{2n}\!)$}%
\put(11,25){$\SL_2 \times \Sp_{2n}$}%
\end{picture}
\end{tabular}

& $\chi_1, \chi_2$ lin. ind. & $n \ge 2$ \\

\hline

8 &

\begin{tabular}{c}
\begin{picture}(110,37)
\put(19,22){\line(-1,-1){10}}%
\put(48,22){\line(-1,-1){10}}%
\put(56,22){\line(1,-1){10}}%
\put(84,22){\line(1,-1){10}}%
\put(0,0){$(\FF^n \otimes \FF^2\!) \oplus (\FF^2 \otimes \FF^m\!)$}%
\put(12,25){$\SL_n \times \SL_2 \times \SL_m$}%
\end{picture}
\end{tabular}

& \begin{tabular}{c} $\chi_1, \chi_2$ lin. ind. \\ for $n = m = 2$
\\ \hline $\chi_2 \ne 0$ for $n \ge 3$ and $m = 2$ \\
\end{tabular}
& $n \ge m \ge 2$ \\

\hline

9 &

\begin{tabular}{c}
\begin{picture}(114,37)
\put(19,22){\line(-1,-1){10}}%
\put(48,22){\line(-1,-1){10}}%
\put(56,22){\line(1,-1){10}}%
\put(82,22){\line(1,-1){10}}%
\put(0,0){$(\FF^n \otimes \FF^2\!) \oplus (\FF^2 \otimes \FF^{2m}\!)$}%
\put(12,25){$\SL_n \times \SL_2 \times \Sp_{2m}$}%
\end{picture}
\end{tabular}

& \begin{tabular}{c} $\chi_1, \chi_2$ lin. ind. for $n = 2$ \\
\hline $\chi_2 \ne 0$ for $n \ge 3$ \\ \end{tabular}
& $n,m \ge 2$ \\

\hline

10 &

\begin{tabular}{c}
\begin{picture}(118,37)
\put(16,22){\line(-1,-2){5}}%
\put(51,22){\line(-1,-1){10}}%
\put(60,22){\line(1,-1){10}}%
\put(84,22){\line(3,-2){15}}%
\put(0,0){$(\FF^{2n} \otimes \FF^2\!) \oplus
(\FF^2 \otimes \FF^{2m}\!)$}%
\put(12,25){$\Sp_{2n} \times \SL_2 \times \Sp_{2m}$}%
\end{picture}
\end{tabular}

& $\chi_1, \chi_2$ lin. ind. & $n \ge m \ge 2$ \\

\hline

11 &

\begin{tabular}{c}
\begin{picture}(48,37)
\put(18,22){\line(-1,-1){10}}%
\put(24,22){\line(1,-1){10}}%
\put(0,0){$\FF^{2n} \oplus \FF^{2n}$}%
\put(13,25){$\Sp_{2n}$}%
\end{picture}
\end{tabular}

& $\chi_1, \chi_2$ lin. ind. & $n \ge 2$ \\

\hline

12 &

\begin{tabular}{c}
\begin{picture}(42,37)
\put(16,22){\line(-1,-1){10}}%
\put(23,22){\line(1,-1){10}}%
\put(0,0){$\mathbb F^8_+\oplus \mathbb F^8_-$}%
\put(7,25){$\Spin_8$}%
\end{picture}
\end{tabular}

& $\chi_1, \chi_2$
lin. ind. & \\

\hline

\end{tabular}

\end{center}
\end{table}

Let us explain some notation in
Table~\ref{table_spherical_modules_2}. In the second column the pair
$(K', V)$ is arranged in two levels, with $K'$ in the upper level
and $V$ in the lower one. Further, each factor of $K'$ acts
diagonally on all components of~$V$ with which it is connected by an
edge. In row~12 the symbols $\FF^8_\pm$ stand for the spaces of the
two half-spin representations of $\Spin_8$.

Let $V$ be a $K$-module such that, up to a geometrical equivalence,
the pair $(K', V)$ is contained in one of
Tables~\ref{table_spherical_modules}
or~\ref{table_spherical_modules_2}. Using the information in the
column ``Conditions on~$C$'', to~$V$ we assign a multiset (that is,
a set whose members are considered together with their
multiplicities) $I(V)$ consisting of several characters of~$C$ in
the following way:

(1) $I(V) = \varnothing$ if there are no conditions on~$C$;

(2) $I(V) = \lbrace \psi_1 - \psi_2 \rbrace$ if the condition on~$C$
is of the form ``$\psi_1 \ne \psi_2$'' for some $\psi_1, \psi_2 \in
\mathfrak X(C)$;

(3) $I(V) = \lbrace \chi_1, \chi_2 \rbrace$ if the condition on~$C$
is of the form ``$\chi_1, \chi_2$ lin. ind.''

In the above notation, $V$ is a spherical $K$-module if and only if
all characters in~$I(V)$ are linearly independent in~$\mathfrak
X(C)$.

Let $V$ be an arbitrary $K$-module. It is easy to see that there are
a decomposition $V = W_1 \oplus \ldots \oplus W_p$ into a direct sum
of $K$-submodules (not necessarily simple) and connected semisimple
normal subgroups $K_1, \ldots, K_p \subset K'$ (some of them are
allowed to be trivial) with the following properties:

(1) $W_i$ is a strictly indecomposable $K$-module for all $i = 1,
\ldots, p$;

(2) the pair $(K', V)$ is geometrically equivalent to the pair
$$
(K_1 \times \ldots \times K_p, W_1 \oplus \ldots \oplus W_p).
$$

The theorem below provides a sphericity criterion for the
$K$-module~$V$. This theorem is a reformulation
of~\cite[Theorem~7]{BR}; see also~\cite[Theorem~2.6]{Lea}.

\begin{theorem} \label{thm_spherical_modules_general}
In the above notation, $V$ is a spherical $K$-module if and only if
the following conditions hold:

\textup{(1)} $W_i$ is a spherical $K$-module for all $i = 1, \ldots,
p$;

\textup{(2)} all the $|I(W_1)| + \ldots + |I(W_p)|$ characters in
the multiset $I(W_1) \cup \ldots \cup I(W_p)$ are linearly
independent in~$\mathfrak X(C)$.
\end{theorem}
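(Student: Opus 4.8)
The plan is to reduce the general statement to the two already-established classification theorems (Theorems~\ref{thm_spherical_modules_simple}, \ref{thm_spherical_modules_nonsimple}) by analyzing how sphericity behaves under the decomposition $V = W_1 \oplus \ldots \oplus W_p$ into strictly indecomposable summands, which is governed on the level of Borel orbits by the behaviour of the acting torus $C$. First I would set up the basic framework: write $K = K' \cdot C$, fix a Borel subgroup $B = B' \cdot C$ of $K$ (with $B'$ a Borel of $K'$), and note that $V$ is $K$-spherical if and only if a generic $B$-orbit is dense, equivalently $\dim \FF[V]^{(B)} = \dim V$ counting with the appropriate grading, or — the formulation I would actually use — the generic isotropy group of $B'$ on $V$ projects onto a subtorus of $C$ of the expected codimension. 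The key point is that $B'$ acts on $V$ through each $K_i$ on $W_i$ separately (by property~(2) of the decomposition), so the generic $B'$-stabilizer in $V$ is the product of the generic $B'_i$-stabilizers in the $W_i$, and what remains is to track which one-parameter subgroups of $C$ survive.

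The core computation I would carry out is the following. For each $i$, the $K_i$-module $W_i$ (with $K_i$ semisimple) has a well-defined generic stabilizer under a Borel of $K_i$; extending scalars by the torus $C$, the module $W_i$ is $(K_i \times C)$-spherical precisely under the condition recorded in Tables~\ref{table_spherical_modules} and~\ref{table_spherical_modules_2}, and the "defect" — the set of characters $I(W_i)$ — measures exactly the directions in $\mathfrak X(C)$ along which a generic point of $W_i$ fails to be moved by the part of $C$ that the $B_i$-stabilizer cannot compensate. More precisely, I would show that the generic $B'$-isotropy subalgebra acting on the whole $V$, intersected with $\mathfrak c = \Lie C$, has as its annihilator in $\mathfrak X(C) \otimes \FF$ precisely the span of $I(W_1) \cup \ldots \cup I(W_p)$ (with multiplicity). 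This is the heart of the matter: it is essentially a bookkeeping lemma saying that the local structure of the generic $B$-orbit decomposes as a product over the strictly indecomposable pieces, with the only interaction being through the shared central torus, and that interaction is linear — it is captured entirely by the characters $\chi_i$ modulo the "internal" freedom of each piece.

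Granting this, the proof concludes as follows. By Corollary~\ref{crl_open_orbits}-type reasoning (or directly by the orbit-dimension count $2 \dim B \ge 2 \dim X$, i.e. Proposition~\ref{prop_inequality} applied to $K \times$ nothing), $V$ is $K$-spherical if and only if the generic $B$-orbit on $V$ is dense, which happens if and only if (a) the generic $B'$-orbit on each $W_i$ together with the central torus is already as large as possible inside $W_i$ — that is, each $W_i$ is a spherical $K$-module, giving condition~(1) — and (b) the residual torus directions coming from the various $W_i$ are jointly non-redundant, i.e. the multiset $I(W_1) \cup \ldots \cup I(W_p)$ consists of characters that are linearly independent in $\mathfrak X(C)$, giving condition~(2). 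The equivalence in both directions is immediate once the bookkeeping lemma is in place: if some character in the union is a linear combination of the others, the corresponding one-parameter subgroup of $C$ acts trivially on the generic $B'$-orbit closure and the orbit is not dense; conversely, linear independence forces the $B$-orbit to fill a dense subset.

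The main obstacle I anticipate is making the bookkeeping lemma precise and rigorous rather than merely plausible: one must be careful that "generic stabilizer" behaves well under the direct sum (this uses that $B'$ acts on the $W_i$ independently, so the relevant varieties of stabilizers are products, but one should verify that taking generic points commutes with the product — a standard but not entirely trivial fact about constructible sets), and one must correctly identify $I(W_i)$ with the annihilator of a specific subtorus, handling uniformly both the "$\chi \neq 0$" type conditions (multiplicity-one characters) and the "$\chi_1, \chi_2$ linearly independent" conditions (two characters) from the tables, as well as the empty case. Since this is precisely the content of~\cite[Theorem~7]{BR} and its reworking in~\cite[Theorem~2.6]{Lea}, I would in fact present the argument as a reduction to those sources, spelling out only the torus-bookkeeping step in the notation of this paper and citing the original proofs for the part that shows each strictly indecomposable non-simple spherical module is forced to appear in Table~\ref{table_spherical_modules_2}.
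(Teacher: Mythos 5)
The paper offers no proof of this statement at all: it is presented verbatim as a reformulation of \cite[Theorem~7]{BR} (see also \cite[Theorem~2.6]{Lea}), which is exactly where your proposal ends up, so the two approaches essentially coincide. Your intermediate sketch of the generic-Borel-stabilizer and torus-bookkeeping mechanism is a reasonable account of how the cited proof works, but since you ultimately (and appropriately) defer the hard part --- the precise identification of $I(W_i)$ with the residual central directions and the product behaviour of generic stabilizers --- to those sources, nothing further is required to match the paper.
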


\subsection{Classification of Levi subgroups in~$\GL(V)$ acting
spherically on $V$-flag varieties} \label{subsec_Levi_subgroups}

Let $G$ be an arbitrary connected reductive group. Let $P,Q \subset
G$ be parabolic subgroups and let $K$ be a Levi subgroup of~$P$.

The following lemma is known to specialists; for the reader's
convenience we provide it together with a proof.

\begin{lemma} \label{lemma_two_parabolics}
The following conditions are equivalent:

\textup{(a)} $G / Q$ is a $K$-spherical variety;

\textup{(b)} $G / P \times G / Q$ is a spherical variety with
respect to the diagonal action of~$G$.
\end{lemma}

\begin{proof}
It is well known that there is a Borel subgroup $B \subset G$ with
the following properties:

(1) the set $BP$ is open in~$G$;

(2) the group $B_K = B \cap P$ is a Borel subgroup of~$K$.

\noindent For the action $B : G / P$, the group $B_K$ is exactly the
stabilizer of the point $o = eP$ and the orbit $O = Bo \simeq B /
B_K$ is open. Consider the open subset $O \times G / Q$ in $G / P
\times G / Q$. This subset is a homogeneous bundle over $B / B_K$
with fiber $G / Q$, see \S\,\ref{subsec_homogeneous_bundles}.
Applying Corollary~\ref{crl_open_orbits} we find that the existence
of an open $B$-orbit in $O \times G / Q$ is equivalent to the
existence of an open $B_K$-orbit in $G / Q$, which implies the
required result.
\end{proof}

As was already mentioned in Introduction, there is a complete
classification of all $G$-spherical varieties of the form $G / P
\times G / Q$. Below, using Lemma~\ref{lemma_two_parabolics}, we
reformulate the results of this classification in the case $G =
\GL(V)$ and thereby list all cases where a Levi subgroup $K \subset
\GL(V)$ acts spherically on a $V$-flag variety $\Fl_{\mathbf a}(V)$
(see Theorem~\ref{thm_MWZ}).

Let $V$ be a vector space of dimension~$d$ and let $\mathbf d =
(d_1, \ldots, d_r)$ be a nontrivial composition of~$d$ such that
$d_1 \le \ldots \le d_r$. Fix a decomposition
$$
V = V_1 \oplus \ldots \oplus V_r
$$
into a direct sum of subspaces, where $\dim V_i = d_i$ for all $i =
1, \ldots, r$. Put
$$
K_{\mathbf d} = \GL(V_1) \times \ldots \times \GL(V_r) \subset
\GL(V).
$$
Let~$Q$ denote the stabilizer in $\GL(V)$ of the point
$$
(V_1, V_1 \oplus V_2, \ldots, V_1 \oplus \ldots \oplus V_r) \in
\Fl_{\mathbf d}(V).
$$
Clearly, $Q$ is a parabolic subgroup of $\GL(V)$ and $K_{\mathbf d}$
is a Levi subgroup of~$Q$. It is well known that every Levi subgroup
of~$\GL(V)$ is conjugate to a subgroup of the form~$K_{\mathbf d}$.

The following theorem follows from results of the paper~\cite{MWZ1},
see also~\cite[Corollary~1.3.A]{Stem}.

\begin{theorem} \label{thm_MWZ}
Suppose that $\mathbf a = (a_1, \ldots, a_s)$ is a nontrivial
composition of~$d$ such that $a_1 \le \ldots \le a_s$. Then the
variety $\Fl_{\mathbf a}(V)$ is $K_{\mathbf d}$-spherical if and
only if the pair of compositions $(\mathbf d, \mathbf a)$ is
contained in Table~\textup{\ref{table_MWZ}}.
\end{theorem}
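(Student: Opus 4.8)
The plan is to translate Theorem~\ref{thm_MWZ} into the language of Lemma~\ref{lemma_two_parabolics} and then invoke the known classification of $\GL(V)$-spherical double flag varieties from~\cite{MWZ1}. Concretely, let $P$ be the parabolic subgroup of $\GL(V)$ stabilizing the standard flag of type $\mathbf a$ (so that $\GL(V)/P \simeq \Fl_{\mathbf a}(V)$), and recall from the setup preceding the theorem that $K_{\mathbf d}$ is a Levi subgroup of the parabolic $Q$ with $\GL(V)/Q \simeq \Fl_{\mathbf d}(V)$. By Lemma~\ref{lemma_two_parabolics}, the variety $\Fl_{\mathbf a}(V)$ is $K_{\mathbf d}$-spherical if and only if $\GL(V)/P \times \GL(V)/Q$ is $\GL(V)$-spherical under the diagonal action. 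So the content of the theorem is exactly the assertion that the pairs $(\mathbf d, \mathbf a)$ for which this double flag variety is spherical are precisely those appearing in Table~\ref{table_MWZ}.

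The next step is to recall what~\cite{MWZ1} actually provides. Magyar, Weyman, and Zelevinsky classify all triples (more generally tuples) of parabolic subgroups of $\GL(V)$ such that $\GL(V)$ acts on the product of the corresponding partial flag varieties with finitely many orbits; for \emph{two} factors, having finitely many orbits is equivalent to having a dense (open) orbit, i.e.\ to sphericity, because $\GL(V)/P \times \GL(V)/Q$ is a spherical variety if and only if there are finitely many $B$-orbits (hence finitely many $G$-orbits) on it, and for products of two flag varieties the finiteness of $G$-orbits is the condition tabulated in~\cite{MWZ1}. Thus the classification of spherical $\GL(V)/P \times \GL(V)/Q$ is literally the two-factor case of the Magyar--Weyman--Zelevinsky list, which is also recorded in~\cite[Corollary~1.3.A]{Stem}.

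The remaining work is bookkeeping: one must match the parametrization of~\cite{MWZ1} (which labels partial flags by their dimension vectors, with the two flags playing symmetric roles) against the asymmetric presentation here, where $\mathbf a$ is ``the flag being acted on'' and $\mathbf d$ ``records the Levi.'' Since both compositions have been normalized to be non-decreasing and the double flag variety $\GL(V)/P \times \GL(V)/Q$ is manifestly symmetric in $P$ and $Q$, each spherical pair from~\cite{MWZ1} gives rise to an entry $(\mathbf d, \mathbf a)$ (or $(\mathbf a, \mathbf d)$) of Table~\ref{table_MWZ}, and conversely. I would verify entry by entry that the table reproduces exactly the two-flag sublist, being careful about (i) the normalization conventions $a_1 \le \dots \le a_s$, $d_1 \le \dots \le d_r$, (ii) the convention that a composition of length $s$ corresponds to a flag with $s-1$ proper steps, and (iii) discarding the trivial cases (one of $P$, $Q$ equal to $\GL(V)$) as required by the nontriviality hypotheses.

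The main obstacle is purely one of translation and is not conceptual: making sure that the dictionary between ``$K_{\mathbf d}$ is a Levi subgroup of $Q$'' and ``$\GL(V)/P \times \GL(V)/Q$'' is applied with the correct indexing, and that the symmetry of the double flag variety is exploited so that Table~\ref{table_MWZ} can be read in a single column. Once Lemma~\ref{lemma_two_parabolics} is invoked, there is no new geometry to do; the proof is a citation of~\cite{MWZ1} (equivalently~\cite[Corollary~1.3.A]{Stem}) together with a routine check that the cited list, restricted to two factors and rewritten in the present notation, coincides with Table~\ref{table_MWZ}.
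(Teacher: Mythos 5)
Your overall strategy is the one the paper uses: reduce via Lemma~\ref{lemma_two_parabolics} to the question of when $\GL(V)/P \times \GL(V)/Q$ is $\GL(V)$-spherical, and then cite the known classification (\cite{MWZ1}, or \cite[Corollary~1.3.A]{Stem}) together with a notational translation into Table~\ref{table_MWZ}. The paper's own ``proof'' is exactly this citation.

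However, the bridge you build to \cite{MWZ1} contains a genuine error. You assert that for a product of \emph{two} flag varieties, ``having finitely many $G$-orbits is equivalent to having a dense orbit, i.e.\ to sphericity,'' and that this finiteness is ``the condition tabulated in \cite{MWZ1}'' in the two-factor case. This is false: by the Bruhat decomposition, $G$ \emph{always} has finitely many orbits on $G/P \times G/Q$ (they are indexed by the double cosets $W_P\backslash W/W_Q$), so the two-factor case of the Magyar--Weyman--Zelevinsky finiteness classification is vacuous and carries no information; taken literally, your reasoning would conclude that \emph{every} double flag variety of $\GL(V)$ is spherical, contradicting the restrictive content of Table~\ref{table_MWZ}. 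The correct way to extract the sphericity classification from \cite{MWZ1} is different: by the Brion--Vinberg theorem, $G/P\times G/Q$ is $G$-spherical if and only if a Borel subgroup $B$ has finitely many orbits on it, which is equivalent to $G$ having finitely many orbits on the \emph{triple} product $G/B \times G/P \times G/Q$. One must therefore read off from \cite{MWZ1} the triple flag varieties of finite type in which one of the three dimension vectors is $(1,\ldots,1)$ (equivalently, quote \cite[Corollary~1.3.A]{Stem}, which records the spherical pairs directly). With that correction the rest of your argument --- the symmetry of the two factors and the entry-by-entry matching with Table~\ref{table_MWZ} --- goes through as routine bookkeeping.
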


\begin{table}[h]

\begin{center}

\caption{} \label{table_MWZ}

\begin{tabular}{|c|c|c|}

\hline
No. & $\mathbf d$ & $\mathbf a$ \\

\hline

1 & $(d_1, d_2)$ & $(a_1, a_2)$ \\

\hline

2 & $(2, d_2)$ & $(a_1, a_2, a_3)$ \\

\hline

3 & $(d_1, d_2, d_3)$ & $(2, a_2)$ \\

\hline

4 & $(d_1, d_2)$ & $(1, a_2, a_3)$ \\

\hline

5 & $(1, d_2, d_3)$ & $(a_1, a_2)$ \\

\hline

6 & $(1, d_2)$ & $(a_1, \ldots, a_s)$ \\

\hline

7 & $(d_1, \ldots, d_r)$ & $(1, a_2)$ \\

\hline
\end{tabular}
\end{center}
\end{table}

\section{Proof of theorem~\ref{thm_main_theorem}}
\label{sect_proof_of_main_theorem}

We divide the proof of Theorem~\ref{thm_main_theorem} into several
steps. As follows from Proposition~\ref{prop_Gr2_is_spherical}, the
first step of the proof is a description of all spherical actions on
the variety $\Gr_2(V)$. This is done in
\S\,\ref{subsec_Gr2_simple_V} (in the case where $V$ is a simple
$K$-module) and~\S\,\ref{subsec_Gr2_non-simple_V} (in the case where
$V$ is a nonsimple $K$-module). At the next step we classify all
spherical actions on arbitrary Grassmannians,
see~\S\,\ref{subsec_grassmannians}. Finally,
in~\S\,\ref{subsec_finish} we list all spherical actions on $V$-flag
varieties that are not Grassmannians.

We recall that the statement of Theorem~\ref{thm_main_theorem}
includes the following objects:

$V$ is a vector space of dimension~$d$;

$K$ is a connected reductive subgroup of~$\GL(V)$;

$C$ is the connected component of the identity of the center of~$K$.

Next, there is a decomposition
\begin{equation} \label{eqn_decomposition_of_V}
V = V_1 \oplus \ldots \oplus V_r
\end{equation}
into a direct sum of simple $K$-submodules and for every $i = 1,
\ldots, r$ the group $C$ acts on~$V_i$ via a character denoted
by~$\chi_i$.

We fix all the above-mentioned objects and notation until the end of
this section.

\subsection{Spherical actions on $\Gr_2(V)$ in the case where
$V$ is a simple $K$-module} \label{subsec_Gr2_simple_V}

The goal of this subsection is to prove the following theorem.

\begin{theorem} \label{thm_gr2_case_of_simple_V}
Suppose that $d \ge 4$ and $V$ is a simple $K$-module. Then the
variety $\Gr_2(V)$ is $K$-spherical if and only if, up to a
geometrical equivalence, the pair $(K', V)$ is contained in
Table~\textup{\ref{table_Gr2_simple_V}}.
\end{theorem}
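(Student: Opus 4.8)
The plan is to classify all connected reductive subgroups $K \subset \GL(V)$ acting spherically on $\Gr_2(V)$ under the assumption that $V$ is a simple $K$-module. By Corollary~\ref{crl_V_is_spherical}, a necessary condition is that $V$ be a spherical $(K \times \FF^\times)$-module; since $V$ is simple, this is equivalent to $V$ being a spherical $K$-module, so the pair $(K', V)$ must appear in Table~\ref{table_spherical_modules}. This reduces the problem to a finite check: go through rows 1--14 of Table~\ref{table_spherical_modules} one at a time, and for each determine whether $\Gr_2(V)$ is actually $K$-spherical. First I would dispose of the large families ($\SL_n$ on $\FF^n$, $\SO_n$ on $\FF^n$, $\Sp_{2n}$ on $\FF^{2n}$): for the symplectic and orthogonal cases, sphericity of $\Gr_2(\FF^{2n})$ and $\Gr_2(\FF^n)$ follows directly from Propositions~\ref{prop_sympl_grassm} and~\ref{prop_orth_grassm}; for $\SL_n$ on $\FF^n$, the action on $\Gr_m(\FF^n)$ is spherical for all $m$ (this is the Levi-subgroup case and follows from Theorem~\ref{thm_MWZ}, or is classical). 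The $\Spin_7$ case on $\FF^8$ is a single finite-dimensional check.

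For the remaining rows of Table~\ref{table_spherical_modules} --- namely $\SL_n$ on $\mathrm S^2\FF^n$, $\SL_n$ on $\wedge^2\FF^n$, $\SL_n\times\SL_m$ on $\FF^n\otimes\FF^m$, $\SL_2\times\Sp_{2n}$ and $\SL_3\times\Sp_{2n}$ and $\SL_n\times\Sp_4$ on the corresponding tensor products, $\Spin_9$ and $\Spin_{10}$ on their (half-)spin modules, $\mathsf G_2$ on $\FF^7$, $\mathsf E_6$ on $\FF^{27}$ --- I expect most to fail the sphericity test for $\Gr_2$, and the key tool will be the dimension inequality of Proposition~\ref{prop_inequality}: if $\Gr_2(V)$ is $K$-spherical then $\dim K + \rk K \ge 2\dim\Gr_2(V) = 4(d-2)$. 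For the large-dimensional simple modules ($\mathrm S^2\FF^n$, $\wedge^2\FF^n$ for large $n$, the spin modules of $\Spin_9,\Spin_{10}$, the $7$-dimensional $\mathsf G_2$-module, the $27$-dimensional $\mathsf E_6$-module, and the tensor-product modules) this inequality is violated once $d$ is large enough, leaving only finitely many small cases to inspect by hand. For those small residual cases (e.g. small $n$ in the $\wedge^2$ or tensor families, or $\mathsf G_2$ on $\FF^7$) I would compute the generic isotropy group of a point of $\Gr_2(V)$, or invoke the classification of spherical modules again applied to the auxiliary module $W_0^*\otimes V_2$ arising from Proposition~\ref{prop_two_grassm_sphericity}, although in the simple case $V$ has no nontrivial $K$-invariant decomposition so the cleanest route is the direct dimension count supplemented by explicit small computations.

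The surviving cases are then collected into Table~\ref{table_Gr2_simple_V}: I expect this table to contain exactly rows 1--4 of Table~\ref{table_main_result} (the Grassmannian part with simple $K'$), i.e. $(\SL_n,\FF^n)$ with $n\ge 4$, $(\Sp_{2n},\FF^{2n})$ with $n\ge 2$, $(\SO_n,\FF^n)$ with $n\ge 4$, and $(\Spin_7,\FF^8)$. The main obstacle will be the borderline cases where the dimension inequality $\dim K + \rk K \ge 4(d-2)$ is satisfied but sphericity still fails (or is nontrivial to establish) --- these require either an explicit determination of the generic stabilizer acting on $\Gr_2(V)$ or a reduction, via the homogeneous-bundle technique of \S\,\ref{subsec_homogeneous_bundles} and Corollary~\ref{crl_grassmannians}, to checking sphericity of an explicit module; in particular the $\Spin_7$ case and the smallest tensor-product cases will need care. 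Once the list of surviving pairs $(K',V)$ is pinned down, no conditions on $C$ arise (since $V$ simple forces all the relevant $\chi_i$ to coincide), so the statement is purely a list of pairs up to geometric equivalence, matching the form asserted in the theorem.
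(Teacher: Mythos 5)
Your overall strategy coincides with the paper's: reduce to $K'$ (the center acts trivially on $\Gr_2(V)$ since $V$ is simple), use Corollary~\ref{crl_V_is_spherical} to force $(K',V)$ into Table~\ref{table_spherical_modules}, settle $\SL_n$, $\Sp_{2n}$, $\SO_n$ by Propositions~\ref{prop_sympl_grassm} and~\ref{prop_orth_grassm}, handle $\Spin_7$ on $\FF^8$ via Corollary~\ref{crl_grassmannians} (the generic stabilizer is $\GL_3$, which is a spherical subgroup of $\Spin_7$ by Kr\"amer's list), and kill the remaining rows with the inequality $\dim K' + \rk K' \ge 4d-8$. That inequality does indeed dispose of $\mathrm S^2\FF^n$, $\wedge^2\FF^n$, the $\Spin_9$ and $\Spin_{10}$ spin modules, $\mathsf G_2$ on $\FF^7$, and $\mathsf E_6$ on $\FF^{27}$ in their admissible ranges.

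However, there is a genuine gap in your treatment of the tensor-product rows. You assert that for these modules the dimension inequality ``is violated once $d$ is large enough, leaving only finitely many small cases to inspect by hand.'' This is false. For $K' = \SL_n \times \SL_2$ acting on $V = \FF^n \otimes \FF^2$ one has $d = 2n$, $\dim K' + \rk K' = n^2 + n + 2$ and $4d - 8 = 8n - 8$, and $n^2 + n + 2 \ge 8n - 8$ holds for every $n \ge 5$; similar infinite families survive the dimension count for $\SL_2\times\Sp_{2n}$, $\SL_3\times\Sp_{2n}$, and $\SL_n\times\Sp_4$. So an infinite family must still be ruled out, and ``explicit small computations'' cannot do it. The paper closes exactly this hole with Lemma~\ref{lemma_K_V1otimesV2}: in the range $n > 2m$ it exhibits an explicit canonical form for a generic pair of $n\times m$ matrices under $T_n \times T_m$ and shows the generic orbit has codimension at least $4$ in $\FF^n\otimes\FF^m\otimes\FF^2$, so that (via Proposition~\ref{prop_open_orbit_in_Fl}) no Borel orbit in $\Gr_2(\FF^n\otimes\FF^m)$ can be open; in the complementary range $m \le n \le 2m$ the dimension inequality does suffice and forces $n=m=2$, which is the $\SO_4$ case already in the table. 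Your proposal needs some such uniform argument for the unbounded range $n>2m$; without it the classification is not established. Your remark that Proposition~\ref{prop_two_grassm_sphericity} is unavailable here because $V$ is simple is correct, which is precisely why this separate normal-form computation is required.
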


\begin{table}[!h]
\begin{center}

\caption{} \label{table_Gr2_simple_V}

\begin{tabular}{|c|c|c|c|}
\hline

No. & $K'$ & $V$ & Note \\

\hline

1 & $\SL_n$ & $\FF^n$ & $n \ge 4$ \\

\hline

2 & $\Sp_{2n}$ & $\FF^{2n}$ & $n \ge 2$ \\

\hline

3 & $\SO_n$ & $\FF^n$ & $n \ge 4$ \\

\hline

4 & $\Spin_7$ & $\FF^8$ & \\

\hline
\end{tabular}
\end{center}
\end{table}

\begin{proof}
First of all we note that, since $V$ is a simple $K$-module, the
center of $K$ acts trivially on $\Gr_2(V)$. Therefore the variety
$\Gr_2(V)$ is $K$-spherical if and only if it is $K'$-spherical.

If $\Gr_2(V)$ is a spherical $K'$-variety then $V$ is a spherical
($K' \times \FF^\times$)-module by
Corollary~\ref{crl_V_is_spherical}.
Theorem~\ref{thm_spherical_modules_simple} implies that, up to a
geometrical equivalence, the pair $(K', V)$ is contained in
Table~\ref{table_spherical_modules}.

As was already mentioned in Introduction, every flag variety of the
group $\SL_n$ is spherical. Thus for every $n \ge 4$ the variety
$\Gr_2(\FF^n)$ is $\SL_n$-spherical.

The variety $\Gr_2(\FF^{2n})$ is $\Sp_{2n}$-spherical for $n \ge 2$
by Proposition~\ref{prop_sympl_grassm}.

The variety $\Gr_2(\FF^n)$ is $\SO_n$-spherical for $n \ge 4$ by
Proposition~\ref{prop_orth_grassm}.

Let us show that the variety $\Gr_2(\FF^8)$ is $\Spin_7$-spherical.
It is known (see \cite[Table~6, row~4]{Ela72} or \cite[\S\,5,
Proposition~26]{SK}) that under the natural action of the group
$\Spin_7 \times \GL_2$ on $\FF^8 \otimes \FF^2$ there is an open
orbit and the Lie algebra of the stabilizer of any point of this
orbit is isomorphic to $\mathfrak{gl}_3$. Applying
Corollary~\ref{crl_grassmannians}, we find that under the action of
the group $\Spin_7$ on $\Gr_2(\FF^8)$ there is an open orbit~$O$ and
the Lie algebra of the stabilizer of any point $x \in O$ is still
isomorphic to $\mathfrak{gl}_3$. It follows that the connected
component of the identity of the stabilizer of any point $x \in O$
is isomorphic to~$\GL_3$. It is well known (see, for
instance,~\cite[Table~1]{Kr79}) that $\GL_3$ is a spherical subgroup
of~$\Spin_7$.

We now prove that the variety $\Gr_2(V)$ is not $K'$-spherical for
the pairs $(K', V)$ in rows 4--9, 11--14 of
Table~\ref{table_spherical_modules}.

Applying Proposition~\ref{prop_inequality}, we obtain the following
necessary condition for $\Gr_2(V)$ to be $K'$-spherical:
\begin{equation} \label{eqn_inequality}
\dim K' + \rk K' \ge 4d - 8.
\end{equation}
A direct check shows that inequality~(\ref{eqn_inequality}) does not
hold for the pairs $(K', V)$ in rows 4, 5, 11--14 of
Table~\ref{table_spherical_modules}.

\begin{lemma} \label{lemma_K_V1otimesV2}
Suppose that $L$ is a connected reductive subgroup of\, $\SL_n
\times \SL_m$, where $n \ge m \ge 2$. Then the variety $\Gr_2(\FF^n
\otimes \FF^m)$ is $L$-spherical if and only if $n = m = 2$ and $L =
\SL_2 \times \SL_2$.
\end{lemma}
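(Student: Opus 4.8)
The plan is to prove Lemma~\ref{lemma_K_V1otimesV2} in two directions. For the ``if'' direction, observe that $\FF^2 \otimes \FF^2 \simeq \FF^4$ as a module over $\SL_2 \times \SL_2 \simeq \SO_4$ (this geometric equivalence is recorded in the Introduction), so $\Gr_2(\FF^2 \otimes \FF^2)$ becomes $\Gr_2(\FF^4)$ under the group $\SO_4$, which is $\SO_4$-spherical by Proposition~\ref{prop_orth_grassm}. This handles the case $n = m = 2$ with $L = \SL_2 \times \SL_2$.

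For the ``only if'' direction, the strategy is to reduce to the maximal case $L = \SL_n \times \SL_m$ and then rule out everything except $n = m = 2$. First I would note that if $\Gr_2(\FF^n \otimes \FF^m)$ is $L$-spherical for some connected reductive $L \subset \SL_n \times \SL_m$, then it is a fortiori $(\SL_n \times \SL_m)$-spherical, since enlarging the acting group preserves the existence of an open Borel orbit (a larger group has a larger Borel, still with an open orbit). So it suffices to determine for which pairs $(n,m)$ with $n \ge m \ge 2$ the variety $\Gr_2(\FF^n \otimes \FF^m)$ is $(\SL_n \times \SL_m)$-spherical, and then to check that in the surviving case $(n,m) = (2,2)$ no proper connected reductive subgroup $L \subsetneq \SL_2 \times \SL_2$ works. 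The latter point follows because $\SL_2 \times \SL_2$ acting on $\Gr_2(\FF^4) = \Gr_2(\FF^2 \otimes \FF^2)$ has an open orbit of dimension $4$ (the variety $\Gr_2(\FF^4)$ has dimension $4$), and any proper connected reductive subgroup $L$ has $\dim L + \rk L < \dim(\SL_2 \times \SL_2) + \rk(\SL_2 \times \SL_2) = 6 + 2 = 8 = 2 \dim \Gr_2(\FF^4)$, contradicting the necessary inequality~(\ref{eqn_inequality_main}) of Proposition~\ref{prop_inequality}; one checks the maximal proper reductive subgroups (the diagonal $\SL_2$, $\SL_2 \times T_1$, etc.) all fail.

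For the main reduction, to test $(\SL_n \times \SL_m)$-sphericity of $\Gr_2(\FF^n \otimes \FF^m)$ I would apply the dimension inequality~(\ref{eqn_inequality_main}): sphericity forces
\[
(n^2 - 1) + (m^2 - 1) + (n - 1) + (m - 1) \ge 2 \cdot 2(nm - 2),
\]
i.e. $n^2 + m^2 + n + m - 4 \ge 4nm - 8$, equivalently $n^2 + m^2 - 4nm + n + m + 4 \ge 0$. Setting $m = 2$ gives $n^2 - 8n + 4 + n + 2 + 4 = n^2 - 7n + 10 = (n-2)(n-5) \ge 0$, which fails for $n = 3, 4$ and holds for $n = 2$ and $n \ge 5$; setting $m = 3$ gives $n^2 - 12n + 9 + n + 3 + 4 = n^2 - 11n + 16 \ge 0$, failing for $4 \le n \le 7$; and for $m \ge 3$, $n \ge m$ the quadratic in $n$ stays negative over the relevant range. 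So the inequality alone eliminates all cases except $(n,m) = (2,2)$ and the large-$n$, $m = 2$ tail $n \ge 5$. The remaining task, which I expect to be the main obstacle, is to rule out $\Gr_2(\FF^n \otimes \FF^2)$ for $n \ge 5$, where the crude dimension bound is satisfied. Here I would invoke Proposition~\ref{prop_two_grassm_sphericity} or Proposition~\ref{prop_gr2_necessary}: writing $\FF^n \otimes \FF^2 = (\FF^2 \otimes \FF^2) \oplus (\FF^{n-2} \otimes \FF^2)$ as a sum of $\SL_n \times \SL_2$-submodules (more precisely, restrict to the subgroup $(\SL_2 \times \SL_{n-2} \times T) \times \SL_2$ and decompose accordingly), $(\SL_n \times \SL_2)$-sphericity of $\Gr_2$ of the whole space would force, via Proposition~\ref{prop_gr2_necessary}, that $(\FF^{n-2} \otimes \FF^2) \otimes \FF^2 \simeq \FF^{n-2} \otimes (\FF^2 \otimes \FF^2) \simeq \FF^{n-2} \otimes \FF^4$ be a spherical module for an appropriate reductive group; consulting Table~\ref{table_spherical_modules} (rows 6 and 9, which handle $\FF^n \otimes \FF^m$ and $\FF^n \otimes \FF^4$) shows this fails for $n - 2 \ge 3$, i.e. $n \ge 5$. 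Assembling the ``if'' direction, the reduction to $\SL_n \times \SL_m$, the dimension inequality, the subgroup argument for $n - 2 \ge 3$, and the proper-subgroup check at $(2,2)$ completes the proof.
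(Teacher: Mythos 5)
Your ``if'' direction, the reduction to the full group $L = \SL_n \times \SL_m$, and the elimination of proper reductive subgroups of $\SL_2\times\SL_2$ via inequality~(\ref{eqn_inequality_main}) all agree with the paper. The difficulty is entirely in the ``only if'' direction for the full group, and there your argument has two genuine gaps.

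First, the assertion that for $m \ge 3$ the quantity $n^2+m^2-4nm+n+m+4$ ``stays negative over the relevant range'' is false: for fixed $m$ it is a monic quadratic in $n$, hence eventually positive; its larger root lies below $4m$, and for instance $(n,m)=(10,3)$ gives $100+9-120+10+3+4=6\ge 0$. So for \emph{every} $m$ there is an infinite tail $n\gg m$ of cases surviving the dimension bound, not only the tail $m=2$, $n\ge 5$ that you noticed. Second, your proposed disposal of the surviving tail is not valid. Propositions~\ref{prop_two_grassm_sphericity} and~\ref{prop_gr2_necessary} take as hypothesis that $\Gr_2(V)$ is spherical \emph{for the group $K$ with respect to which $V=V_1\oplus V_2$ decomposes}. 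Since $\FF^n\otimes\FF^2$ is a simple $(\SL_n\times\SL_2)$-module, you are forced to pass to a proper subgroup such as $(\SL_2\times\SL_{n-2}\times T)\times\SL_2$; but sphericity does not descend to subgroups, so $(\SL_n\times\SL_2)$-sphericity of $\Gr_2(\FF^n\otimes\FF^2)$ gives no control over its behaviour under the smaller group, and the hypothesis of Proposition~\ref{prop_gr2_necessary} is simply unavailable. The paper closes exactly this hole by a different route: it splits into the ranges $n>2m$ and $m\le n\le 2m$. In the first range (which contains all the surviving tails) it uses Proposition~\ref{prop_open_orbit_in_Fl} to translate the question into the existence of an open $(T_n\times T_m\times\SL_2)$-orbit on pairs of $n\times m$ matrices, and exhibits an explicit canonical form showing that a generic $(T_n\times T_m)$-orbit has codimension $m^2+(m-1)(m-2)/2\ge 4>\dim\SL_2$, so no open orbit exists. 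Only in the range $m\le n\le 2m$ is the dimension inequality invoked, where it does force $m=2$ and then $n=2$. You would need an argument of this kind, valid for all $n>2m$, before your proof is complete.
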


\begin{proof}
If $n = m = 2$ and $L = \SL_2 \times \SL_2$, then the pair $(L,
\FF^2 \otimes \nobreak \FF^2)$ is geometrically equivalent to the
pair $(\SO_4, \FF^4)$. By Proposition~\ref{prop_orth_grassm} the
action of $\SO_4$ on $\Gr_2(\FF^4)$ is spherical.

We now prove that the $L$-sphericity of the variety $\Gr_2(\FF^n
\otimes \FF^m)$ implies $n = m = 2$. Obviously, if $\Gr_2(\FF^n
\otimes\nobreak \FF^m)$ is $L$-spherical then it is also ($\SL_n
\times \SL_m$)-spherical. Therefore it suffices to prove the
required assertion in the case $L = \SL_n \times \SL_m$.

We divide our subsequent consideration into two cases.

\textit{Case}~1. $n > 2m$. We show that in this case the variety
$\Gr_2(\FF^n \otimes\nobreak \FF^m)$ is not ($\SL_n \times
\SL_m$)-spherical. For $k = n,m$ let $T_k$ denote the group of all
upper-triangular matrices in~$\GL_k$ and put $B_k = T_k \cap \SL_k$,
so that $B_k$ is a Borel subgroup of $\SL_k$. In view of
Proposition~\ref{prop_open_orbit_in_Fl} it suffices to prove that
the space $W = \FF^n \otimes \FF^m \otimes \FF^2$ contains no open
orbit for the action of $B_n \times B_m \times \GL_2$ or,
equivalently, for the action of $T_n \times T_m \times \SL_2$. In
turn, the latter will hold if we prove that the codimension of an
orbit of general position for the action of $T_n \times T_m$ on $W$
is at least~$4$.

We represent $W$ as the set of pairs of $(n \times m)$-matrices.
Then the action $T_n \times T_m : W$ is described by the formula
$((T, U), (P,Q)) \mapsto (TPU^\top, TQU^\top)$, where $(T,U) \in T_n
\times T_m$, $(P,Q) \in W$. It is not hard to check that, acting by
the group $T_n \times T_m$, one can reduce a pair $(P,Q)$ from a
suitable open subset of~$W$ to a uniquely determined canonical form
$(P', Q')$, where
$$
\arraycolsep=3pt
P' = \left(%
\begin{array}{ccc}%
 & 0 & \\ \hline
 & &  \\
 & 0 & \\
 & &
\\ \hline%
1 & & 0 \\
 & \dd & \\
0 & & 1
\end{array}
\right),%
\qquad
Q' = \left(%
\begin{array}{ccc}%
 & 0 & \\ \hline%
1 & & 0 \\
\vd & \dd & \\
1 & * & 1 \\\hline%
 & &  \\
 & * & \\
 & &
\end{array}
\right).
$$
(Each of the matrices $P',Q'$ is divided into three blocks: the
upper one consists of the upper $n-2m$ rows, the middle one consists
of the following $m$ rows, and the lower one consists of the last
$m$ rows.) It follows that the codimension of a $(T_n \times
T_m)$-orbit of general position in~$W$ equals $m^2 + (m-1)(m-2)/2
\ge m^2 \ge 4$ as required.

\textit{Case}~2. $m \le n \le 2m$. Suppose that $n = 2m - l$, where
$0 \le l \le m$. If the variety $\Gr_2(\FF^n \otimes \FF^m)$ is
$(\SL_n \times \SL_m)$-spherical, then
applying~(\ref{eqn_inequality}) we get the inequality
$$
(2m - l)^2 - 1 + m^2 - 1 + (2m - l) - 1 + m - 1 \ge 4(2m - l)m - 8,
$$
which takes the form
\begin{equation} \label{eqn_inequality2}
3m^2 - 3m - 4 \le l^2 - l
\end{equation}
after transformations. Since $0 \le l \le m$,
inequality~(\ref{eqn_inequality2}) implies that $3m^2 - 3m - 4 \le
m^2$. Hence $2m^2 - 3m - 4 \le 0$ and so $m = 2$. Then $l = 0,1
\text{ or } 2$, but the first two cases do not occur
by~(\ref{eqn_inequality2}). Thus $l = 2$, that is, $n = 2$.

It remains to show that the action $L : \Gr_2(\FF^2 \otimes \FF^2)$
is not spherical for every proper reductive subgroup $L \subset
\SL_2 \times \SL_2$. Indeed, in this case we have $\rk L \le 2$ and
$\dim L \le 5$, hence inequality (\ref{eqn_inequality}) does not
hold.

The proof of the lemma is completed.
\end{proof}

Lemma~\ref{lemma_K_V1otimesV2} immediately implies that the variety
$\Gr_2(V)$ is not $K'$-spherical for the pairs $(K', V)$ in rows
6--9 of Table~\ref{table_spherical_modules}, which completes the
proof of Theorem~\ref{thm_gr2_case_of_simple_V}.
\end{proof}

\subsection{Spherical actions on~$\Gr_2(V)$ in the case where
$V$ is a nonsimple $K$-module} \label{subsec_Gr2_non-simple_V}

In this subsection we suppose that $r \ge 2$. Here the main result
is Theorem~\ref{thm_gr2_case_of_nonsimple_V}.

\begin{proposition} \label{prop_either_sl_or_sp}
Suppose that $\Gr_2(V)$ is a spherical $K$-variety. Then for every
$i = 1, \ldots, r$ the pair $(K', V_i)$ is geometrically equivalent
to either of the pairs $(\SL_n, \FF^n)$ $(n \ge 1)$ or $(\Sp_{2n},
\FF^{2n})$ $(n \ge 2)$.
\end{proposition}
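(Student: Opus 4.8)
The plan is to deduce strong constraints on each simple summand $V_i$ from the $K$-sphericity of $\Gr_2(V)$ and then match them against the classification of spherical modules. First I would observe that if $\Gr_2(V)$ is $K$-spherical, then by Corollary~\ref{crl_V_is_spherical} the module $V$ is a spherical $(K\times\FF^\times)$-module, so in particular $V$ is a direct sum of \emph{finitely many} summands appearing in Tables~\ref{table_spherical_modules} and~\ref{table_spherical_modules_2}; this already restricts which simple $K'$-modules can occur as the $V_i$. The remaining task is to rule out all the ``large'' simple modules, i.e.\ everything except $\FF^n$ for $\SL_n$ and $\FF^{2n}$ for $\Sp_{2n}$.

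The main tool I would use is Proposition~\ref{prop_gr2_necessary}: writing $V = V_i \oplus (\text{rest})$, the $K$-sphericity of $\Gr_2(V)$ forces $V_i \otimes \FF^2$ to be a spherical $(K\times\GL_2)$-module (applied with $V_1$ being the ``rest'' and $V_2 = V_i$, using $d\ge 4$; the edge cases where $\dim(\text{rest})$ is small are handled separately as in the proof of Proposition~\ref{prop_gr2_necessary}). So it suffices to determine for which simple $K'$-modules $U$ the module $U\otimes\FF^2$ can be spherical for a connected reductive group. Equivalently, since $\Gr_2$ of a simple module being spherical is necessary, one can invoke Theorem~\ref{thm_gr2_case_of_simple_V}: if $V_i$ has dimension $\ge 4$ then $\Gr_2(V_i)$ must be $K$-spherical as a subvariety (via Proposition~\ref{prop_two_grassm_sphericity}(a)), which by Theorem~\ref{thm_gr2_case_of_simple_V} forces $(K',V_i)$ to be geometrically equivalent to one of $(\SL_n,\FF^n)$, $(\Sp_{2n},\FF^{2n})$, $(\SO_n,\FF^n)$, or $(\Spin_7,\FF^8)$. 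It then remains only to exclude the last two: the $\SO_n$ case and the $\Spin_7$ case.

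For the exclusion of $(\SO_n,\FF^n)$ and $(\Spin_7,\FF^8)$, I would argue that $V_i \otimes \FF^2$ is not a spherical $(K\times\GL_2)$-module. In the $\Spin_7$ case, $\FF^8\otimes\FF^2$ is a $16$-dimensional module for $\Spin_7\times\GL_2$, and one checks (either by the dimension/rank inequality of Proposition~\ref{prop_inequality} applied to the linear group, or by noting it does not appear in Table~\ref{table_spherical_modules_2} and is not among the listed sums) that it is not spherical; more directly, Proposition~\ref{prop_gr2_necessary} would give that $V_i\otimes\FF^2$ sits in the classification, but $\FF^8\otimes\FF^2$ for $\Spin_7\times\GL_2$ (or any larger reductive group inside $\GL(\FF^8\otimes\FF^2)$ containing it) is absent. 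For $(\SO_n,\FF^n)$ with $n\ge 4$: the module $\FF^n\otimes\FF^2$ for $\SO_n\times\GL_2$ is not spherical — one can see $\dim(\SO_n\times\GL_2)+\rk(\SO_n\times\GL_2)<2\dim\GL(\FF^n\otimes\FF^2)$ fails for large $n$ and check the small cases $n=4,5,6$ by hand against Table~\ref{table_spherical_modules_2} (noting $\SO_4$ reduces to the $\SL_2\times\SL_2$ tensor case already handled, etc.). Finally, any summand $V_i$ of dimension $\le 3$ is automatically $\FF^n$ for $\SL_n$ with $n\le 3$ (or geometrically equivalent to such, since $\SO_3\cong\SL_2$ acting on $\mathrm S^2\FF^2\cong\FF^3$ is geometrically equivalent to $(\SL_2,\mathrm S^2\FF^2)$, which is \emph{not} of the claimed form — so this is a case I must address): but $\mathrm S^2\FF^2$ for $\SL_2$ is $3$-dimensional and $(\mathrm S^2\FF^2)\otimes\FF^2$ for $\SL_2\times\GL_2$ must be checked; if it too fails sphericity (or is excluded by the structure of Table~\ref{table_spherical_modules_2}), then the only surviving low-dimensional option is the standard representation, completing the proof.

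The step I expect to be the main obstacle is the careful bookkeeping of the \emph{low-dimensional} simple summands (dimensions $1$, $2$, $3$) up to geometric equivalence — in particular making sure that $(\SL_2,\mathrm S^2\FF^2)\sim(\SO_3,\FF^3)$ does not sneak in as a counterexample — together with the edge cases of Proposition~\ref{prop_gr2_necessary} where $\dim V_1 = 1$ and one must instead argue via $\Fl(1,1;V_2)$ and Corollary~\ref{crl_Fl(1,1,...,1;V)}. The high-dimensional exclusions are essentially mechanical once Theorem~\ref{thm_gr2_case_of_simple_V} and Table~\ref{table_spherical_modules_2} are in hand, but the small cases require a genuine case-by-case verification that the relevant tensor-with-$\FF^2$ modules are non-spherical.
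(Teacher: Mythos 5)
Your core step is exactly the paper's: Proposition~\ref{prop_gr2_necessary} applied to $V = V_i \oplus (\text{rest})$ gives that $V_i \otimes \FF^2$ is a spherical $(K \times \GL_2)$-module, and since $V_i$ is a simple $K$-module, $V_i \otimes \FF^2$ is a \emph{simple} $(K \times \GL_2)$-module, so Theorem~\ref{thm_spherical_modules_simple} (Table~\ref{table_spherical_modules}) forces the pair, up to geometric equivalence and apart from the degenerate case $\dim V_i = 1$, to be $(\SL_n \times \SL_2, \FF^n \otimes \FF^2)$ or $(\Sp_{2n} \times \SL_2, \FF^{2n} \otimes \FF^2)$ --- which is the assertion. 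That two-line argument is the paper's entire proof. Your detour through Proposition~\ref{prop_two_grassm_sphericity}(a) and Theorem~\ref{thm_gr2_case_of_simple_V} is logically admissible but unnecessary, and it is precisely this detour that manufactures the candidates $(\SO_n,\FF^n)$ and $(\Spin_7,\FF^8)$ you then have to kill by hand.

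The one step that fails as written is your exclusion of $(\SO_n,\FF^n)$ for large $n$ via Proposition~\ref{prop_inequality}. For $\SO_n \times \GL_2$ acting on $\FF^n \otimes \FF^2$ one has
$$
\dim(\SO_n \times \GL_2) + \rk(\SO_n \times \GL_2) = \tfrac{n(n-1)}{2} + 4 + \lfloor n/2 \rfloor + 2,
$$
which is $\ge 4n = 2\dim(\FF^n \otimes \FF^2)$ for every $n \ge 6$ (quadratic beats linear), so the dimension--rank criterion is silent exactly in the range where you invoke it; it only rules out $n=4,5$. The uniform and correct exclusion is the table lookup you reserve for ``small cases'': $\FF^n \otimes \FF^2$ is a simple $(\SO_n \times \GL_2)$-module, and no pair geometrically equivalent to $(\SO_n \times \SL_2, \FF^n \otimes \FF^2)$ with $n \ge 5$ occurs in Table~\ref{table_spherical_modules}. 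Similarly, you leave the non-sphericity of $\mathrm{S}^2\FF^2 \otimes \FF^2$ for $\SL_2 \times \GL_2$ conditional; there the inequality does close the case ($3+4+1+2 = 10 < 12$), or one again cites its absence from the table. With these repairs your proof is complete, but the cleanest route is to drop Theorem~\ref{thm_gr2_case_of_simple_V} altogether and pass directly from Proposition~\ref{prop_gr2_necessary} to Theorem~\ref{thm_spherical_modules_simple}.
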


\begin{proof}
It follows from Proposition~\ref{prop_gr2_necessary} that $V_i
\otimes \FF^2$ is a spherical $(K \times \GL_2)$-module for every $i
= 1, \ldots, r$. The proof is completed by applying
Theorem~\ref{thm_spherical_modules_simple}.
\end{proof}

\begin{proposition} \label{prop_no_diagonal_action}
Suppose that $\Gr_2(V)$ is a spherical $K$-variety. Then every
simple normal subgroup in~$K'$ acts nontrivially on at most one
summand of decomposition~\textup{(\ref{eqn_decomposition_of_V})}.
\end{proposition}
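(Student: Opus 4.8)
The plan is to argue by contradiction: suppose some simple normal subgroup $S \subset K'$ acts nontrivially on two of the summands $V_i, V_j$ in the decomposition~(\ref{eqn_decomposition_of_V}). By Proposition~\ref{prop_either_sl_or_sp} each of $(K', V_i)$ and $(K', V_j)$ is geometrically equivalent to $(\SL_n, \FF^n)$ or $(\Sp_{2n}, \FF^{2n})$, so $S$ must act on $V_i$ (resp. $V_j$) as the natural module of an $\SL$ or $\Sp$ factor; in particular $S$ is of type $\mathsf A$ or $\mathsf C$, and $\dim V_i, \dim V_j$ are both bounded below in terms of $S$. The key observation is that, after discarding all the other summands, $V_i \oplus V_j$ is a $K$-submodule of $V$ on which $S$ acts diagonally via (essentially) its defining representation on each factor, and we should derive a contradiction with sphericity of $\Gr_2(V)$.

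The main tool I would use is Proposition~\ref{prop_gr2_necessary} together with a reduction step. First I would apply Proposition~\ref{prop_two_grassm_sphericity}(a) with $V_1 := V_i \oplus V_j$ (and $V_2$ the sum of the remaining summands) to conclude that $\Gr_2(V_i \oplus V_j)$ is $K$-spherical, hence $S$-spherical since only $S$ acts nontrivially. Equivalently, via Proposition~\ref{prop_open_orbit_in_Fl}, the space $(V_i \oplus V_j) \otimes \FF^2$ carries an open orbit of $S \times \GL_2$. Now the four geometric-equivalence types for the pair $(S, V_i \oplus V_j)$ are: $(\SL_n, \FF^n \oplus \FF^n)$ or $(\SL_n, \FF^n \oplus (\FF^n)^*)$ (type $\mathsf A$, with $n$ possibly equal to $2$), and $(\Sp_{2n}, \FF^{2n} \oplus \FF^{2n})$ (type $\mathsf C$). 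In each case I would show that $\Gr_2$ of the direct sum fails to be $S$-spherical — either by the dimension inequality $\dim S + \rk S \ge 4\dim(V_i\oplus V_j) - 8$ of Proposition~\ref{prop_inequality}, which is violated once $n$ is moderately large, or, for the small values of $n$ not excluded by that inequality (essentially $\SL_2$ acting as $\FF^2 \oplus \FF^2 \cong \FF^2\otimes\FF^2$, which lands inside the scope of Lemma~\ref{lemma_K_V1otimesV2}, and perhaps $\SL_3, \Sp_4$), by a direct orbit-dimension computation on $(V_i\oplus V_j)\otimes\FF^2$ showing the generic $S\times\GL_2$-orbit has codimension $\ge 1$.

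The step I expect to be the main obstacle is handling the small-rank cases cleanly: for $S = \SL_2$ acting diagonally on $\FF^2\oplus\FF^2$, the pair is geometrically $(\SL_2\times\SL_2,\FF^2\otimes\FF^2)$-like only after enlarging the group, so one cannot immediately quote Lemma~\ref{lemma_K_V1otimesV2}; instead I would note $\Gr_2(\FF^4)$ is $\SO_4$-spherical but $\SL_2$ (embedded diagonally as a proper subgroup of $\SL_2\times\SL_2\cong\Spin_4$) is not spherical in $\SO_4$, again by the inequality $\dim\SL_2+\rk\SL_2 = 4 < 8 = 4\cdot 4 - 8$. A similar low-dimensional check disposes of $\Sp_4$ on $\FF^4\oplus\FF^4$ and $\SL_3$ on $\FF^3\oplus\FF^3$ or $\FF^3\oplus(\FF^3)^*$ via Proposition~\ref{prop_inequality}. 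Once all these cases yield non-sphericity, the assumed diagonal action is impossible, which is the claim.
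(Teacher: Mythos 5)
Your overall skeleton is the same as the paper's: restrict attention to $V_i \oplus V_j$ via Proposition~\ref{prop_two_grassm_sphericity}(a), use Proposition~\ref{prop_either_sl_or_sp} to reduce to the three diagonal cases $(\SL_n, \FF^n\oplus\FF^n)$, $(\SL_n, \FF^n\oplus(\FF^n)^*)$, $(\Sp_{2n},\FF^{2n}\oplus\FF^{2n})$, and then rule each one out. But the way you propose to rule them out has the logic of the dimension bound inverted, and this is a genuine gap. The necessary condition of Proposition~\ref{prop_inequality} reads $\dim S + \rk S \ge 2\dim \Gr_2(V_i\oplus V_j)$; here the left-hand side grows \emph{quadratically} in $n$ while the right-hand side ($8n-8$ for $\SL_n$ diagonal on $\FF^n\oplus\FF^n$, $16n-8$ for $\Sp_{2n}$) grows only \emph{linearly}. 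So the inequality is violated precisely for \emph{small} $n$ (e.g.\ $2\le n\le 5$ for $\SL_n$, since $n^2+n-2 < 8n-8$ iff $(n-1)(n-6)<0$) and is \emph{satisfied} for all $n\ge 6$ (resp.\ $n \ge 7$ for $\Sp_{2n}$), where it yields no contradiction at all. You have assigned the dimension count to the large-$n$ cases and a ``direct orbit-dimension computation'' to the small ones — exactly backwards — and you give no workable argument for the infinitely many large-$n$ cases. The paper handles these by a slice reduction: it applies Proposition~\ref{prop_point+group_sl} to $\Gr_2(V_1)$ and Proposition~\ref{prop_two_grassm_sphericity}(b) to reduce sphericity of $\Gr_2(V_1\oplus V_2)$ to sphericity of the module $\mathrm S^2\FF^2\oplus(\FF^2\otimes\FF^{n-2})$ over $\SL_2\times\SL_{n-2}\times(\FF^\times)^2$, which Theorem~\ref{thm_spherical_modules_nonsimple} rules out; the symplectic case is then dispatched by enlarging $\Sp_{2n}$ to $\SL_{2n}$.

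A second, related problem: the step ``$\Gr_2(V_i\oplus V_j)$ is $K$-spherical, hence $S$-spherical since only $S$ acts nontrivially'' is not valid. The image of $K$ in $\GL(V_i\oplus V_j)$ also contains the central torus $C$ acting via $(\chi_i,\chi_j)$, which acts nontrivially on the Grassmannian whenever $\chi_i\ne\chi_j$; sphericity for this larger group does not imply sphericity for $S$ alone, so exhibiting non-$S$-sphericity does not produce the desired contradiction. You must show non-sphericity for $S$ \emph{together with} the rank-$\le 2$ torus, and this matters even in your base case: for $\SL_2$ diagonal on $\FF^2\oplus\FF^2$ with the torus included one gets $\dim+\rk = 8 = 2\dim\Gr_2(\FF^4)$, so the inequality alone is inconclusive and one needs the finer reduction of Corollary~\ref{crl_k=dimV1} to the module $V_1^*\otimes V_2$ as in the paper.
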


\begin{proof}
Assume that there is a simple normal subgroup $K_0 \subset K'$
acting nontrivially on two different summands of
decomposition~(\ref{eqn_decomposition_of_V}). Without loss of
generality we shall assume that $K_0$ acts nontrivially on~$V_1$
and~$V_2$. (We note that $\dim V_1 \ge 2$ and $\dim V_2 \ge 2$ in
this case.) Proposition~\ref{prop_two_grassm_sphericity}(a) implies
that the variety $\Gr_2(V_1 \oplus V_2)$ is $K$-spherical. Making
use of Proposition~\ref{prop_either_sl_or_sp}, we find that, up to a
geometrical equivalence, the pair $(K', V_1 \oplus V_2)$ is
contained in Table~\ref{table_just_three}, where $K'$ is assumed to
act diagonally on~$V_1$ and~$V_2$ in all cases.

\begin{table}[!h]
\begin{center}
\caption{} \label{table_just_three}
\begin{tabular}{|c|c|c|c|c|}
\hline

No. & $K'$ & $V_1$ & $V_2$ & Note \\

\hline

1 & $\SL_n$ & $\FF^n$ & $\FF^n$ & $n \ge 2$ \\

\hline

2 & $\SL_n$ & $\FF^n$ & $(\FF^n)^*$ & $n \ge 3$ \\

\hline

3 & $\Sp_{2n}$ & $\FF^{2n}$ & $\FF^{2n}$ & $n \ge 2$ \\

\hline

\end{tabular}
\end{center}
\end{table}

For each case in Table~\ref{table_just_three} we put $Z = \Gr_2(V_1
\oplus V_2)$ and $X = \Gr_2(V_1)$. Let us show that the variety $Z$
is not $K$-spherical. We consider all the three cases separately.

\textit{Case}~1. If $n = 2$ then, by Corollary~\ref{crl_k=dimV1},
$Z$ is a $K$-spherical variety if and only if $V_1^* \otimes V_2$ is
a spherical $(C \times K')$-module, where $K$ acts diagonally
and~$C$ acts via the character $\chi_2 - \chi_1$. It follows that
$\FF^2 \otimes \FF^2$ is a spherical $(\SL_2 \times
\FF^\times)$-module, where $\SL_2$ acts diagonally and~$\FF^\times$
acts by scalar transformations. The latter is false since the
indicated module does not satisfy
inequality~(\ref{eqn_inequality_main}).

In what follows we suppose that $n \ge 3$. Applying
Proposition~\ref{prop_point+group_sl} to $X$ and then
Proposition~\ref{prop_two_grassm_sphericity}(b) to~$Z$ and~$X$, we
find a point $[W] \in X$ and a group $L \subset (K')_{[W]}$ with the
following properties:

(1) $L \simeq \mathrm{S}(\mathrm{L}_2 \times \mathrm{L}_{n-2})$;

(2) the pair $(L, W)$ is geometrically equivalent to the pair
$(\GL_2, \FF^2)$;

(3) the pair $(L, V_2)$ is geometrically equivalent to the pair
$(\mathrm{S}(\mathrm{L}_2 \times \mathrm{L}_{n-2}), \FF^2 \oplus
\FF^{n-2})$;

(4) the $K$-sphericity of $Z$ is equivalent to the sphericity of the
$(C \times L)$-module $W^* \otimes V_2$, where $L$ acts diagonally
and $C$ acts via the character $\chi_2 - \chi_1$.

In view of the $\SL_2$-module isomorphisms $(\FF^2)^* \simeq \FF^2$
and $\FF^2 \otimes \FF^2 \simeq \mathrm{S}^2 \FF^2 \oplus \FF^1$,
the sphericity of the $(C \times L)$-module $W^* \otimes V_2$
implies that the $(\SL_2 \times \SL_{n-2} \times
(\FF^\times)^2)$-module
$$
\mathrm S^2 \FF^2 \oplus (\FF^2 \otimes \FF^{n-2})
$$
is spherical, where $\SL_2$ acts diagonally on $\mathrm S^2 \FF^2$
and~$\FF^2$, $\SL_{n-2}$ acts on~$\FF^{n-2}$, and $(\FF^\times)^2$
acts on each of the direct summands by scalar transformations. By
Theorem~\ref{thm_spherical_modules_nonsimple} the indicated module
is not spherical.

\textit{Case}~2. Using an argument similar to that in Case~1 for $n
\ge 3$ we deduce from the condition of $Z$ being $K$-spherical that
the $(\SL_2 \times \SL_{n-2} \times (\FF^\times)^2)$-module
$$
\mathrm S^2 \FF^2 \oplus (\FF^2 \otimes (\FF^{n-2})^*)
$$
is spherical, where $\SL_2$ acts diagonally on $\mathrm S^2 \FF^2$
and~$\FF^2$, $\SL_{n-2}$ acts on $(\FF^{n-2})^*$, and
$(\FF^\times)^2$ acts on each of the direct summands by scalar
transformations. By Theorem~\ref{thm_spherical_modules_nonsimple}
the indicated module is not spherical.

\textit{Case}~3. If the variety $Z$ is $K$-spherical then $Z$ is
also $(C \times \SL_{2n})$-spherical (where $\SL_{2n}$ acts
diagonally on $V_1 \oplus V_2$). As was shown in Case~1, the latter
is false.
\end{proof}

\begin{theorem} \label{thm_gr2_case_of_nonsimple_V}
Suppose that $d \ge 4$ and $r \ge 2$. Then the variety $\Gr_2(V)$ is
$K$-spherical if and only if the following conditions hold:

\textup{(1)} up to a geometrical equivalence, the pair $(K', V)$ is
contained in Table~\textup{\ref{table_Gr2}};

\textup{(2)} the group $C$ satisfies the conditions listed in the
fourth column of Table~\textup{\ref{table_Gr2}}.
\end{theorem}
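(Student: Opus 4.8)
The plan is to start from the necessary conditions already in hand --- Corollary~\ref{crl_V_is_spherical}, Proposition~\ref{prop_gr2_necessary}, Proposition~\ref{prop_either_sl_or_sp} and Proposition~\ref{prop_no_diagonal_action} --- to reduce the pair $(K',V)$ to a finite list of ``block'' configurations, then to eliminate the configurations absent from Table~\ref{table_Gr2}, and finally to verify sphericity, together with the precise conditions on~$C$, for the configurations that survive.

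By Propositions~\ref{prop_either_sl_or_sp} and~\ref{prop_no_diagonal_action}, $K'$ is a direct product $K_1\times\ldots\times K_r$ with $K_i$ acting only on~$V_i$, and each pair $(K_i,V_i)$ is geometrically equivalent to $(\SL_{n_i},\FF^{n_i})$ with $n_i\ge 1$ or to $(\Sp_{2n_i},\FF^{2n_i})$ with $n_i\ge 2$; thus a candidate is a sequence of blocks of these two kinds, with $V=V_1\oplus\ldots\oplus V_r$ correspondingly. Applying the dimension inequality of Proposition~\ref{prop_inequality} to $\Gr_2(V)$ (note $2\dim\Gr_2(V)=4d-8$) for a suitable subgroup of $\GL(V)$ containing~$K$ and the block-scaling torus, together with the restriction statement Proposition~\ref{prop_two_grassm_sphericity}(a) applied to the sub-sums $V_i\oplus V_j$ and $V_i\oplus V_j\oplus V_l$, and the eliminations described next, cuts the block sizes down to those recorded in Table~\ref{table_Gr2}.

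To rule out a configuration not in Table~\ref{table_Gr2} I would argue as in Lemma~\ref{lemma_K_V1otimesV2} and in the proof of Proposition~\ref{prop_no_diagonal_action}: pass from $\Gr_2(V)$ to $\Gr_2(V_1)$ by Proposition~\ref{prop_two_grassm_sphericity}(a), then use Corollary~\ref{crl_k=dimV1} when $\dim V_1=2$, or Proposition~\ref{prop_two_grassm_sphericity}(b) together with the property-(P) point supplied by Proposition~\ref{prop_point+group_sl}, Proposition~\ref{prop_point+group_sp_even} or Proposition~\ref{prop_point+group_sp_odd}, to reduce $K$-sphericity of $\Gr_2(V)$ to sphericity of an explicit $L$-module $W_0^*\otimes(V_2\oplus\ldots\oplus V_r)$, and contradict Theorem~\ref{thm_spherical_modules_nonsimple} or Theorem~\ref{thm_spherical_modules_general}. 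Sufficiency is the same reduction read forward: when some block $V_1$ has dimension~$2$, Corollary~\ref{crl_k=dimV1} turns $K$-sphericity of $\Gr_2(V)$ into sphericity of the $(C\times K')$-module $V_1^*\otimes(V_2\oplus\ldots\oplus V_r)$, read off from Tables~\ref{table_spherical_modules}--\ref{table_spherical_modules_2}; otherwise Proposition~\ref{prop_point+group_sl} (resp.\ Proposition~\ref{prop_point+group_sp_even}/\ref{prop_point+group_sp_odd}, and, when convenient, Corollary~\ref{crl_point+group_sp_even}/\ref{crl_point+group_sp_odd} to pass to the symplectic complement) applied to $\Gr_2(V_1)$ reduces, via Proposition~\ref{prop_two_grassm_sphericity}(b), to sphericity of the $L$-module $W_0^*\otimes(V_2\oplus\ldots\oplus V_r)$, where $L$ is again a product of $\SL$/$\Sp$ blocks with a small diagonal $\SL_2$ or torus factor carried by~$W_0$; this module falls under Theorems~\ref{thm_spherical_modules_nonsimple} and~\ref{thm_spherical_modules_general}, and --- after applying the isomorphisms $(\FF^2)^*\simeq\FF^2$ and $\FF^2\otimes\FF^2\simeq\mathrm{S}^2\FF^2\oplus\FF^1$ --- its sphericity criterion produces exactly the linear-independence conditions on the differences $\chi_j-\chi_1$ listed in the fourth column of Table~\ref{table_Gr2}.

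The main obstacle is the bookkeeping of central characters across these reductions: the auxiliary summand $W_0$ carries its own character and the diagonal-$\SL_2$ identification splits off an extra line, so one has to track carefully how $\chi_1,\ldots,\chi_r$ and their differences transform at each step in order to match the output of Theorem~\ref{thm_spherical_modules_general} with the entries of Table~\ref{table_Gr2}, and to check that every reduction is a genuine equivalence rather than a one-way implication --- which is exactly what property~(P), with its irreducible-fiber hypothesis, guarantees. Beyond this there is no single hard idea; the remaining effort is the routine case analysis over the block types and sizes.
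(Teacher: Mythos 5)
Your proposal follows essentially the same route as the paper: reduce $(K',V)$ to a product of $\SL$/$\Sp$ blocks via Propositions~\ref{prop_either_sl_or_sp} and~\ref{prop_no_diagonal_action}, then for each surviving configuration use Corollary~\ref{crl_k=dimV1} or a property-(P) point from Propositions~\ref{prop_point+group_sl}/\ref{prop_point+group_sp_even} together with Proposition~\ref{prop_two_grassm_sphericity}(b) to convert $K$-sphericity of $\Gr_2(V)$ into sphericity of an explicit module $W_0^*\otimes(V_2\oplus\ldots\oplus V_r)$, settled by the classification of spherical modules. The one place your stated justification is too weak is the bound on the number $r$ of simple summands: the dimension inequality of Proposition~\ref{prop_inequality} and restriction to two- or three-block sub-sums do not exclude, say, the four-dimensional torus acting on $\Gr_2(\FF^4)$ (the inequality holds with equality and every three-block sub-sum gives a spherical $\Gr_2(\FF^3)$), whereas the paper gets $r\le 3$ immediately from Theorem~\ref{thm_MWZ}; your own Proposition~\ref{prop_gr2_necessary}, or the module reductions you describe, would also yield this, but the tools you name for that step do not.
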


\begin{table}[!h]

\begin{center}

\caption{} \label{table_Gr2}

\begin{tabular}{|c|c|c|c|c|}
\hline

No. & $K'$ & $V$ & Conditions on $C$ & Note \\

\hline

1 & $\SL_n \times \SL_m$ & $\FF^n \oplus \FF^m$ & $\chi_1 \ne
\chi_2$ for $n = m = 2$ &
\begin{tabular}{c}
$n \ge m \ge 1$, \\
$n + m \ge 4$
\end{tabular}
\\

\hline

2 & $\Sp_{2n} \times \SL_m$ & $\FF^{2n} \oplus \FF^m$ &
$\chi_1 \ne \chi_2$ for $m = 2$ & $n \ge 2$, $m \ge 1$\\

\hline

3 & $\Sp_{2n} \times \Sp_{2m}$ & $\FF^{2n} \oplus \FF^{2m}$ &
$\chi_1 \ne \chi_2$ & $n \ge m \ge 2$ \\

\hline

4 & $\SL_n \times \SL_m \times \SL_l$ & $\FF^n \oplus \FF^m \oplus
\FF^l$ &

\begin{tabular}{c}
$\chi_2 - \chi_1, \chi_3 - \chi_1$ \\ lin. ind. for $n = 2$; \\

\hline

$\chi_2 \ne \chi_3$ for \\
$n \ge 3$, $m \le 2$ \\
\end{tabular}

&
\begin{tabular}{c}
$n \ge m \ge l \ge 1$, \\
$n \ge 2$
\end{tabular}
\\

\hline

5 & $\Sp_{2n} \times \SL_m \times \SL_l$ & $\FF^{2n} \oplus \FF^m
\oplus \FF^l$ &

\begin{tabular}{c}
$\chi_2 - \chi_1, \chi_3 - \chi_1$ \\ lin. ind. for $m \le 2$ \\

\hline

$\chi_1 \ne \chi_3$ for \\ $m \ge 3, l \le 2$ \\
\end{tabular}

&

\begin{tabular}{c}
$n \ge 2$, \\ $m \ge l \ge 1$
\end{tabular}
\\

\hline

6 & $\Sp_{2n} \times \Sp_{2m} \times \SL_l$ & $\FF^{2n} \oplus
\FF^{2m} \oplus \FF^l$ &

\begin{tabular}{c}
$\chi_2 - \chi_1, \chi_3 - \chi_1$ \\ lin. ind. for $l \le 2$ \\

\hline

$\chi_1 \ne \chi_2$ for $l \ge 3$ \\
\end{tabular}

&
\begin{tabular}{c}
$n \ge m \ge 2$, \\
$l \ge 1$
\end{tabular}
\\

\hline

7 & $\Sp_{2n} \times \Sp_{2m} \times \Sp_{2l}$ & $\FF^{2n} \oplus
\FF^{2m} \oplus \FF^{2l}$ & \begin{tabular}{c} $\chi_1 - \chi_2,
\chi_1 - \chi_3$ \\ lin. ind. \end{tabular} & $n \ge m \ge l \ge 2$ \\

\hline

\end{tabular}

\end{center}

\end{table}

\begin{table}[!h]

\begin{center}

\caption{} \label{table_Gr2_proof}

\begin{tabular}{|c|c|c|}
\hline

Case & References & $(M, W)$ \\

\hline

\begin{tabular}{c}1, \\ $n {=} m {=} 2$ \end{tabular}
& \ref{crl_k=dimV1} & $(\SL_2, \FF^2)$ \\

\hline

1, $n \ge 3$ & \ref{prop_point+group_sl},
\ref{prop_two_grassm_sphericity}(b) & $(\GL_2, \FF^2)$ \\

\hline

2 & \ref{prop_point+group_sp_even},
\ref{prop_two_grassm_sphericity}(b)
& $(\SL_2, \FF^2)$ \\

\hline

3 & \ref{prop_point+group_sp_even},
\ref{prop_two_grassm_sphericity}(b)
& $(\SL_2, \FF^2)$ \\

\hline

4, $n = 2$ & \ref{crl_k=dimV1} & $(\SL_2, \FF^2)$ \\

\hline

4, $n \ge 3$ & \ref{prop_point+group_sl},
\ref{prop_two_grassm_sphericity}(b) & $(\GL_2, \FF^2)$ \\

\hline

5 & \ref{prop_point+group_sp_even},
\ref{prop_two_grassm_sphericity}(b)
& $(\SL_2, \FF^2)$ \\

\hline

6 & \ref{prop_point+group_sp_even},
\ref{prop_two_grassm_sphericity}(b)
& $(\SL_2, \FF^2)$ \\

\hline

7 & \ref{prop_point+group_sp_even},
\ref{prop_two_grassm_sphericity}(b)
& $(\SL_2, \FF^2)$ \\

\hline
\end{tabular}

\end{center}

\end{table}

\begin{proof}
Put $U = V_2 \oplus \ldots \oplus V_r$. Let $K_1$ (resp. $K_2$) be
the image of~$K'$ in~$\GL(V_1)$ (resp. $\GL(U)$).

If the variety $\Gr_2(V)$ is spherical with respect to the action
of~$K$, then it is also spherical with respect to the action of
$\GL(V_1) \times \ldots \times \GL(V_r)$. Then it follows from
Theorem~\ref{thm_MWZ} that $r \le 3$. Applying
Propositions~\ref{prop_either_sl_or_sp}
and~\ref{prop_no_diagonal_action} we find that, up to a geometrical
equivalence, the pair $(K', V)$ is contained in
Table~\ref{table_Gr2}. The subsequent reasoning is similar for each
of the cases in Table~\ref{table_Gr2}; the key points of the
arguments are gathered in Table~\ref{table_Gr2_proof}. First,
applying an appropriate combination of
statements~\ref{prop_point+group_sl},
\ref{prop_point+group_sp_even}, \ref{prop_two_grassm_sphericity}(b),
and~\ref{crl_k=dimV1} (see the column ``References'') to the
varieties $Z = \Gr_2(V)$ and $X = \Gr_2(V_1)$, we find a connected
reductive subgroup $L \subset K$ and an $L$-module $R$ with the
following property: $Z$~is a spherical $K$-variety if and only if
$R$ is a spherical $L$-module. After that the sphericity of the
$L$-module $R$ is verified using
Theorems~\ref{thm_spherical_modules_simple}
and~\ref{thm_spherical_modules_nonsimple}. Since the group $C$ acts
trivially on~$X$, we have $C \subset L$. Therefore to describe the
action $L : R$ it suffices to describe the actions $(L \cap K') : R$
and $C : R$. In all the cases we have $L \cap K' = M \times K_2$ for
some subgroup $M \subset K_1$. Moreover, $R = W^* \otimes U$, where
$M$ acts on~$W^*$ and $K_2$ acts on~$U$. For each of the cases, up
to a geometrical equivalence, the pair $(M, W)$ is indicated in the
third column of Table~\ref{table_Gr2_proof}. The action of $C$ on
$W$ is the same as on~$V_1$ and the action of $C$ on $U$ coincides
with the initial one.
\end{proof}

\subsection{Spherical actions on Grassmannians}
\label{subsec_grassmannians}

In this subsection we complete the description of spherical actions
on Grassmannians initiated
in~\S\S\,\ref{subsec_Gr2_simple_V},~\ref{subsec_Gr2_non-simple_V}.
The main result of this subsection is the following theorem.

\begin{theorem} \label{thm_grassmannians}
Suppose that $d \ge 6$ and $3 \le k \le d / 2$. Then the variety $X
= \Gr_k(V)$ is $K$-spherical if and only if the following conditions
hold:

\textup{(1)} up to a geometrical equivalence, the pair $(K', V)$ is
contained in Table~\textup{\ref{table_Gr_all}};

\textup{(2)} the number $k$ satisfies the conditions listed in the
fourth column of Table~\textup{\ref{table_Gr_all}};

\textup{(3)} the group~$C$ satisfies the conditions listed in the
fifth column of Table~\textup{\ref{table_Gr_all}}.
\end{theorem}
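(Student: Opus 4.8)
The plan is to reduce, via the partial order $\preccurlyeq$, to the already-classified case of $\Gr_2(V)$, and then for each surviving pair $(K',V)$ to reduce the $K$-sphericity of $\Gr_k(V)$ to a question about spherical modules. First I would observe that for $d\ge 6$ and $3\le k\le d/2$ one has $\bl\Gr_k(V)\br\succ\bl\Gr_2(V)\br$. Indeed $\Gr_k(V)=\Fl_{(k,\,d-k)}(V)$, so $(k,d-k)^\natural=(d-k,k)\preccurlyeq(d-2,2)=(2,d-2)^\natural$ (the first partial-sum inequality reads $d-k\le d-2$, the second is an equality), and the two partitions differ because $k\ge 3$; by Corollary~\ref{crl_partial_order} this gives $\bl\Gr_2(V)\br\prec\bl\Gr_k(V)\br$. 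Hence by Theorem~\ref{thm_prec_spherical}, if $\Gr_k(V)$ is $K$-spherical then so is $\Gr_2(V)$, and therefore, up to geometrical equivalence, the pair $(K',V)$ lies in Table~\ref{table_Gr2_simple_V} when $V$ is simple and in Table~\ref{table_Gr2} when $V$ is nonsimple. Also, since $(k,d-k)$ and $(d-k,k)$ are permutations of each other, Theorem~\ref{thm_sphericity_equivalence} lets us freely pass between $\Gr_k(V)$ and $\Gr_{d-k}(V)$, which is why only $k\le d/2$ has to be treated.

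For the simple candidates the analysis is short: $\Gr_k(\FF^n)$ is $\SL_n$-spherical because every flag variety of $\SL_n$ is spherical; $\Gr_k(\FF^{2n})$ is $\Sp_{2n}$-spherical by Proposition~\ref{prop_sympl_grassm}; $\Gr_k(\FF^n)$ is $\SO_n$-spherical by Proposition~\ref{prop_orth_grassm}; and $\Gr_k(\FF^8)$ is \emph{not} $\Spin_7$-spherical for $k=3,4$, because $\dim\Spin_7+\rk\Spin_7=24$ while $2\dim\Gr_k(\FF^8)=2k(8-k)\in\{30,32\}$, contradicting Proposition~\ref{prop_inequality}. This disposes of the simple case and fixes the corresponding rows of Table~\ref{table_Gr_all} together with the admissible range of $k$.

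For the nonsimple candidates ($r=2$ or $r=3$, with $(K',V)$ from Table~\ref{table_Gr2}) I would label the summands so that $V_1$ is a largest one; then $(K',V_1)$ is geometrically $(\SL_N,\FF^N)$ or $(\Sp_{2N},\FF^{2N})$. When $k\le\dim V_1$, Proposition~\ref{prop_two_grassm_sphericity}(a) shows $\Gr_k(V_1)$ is automatically $K$-spherical, so I can take the point $[W_0]\in\Gr_k(V_1)$ and the connected reductive $L\subset K_{[W_0]}$ with property~(P) supplied by Proposition~\ref{prop_point+group_sl} (if $V_1=\FF^N$), or by Proposition~\ref{prop_point+group_sp_even} / Proposition~\ref{prop_point+group_sp_odd} (together with Corollaries~\ref{crl_point+group_sp_even}, \ref{crl_point+group_sp_odd}) according to the parity of $k$ (if $V_1=\FF^{2N}$); Proposition~\ref{prop_two_grassm_sphericity}(b) then says $\Gr_k(V)$ is $K$-spherical iff $W_0^*\otimes(V_2\oplus\dots\oplus V_r)$ is a spherical $L$-module. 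Feeding in the explicit descriptions of $(L,W_0)$ and of the $L$-module structure on $V$ from those propositions, one rewrites this module as a concrete direct sum (typically of tensor pieces across several $\SL_2$-factors, twisted by $C$), and Theorems~\ref{thm_spherical_modules_simple}, \ref{thm_spherical_modules_nonsimple}, \ref{thm_spherical_modules_general} decide its sphericity; the thresholds on $k$ ($k=n$, $m\le k<n$, $k=m$, $k\le 3$, and so on) and the linear-independence conditions on the $\chi_i$ recorded in the last two columns of Table~\ref{table_Gr_all} are precisely what emerges here. The boundary case $k=\dim V_1$ is handled directly by Corollary~\ref{crl_k=dimV1}, and the case $k>\dim V_i$ for all $i$ by the flag-variety device used in the proof of Proposition~\ref{prop_gr2_necessary}: a general $k$-plane surjects onto $V_1$, giving a $K$-equivariant morphism from an open subset of $\Gr_k(V)$ onto a flag variety of $V_2\oplus\dots\oplus V_r$, which one treats with Proposition~\ref{prop_Fl(r1,r2;V)}, Corollary~\ref{crl_Fl(1,1,...,1;V)} and Theorem~\ref{thm_MWZ}. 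Finally, every case that survives these restrictions is shown to be genuinely spherical by running the same equivalences in the reverse direction, so the table is both necessary and sufficient.

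I expect the main obstacle to be this nonsimple case-by-case work, and specifically the $\Sp$-summand subcase: there the property-(P) point, and hence the $L$-module $W_0^*\otimes(V_2\oplus\dots)$, has genuinely different shapes for $k$ even versus $k$ odd (Propositions~\ref{prop_point+group_sp_even} vs.\ \ref{prop_point+group_sp_odd}), so each of the seven rows of Table~\ref{table_Gr2} splits into several subcases, and in each one checking sphericity of a multi-factor module via Theorem~\ref{thm_spherical_modules_general} and tracking how the character conditions depend on the relation between $k$ and the various $\dim V_j$ is a delicate bookkeeping exercise.
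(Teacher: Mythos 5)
Your plan is essentially the paper's own proof: the same reduction to $\Gr_2(V)$ via the partial order, the same direct treatment of the simple case (including killing $\Spin_7$ with Proposition~\ref{prop_inequality}), and the same reduction of each nonsimple case to a spherical-module question through the property-(P) points of Propositions~\ref{prop_point+group_sl}--\ref{crl_point+group_sp_odd} and Proposition~\ref{prop_two_grassm_sphericity}(b)/Corollary~\ref{crl_k=dimV1}. The one organizational difference is that the paper invokes Theorem~\ref{thm_MWZ} (restricting to the Levi $\GL(V_1)\times\cdots\times\GL(V_r)\supset K$) immediately after the $\Gr_2$ step to cut the candidates down to Table~\ref{table_Gr_all_aux}, which eliminates precisely the three-factor configurations in which no summand has dimension $\ge k$; this renders your ``$k>\dim V_i$ for all $i$'' contingency unnecessary (and note that your proposed tool there, Proposition~\ref{prop_Fl(r1,r2;V)}, requires $k\le\dim V_1$ and so would not apply as stated, though Proposition~\ref{prop_inequality} disposes of those cases anyway).
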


\begin{table}[h]

\begin{center}

\caption{} \label{table_Gr_all}

\begin{tabular}{|c|c|c|c|c|c|}
\hline

No. & $K'$ & $V$ & \begin{tabular}{c} Conditions on~$k$
\end{tabular}
& Conditions on $C$ & Note \\

\hline

1 & $\SL_n$ & $\FF^n$ & &
& $n \ge 6$ \\

\hline

2 & $\Sp_{2n}$ & $\FF^{2n}$ & & & $n \ge 3$ \\

\hline

3 & $\SO_n$ & $\FF^n$ & & & $n \ge 6$ \\

\hline

4 & $\SL_n \times \SL_m$ & $\FF^n \oplus \FF^m$ & & $\chi_1 \ne
\chi_2$ for $n = m = k$

&
\begin{tabular}{c}
$n \ge m$, \\ $n + m \ge 6$
\end{tabular}
\\

\hline

5 & $\Sp_{2n}$ & $\FF^{2n} \oplus \FF^1$ & & & $n \ge 3$ \\

\hline

6 & $\Sp_{2n} \times \SL_m$ & $\FF^{2n} \oplus \FF^m$ & $k = 3$ &
$\chi_1 \ne \chi_2$ for $m \le 3$

& $n, m \ge 2$ \\

\hline

7 & $\Sp_4 \times \SL_m$ & $\FF^4 \oplus \FF^m$ & $k \ge 4$ &
$\chi_1 \ne \chi_2$ for $k = m = 4$

& $m \ge 4$ \\

\hline

8 & $\SL_n \times \SL_m$ & $\FF^n \oplus \FF^m \oplus \FF^1$ & &
\begin{tabular}{c}
$\chi_2 - \chi_1, \chi_3 - \chi_1$ \\ lin. ind. for $k = n$;\\
\hline $\chi_2 \ne \chi_3$ for $m \le k < n$
\end{tabular} &

\begin{tabular}{c}
$n \ge m \ge 1$, \\ $n + m \ge 5$
\end{tabular}
\\

\hline

\end{tabular}

\end{center}

\end{table}

In the fourth column of Table~\ref{table_Gr_all} the empty cells
mean that $k$ may be any number such that $3 \le k \le d /2$.

\begin{proof}[Proof of Theorem~\textup{\ref{thm_grassmannians}}]
We first consider the case $r = 1$, that is, the case where $V$ is a
simple $K$-module. In this situation the center of $K$ acts
trivially on $\Gr_k(V)$, hence $\Gr_k(V)$ is $K$-spherical if and
only if it is $K'$-spherical.

If $\Gr_k(V)$ is $K'$-spherical, then by
Proposition~\ref{prop_Gr2_is_spherical} and
Theorem~\ref{thm_gr2_case_of_simple_V} the pair $(K', V)$ is
geometrically equivalent to one of the pairs in
Table~\ref{table_Gr2_simple_V}.

Since every flag variety of the group $\SL_n$ is $\SL_n$-spherical,
then so is $\Gr_k(\FF^n)$.

For $n \ge 3$ and $3 \le k \le n$ the variety $\Gr_k(\FF^{2n})$ is
$\Sp_{2n}$-spherical by Proposition~\ref{prop_sympl_grassm}.

For $n \ge 6$ and $3 \le k \le n/2$ the variety $\Gr_k(\FF^n)$ is
$\SO_n$-spherical by Proposition~\ref{prop_orth_grassm}.

For $3 \le k \le 4$ the variety $\Gr_k(\FF^8)$ is not
$\Spin_7$-spherical since inequality~(\ref{eqn_inequality_main})
does not hold in this case.

We now consider the case $r \ge 2$. By
Proposition~\ref{prop_Gr2_is_spherical} the $K$-sphericity of
$\Gr_k(V)$ implies the $K$-sphericity of $\Gr_2(V)$. Applying
Theorems~\ref{thm_gr2_case_of_nonsimple_V} and~\ref{thm_MWZ} we find
that the pair $(K', V)$ is geometrically equivalent to one of the
pairs in Table~\ref{table_Gr_all_aux}.

\begin{table}[h]

\begin{center}

\caption{} \label{table_Gr_all_aux}

\begin{tabular}{|c|c|c|c|}
\hline

No. & $K'$ & $V$ & Note \\

\hline

1 & $\SL_n \times \SL_m$ & $\FF^n \oplus \FF^m$ &
$n \ge m \ge 1$, $n + m \ge 6$ \\

\hline

2 & $\Sp_{2n} \times \SL_m$ & $\FF^{2n} \oplus \FF^m$ &
$n \ge 2$, $m \ge 1$, $2n + m \ge 6$\\

\hline

3 & $\Sp_{2n} \times \Sp_{2m}$ & $\FF^{2n} \oplus \FF^{2m}$ &
$n \ge m \ge 2$ \\

\hline

4 & $\SL_n \times \SL_m$ & $\FF^n \oplus \FF^m \oplus \FF^1$ & $n
\ge
m \ge 1$, $n + m \ge 5$ \\

\hline

5 & $\Sp_{2n} \times \SL_m$ & $\FF^{2n} \oplus \FF^m \oplus \FF^1$ &
$n \ge 2$, $m \ge 1$
\\

\hline

6 & $\Sp_{2n} \times \Sp_{2m}$ & $\FF^{2n} \oplus \FF^{2m} \oplus
\FF^1$ & $n \ge m \ge 2$ \\

\hline

\end{tabular}

\end{center}

\end{table}

\begin{table}[h!]

\begin{center}

\caption{} \label{table_Gr_all_proof}

\begin{tabular}{|c|c|c|c|}
\hline

No. & $(K_1, V_1)$ & Case & ($M, W$) \\

\hline

1 &
\begin{tabular}{c}
$(\SL_n, \FF^n)$,\\  $k = n$
\end{tabular}
& \ref{crl_k=dimV1} & $(\SL_n, \FF^n)$ \\

\hline

2 &
\begin{tabular}{c} $(\SL_n, \FF^n)$, \\ $k < n$
\end{tabular}
& \ref{prop_point+group_sl}, \ref{prop_two_grassm_sphericity}(b)
& $(\GL_k, \FF^k)$ \\

\hline

3 &
\begin{tabular}{c} $(\Sp_{2n}, \FF^{2n})$, \\ $k \le n, k = 2l$
\end{tabular}
& \ref{prop_point+group_sp_even},
\ref{prop_two_grassm_sphericity}(b) & $(\underbrace{\SL_2 {\times}
\ldots {\times} \SL_2}_l$, $\underbrace{\FF^2 {\oplus} \ldots
{\oplus} \FF^2}_l)$
\\

\hline

4 &
\begin{tabular}{c}
$(\Sp_{2n}, \FF^{2n})$, \\ $k \le n$, $k = 2l + 1$
\end{tabular}
& \ref{prop_point+group_sp_odd}, \ref{prop_two_grassm_sphericity}(b)
& $(\FF^\times {\times} \underbrace{\SL_2 {\times} \ldots {\times}
\SL_2}_l,
\FF^1 {\oplus} \underbrace{\FF^2 {\oplus} \ldots {\oplus} \FF^2}_l)$ \\

\hline

5 &
\begin{tabular}{c}
$(\Sp_{2n}, \FF^{2n})$, \\ $n < k < 2n$, \\ $2n - k = 2l$
\end{tabular}
& \ref{crl_point+group_sp_even}, \ref{prop_two_grassm_sphericity}(b)
&
\begin{tabular}{c}
$(\underbrace{\SL_2 {\times} \ldots {\times} \SL_2}_l {\times}
\Sp_{2n-4l}$, \\ $\underbrace{\FF^2 {\oplus} \ldots {\oplus}
\FF^2}_l {\oplus} \FF^{2n-4l})$
\end{tabular}
\\

\hline

6 &
\begin{tabular}{c}
($\Sp_{2n}, \FF^{2n}$), \\ $n < k < 2n$, \\ $2n - k = 2l + 1$
\end{tabular}
& \ref{crl_point+group_sp_odd}, \ref{prop_two_grassm_sphericity}(b)
&
\begin{tabular}{c}
$(\FF^\times {\times} \underbrace{\SL_2 {\times} \ldots {\times}
\SL_2}_l {\times} \Sp_{2n-4l-2}$,\\
$\FF^1 {\oplus} \underbrace{\FF^2 {\oplus} \ldots {\oplus} \FF^2}_l
{\oplus} \FF^{2n-4l-2})$
\end{tabular}
\\

\hline

7 &
\begin{tabular}{c}
$(\Sp_{2n}, \FF^{2n})$, \\ $k = 2n$
\end{tabular}
& \ref{crl_k=dimV1} & $(\Sp_{2n}, \FF^{2n})$ \\

\hline

\end{tabular}

\end{center}

\end{table}

For each case in Table~\ref{table_Gr_all_aux} we denote by $K_1$
(resp.~$K_2$) the first (resp. second) factor of~$K'$. We also put
$$
U = V_2 \oplus \ldots \oplus V_r.
$$
Up to changing the order of factors of~$K$ (along with
simultaneously interchanging the first and second summands of~$V$),
the pair $(K_1, V_1)$ fits in at least one of the cases listed in
the second column of Table~\ref{table_Gr_all_proof}. In all these
cases the subsequent reasoning is similar. At first, applying an
appropriate combination of statements
\ref{prop_point+group_sl}--\ref{prop_two_grassm_sphericity},
\ref{crl_k=dimV1} (see the third column of
Table~\ref{table_Gr_all_proof}) to $Z = \Gr_k(V)$ and $X =
\Gr_k(V_1)$, we find a connected reductive subgroup $L \subset K$
and an $L$-module $R$ with the following property: $Z$~is a
spherical $K$-variety if and only if $R$ is a spherical $L$-module.
After that the sphericity of the $L$-module $R$ is verified using
Theorems~\ref{thm_spherical_modules_simple},
\ref{thm_spherical_modules_nonsimple},
and~\ref{thm_spherical_modules_general}. Since $C$ acts trivially
on~$X$, we have $C \subset L$. Therefore to describe the action $L :
R$ it suffices to describe the actions $(L \cap K') : R$ and $C :
R$. In all the cases we have $L \cap K' = M \times K_2$ for some
subgroup $M \subset K_1$. Moreover, $R = W^* \otimes U$ where $M$
acts on~$W^*$ and $K_2$ acts on~$U$. For each of the cases, up to a
geometrical equivalence, the pair $(M, W)$ is indicated in the
fourth column of Table~\ref{table_Gr_all_proof}. The action of $C$
on~$W$ is the same as on~$V_1$ and the action of $C$ on~$U$
coincides with the initial one.
\end{proof}

\subsection{Completion of the classification}
\label{subsec_finish}

In this subsection we classify all spherical actions on $V$-flag
varieties that are not Grassmannians (see
Theorem~\ref{thm_non-grassmannians}). Thereby we complete the proof
of Theorem~\ref{thm_main_theorem}.

Let $\mathbf a = (a_1, \ldots, a_s)$ be a composition of~$d$ such
that $a_1 \le \ldots \le a_s$ and $s \ge 3$. (The latter exactly
means that $\Fl_{\mathbf a}(V)$ is not a Grassmannian.)

\begin{theorem} \label{thm_non-grassmannians}
The variety $\Fl_{\mathbf a}(V)$ is $K$-spherical if and only if the
following conditions hold:

\textup{(1)} the pair $(K', V)$, considered up to a geometrical
equivalence, and the tuple $(a_1, \ldots, a_{s-1})$ are contained in
Table~\textup{\ref{table_non-grassmannians}};

\textup{(2)} the group $C$ satisfies the conditions listed in the
fifth column of Table~\textup{\ref{table_non-grassmannians}}.
\end{theorem}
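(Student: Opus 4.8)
The plan is to establish both implications by reduction to the classifications already obtained for Grassmannians (Theorems~\ref{thm_gr2_case_of_simple_V}, \ref{thm_gr2_case_of_nonsimple_V}, \ref{thm_grassmannians}), for spherical modules (Theorems~\ref{thm_spherical_modules_simple}, \ref{thm_spherical_modules_nonsimple}, \ref{thm_spherical_modules_general}), and for Levi subgroups (Theorem~\ref{thm_MWZ}), and then, exactly as in the proofs of Theorems~\ref{thm_gr2_case_of_nonsimple_V} and~\ref{thm_grassmannians}, to collect the remaining verifications in an auxiliary table in the format of Tables~\ref{table_Gr2_proof} and~\ref{table_Gr_all_proof}.

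\emph{Necessity and a first restriction on $(K',V)$.} Assume $\Fl_{\mathbf a}(V)$ is $K$-spherical. Since $s\ge3$, the partition $\mathbf a^\natural$ has at least three parts, so $\bl\Fl_{\mathbf a}(V)\br\ne\bl\PP(V)\br$; hence by Proposition~\ref{prop_P(V)_is_spherical} the variety $\PP(V)$ is $K$-spherical, and, when $d\ge4$, by Proposition~\ref{prop_Gr2_is_spherical} so is $\Gr_2(V)$ (the case $d=3$, where necessarily $\mathbf a=(1,1,1)$, is settled directly by Corollary~\ref{crl_Fl(1,1,...,1;V)} and Theorem~\ref{thm_spherical_modules_general}). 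By Theorems~\ref{thm_gr2_case_of_simple_V} and~\ref{thm_gr2_case_of_nonsimple_V} this confines the pair $(K',V)$, together with the number $r\le3$ of simple summands in the decomposition $V=V_1\oplus\dots\oplus V_r$, to a short list. Next, each projection $\Fl_{\mathbf a}(V)\to\Fl_{\mathbf b}(V)$ onto a coarsening of $\mathbf a$ is $\GL(V)$-equivariant and surjective, so every such $\Fl_{\mathbf b}(V)$ --- in particular the Grassmannians $\Gr_{a_1+\dots+a_i}(V)$, $i=1,\dots,s-1$, and the intermediate two-step flags --- is $K$-spherical as well. Combining Theorem~\ref{thm_grassmannians}, Theorem~\ref{thm_MWZ} (applied to the Levi subgroup $\GL(V_1)\times\dots\times\GL(V_r)$, which contains $K$), and, in the few residual borderline cases, the dimension inequality~(\ref{eqn_inequality_main}), I would rule out the pairs $(\SO_n,\FF^n)$ and $(\Spin_7,\FF^8)$ and reduce the admissible tuples $(a_1,\dots,a_{s-1})$ to precisely those in Table~\ref{table_non-grassmannians}.

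\emph{Sufficiency and the conditions on $C$.} For each pair $(K',V)$ that survives the previous step I would run through its admissible compositions. For $(\SL_n,\FF^n)$ (row~16 of Table~\ref{table_main_result}) no computation is needed: every flag variety of $\SL_n$ is $\SL_n$-spherical and the centre of $K$ acts trivially on $\Fl_{\mathbf a}(V)$, so $\Fl_{\mathbf a}(V)$ is $K$-spherical for every~$\mathbf a$. When $a_1=\dots=a_{s-1}=1$, Corollary~\ref{crl_Fl(1,1,...,1;V)} identifies the $K$-sphericity of $\Fl_{\mathbf a}(V)$ with the sphericity of the $(K\times\GL_{s-1})$-module $V\otimes\FF^{s-1}$, which Theorem~\ref{thm_spherical_modules_general} decides; this covers rows~18, 22, 23 and the all-ones subcases of rows~16 and~19 (the last giving the condition ``$\chi_1\ne\chi_2$ for $s=n+1$''). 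For the remaining compositions I would use property~(P): when $V=V_1\oplus V_2$ with $\dim V_1\ge a_1+\dots+a_{s-1}$, the morphism $\Fl_{\mathbf a}(V)\to\Gr_{a_1+\dots+a_{s-1}}(V_1)$ together with Proposition~\ref{prop_Fl(r1,r2;V)} --- and, as needed, Corollary~\ref{crl_k=dimV1}, an iteration of Proposition~\ref{prop_Fl(r1,r2;V)} when $s\ge4$, and the explicit point-and-subgroup data of Propositions~\ref{prop_point+group_sl}, \ref{prop_point+group_sp_even}, \ref{prop_point+group_sp_odd}, Corollaries~\ref{crl_point+group_sp_even}, \ref{crl_point+group_sp_odd}, or else Theorem~\ref{thm_existence} with Proposition~\ref{prop_sufficient_(P)} --- produces a connected reductive $L\subset K$ and an $L$-module $R$ with $\Fl_{\mathbf a}(V)$ being $K$-spherical if and only if $R$ is a spherical $L$-module; when $V$ is a simple $K$-module (row~17, and the all-ones case row~18) one instead uses the projection $\Fl_{\mathbf a}(V)\to\Gr_{a_1+\dots+a_{s-1}}(V)$ and property~(P) directly via Definition~\ref{dfn_P-property}, the fiber being a projective space or a smaller Grassmannian on which $L$ acts as read off from Propositions~\ref{prop_point+group_sp_even} and~\ref{prop_point+group_sp_odd}. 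In every instance Theorems~\ref{thm_spherical_modules_simple}, \ref{thm_spherical_modules_nonsimple}, \ref{thm_spherical_modules_general} then settle the module question, and the conditions on $C$ are exactly the linear-independence requirements on the multisets $I(\cdot)$ that occur --- the borderline parameter values (such as $s=n+1$ in row~19, $n=1+a_2$ in row~20, or $n=a_1=a_2=2$ in row~21) being those at which two of the relevant characters of $C$ would otherwise agree.

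\emph{Expected main obstacle.} The necessity direction is likely to be the hardest part: there is no single clean criterion forcing $(K',V)$ and $\mathbf a$ into Table~\ref{table_non-grassmannians}, so one must juggle the projection-to-Grassmannian and projection-to-two-step-flag reductions, Theorem~\ref{thm_MWZ}, and the bound~(\ref{eqn_inequality_main}), and then check by hand the finitely many borderline ranks and compositions where none of these alone is decisive --- in particular the elimination of $\SO_n$ acting on non-Grassmannian flags. On the sufficiency side the delicate cases are the parametric rows with unbounded~$s$, namely rows~16 and~19: row~16 is immediate, but for row~19 one must make sure that the reduction via Corollary~\ref{crl_Fl(1,1,...,1;V)} for the all-ones compositions and via Proposition~\ref{prop_Fl(r1,r2;V)} for the rest genuinely terminates in a module appearing in Tables~\ref{table_spherical_modules}--\ref{table_spherical_modules_2}, while correctly tracking the dependence of $L$ and $R$ on $(a_1,\dots,a_{s-1})$.
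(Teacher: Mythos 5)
Your sufficiency outline (property~(P) reductions via Proposition~\ref{prop_Fl(r1,r2;V)}, Corollary~\ref{crl_Fl(1,1,...,1;V)} for the all-ones compositions, and the spherical-module tables) matches the paper's method for the cases that actually occur. The problem is the necessity direction, where your proposal has a genuine gap. You propose to confine $(K',V)$ and $\mathbf a$ using (i) sphericity of $\PP(V)$ and $\Gr_2(V)$, (ii) sphericity of the coarsenings of $\mathbf a$ (in particular the Grassmannians $\Gr_{a_1+\dots+a_i}(V)$), (iii) Theorem~\ref{thm_MWZ}, and (iv) the inequality~(\ref{eqn_inequality_main}). These four tools do not suffice. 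Concretely, take $K=\SO_n$ on $V=\FF^n$ and $\mathbf a=(1,1,n-2)$ with $n\ge 6$: every Grassmannian $\Gr_k(\FF^n)$ is $\SO_n$-spherical (Proposition~\ref{prop_orth_grassm}), the ambient Levi is all of $\GL_n$ so Theorem~\ref{thm_MWZ} gives nothing, and $\dim\SO_n+\rk\SO_n=\binom{n}{2}+\lfloor n/2\rfloor\ge 4n-6=2\dim\Fl(1,1;\FF^n)$ for $n\ge 6$, so (\ref{eqn_inequality_main}) is satisfied as well. Yet $\Fl(1,1;\FF^n)$ is \emph{not} $\SO_n$-spherical, because by Corollary~\ref{crl_Fl(1,1,...,1;V)} that would force $\FF^n\otimes\FF^2$ to be a spherical $(\SO_n\times\GL_2)$-module, which it is not. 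So your stated strategy for ``the elimination of $\SO_n$ acting on non-Grassmannian flags'' fails outright, and similar infinite families (e.g.\ $\Sp_{2n}\times\SL_m$ on $\Fl(1,a_2;V)$) are not pinned down by your tools either.

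The missing idea is the paper's pivot: for any composition with $s\ge 3$ parts one has $\mathbf a^\natural\preccurlyeq(d-2,1,1)$, hence $\bl\Fl_{\mathbf a}(V)\br\succcurlyeq\bl\Fl(1,1;V)\br$ in the \emph{nil-equivalence} order (not the coarsening order --- $\Fl(1,1;V)$ is generally not a coarsening of $\Fl_{\mathbf a}(V)$), so Theorem~\ref{thm_prec_spherical} forces $\Fl(1,1;V)$ to be $K$-spherical. By Corollary~\ref{crl_Fl(1,1,...,1;V)} this is equivalent to $V\otimes\FF^2$ being a spherical $(K\times\GL_2)$-module, which via Theorems~\ref{thm_spherical_modules_simple} and~\ref{thm_spherical_modules_nonsimple} cuts the candidates down to the six rows of Table~\ref{table_Fl(1,2;V)} in one stroke, eliminating $\SO_n$, $\Spin_7$, and all three-factor pairs. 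The remainder of the proof is then a case-by-case climb up the $\preccurlyeq$-order within those six rows, for which one must also prove the targeted non-sphericity statements $\Fl(2,2;V)$ for $\Sp_{2n}$ and $\Fl(1,2;V)$ for $\Sp_{2n}\times\GL_m$ (Propositions~\ref{prop_Fl(2,2;V)} and~\ref{prop_Fl(1,2;V)}); your proposal does not identify that such auxiliary non-sphericity results are needed.
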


\begin{table}[h]

\begin{center}

\caption{} \label{table_non-grassmannians}

\begin{tabular}{|c|c|c|c|c|c|}
\hline

No. & $K'$ & $V$ & $(a_1, \ldots, a_{s-1})$ & Conditions on $C$
& Note \\

\hline

1 & $\SL_n$ & $\FF^n$ & $(a_1, \ldots, a_{s-1})$ & & $n \ge 3$ \\

\hline

2 & $\Sp_{2n}$ & $\FF^{2n}$ & $(1, a_2)$ & & $n \ge 2$ \\

\hline

3 & $\Sp_{2n}$ & $\FF^{2n}$ & $(1,1,1)$ & & $n \ge 2$ \\

\hline

4 & $\SL_n$ & $\FF^n \oplus \FF^1$ & $(a_1, \ldots, a_{s-1})$ &
$\chi_1 \ne \chi_2$ for $s = n+1$ & $n \ge 2$ \\

\hline

5 & $\SL_n \times \SL_m$ & $\FF^n \oplus \FF^m$ & $(1, a_2)$ &
$\chi_1 \ne \chi_2$ for $n = 1 + a_2$ & $n \ge m \ge 2$ \\

\hline

6 & $\SL_n \times \SL_2$ & $\FF^n \oplus \FF^2$ & $(a_1, a_2)$
&
\begin{tabular}{c}
$\chi_1 \ne \chi_2$ for \\
$n = 4$ and $a_1 = a_2 = 2$
\end{tabular}
&
\begin{tabular}{c}
$n \ge 4$,\\ $a_1 \ge 2$
\end{tabular}
\\

\hline

7 & $\Sp_{2n} \times \SL_m$ & $\FF^{2n} \oplus \FF^m$ & $(1,1)$ &
$\chi_1 \ne \chi_2$ for $m \le 2$ &
\begin{tabular}{c}
$n \ge 2$, \\ $m \ge 1$
\end{tabular}
\\

\hline

8 & $\Sp_{2n} \times \Sp_{2m}$ & $\FF^{2n} \oplus \FF^{2m}$ &
$(1,1)$ & $\chi_1 \ne \chi_2$ & $n \ge m \ge 2$\\

\hline

\end{tabular}

\end{center}

\end{table}

In the proof of Theorem~\ref{thm_non-grassmannians} we shall need
several auxiliary results.

\begin{proposition} \label{prop_Fl(1,k;V)}
Suppose that $n \ge 3$, $V = \FF^{2n}$, $K = \Sp_{2n}$, and $2 \le k
\le n-1$. Then the variety $\Fl(1,k; V)$ is $K$-spherical.
\end{proposition}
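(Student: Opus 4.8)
The plan is to reduce the statement to the known classification of spherical modules via the dictionary between flag varieties and multiple-cone representations developed in \S\,\ref{sect_tools}. First I would use the fact that $\Fl(1,k;V)$ fibers $\Sp_{2n}$-equivariantly over the Grassmannian $\Gr_k(V)$ (sending a flag $(L,W,V)$ to $W$), with fiber over $[W]$ equal to $\PP(W)\simeq\Gr_1(W)$. By Proposition~\ref{prop_sympl_grassm}, $\Gr_k(V)$ is $K$-spherical for $2\le k\le n-1$ (indeed for all $1\le k\le 2n-1$), so I can invoke Theorem~\ref{thm_existence} to fix a point $[W]\in\Gr_k(V)$ and a connected reductive subgroup $L\subset K_{[W]}$ having property~(P). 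In fact, since $k$ is in the range covered by Propositions~\ref{prop_point+group_sp_even} and~\ref{prop_point+group_sp_odd}, I would take exactly the explicit $[W]$ and $L$ produced there (the open $K$-orbit assumption forces $\Omega|_W$ to be nondegenerate when $k=2l$, or of rank $2l$ when $k=2l+1$). Property~(P) then says that $\Fl(1,k;V)$ is $K$-spherical if and only if the fiber $\PP(W)$ is $L$-spherical, i.e.\ iff $W$ is a spherical $(L\times\FF^\times)$-module, where $\FF^\times$ acts by scalars.

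Next I would unwind the $L$-module structure on $W$ using the explicit decompositions in Propositions~\ref{prop_point+group_sp_even} and~\ref{prop_point+group_sp_odd}. In the even case $k=2l$ we have $L=L_1\times\cdots\times L_l\times L_{l+1}$ with $L_i\simeq\SL_2$ acting on $W=W_1\oplus\cdots\oplus W_l$ so that $L_i$ acts on $W_i\simeq\FF^2$ as the standard module and trivially on the other summands; $L_{l+1}$ acts trivially on all of $W$. In the odd case $k=2l+1$ there is additionally a one-dimensional torus factor $L_0$ acting with a nontrivial weight on $W_0\simeq\FF^1$. In either case the pair $(L\times\FF^\times,\,W)$, where $\FF^\times$ scales, is geometrically equivalent to a direct sum of copies of $(\SL_2,\FF^2)$ (possibly with a one-dimensional summand $\FF^1$), twisted by characters of a central torus.

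Finally I would apply the classification of spherical modules from \S\,\ref{subsec_spherical_modules}. The module $\FF^2\oplus\cdots\oplus\FF^2$ (with $l$ copies), or $\FF^1\oplus\FF^2\oplus\cdots\oplus\FF^2$, as a module over $\SL_2^l$ times the appropriate central torus that separates the summands, is spherical: each $\FF^2$ is a spherical $\SL_2$-module (row~1 of Table~\ref{table_spherical_modules}), the summands are strictly indecomposable, and the center $C$ acts through the scalar $\FF^\times$ together with whatever torus sits in $L$, which has enough characters to make all the $|I(W_i)|$-many characters in $I(W_1)\cup\cdots$ linearly independent; here one checks the relevant multisets $I(\FF^2)=\varnothing$ for the $\SL_2$-factors and $I(\FF^1)=\{\chi\}$ for the torus factor, so no linear-independence obstruction arises. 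By Theorem~\ref{thm_spherical_modules_general} the module is spherical, hence $\PP(W)$ is $L$-spherical, and property~(P) gives that $\Fl(1,k;V)$ is $K$-spherical.

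The main obstacle is the bookkeeping of the central torus actions on the summands of $W$: one must make sure that the $\FF^\times$ scaling the fiber $\PP(W)=\PP(W)$ together with the torus factor in $L$ (present only in the odd case) genuinely separates the $\FF^2$-summands so that Theorem~\ref{thm_spherical_modules_general} applies. This is routine because the summands $W_i$ are permuted trivially and the scalar $\FF^\times$ acts identically on all of them, so the only character that could be forced to coincide is handled by $\FF^\times$ itself; but it does require writing out $\mathfrak X(C)$ for the relevant $L\times\FF^\times$ carefully. An alternative, slicker route avoiding even this is to observe that $\Fl(1,k;V)$ dominates $\Gr_k(V)$ with fiber $\PP(W)$ and $\Gr_k(V)$ is already $\Sp_{2n}$-spherical; since the stabilizer of a generic $W$ acts on $W$ with a dense orbit of a Borel (this is exactly what Propositions~\ref{prop_point+group_sp_even}/\ref{prop_point+group_sp_odd} encode), the open $K$-orbit in $\Gr_k(V)$ lifts to an open $K$-orbit in $\Fl(1,k;V)$ for a Borel of $K$, giving sphericity directly via Corollary~\ref{crl_open_orbits}.
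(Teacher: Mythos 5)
Your argument is correct in substance but takes a genuinely different route from the paper. The paper projects $\Fl(1,k;V)$ onto $\PP(V)$ (the line of the flag), applies Proposition~\ref{prop_point+group_sp_odd} only in its simplest instance ($\Gr_1$), obtaining $L\simeq\FF^\times\times\Sp_{2n-2}$ with $V/W\simeq\FF^1\oplus\FF^{2n-2}$, and then reduces via property~(P) to the $L$-sphericity of the fiber $\Gr_k(V/W)$ --- which is already known from Theorems~\ref{thm_gr2_case_of_nonsimple_V} and~\ref{thm_grassmannians}. You instead project onto the Grassmannian of the larger subspace, invoke the full strength of Propositions~\ref{prop_point+group_sp_even}/\ref{prop_point+group_sp_odd}, and reduce to the sphericity of $\PP(W)$ for the explicit generic stabilizer, i.e.\ to an elementary spherical-module check for $(\FF^2)^{\oplus l}$ or $\FF^1\oplus(\FF^2)^{\oplus l}$ over $\SL_2^l$ times a torus; your verification of this via Theorem~\ref{thm_spherical_modules_general} (all multisets $I(W_i)$ empty except possibly one nonzero character) is correct. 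What your route buys is independence from the Grassmannian classification of \S\,\ref{subsec_grassmannians}; what it costs is needing the general-$k$ appendix propositions rather than just the $\Gr_1$ case. One indexing slip to fix: in the paper's convention $\Fl(1,k;V)$ parametrizes flags $V_1\subset V_2\subset V$ with $\dim V_1=1$ and $\dim V_2=k+1$, so your fibration is over $\Gr_{k+1}(V)$, not $\Gr_k(V)$, with fiber $\PP(W)$ for $\dim W=k+1$; since $3\le k+1\le n$ stays within the range of Propositions~\ref{prop_point+group_sp_even}/\ref{prop_point+group_sp_odd}, the argument is unaffected once the indices are shifted, but as written your proof covers $\Fl(1,j;V)$ for $1\le j\le n-2$ and misses $j=n-1$.
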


\begin{proof}
Put $Z = \Fl(1,k; V)$, $X = \PP(V)$ and consider the natural
$K$-equivariant morphism $\varphi \colon Z \to X$. Using
Proposition~\ref{prop_point+group_sp_odd} and then
Definition~\ref{dfn_P-property}, we find that there are a point $[W]
\in X$ and a connected reductive subgroup $L \subset K_{[W]}$ with
the following properties:

(1) the pair $(L, V/W)$ is geometrically equivalent to the pair
$(\FF^\times \times \Sp_{2n-2}, \FF^1 \oplus \FF^{2n-2})$;

(2) the $K$-sphericity of $Z$ is equivalent to the $L$-sphericity of
$\varphi^{-1}([W]) \simeq \Gr_k(V/W)$.

By Theorems~\ref{thm_gr2_case_of_nonsimple_V}
and~\ref{thm_grassmannians} the variety $\Gr_k(V/W)$ is
$L$-spherical, which completes the proof.
\end{proof}

\begin{proposition} \label{prop_Fl(2,2;V)}
Suppose that $n \ge 3$, $V = \FF^{2n}$, and $K = \Sp_{2n}$. Then the
variety $\Fl(2,2; V)$ is not $K$-spherical.
\end{proposition}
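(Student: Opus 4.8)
The plan is to realize $\Fl(2,2;V)$ as a homogeneous bundle over the Grassmannian $\Gr_2(V)$, use property~(P) to pass to a fibre, and then apply Theorem~\ref{thm_gr2_case_of_nonsimple_V} to show that this fibre is not spherical under the induced action.

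First I would introduce the natural surjective $K$-equivariant morphism $\varphi\colon\Fl(2,2;V)\to\Gr_2(V)$ sending a flag $(V_1,V_2)$ to $[V_1]$; its fibre over a point $[W]$ is the set of $4$-dimensional subspaces containing $W$, hence is isomorphic to the irreducible variety $\Gr_2(V/W)$, while $\Fl(2,2;V)$ itself is irreducible, being homogeneous under $\GL(V)$. Since $n\ge 3$, the variety $\Gr_2(V)$ is $K$-spherical by Proposition~\ref{prop_sympl_grassm}, so I may apply Proposition~\ref{prop_point+group_sp_even} with $k=1$ to obtain a point $[W]\in\Gr_2(V)$ and a connected reductive subgroup $L=L_1\times L_2\subset K_{[W]}$, with $L_1\simeq\SL_2$ and $L_2\simeq\Sp_{2n-4}$, having property~(P), together with decompositions $W=W_1$ and $V=V_1\oplus V_2$ into $L$-modules such that $W_1\subset V_1$, $\dim V_1=4$, $\dim V_2=2n-4$, the factor $L_2$ acts trivially on $V_1$, the factor $L_1$ acts trivially on $V_2$, and (up to geometrical equivalence) $(L_1,W_1)=(\SL_2,\FF^2)$, $(L_1,V_1)=(\SL_2,\FF^2\oplus\FF^2)$ with $\SL_2$ acting diagonally, and $(L_2,V_2)=(\Sp_{2n-4},\FF^{2n-4})$.

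By property~(P), the variety $\Fl(2,2;V)$ is $K$-spherical if and only if $\varphi^{-1}([W])\simeq\Gr_2(V/W)$ is $L$-spherical. The inclusion $W_1\subset V_1$ gives $V/W\simeq(V_1/W_1)\oplus V_2$ as $L$-modules; writing $V_1\simeq U\otimes\FF^2$ with $L_1$ acting on $U\simeq\FF^2$ and trivially on $\FF^2$, the $L_1$-submodule $W_1\simeq\FF^2$ must have the form $U\otimes\ell$ for a line $\ell\subset\FF^2$, so that $V_1/W_1\simeq U\simeq\FF^2$. Hence, up to geometrical equivalence, $(L,V/W)=(\SL_2\times\Sp_{2n-4},\ \FF^2\oplus\FF^{2n-4})$, each factor acting standardly on its own summand. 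Finally I would apply Theorem~\ref{thm_gr2_case_of_nonsimple_V} to $\Gr_2(V/W)$: here $\dim(V/W)=2n-2\ge 4$, the module $V/W$ is nonsimple, and $L$ is semisimple, so $\mathfrak X(C)=0$; for $n=3$ the pair $(\SL_2\times\SL_2,\FF^2\oplus\FF^2)$ appears in Table~\ref{table_Gr2} only as row~1 (with $n=m=2$), and for $n\ge 4$ the pair $(\SL_2\times\Sp_{2n-4},\FF^2\oplus\FF^{2n-4})$ appears only as row~2 (with $m=2$), and in both cases the listed condition ``$\chi_1\ne\chi_2$'' on $C$ fails because $\mathfrak X(C)=0$. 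Therefore $\Gr_2(V/W)$ is not $L$-spherical and $\Fl(2,2;V)$ is not $K$-spherical. The only slightly delicate steps are reading off the $L$-module structure of the fibre $V/W$ from the data supplied by Proposition~\ref{prop_point+group_sp_even} and matching it against the correct row of Table~\ref{table_Gr2} (which row depending on whether $2n-4$ equals $2$ or is at least $4$); I do not anticipate any real obstacle, since all the needed structural results are already in place.
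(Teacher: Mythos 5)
Your proposal is correct and follows essentially the same route as the paper: the paper also fibres $\Fl(2,2;V)$ over $\Gr_2(V)$, invokes Proposition~\ref{prop_point+group_sp_even} together with property~(P) to reduce to the fibre $\Gr_2(V/W)$ with $(L,V/W)$ geometrically equivalent to $(\SL_2\times\Sp_{2n-4},\FF^2\oplus\FF^{2n-4})$, and concludes via Theorem~\ref{thm_gr2_case_of_nonsimple_V}. Your write-up merely spells out in more detail the identification of the $L$-module structure on $V/W$ and the table look-up (rows~1 and~2 of Table~\ref{table_Gr2}, where the condition $\chi_1\ne\chi_2$ fails since $L$ is semisimple), which the paper leaves implicit.
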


\begin{proof}
Put $Z = \Fl(2,2; V)$, $X = \Gr_2(V)$ and consider the natural
$K$-equivariant morphism $\varphi \colon Z \to X$. Applying
Proposition~\ref{prop_point+group_sp_even} and taking into account
Definition~\ref{dfn_P-property}, we find that there are a point $[W]
\in X$ and a connected reductive subgroup $L \subset K_{[W]}$ with
the following properties:

(1) the pair $(L, V/W)$ is geometrically equivalent to the pair
$$
(\SL_2 \times \Sp_{2n-4}, \FF^2 \oplus \FF^{2n-4});
$$

(2) the $K$-sphericity of $Z$ is equivalent to the $L$-sphericity of
$\varphi^{-1}([W]) \simeq \Gr_2(V/W)$.

By Theorem~\ref{thm_gr2_case_of_nonsimple_V} the variety
$\Gr_2(V/W)$ is not $L$-spherical, which completes the proof.
\end{proof}

\begin{proposition} \label{prop_Fl(1,2;V)}
Suppose that $n \ge 2$, $m \ge 1$, $V_1 = \FF^{2n}$, $V_2 = \FF^m$,
$V = V_1 \oplus V_2$, $K_1 = \Sp_{2n}$, $K_2 = \GL_m$, and $K = K_1
\times K_2$. Then the variety $\Fl(1,2; V)$ is not $K$-spherical.
\end{proposition}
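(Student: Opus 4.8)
The plan is to chain together the two main reduction tools of \S\ref{sect_tools}: first reduce the $K$-sphericity of $\Fl(1,2;V)$ to a question about the three-dimensional symplectic Grassmannian of $V_1=\FF^{2n}$ via Proposition~\ref{prop_Fl(r1,r2;V)}, and then perform a further reduction over a projective plane by means of Proposition~\ref{prop_sufficient_(P)} and Definition~\ref{dfn_P-property}, ending up with a pure spherical-module question that has a negative answer. Write $Z=\Fl(1,2;V)$ and $X=\Gr_3(V_1)$. Since $n\ge 2$, Proposition~\ref{prop_sympl_grassm} gives that $X$ is $\Sp_{2n}$-spherical, hence $K$-spherical (the factor $\GL_m$ acts trivially on $X$).

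First I would apply Proposition~\ref{prop_point+group_sp_odd} with $k=1$ when $n\ge 3$, and Corollary~\ref{crl_point+group_sp_odd} (via Proposition~\ref{prop_point+group_sp_odd} with $k=0$) when $n=2$, to obtain a point $[W]\in X$ and a connected reductive subgroup $L_0\subset(\Sp_{2n})_{[W]}$ having property~(P) with respect to $\Sp_{2n}$, together with an $L_0$-module decomposition $W=W'\oplus W''$ with $\dim W'=1$, $\dim W''=2$, where $L_0=T\times S\times\Sp_{2n-6}$ with $T\simeq\FF^\times$ acting nontrivially on $W'$ and trivially on $W''$, $S\simeq\SL_2$ acting on $W''\simeq\FF^2$ as the standard module and trivially on $W'$, and the factor $\Sp_{2n-6}$ (absent when $n\le 3$) acting trivially on $W$. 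Put $L=L_0\times\GL_m\subset K_{[W]}$; since $\GL_m$ acts trivially on $X$, Proposition~\ref{prop_sufficient_(P)} shows that $[W]$ and $L$ have property~(P) with respect to $K$. Then Proposition~\ref{prop_Fl(r1,r2;V)} (with $k_1=1$, $k_2=2$) gives: $Z$ is $K$-spherical if and only if $Y:=(W^*\otimes V_2)\times\PP(W)$ is $L$-spherical for the diagonal action.

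Next I would show that $Y$ is not $L$-spherical by a second reduction over $\PP(W)\simeq\PP^2$. A Borel subgroup $B_L$ of $L$ acts on $\PP(W)$ through $T\times S\simeq\FF^\times\times\SL_2$; fixing $0\ne w'\in W'$ and a basis $e_1,e_2$ of $W''$ and taking $p=[w'+e_1+e_2]$, a direct computation shows that $B_L p$ is open in $\PP(W)$ (so $\PP(W)$ is $L$-spherical) and that $(B_L)_p^0$ is a Borel subgroup of the connected reductive subgroup $L'=\{(\beta,g_\beta,l,h):\beta\in\FF^\times,\ l\in\Sp_{2n-6},\ h\in\GL_m\}\subset L_p$, where $\beta\mapsto g_\beta:=\bigl(\begin{smallmatrix}\beta^{-1}&\beta-\beta^{-1}\\0&\beta\end{smallmatrix}\bigr)$ is a one-parameter subgroup of $S$ (one checks $g_{\beta_1}g_{\beta_2}=g_{\beta_1\beta_2}$); thus $L'\simeq\FF^\times\times\Sp_{2n-6}\times\GL_m$. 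By Proposition~\ref{prop_sufficient_(P)}, $p$ and $L'$ have property~(P) with respect to $L$, and applying Definition~\ref{dfn_P-property} to the projection $Y\to\PP(W)$ (surjective, $L$-equivariant, with fiber $W^*\otimes V_2$ over $p$) I conclude that $Y$ is $L$-spherical if and only if $W^*\otimes V_2$ is a spherical $L'$-module.

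Finally, the parameter-$\beta$ element of $L'$ acts on $W$ with $W'$ of weight $\beta$, and on $W''=\langle e_1+e_2\rangle\oplus\langle e_1\rangle$ with weights $\beta$ and $\beta^{-1}$ respectively, while $\Sp_{2n-6}$ and $\GL_m$ act trivially on $W$ and $\GL_m$ acts on $V_2=\FF^m$ as the standard module; hence $W^*\otimes V_2\simeq\FF^m\oplus\FF^m\oplus\FF^m$ as a $\GL_m$-module, the $\FF^\times$-weights being $-1,-1,+1$ and $\Sp_{2n-6}$ acting trivially. This $L'$-module is not spherical: for $m\ge 2$ it is strictly indecomposable (its derived subgroup $\SL_m$ is simple), nonsimple, and splits into three simple summands, contradicting Theorem~\ref{thm_spherical_modules_nonsimple}; for $m=1$ it splits into three simple summands whose $C$-characters cannot be linearly independent in the rank-$2$ character group of the central torus of $L'$, contradicting Theorem~\ref{thm_spherical_modules_general}. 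Therefore $Y$ is not $L$-spherical, and consequently $\Fl(1,2;V)$ is not $K$-spherical. The delicate part of the argument is the second reduction: one must verify carefully that $\PP(W)$ is $L$-spherical, that $\beta\mapsto g_\beta$ is a group homomorphism, and that $(B_L)_p^0$ is exactly a Borel subgroup of $L'$ — the module bookkeeping afterwards is routine.
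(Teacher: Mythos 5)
Your proof is correct, and its first half coincides with the paper's: both reduce the $K$-sphericity of $\Fl(1,2;V)$ via Proposition~\ref{prop_point+group_sp_odd} (or Corollary~\ref{crl_point+group_sp_odd} for $n=2$) and Proposition~\ref{prop_Fl(r1,r2;V)} to the $L$-sphericity of $Y=(W^*\otimes V_2)\times\PP(W)$ with $(L\cap K_1,W)$ geometrically equivalent to $(\FF^\times\times\SL_2,\FF^1\oplus\FF^2)$. You diverge in how you dispose of the factor $\PP(W)$. The paper uses the affine-cone trick: the $L$-sphericity of $M\times\PP(W)$ is equivalent to the sphericity of the $(L\times\FF^\times)$-module $M\oplus W$ with the extra $\FF^\times$ acting on $W$ by scalars, and then Theorem~\ref{thm_spherical_modules_nonsimple} is applied to $(W^*\otimes V_2)\oplus W$. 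You instead run a second property-(P) reduction over $\PP(W)$ itself: you exhibit a point $p=[w'+e_1+e_2]$ with open Borel orbit, compute its stabilizer explicitly (your one-parameter subgroup $\beta\mapsto(\beta,g_\beta)$ is indeed a homomorphism and exhausts $(B_L)_p^0$, which is a Borel of $L'\simeq\FF^\times\times\Sp_{2n-6}\times\GL_m$), and land on the smaller module $W^*\otimes V_2\simeq\FF^m\oplus\FF^m\oplus\FF^m$ with diagonal $\GL_m$ and $\FF^\times$-weights $-1,-1,+1$, which is visibly non-spherical (three simple summands in one strictly indecomposable piece for $m\ge2$; three characters that cannot be independent, indeed two of them coincide, for $m=1$). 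Your route costs an explicit stabilizer computation that the paper's one-line reduction avoids, but it buys a cleaner terminal module; for $m\ge 2$ one could shorten your last step further by noting that the two copies of $\FF^m$ with equal central character already violate row~1 of Table~\ref{table_spherical_modules_2}, since a $K$-stable submodule of a spherical module is spherical. The only cosmetic slip is the parenthetical ``its derived subgroup $\SL_m$ is simple'': the derived subgroup of $L'$ is $\SL_m\times\Sp_{2n-6}$, and simplicity is not the reason for strict indecomposability — the relevant point is that the image of $(L')'$ in $\GL(W^*\otimes V_2)$ is a diagonally embedded $\SL_m$, which does not split as a product acting on complementary summands.
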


\begin{proof}
Put $Z = \Fl(1,2; V)$ and $X = \Gr_3(V_1)$. Applying
Proposition~\ref{prop_point+group_sp_odd} (for $n \ge 3$) or
Corollary~\ref{crl_point+group_sp_odd} (for $n = 2$) to~$X$ and then
Proposition~\ref{prop_Fl(r1,r2;V)} to $Z$ and~$X$, we find that
there are a point $[W] \in X$ and a connected reductive subgroup $L
\subset K_{[W]}$ with the following properties:

(1) $L = L_1 \times K_2$, where $L_1 \subset K_1$;

(2) $L_1 \simeq \FF^\times \times \SL_2$ for $2 \le n \le 3$ and
$L_1 \simeq \FF^\times \times \SL_2 \times \Sp_{2n-6}$ for $n \ge
4$;

(3) the pair $(L_1, W)$ is geometrically equivalent to the pair
$(\FF^\times \times \SL_2, \FF^1 \oplus \FF^2)$;

(4) the $K$-sphericity of $Z$ is equivalent to the $L$-sphericity of
$(W^* \otimes V_2) \times \PP(W)$, where $L_1$ acts diagonally on
$W^*$ and~$\PP(W)$, and $K_2$ acts on~$V_2$.

It is easy to see that the $L$-sphericity of $(W^* \otimes V_2)
\times \PP(W)$ is equivalent to the sphericity of the $(L_1 \times
K_2 \times \FF^\times)$-module $(W^* \otimes V_2) \oplus W$, where
$L_1$ acts diagonally on $W^*$ and~$W$, $K_2$ acts on~$V_2$, and
$\FF^\times$ acts on the summand~$W$ by scalar transformations.
Applying Theorem~\ref{thm_spherical_modules_nonsimple} we find that
the indicated module is not spherical, which completes the proof.
\end{proof}

\begin{proof}[Proof of Theorem~\textup{\ref{thm_non-grassmannians}}]
Throughout this proof, the description of the partial order
$\preccurlyeq$ on the set $\mathscr F(\GL(V)) / \! \sim$ (see
Corollary~\ref{crl_partial_order} and
\S\,\ref{subsec_Young_diagrams}) as well as
Theorem~\ref{thm_prec_spherical} will be use without extra
explanation.

Since $\Fl_{\mathbf a}(V)$ is not a Grassmannian, we have
$$
\bl \Fl_{\mathbf a}(V) \br \succcurlyeq \bl \Fl(1,1; V) \br.
$$
Therefore the $K$-sphericity of $\Fl_{\mathbf a}(V)$ implies the
$K$-sphericity of $\Fl(1,1; V)$. The following proposition provides
a complete classification of pairs $(K, V)$ for which $K$ acts
spherically on $\Fl(1,1; V)$.

\begin{proposition} \label{prop_Fl(1,1;V)}
For $d \ge 3$, the variety $\Fl(1,1; V)$ is $K$-spherical if and
only if the following conditions hold:

\textup{(1)} up to a geometrical equivalence, the pair $(K', V)$ is
contained in Table~\textup{\ref{table_Fl(1,2;V)}};

\textup{(2)} the group $C$ satisfies the conditions listed in the
fourth column of Table~\textup{\ref{table_Fl(1,2;V)}}.
\end{proposition}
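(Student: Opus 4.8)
The plan is to reduce the statement to the known classification of spherical modules. First I would invoke Corollary~\ref{crl_Fl(1,1,...,1;V)} with $m = 2$: since $\Fl(1,1; V) = \Fl_{(1,1,d-2)}(V)$, it gives that $\Fl(1,1; V)$ is $K$-spherical if and only if $V \otimes \FF^2$ is a spherical $(K \times \GL_2)$-module, so the proposition becomes a reformulation of Theorems~\ref{thm_spherical_modules_simple}--\ref{thm_spherical_modules_general} for the module $V \otimes \FF^2$. I set $\widetilde K = K \times \GL_2$, let $Z \subset \GL_2$ be the subgroup of scalar matrices, and write $\varepsilon$ for the character by which $Z$ acts on $\FF^2$; then $\widetilde K' = K' \times \SL_2$, the connected centre of $\widetilde K$ is $\widetilde C = C \times Z$, and with respect to the decomposition $V = V_1 \oplus \ldots \oplus V_r$ one has $V \otimes \FF^2 = \bigoplus_i (V_i \otimes \FF^2)$ with $\widetilde C$ acting on $V_i \otimes \FF^2$ through the character $(\chi_i, \varepsilon)$.

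Next I would pin down the possible pairs $(K', V)$. Because the projection $V \otimes \FF^2 \to V_i \otimes \FF^2$ is a surjective $\widetilde K$-equivariant linear map, sphericity of $V \otimes \FF^2$ forces each (simple) $\widetilde K$-module $V_i \otimes \FF^2$ to be spherical; scanning Table~\ref{table_spherical_modules} for modules of the form $U \otimes \FF^2$ shows that $(K', V_i)$ must be geometrically equivalent to $(\SL_{n_i}, \FF^{n_i})$ with $n_i \ge 1$ or $(\Sp_{2n_i}, \FF^{2n_i})$ with $n_i \ge 2$. Then, since $\bl \Fl(1,1; V) \br \succcurlyeq \bl \Gr_2(V) \br$ by Corollary~\ref{crl_partial_order}, $K$-sphericity of $\Fl(1,1; V)$ implies that of $\Gr_2(V)$, so Proposition~\ref{prop_no_diagonal_action} tells me that no simple normal subgroup of $K'$ acts nontrivially on two of the $V_i$. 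Finally, applying Theorem~\ref{thm_MWZ} to the Levi subgroup $\GL(V_1) \times \ldots \times \GL(V_r)$ of $\GL(V)$ and to the composition $(1,1,d-2)$ (which matches only rows of Table~\ref{table_MWZ} with $\mathbf d$ having at most two parts) I get $r \le 2$. Combining these facts leaves, up to geometrical equivalence, only the pairs $(\SL_n, \FF^n)$ and $(\Sp_{2n}, \FF^{2n})$ for $r = 1$, and $(\SL_n \times \SL_m, \FF^n \oplus \FF^m)$, $(\Sp_{2n} \times \SL_m, \FF^{2n} \oplus \FF^m)$, $(\Sp_{2n} \times \Sp_{2m}, \FF^{2n} \oplus \FF^{2m})$ for $r = 2$ (one-dimensional summands allowed, i.e.\ $\SL_1$ read as trivial).

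For each of these pairs I would then decide sphericity of $V \otimes \FF^2$ and extract the conditions on $C$. Since $\SL_2$ acts on every summand, $V \otimes \FF^2$ is a strictly indecomposable $\widetilde K$-module, so Theorem~\ref{thm_spherical_modules_simple} applies when $r = 1$ (identifying $V \otimes \FF^2$ with row~6 or~7 of Table~\ref{table_spherical_modules}, with the obvious small coincidences such as $(\SL_2 \times \SL_2, \FF^2 \otimes \FF^2) \simeq (\SO_4, \FF^4)$ treated separately), and Theorem~\ref{thm_spherical_modules_nonsimple} applies when $r = 2$ (identifying it with row~5, 7, 8, 9, or~10 of Table~\ref{table_spherical_modules_2}, depending on the types of $V_1, V_2$ and the number of one-dimensional summands); in every case $V \otimes \FF^2$ turns out to be spherical, with a condition on $\widetilde C = C \times Z$ read off from the relevant table. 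The last step is the translation of these conditions from $\mathfrak X(\widetilde C) = \mathfrak X(C) \oplus \mathfrak X(Z)$ to $\mathfrak X(C)$: any character of $\widetilde C$ attached to a summand has nonzero $\varepsilon$-component, so all conditions ``$\chi \ne 0$'' become vacuous, and for two summands carrying the same $Z$-weight, linear independence of $(\chi_i, \varepsilon)$ and $(\chi_j, \varepsilon)$ is equivalent to $\chi_i \ne \chi_j$ because $\mathfrak X(C)$ is torsion-free. Performing this translation case by case produces exactly the entries of Table~\ref{table_Fl(1,2;V)}. I expect the only real obstacle to be this concluding case-by-case bookkeeping — matching each surviving pair with the correct row of the spherical-module tables and propagating the central characters correctly through the identification $\widetilde C = C \times Z$ — since all the conceptual input is already available from the results cited above.
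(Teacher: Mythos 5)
Your proposal is correct and follows essentially the same route as the paper: reduce via Corollary~\ref{crl_Fl(1,1,...,1;V)} (with $m=2$) to the sphericity of the $(K\times\GL_2)$-module $V\otimes\FF^2$, and then read the answer off from the classification of spherical modules in Theorems~\ref{thm_spherical_modules_simple} and~\ref{thm_spherical_modules_nonsimple}. The extra steps you interpose to narrow the candidate pairs (Proposition~\ref{prop_no_diagonal_action}, Theorem~\ref{thm_MWZ}) are harmless but unnecessary, and your translation of the central-character conditions from $\mathfrak X(C\times Z)$ back to $\mathfrak X(C)$ is the same bookkeeping the paper leaves implicit.
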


\begin{table}[h]

\begin{center}

\caption{} \label{table_Fl(1,2;V)}

\begin{tabular}{|c|c|c|c|c|}
\hline

No. & $K'$ & $V$ & Conditions on $C$ & Note \\

\hline

1 & $\SL_n$ & $\FF^n$ & & $n \ge 3$ \\

\hline

2 & $\Sp_{2n}$ & $\FF^{2n}$ & & $n \ge 2$ \\

\hline

3 & $\SL_n$ & $\FF^n \oplus \FF^1$ & $\chi_1 \ne \chi_2$
for $n = 2$ & $n \ge 2$ \\

\hline

4 & $\SL_n \times \SL_m$ & $\FF^n \oplus \FF^m$ &
$\chi_1 \ne \chi_2$ for $n = m = 2$ & $n \ge m \ge 2$ \\

\hline

5 & $\Sp_{2n} \times \SL_m$ & $\FF^{2n} \oplus \FF^m$ &
$\chi_1 \ne \chi_2$ for $m \le 2$ & $n \ge 2$, $m \ge 1$ \\

\hline

6 & $\Sp_{2n} \times \Sp_{2m}$ & $\FF^{2n} \oplus \FF^{2m}$
& $\chi_1 \ne \chi_2$ & $n,m \ge 2$\\

\hline

\end{tabular}

\end{center}

\end{table}

\begin{proof}
By Corollary~\ref{crl_Fl(1,1,...,1;V)} the action $K : \Fl(1,1; V)$
is spherical if and only if $V \otimes \FF^2$ is a spherical $(K
\times \GL_2)$-module. Now the required result follows from
Theorems~\ref{thm_spherical_modules_simple}
and~\ref{thm_spherical_modules_nonsimple}.
\end{proof}

In view of Proposition~\ref{prop_Fl(1,1;V)}, to complete the proof
of Theorem~\ref{thm_non-grassmannians} it remains to find all
$K$-spherical $V$-flag varieties~$X$ with~$\bl X \br \succ \bl
\Fl(1,1; V) \br$ for all the cases in Table~\ref{table_Fl(1,2;V)}.
In what follows we consider each of these cases separately.

\textit{Case}~1. $K' = \SL_n, V = \FF^n$. We have $\bl X \br
\preccurlyeq \bl \Fl_{\mathbf b}(V) \br$ where $\mathbf b = (1,
\ldots, 1)$. By Corollary~\ref{crl_Fl(1,1,...,1;V)} and Theorem
~\ref{thm_spherical_modules_simple} the variety $\Fl_{\mathbf b}(V)$
is $K$-spherical, hence so is~$X$.

\textit{Case}~2. $K' = \Sp_{2n}, V = \FF^{2n}$. If $s = 3$ and $a_1
= 1$, then $X$ is $K$-spherical by Proposition~\ref{prop_Fl(1,k;V)}.
If $s = 3$ and $a_1 \ge 2$, then $\bl X \br \succcurlyeq \bl
\Fl(2,2; V) \br$, and so $X$ is not $K$-spherical by
Proposition~\ref{prop_Fl(2,2;V)}. If $s = 4$ and $a_1 = a_2 = a_3 =
1$, then $X$ is $K$-spherical by Corollary~\ref{crl_Fl(1,1,...,1;V)}
and Theorem~\ref{thm_spherical_modules_simple}. If $s = 4$ and $a_3
\ge 2$, then $\bl X \br \succcurlyeq \bl \Fl(2,2; V) \br$, and so
$X$ is not $K$-spherical by Proposition~\ref{prop_Fl(2,2;V)}. In
what follows we assume that $s \ge 5$. Then $\bl X \br \succcurlyeq
\bl \Fl(1,1,1,1; V) \br$. The variety $\Fl(1,1,1,1; V)$ is not
$K$-spherical by Corollary~\ref{crl_Fl(1,1,...,1;V)} and
Theorem~\ref{thm_spherical_modules_simple}, hence $X$ is not
$K$-spherical either.

\textit{Case}~3. $K' = \SL_n, V = \FF^n \oplus \FF^1$. If $s = n +
1$, that is, $\mathbf a = (1, \ldots, 1)$, then by
Corollary~\ref{crl_Fl(1,1,...,1;V)} and
Theorem~\ref{thm_spherical_modules_nonsimple} the variety $X$ is
$K$-spherical if and only if $\chi_1 \ne\nobreak \chi_2$. In what
follows we assume that $s \le n$. Then $\bl X \br \preccurlyeq \bl
\Fl_{\mathbf b}(V) \br$, where $\mathbf b = (1, \ldots, 1, 2)$. By
Corollary~\ref{crl_Fl(1,1,...,1;V)} and
Theorem~\ref{thm_spherical_modules_nonsimple} the variety
$\Fl_{\mathbf b}(V)$ is $K'$-spherical, hence $X$ is $K$-spherical.

\textit{Case}~4. $K' = \SL_n \times \SL_m$, $V = \FF^n \oplus
\FF^m$, $n \ge m \ge 2$. Put $K_1 = \SL_n$, $K_2 = \SL_m$, $V_1 =
\FF^n$, $V_2 = \FF^m$. If $s \ge 4$ then $\bl X \br \succcurlyeq \bl
\Fl(1,1,1; V) \br$. By Corollary~\ref{crl_Fl(1,1,...,1;V)} and
Theorem~\ref{thm_spherical_modules_nonsimple} the variety
$\Fl(1,1,1; V)$ is not $K$-spherical, hence $X$ is not $K$-spherical
either. In what follows we assume that $s = 3$. Put $k = a_1 + a_2$.
We consider the following two subcases: $a_1 = 1$ and $a_1 \ge 2$.

\textit{Subcase}~4.1. $a_1 = 1$. Then $k \le n$. In view of
Propositions~\ref{prop_point+group_sl} and~\ref{prop_Fl(r1,r2;V)}
there are a point $[W] \in \Gr_k(V_1)$ and a subgroup $L \subset
(K_1)_{[W]}$ with the following properties:

(1) the pair $(L, W)$ is geometrically equivalent to the pair
$(\GL_k, \FF^k)$ for $k < n$ and the pair $(\SL_k, \FF^k)$ for $k =
n$;

(2) the $K$-sphericity of $X$ is equivalent to the $(C \times L
\times K_2)$-sphericity of the variety $(W^* \otimes V_2) \times
\PP(W)$, where $L$ acts diagonally on~$W^*$ and~$W$, $K_2$~acts on
$V_2$, and~$C$ acts on $W^*$, $V_2$, and $W$ via the characters
$-\chi_1$, $\chi_2$, and $\chi_1$, respectively.

It follows from~(2) that the $K$-sphericity of $X$ is equivalent to
the sphericity of the $(C \times L \times K_2 \times
\FF^\times)$-module $(W^* \otimes V_2) \oplus W$, where $C$, $L$,
and $K_2$ act as described in~(2) and $\FF^\times$ acts on the
summand~$W$ by scalar transformations. Applying
Theorem~\ref{thm_spherical_modules_nonsimple} we find that the
indicated module is not spherical when $k = n$, $\chi_1 = \chi_2$
and is spherical in all other cases.

\textit{Subcase}~4.2. $a_1 \ge 2$. By Theorem~\ref{thm_MWZ} the
$K$-sphericity of~$X$ implies that $m = 2$. Then $k \le n$ and the
equality is attained if and only if $a_1 = a_2 = 2$ and $n = 4$. In
view of Propositions~\ref{prop_point+group_sl}
and~\ref{prop_Fl(r1,r2;V)} there are a point $[W] \in \Gr_k(V_1)$
and a subgroup $L \subset (K_1)_{[W]}$ with the following
properties:

(1) the pair $(L, W)$ is geometrically equivalent to the pair
$(\GL_k, \FF^k)$ for $k < n$ and the pair $(\SL_k, \FF^k)$ for $k =
n$;

(2) the $K$-sphericity of $X$ is equivalent to the $(C \times L
\times K_2)$-sphericity of the variety $Y = (W^* \otimes V_2) \times
\Gr_{a_1}(W)$ be, where $L$ acts diagonally on~$W^*$ and
$\Gr_{a_1}(W)$, $K_2$~acts on~$V_2$, and $C$ acts on $W^*$ and $V_2$
via the characters $-\chi_1$ and $\chi_2$, respectively.

Applying Proposition~\ref{prop_point+group_sl} to $\Gr_{a_1}(W)$ and
then considering the natural projection $Y \to \Gr_{a_1}(W)$, we
find that there is a subgroup $L_1 \subset\nobreak L$ with the
following properties:

(1) the pair $(L_1, W)$ is geometrically equivalent to the pair
$(\GL_{a_1} \times \GL_{a_2}, \FF^{a_1} \oplus \FF^{a_2})$ for $k <
n$ and the pair $(\mathrm{S}(\mathrm{L}_{a_1} \times
\mathrm{L}_{a_2}), \FF^{a_1} \oplus \FF^{a_2})$ for $k = n$;

(2) the $(C \times L \times K_2)$-sphericity of~$Y$ is equivalent to
the sphericity of the $(C \times L_1 \times K_2)$-module $W^*
\otimes V_2$ on which $C$ and $K_2$ act as described above and $L_1$
acts on~$W^*$.

By Theorem~\ref{thm_spherical_modules_nonsimple} the indicated
module is not spherical when $k = n$, $\chi_1 = \chi_2$ and is
spherical in all other cases.

\textit{Case}~5. $K' = \Sp_{2n} \times \SL_m$, $V = \FF^{2n} \oplus
\FF^m$, $n \ge 2$, $m \ge 1$. It follows from the condition $\bl X
\br \succ \bl \Fl(1,1; V) \br$ that $\bl X \br \succcurlyeq \bl
\Fl(1,2; V) \br$. The variety $\Fl(1,2; V)$ is not $K$-spherical by
Proposition~\ref{prop_Fl(1,2;V)}, hence $X$ is not $K$-spherical
either.

\textit{Case}~6. $K' = \Sp_{2n} \times \Sp_{2m}$, $V = \FF^{2n}
\oplus \FF^{2m}$, $n \ge m \ge 2$. If $X$ is $K$-spherical then $X$
is also $(\Sp_{2n} \times \SL_{2m})$-spherical. As was shown in
Case~5, the latter is false.

The proof of Theorem~\ref{thm_non-grassmannians} and hence that of
Theorem~\ref{thm_main_theorem} is completed.
\end{proof}

\appendix

\section{Proofs of Propositions~\ref{prop_point+group_sl},
\ref{prop_point+group_sp_even}, and~\ref{prop_point+group_sp_odd}}
\label{sect_appendix}

\begin{proof}[{Proof of Proposition~\textup{\ref{prop_point+group_sl}}}]
Let $e_1, \ldots, e_n$ be a basis of~$V$. Put $W = \langle e_1,
\ldots, e_m \rangle$, and $W' = \langle e_{m+1}, \ldots, e_n
\rangle$. Denote by $L$ the subgroup of~$K$ preserving each of the
subspaces $W$ and~$W'$. It is easy to see that the point $[W] \in X$
and the group $L \subset K_{[W]}$ satisfy conditions
(\ref{4.13.2})--(\ref{4.13.4}). It remains to show that
condition~(\ref{4.13.1}) also holds.

For each $i = 1, \ldots, n$ we introduce the subspace $V_i = \langle
e_n, \ldots, e_{n-i+1} \rangle \subset \nobreak V$. The stabilizer
in~$K$ of the flag $(V_1, \ldots, V_n)$ is a Borel subgroup of~$K$,
we denote it by~$B$. It is not hard to check that the group
$B_{[W]}$ is a Borel subgroup of~$L$.

Computations show that $\dim B_{[W]} = n(n+1)/2 - m(n-m) - 1$,
whence
$$
\dim B[W] = \dim B - \dim B_{[W]} = m(n-m) = \dim X,
$$
and so the orbit $B[W]$ is open in~$X$.

Applying Proposition~\ref{prop_sufficient_(P)} to the groups~$K$,
$B$, $L$ and the point~$[W]$ we find that condition~(1) holds.
\end{proof}

In the proofs of Propositions~\ref{prop_point+group_sp_even}
and~\ref{prop_point+group_sp_odd} we shall need the following
notion.

Let $U$ be a finite-dimensional vector space with a given symplectic
form~$\Omega$ on it and let $\dim U = 2m$. A~basis $e_1, \ldots,
e_{2m}$ of~$U$ will be called \textit{standard} if the matrix of
$\Omega$ has the form
$$
\begin{pmatrix}
\begin{matrix} 0 & 1 \\ -1 & 0 \end{matrix} & & 0 \\
 & \ddots & \\
0 & & \begin{matrix} 0 & 1 \\ -1 & 0 \end{matrix}
\end{pmatrix}
$$
in this basis.

\begin{proof}[Proof of Proposition~\textup{\ref{prop_point+group_sp_even}}]
Let $\Omega$ be a symplectic form on~$V$ preserved by~$K$. We fix a
decomposition into a skew-orthogonal direct sum
$$
V = W \oplus W' \oplus R,
$$
where $\dim W = \dim W' = 2k$, $\dim R = 2n - 4k$, and the
restriction of~$\Omega$ to each of the subspaces $W, W', R$ is
nondegenerate. Let $e_1, \ldots, e_{2k}$ be a standard basis in~$W$
and let $e'_1, \ldots, e'_{2k}$ be a standard basis in~$W'$. If $n
> 2k$ (that is, $R$ is nontrivial), then we fix a linearly
independent set of vectors $r_1, \ldots, r_{n-2k}$ in $R$ that
generates a maximal isotropic subspace in~$R$. For each $i = 1,
\ldots, k$ we introduce the two-dimensional subspaces
$$
W_i = \langle e_{2i-1}, e_{2i} \rangle \subset\nobreak W \quad
\text{ and } \quad W'_i = \langle e'_{2i-1}, e'_{2i} \rangle \subset
W'.
$$

For each $i = 1, \ldots, 2k$ we put $f_i = e_i + (-1)^ie'_i$. If $n
> 2k$ then for each $j = 1, \ldots, n - 2k$ we put $f_{2k+j} =
r_j$. For each $i = 1, \ldots, n$ we introduce the subspace $F_i =
\langle f_1, \ldots, f_i \rangle \subset V$. A direct check shows
that $\Omega(f_i, f_j) = 0$ for all $i,j = 1, \ldots, n$, hence
$F_n$ is a maximal isotropic subspace of~$V$. Consequently, the
stabilizer in $K$ of the isotropic flag $(F_1, \ldots, F_n)$ is a
Borel subgroup of~$K$, we denote it by~$B$.

Put $H = K_{[W]}$. Clearly, $H$ preserves the subspace $W^\perp = W'
\oplus R$.

Put $V_i = W_i \oplus W'_i$ for $i = 1, \ldots, k$ and $V_{k+1} =
R$. Then
\begin{equation} \label{eqn_decomposition_k+1}
V = V_1 \oplus \ldots \oplus V_k \oplus V_{k+1}.
\end{equation}

We define the group $L$ to be the stabilizer in~$H$ of the flag
$(F_2, F_4, \ldots, F_{2k})$. For each $i = 1, \ldots, k$ the
$L$-invariance of the subspace~$F_i$ implies the $L$-invariance of
its projections to~$W$ and~$W^\perp$, hence both subspaces
$$
W_1 \oplus \ldots \oplus W_i \quad \text{ and } \quad W'_1 \oplus
\ldots \oplus W'_i
$$
are invariant with respect to~$L$. It follows that $L$ preserves
each of the subspaces $W_1, \ldots, W_k$, $W'_1, \ldots, W'_k$,
hence it preserves each of the subspaces $V_1, \ldots, V_k,
V_{k+1}$. At last, for each $i = 1, \ldots, k$ the $L$-invariance of
the subspaces $V_i$ and $F_i$ implies the $L$-invariance of the
subspace $V_i \cap F_i = \langle f_{2i-1}, f_{2i} \rangle$.

For each $i = 1, \ldots, k, k+1$ let $L_i$ denote the subgroup
in~$L$ consisting of all transformations acting trivially on all
summands of decomposition~(\ref{eqn_decomposition_k+1}) except
for~$V_i$. Then $L = L_1 \times \ldots \times L_k \times L_{k+1}$.

For a fixed $i \in \lbrace 1, \ldots, k \rbrace$, the $L$-invariance
of the subspace $\langle f_{2i-1}, f_{2i} \rangle$ implies that
$L_i$ diagonally acts on the direct sum $W_i \oplus W'_i$
transforming the bases $(e_{2i-1}, e_{2i})$ and $(-e'_{2i-1},
e'_{2i})$ in the same way. Consequently, $L_i \simeq \SL_2$ and the
pair $(L_i, V_i)$ is geometrically equivalent to the pair $(\SL_2,
\FF^2 \oplus \FF^2)$ with the diagonal action of $\SL_2$.

The above arguments show that the point $[W] \in X$ and the group $L
\subset K_{[W]}$ satisfy conditions (\ref{4.14.2})--(\ref{4.14.4}).
We now prove that condition~(\ref{4.14.1}) also holds.

Let us show that the group $B_{[W]} = H \cap B = L \cap B$ is a
Borel subgroup of~$L$. It is easy to see that $B_{[W]} = B_1 \times
\ldots \times B_k \times B_{k+1}$, where $B_i = B \cap L_i$ for all
$i = 1, \ldots, k, {k+1}$. For every $i = 1, \ldots, k$ the group
$B_i$ is the stabilizer in~$L_i$ of the line $\langle f_{2i-1}
\rangle$ and the group $B_{k+1}$ is the stabilizer in~$L_{k+1}$ of
the maximal isotropic flag
$$
(\langle r_1 \rangle, \langle r_1, r_2 \rangle, \ldots, \langle r_1,
r_2, \ldots, r_{n - 2k} \rangle)
$$
in~$V_{k+1}$. From this one deduces that $B_i$ is a Borel subgroup
of~$L_i$ for all $i = 1, \ldots, k, k+1$, hence $B_{[W]}$ is a Borel
subgroup of~$L$.

Computations show that $\dim B_{[W]} = (n-2k)^2 + n$, whence
$$
\dim B[W] = \dim B - \dim B_{[W]} = n^2 - (n-2k)^2 = 2k(2n-2k) =
\dim X,
$$
and so the orbit $B[W]$ is open in~$X$.

Applying Proposition~\ref{prop_sufficient_(P)} to the groups $K$,
$B$, $L$ and the point~$[W]$ we find that condition~(\ref{4.14.1})
holds.
\end{proof}

\begin{proof}[Proof of Proposition~\textup{\ref{prop_point+group_sp_odd}}]
Let $\Omega$ be a symplectic form on~$V$ preserved by~$K$. We fix a
decomposition into a skew-orthogonal direct sum
$$
V = U_0 \oplus U \oplus U' \oplus R,
$$
where $\dim U_0 = 2$, $\dim U = \dim U' = 2k$, $\dim R = 2n - 4k -
2$ and the restriction of $\Omega$ to each of the subspaces $U_0, U,
U', R$ is nondegenerate. We choose a standard basis $e_0, e'_0$
in~$U_0$, a standard basis $e_1, \ldots, e_{2k}$ in~$U$, and a
standard basis $e'_1, \ldots, e'_{2k}$ in~$U'$. If $n
> 2k + 1$ (that is, $R$ is nontrivial), then we fix a linearly
independent set of vectors $r_1, \ldots, r_{n-2k-1}$ in~$R$ that
generates a maximal isotropic subspace in~$R$. We put $W_0 =\nobreak
\langle e_0 \rangle$ and for each $i = 1, \ldots, k$ we introduce
the two-dimensional subspaces
$$
W_i = \langle e_{2i-1}, e_{2i} \rangle \subset \nobreak U \quad
\text{ and } \quad W'_i = \langle e'_{2i-1}, e'_{2i} \rangle \subset
U'.
$$

We put $f_0 = e'_0$. Next, for each $i = 1, \ldots, 2k$ we put $f_i
= e_i + (-1)^ie'_i$. At last, if $n > 2k+1$ then for each $j = 1,
\ldots, n - 2k - 1$ we put $f_{2k+j} = r_j$. For each $i = 0, 1,
\ldots, n-1$ we introduce the subspace $F_i = \langle f_0, \ldots,
f_i \rangle \subset V$. A direct check shows that $\Omega(f_i, f_j)
= 0$ for all $i,j = 0, \ldots, n-1$, hence $F_n$ is a maximal
isotropic subspace in~$V$. Consequently, the stabilizer in $K$ of
the isotropic flag $(F_0, \ldots, F_{n-1})$ is a Borel subgroup
of~$K$, we denote it by~$B$.

Put $W = W_0 \oplus U$ and $H = K_{[W]}$.

Put $V_0 = U_0$, $V_i = W_i \oplus W'_i$ for $i = 1, \ldots, k$ and
$V_{k+1} = R$. Then
\begin{equation} \label{eqn_decomposition_k+2}
V = V_0 \oplus V_1 \oplus \ldots \oplus V_k \oplus V_{k+1}.
\end{equation}

We define the group $L$ to be the stabilizer in~$H$ of the flag
$(F_0, F_2, F_4, \ldots, F_{2k})$.

Since the subspaces $W$ and $\langle e'_0 \rangle = F_0$ are
$L$-invariant, it follows that the subspace $\langle e'_0 \rangle
\oplus W = U_0 \oplus U$ is also $L$-invariant, hence so is its
skew-orthogonal complement $(U_0 \oplus U)^\perp = U' \oplus R$.
Thus $V$ admits the decomposition $V = \langle e'_0 \rangle \oplus W
\oplus (U' \oplus R)$ into a direct sum of three $L$-invariant
subspaces.

For each $i = 1, \ldots, k$ the $L$-invariance of the subspace~$F_i$
implies the $L$-invariance of its projections to~$W$ (along $\langle
e'_0 \rangle \oplus U' \oplus R$) and~$U' \oplus R$ (along $\langle
e'_0 \rangle \oplus W$), hence both subspaces
$$
W_1 \oplus \ldots \oplus W_i \quad \text{ and } \quad W'_1 \oplus
\ldots \oplus W'_i
$$
are invariant with respect to~$L$. This implies that $L$ preserves
each of the subspaces $W_1, \ldots, W_k$, $W'_1, \ldots, W'_k$,
hence it preserves $W_0$ and each of the subspaces $V_0, V_1,
\ldots, V_k, V_{k+1}$. At last, for each $i = 1, \ldots, k$ the
$L$-invariance of the subspaces $V_i$ and $F_i$ implies the
$L$-invariance of the subspace $V_i \cap F_i = \langle f_{2i-1},
f_{2i} \rangle$.

For each $i = 0, 1, \ldots, k, k+1$ let $L_i$ denote the subgroup
in~$L$ consisting of all transformations acting trivially on all
summands of decomposition~(\ref{eqn_decomposition_k+2}) except
for~$V_i$. Then $L = L_0 \times L_1 \times \ldots \times L_k \times
L_{k+1}$.

Since $L$ preserves each of the two one-dimensional subspaces $W_0 =
\langle e_0 \rangle$ and $\langle e'_0 \rangle$, it follows that
$L_0$ acts on the direct sum $\langle e_0 \rangle \oplus \langle
e'_0 \rangle$ diagonally, multiplying $e_0$ and $e'_0$ by mutually
inverse numbers. Hence $L_0 \simeq \FF^\times$ and the pair $(L_0,
V_0)$ is geometrically equivalent to the pair $(\FF^\times, \FF^1
\oplus \FF^1)$ with the action $(t, (x_1, x_2)) \mapsto (tx_1,
t^{-1}x_2)$. Next, for a fixed $i \in \lbrace 1, \ldots, k \rbrace$,
the $L$-invariance of the subspace $\langle f_{2i-1}, f_{2i}
\rangle$ implies that $L_i$ diagonally acts on $W_i \oplus W'_i$
transforming the bases $(e_{2i-1}, e_{2i})$ and $(-e'_{2i-1},
e'_{2i})$ in the same way. Consequently, $L_i \simeq \SL_2$ and the
pair $(L_i, V_i)$ is geometrically equivalent to the pair $(\SL_2,
\FF^2 \oplus \FF^2)$ with the diagonal action of $\SL_2$.

The above arguments show that the point $[W] \in X$ and the group $L
\subset K_{[W]}$ satisfy conditions (\ref{4.16.2})--(\ref{4.16.4}).
We now prove that condition~({\ref{4.16.1}}) also holds.

Let us show that the group $B_{[W]} = H \cap B = L \cap B$ is a
Borel subgroup of~$L$. It is easy to see that $B_{[W]} = B_0 \times
B_1 \times \ldots \times B_k \times B_{k+1}$, where $B_i = B \cap
L_i$ for all $i = 1, \ldots, k, {k+1}$. Evidently, $B_0 = L_0$.
Next, for every $i = 1, \ldots, k$ the group $B_i$ is the stabilizer
in~$L_i$ of the line $\langle f_{2i-1} \rangle$ and the group
$B_{k+1}$ is the stabilizer in~$L_{k+1}$ of the maximal isotropic
flag
$$
(\langle r_1 \rangle, \langle r_1, r_2 \rangle, \ldots, \langle r_1,
r_2, \ldots, r_{n - 2k - 1} \rangle)
$$
in~$V_{k+1}$. From this one deduces that $B_i$ is a Borel subgroup
of~$L_i$ for all $i = 0, 1, \ldots, k, k+1$, hence $B_{[W]}$ is a
Borel subgroup of~$L$.

Computations show that $\dim B_{[W]} = (n - 2k - 1)^2 + n$, whence
\begin{multline*}
\dim B[W] = \dim B - \dim B_{[W]} = \\ n^2 - (n-2k-1)^2 =
(2k+1)(2n-2k-1) = \dim X,
\end{multline*}
and so the orbit $B[W]$ is open in~$X$.

Applying Proposition~\ref{prop_sufficient_(P)} to the groups $K$,
$B$, $L$ and the point~$[W]$, we find that condition~(\ref{4.16.1})
holds.
\end{proof}

\newpage

\end{document}